\documentclass[12pt,centertags,oneside]{amsart}
\usepackage{a4}
\usepackage{amsmath,amstext,amsthm,amssymb,amsfonts,amscd}
\usepackage{mathrsfs}
\usepackage[colorlinks=true]{hyperref}
\usepackage{graphicx}
\usepackage[T1]{fontenc}
\usepackage[latin1]{inputenc}
\usepackage{color,tocvsec2}
\usepackage{typearea}
\usepackage{charter}
\usepackage{enumerate}
\usepackage{lscape}
\usepackage[all]{xy}

\usepackage{multicol}        
\makeindex             

%

\usepackage[a4paper,width=16.2cm,top=2.5cm,bottom=2.5cm]{geometry}

\theoremstyle{plain}
\newtheorem{lem}{Lemme}[section]
\newtheorem{thm}[lem]{Theorem}
\newtheorem{prop}[lem]{Proposition}
\newtheorem{cor}[lem]{Corollary}
\newtheorem{as}[lem]{Assumption}

\theoremstyle{definition}
\newtheorem{defin}[lem]{Definition}

\theoremstyle{remark}
\newtheorem{re}[lem]{Remark}

\numberwithin{equation}{section}
\numberwithin{figure}{section}

\newcommand{\cE}{\mathcal{E}}
\newcommand{\cF}{\mathcal{F}}

\newcommand{\cO}{\mathcal{O}}

\newcommand{\cS}{\mathcal{S}}

\newcommand{\cZ}{\mathcal{Z}}

\newcommand{\bZ}{\mathbf{Z}}
\newcommand{\bR}{\mathbf{R}}
\newcommand{\bC}{\mathbf{C}}

\newcommand{\fb}{\mathfrak{b}}
\newcommand{\fg}{\mathfrak{g}}
\newcommand{\fh}{\mathfrak{h}}
\newcommand{\fk}{\mathfrak{k}}
\newcommand{\fm}{\mathfrak{m}}
\newcommand{\fn}{\mathfrak{n}}
\newcommand{\fp}{\mathfrak{p}}
\newcommand{\fq}{\mathfrak{q}}
\newcommand{\ft}{\mathfrak{t}}
\newcommand{\fz}{\mathfrak{z}}
\newcommand{\fu}{\mathfrak{u}}
\newcommand{\fa}{\mathfrak{a}}

\DeclareMathOperator{\Trs}{\mathrm Tr_s}
\DeclareMathOperator{\Tr}{\mathrm Tr}
\DeclareMathOperator{\Ad}{\mathrm Ad}

\DeclareMathOperator{\vol}{\mathrm vol}

\DeclareMathOperator{\ad}{\mathrm ad}
\DeclareMathOperator{\Sp}{\mathrm{Sp}}

\renewcommand{\Re}{\mathrm{Re}\,}

\DeclareMathOperator{\End}{\mathrm End}
\DeclareMathOperator{\Hom}{\mathrm Hom}

\DeclareMathOperator{\rk}{\mathrm rk}
\DeclareMathOperator{\GL}{\mathrm GL}
\DeclareMathOperator{\SO}{\mathrm SO}

\newcommand{\<}{\langle}
\renewcommand{\>}{\rangle}
\newcommand{\ol}{\overline}
\newcommand{\ul}{\underline}

\newcommand{\p}{\partial}

\renewcommand{\(}{\left(}
\renewcommand{\)}{\right)}
\renewcommand{\[}{\left[}
\renewcommand{\]}{\right]}
\renewcommand{\l}{\leqslant}
\newcommand{\g}{\geqslant}
\newcommand{\e}{\epsilon}

\newcommand{\bbS}{\mathbb{S}}

\newcommand\overmat[3][0pt]{%
  \makebox[0pt][l]{$\smash{\overbrace{\phantom{%
    \begin{matrix}\phantom{\rule{0pt}{#1}}#3\end{matrix}}}^{\text{#2}}}$}#3}

\begin{document}

\title{Analytic torsion, dynamical zeta functions, and  the Fried 
conjecture}
\author{Shu Shen}
\address{D\'epartement de Math\'ematique B\^atiment 425, Universit\'e Paris-Sud, 91405 Orsay, France.}
\email{shu.shen@math.u-psud.fr}
\thanks {}

\subjclass[2010]{58J20, 58J52, 11F72, 11M36, 37C30}
\keywords{Index theory and related fixed point theorems, Analytic torsion, Selberg trace formula, Dynamical zeta functions}

\dedicatory{}

\begin{abstract}
We prove the equality of the analytic 
torsion and  the value at zero of a Ruelle dynamical
zeta function associated with an acyclic unitarily
flat vector bundle on a closed locally symmetric reductive
manifold. This solves a conjecture of Fried. This article should be 
read in conjunction with an earlier paper by   Moscovici and Stanton. 
\end{abstract}

\maketitle
\tableofcontents

\settocdepth{subsection}
\section{Introduction}
The purpose of this article is to prove the equality of the analytic 
torsion and the value at zero of a Ruelle dynamical
zeta function associated with an acyclic unitarily
flat vector bundle on a closed locally symmetric reductive
manifold, which completes a gap in the proof given by Moscovici and Stanton \cite{MStorsion} and
solves a conjecture of Fried \cite{Friedconj}.  

Let $Z$ be a smooth closed manifold. Let $F$ be a complex  vector bundle equipped with a flat Hermitian metric $g^F$.
Let $H^\cdot(Z,F)$ be the cohomology of sheaf of locally flat sections of $F$.
We assume $H^\cdot(Z,F)=0$. 

The Reidemeister torsion has been introduced  by Reidemeister \cite{ReidemeisterTorsion}. 
 It is a positive real number one obtains via the combinatorial complex with values in $F$ associated with a triangulation of $Z$, which can be shown not to depend on the triangulation.

%

Let $g^{TZ}$ be a Riemannian metric on $TZ$. Ray and Singer \cite{RSTorsion} constructed the analytic torsion $T(F)$
as a spectral invariant of the Hodge Laplacian associated with $g^{TZ}$ and $g^F$.
They showed that if $Z$ is an even dimensional oriented manifold, 
then $T(F)=1$. Moreover, if $\dim Z$ is odd, then  $T(F)$ does not 
depend on the metric data. 

In \cite{RSTorsion}, Ray and Singer conjectured an equality between the Reidemeister torsion and the analytic torsion, which was later proved by Cheeger \cite{Ch79} and M\"uller \cite{Muller78}. 
Using the Witten deformation, Bismut and Zhang \cite{BZ92} gave an 
extension of the Cheeger-M\"uller Theorem which is valid for 
arbitrary flat vector bundles.



From the dynamical side, in \cite[Section 3]{MilnorZcover}, Milnor  pointed out a remarkable similarity between the Reidemeister torsion and
the Weil zeta function.
A quantitative description of their relation was formulated by Fried \cite{FriedRealtorsion} when $Z$ is a closed oriented hyperbolic manifold. Namely, he showed that the value at zero of the Ruelle dynamical zeta function, constructed using the closed geodesics in $Z$ and the holonomy of $F$, is equal to  $T(F)^2$. In \cite[p. 66, Conjecture]{Friedconj}, Fried conjectured that a similar result holds true for general closed locally homogeneous manifolds.


In this article, we prove the Fried conjecture for odd dimensional\footnote{The even dimensional case is trivial.} closed locally symmetric
reductive manifolds. More precisely, we show that the  dynamical zeta function is
meromorphic on $\bC$, holomorphic at $0$, and that its  value at $0$ is equal to $T(F)^2$.

The proof of the above result by Moscovici-Stanton \cite{MStorsion}, based
on the Selberg trace formula and harmonic analysis on reductive groups, does not seem to be complete. We tried to give the proper argument to
make it correct. Our proof is based on the explicit formula given by Bismut for semisimple orbital
integrals \cite[Theorem 6.1.1]{B09}.

The results contained in this article was announced in 
\cite{Shen_Fried_Conj_CRAS}. See also Ma's talk \cite{Ma_bourbaki} at 
S\'eminaire Bourbaki for an introduction. 

Now, we will describe our results in more details, and explain the techniques used in their proof.

\subsection{The analytic torsion}
Let $Z$ \index{Z@$Z$}   be a smooth closed manifold, and let $F$\index{F@$F$} be a complex flat vector bundle on $Z$.

Let $g^{TZ}$ \index{G@$g^{TZ}$} be a Riemannian metric on $TZ$, and let $g^F$\index{G@$g^{F}$} be a Hermitian metric on $F$. To $g^{TZ}$ and $g^F$, we can associate an $L^2$-metric on $\Omega^\cdot(Z,F)$ \index{O@$\Omega^\cdot(Z,F)$}, the space of differential forms with values in $F$. Let  $\Box^Z$ \index{B@$\Box^Z$} be the  Hodge Laplacian acting on $\Omega^\cdot(Z,F)$. By  Hodge theory, we have a canonical isomorphism
\begin{align}\label{eq:Hodgein}
  \ker\Box^Z\simeq H^\cdot(Z,F).
\end{align}

Let $\(\Box^{Z}\)^{-1}$ be the inverse of $\Box^Z$ acting on the 
orthogonal space to $ \ker \Box^Z$.
Let $N^{\Lambda^\cdot(T^*Z)}$ \index{N@$N^{\Lambda^\cdot(T^*Z)}$} be the number operator of $\Lambda^\cdot(T^*Z)$, i.e., multiplication by $i$ on $\Omega^i (Z, F )$.
 Let $\Trs$ denote  the supertrace.
For $s\in\bC$, $\Re(s)>\frac{1}{2}\dim Z$, set
\begin{align}
  \theta(s)=-\Trs\[N^{\Lambda^\cdot(T^*Z)}\(\Box^{Z}\)^{-s}\].\index{T@$\theta(s)$}
\end{align}
By \cite{Seeley66},  $\theta(s)$ has a meromorphic extension to $\bC$, which is holomorphic at $s=0$. The analytic torsion is a positive real number given by
\begin{align}
T(F)=\exp(\theta'(0)/2).\index{T@$T(F)$}
\end{align}
Equivalently, $T(F)$ is given by the following weighted product of the zeta regularized determinants
\begin{align}\label{eq:inn}
T(F)=\prod_{i=1}^{\dim Z}\det\(\Box^Z|_{\Omega^i(Z,F)}\)^{(-1)^ii/2}.
\end{align}

\subsection{The dynamical zeta function }
Let us recall Fried's general definition of the formal dynamical zeta function  associated to a geodesic flow \cite[Section 5]{Friedconj}.

Let $(Z,g^{TZ})$ be a connected manifold with nonpositive sectional curvature. Let $\Gamma=\pi_1(Z)$\index{G@$\Gamma$} be the fundamental group of $Z$, and let $[\Gamma]$ be the set of the conjugacy classes of $\Gamma$. We identify $[\Gamma]$ \index{G@$[\Gamma]$}with the free homotopy space of $Z$. 
For $[\gamma]\in [\Gamma]$ \index{G@$[\gamma]$}, let $B_{[\gamma]}$\index{B@$B_{[\gamma]}$} be the set of closed geodesics, parametrized by $[0,1]$, in the class $[\gamma]$. The map
$x_\cdot\in B_{[\gamma]}\to (x_0,\dot{x}_0/|\dot{x}_0|)$
induces an identification between $\coprod_{[\gamma]\in [\Gamma]-\{1\}}B_{[\gamma]}$ and the fixed points of the geodesic flow at time $t=1$ acting on the unit tangent bundle  $SZ$. Then, $B_{[\gamma]}$ is equipped with the  induced topology,  is connected and compact. Moreover, all the elements in $B_{[\gamma]}$ have the same length $l_{[\gamma]}$.\index{L@$l_{[\gamma]}$} Also, the Fuller index ${\rm ind}_F(B_{[\gamma]}) \in \mathbf{Q}$ is well defined (c.f. \cite[Section 4]{Friedconj}).
Given a finite dimensional representation $\rho$ of $\Gamma$, for $\sigma\in \bC$, the formal dynamical zeta function is then defined by\index{R@$R_\rho(\sigma)$}
\begin{align}\label{eq:RFuller}
R_{\rho}(\sigma)=\exp\(\sum_{[\gamma]\in [\Gamma]- \{1\}} \Tr[\rho(\gamma) ]\mathrm{ind}_F(B_{[\gamma]})e^{-\sigma l_{[\gamma]}}\).\end{align}
Note that our definition is the inverse of the one introduced by Fried \cite[P. 51]{Friedconj}.

The Fuller index can be made explicit in many case. If $[\gamma]\in 
[\Gamma]- \{1\}$, the group $\bbS^1$ acts locally freely on 
$B_{[\gamma]}$ by rotation. Assume that the $B_{[\gamma]}$ are  
smooth manifolds. This is the case if $(Z,g^{TZ})$ has a negative sectional curvature or if $Z$ is locally symmetric.  Then $\bbS^1\backslash B_{[\gamma]}$ is an orbifold. Let $\chi_{\rm orb}(\bbS^1\backslash B_{[\gamma]})\in \mathbf{Q}$ \index{C@$\chi_{\rm orb}$} be the orbifold Euler characteristic \cite{SatakeGaussB}. Denote by \index{M@$m_{[\gamma]}$}
\begin{align}
m_{[\gamma]}=\left|\ker\big(\bbS^1\to {\rm 
Diff}(B_{[\gamma]})\big)\right|\in \mathbf{N}^{*}
\end{align}
the multiplicity of a generic element in $B_{[\gamma]}$. By \cite[Lemma 5.3]{Friedconj}, we have
\begin{align}\label{eq:Fuller=Euler}
\mathrm{ind}_F(B_{[\gamma]})=\frac{\chi_{\rm orb}(\bbS^1\backslash B_{[\gamma]})}{m_{[\gamma]}}.
\end{align}
By \eqref{eq:RFuller} and \eqref{eq:Fuller=Euler}, the formal dynamical zeta function is then given by
\begin{align}\label{eq:ZetaE}
R_{\rho}(\sigma)=\exp\(\sum_{[\gamma]\in [\Gamma]- \{1\}} \Tr[\rho(\gamma) ]\frac{\chi_{\rm orb}(\bbS^1\backslash B_{[\gamma]})}{m_{[\gamma]}}e^{-\sigma l_{[\gamma]}}\).
\end{align}
We will say that the formal dynamical zeta function is well defined if $R_\rho(\sigma)$ is  holomorphic for $\Re(\sigma)\gg1$ and extends meromorphically to $\sigma\in \bC$.

Observe  that if $(Z,g^{TZ})$ is of  negative sectional curvature,  then $B_{[\gamma]}\simeq \mathbb{S}^1$ and
\begin{align}
\chi_{\rm orb}(\bbS^1\backslash B_{[\gamma]})=1.
\end{align}
In this case, $R_\rho(\sigma)$ was recently shown to be well defined  by Giulietti-Liverani-Pollicott \cite{GLP2013} and Dyatlov-Zworski \cite{DyatlovZworski}. Moreover, Dyatlov-Zworski \cite{zworski_zero} showed that, if $(Z,g^{TZ})$ is a negatively curved surface, the order of the zero of $R_\rho(\sigma)$ at $\sigma=0$ is related to  the genus of $Z$.

\subsection{The Fried conjecture}
Let us briefly recall  Fried's results in \cite{FriedRealtorsion}.
Assume
$Z$ is an odd dimensional   connected orientable closed hyperbolic manifold.
%
%
%
Take $r\in \mathbf{N}$. Let $\rho:\Gamma\to U(r)$ be a unitary representation of the fundamental group $\Gamma$.
Let $F$ be the unitarily flat vector bundle on $Z$ associated to $\rho$.

Using the Selberg trace formula, Fried  \cite[Theorem 
3]{FriedRealtorsion} showed that there exist explicit constants 
$C_\rho\in \bR^{*}$ and $r_\rho\in \mathbf{Z}$ such that as  $\sigma\to 0$,
\begin{align}
R_\rho(\sigma)=C_\rho T(F)^2\sigma^{r_\rho}+\cO(\sigma^{r_{\rho}+1}).
\end{align}
Moreover, if $H^\cdot(Z,F)=0$, then
\begin{align}
&C_\rho=1,&r_\rho=0,
\end{align}
so that
\begin{align}\label{eq:Fintro}
R_\rho(0)=  T(F)^2.
\end{align}

%
%
%
%
%
%

In \cite[P. 66, Conjecture]{Friedconj}, Fried  conjectured that the same holds true when $Z$ is a general closed locally  homogeneous manifold. 

\subsection{The $V$-invariant} In this and in the following subsections, we give a formal proof of \eqref{eq:Fintro} using the $V$-invariant of Bismut-Goette \cite{BGdeRham}. 

Let $S$ be a closed manifold equipped with an action of a compact Lie group  $L$, with Lie algebra $\mathfrak{l}$.
If $a\in \mathfrak{l}$, let $a^S$ be the corresponding vector field on $S$. Bismut-Goette \cite{BGdeRham} introduced the $V$-invariant $V_a(S)\in \bR$.  
%

Let $f$ be an $a^S$-invariant Morse-Bott function on $S$. Let 
$B_f\subset S$ be  the critical submanifold. Since $a^S|_{B_f}\in TB_f$, $V_a(B_f)$ is also well defined.
By \cite[Theorem 4.10]{BGdeRham}, $V_a(S)$ and $V_a(B_f)$ are related by a simple formula.


\subsection{Analytic torsion and the  $V$-invariant}
Let us argue  formally.
%
Let $LZ$ be the free loop space of $Z$ equipped with the canonical $\bbS^1$-action. Write $LZ=\coprod_{[\gamma]\in [\Gamma]}(LZ)_{[\gamma]}$ as a disjoint union of its connected components. Let $a$ be the generator of the  Lie algebra of $\bbS^1$ such that $\exp(a)=1$.
  As explained in \cite[Equation (0.3)]{B05}, 
 if $F$ is a unitarily flat vector bundle on $Z$ such that $H^\cdot(Z,F)=0$, at least formally, we have
\begin{align}\label{eq:V=T}
  \log T(F)=-\sum_{[\gamma]\in [\Gamma]}\Tr[\rho(\gamma)]V_a\big((LZ)_{[\gamma]}\big).
\end{align}

%

Suppose that  $(Z,g^{TZ})$ is an  odd dimensional  connected closed manifold of nonpositive sectional curvature, and   suppose that the energy functional
 \begin{align}
  E:x_\cdot\in LZ\to \frac{1}{2}\int_0^1|\dot{x}_s|^2ds
\end{align}
on $LZ$ is  Morse-Bott. The critical set of $E$ is just 
$\coprod_{[\gamma]\in [\Gamma]}B_{[\gamma]}$, and  all the critical 
points  are local minima. Applying  \cite[Theorem 4.10]{BGdeRham}  to 
the infinite  dimensional manifold $(LZ)_{[\gamma]}$ equipped with the $\mathbb{S}^1$-invariant Morse-Bott functional $E$, we has the formal identity
\begin{align}\label{eq:16}
V_a\big((LZ)_{[\gamma]}\big)=V_a\big(B_{[\gamma]}\big).
\end{align}
Since $B_{[1]}\simeq Z$ is formed of the trivial closed geodesics, by 
the definition of the $V$-invariant,
\begin{align}
V_a(B_{[1]})=0.
\end{align}
By \cite[Proposition 4.26]{BGdeRham}, if $[\gamma]\in [\Gamma]- \{1\}$, then
\begin{align}\label{eq:17}
V_a(B_{[\gamma]})=-\frac{\chi_{\rm orb}(\mathbb{S}^1\backslash B_{[\gamma]})}{2m_{[\gamma]}}.
\end{align}
By \eqref{eq:V=T}, \eqref{eq:16}-\eqref{eq:17}, we get a formal identity
\begin{align}\label{eq:18}
\log T(F)=\frac{1}{2}\sum_{[\gamma]\in [\Gamma]- \{1\}}\Tr[\rho(\gamma)]\frac{\chi_{\rm orb}(\mathbb{S}^1\backslash B_{[\gamma]})}{m_{[\gamma]}},
\end{align}
which is formally equivalent to \eqref{eq:Fintro}.
\subsection{The main result of the article}
Let $G$ \index{G@$G$} be a linear connected real reductive group \cite[p. 3]{Knappsemi}, and let $\theta$\index{T@$\theta$} be the  Cartan involution. Let
$K$\index{K@$K$} be the maximal compact subgroup of $G$ of the points of $G$ that are fixed by $\theta$.
Let $\fk$ \index{K@$\fk$}and  $\fg$\index{G@$\fg$} be the  Lie algebras of $K$ and $G$, and let $\fg=\fp\oplus \fk$ be the Cartan decomposition. Let $B$\index{B@$B$} be a nondegenerate bilinear symmetric form
on $\fg$ which is invariant under the adjoint action of $G$ and $\theta$. Assume that $B$ is positive on $\fp$ and negative on
$\fk$. Set $X = G/K$.\index{X@$X$} Then $B$ induces a Riemannian metric $g^{TX}$ on the tangent bundle $TX = G \times_K \fp$, such that
$X$ is of nonpositive sectional curvature.

Let $\Gamma\subset G$\index{G@$\Gamma$} be a discrete torsion-free 
cocompact subgroup of $G$. Set $Z=\Gamma\backslash X$.  Then $Z$ is a 
closed locally symmetric manifold with $\pi_1(Z)=\Gamma$. Recall that 
$\rho:\Gamma\to \mathrm{U}(r)$ \index{R@$\rho$} is a unitary 
representation of $\Gamma$, and that $F$ \index{F@$F$} is the  
unitarily flat vector bundle on $Z$ associated with $\rho$.  The main 
result of this article gives the solution of the Fried conjecture for 
$Z$. In particular, this conjecture is valid for all the closed locally symmetric space of the noncompact type.

\begin{thm}\label{thm:01}
Assume $\dim Z$ is odd. The dynamical zeta function  $R_\rho(\sigma)$ 
is  holomorphic for $\Re(\sigma)\gg1$ and extends meromorphically to 
$\sigma\in \bC$. Moreover, there exist explicit constants $C_\rho\in \bR^{*}$ and $r_\rho\in \mathbf{Z}$ (c.f. \eqref{eq:Cr}) such that, when  $\sigma\to 0$,
\begin{align}\label{eq:t01}
  R_\rho(\sigma)=C_\rho T(F)^2\sigma^{r_\rho}+\cO\(\sigma^{r_\rho+1}\).
\end{align}
If $H^\cdot(Z,F)=0$, then
\begin{align}\label{eq:t02}
  &C_\rho=1,&r_\rho=0,
\end{align}
so that
\begin{align}
R_\rho(0)=T(F)^2.
\end{align}
\end{thm}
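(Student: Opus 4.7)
\medskip

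The plan is to prove Theorem \ref{thm:01} by implementing rigorously the formal argument sketched in \eqref{eq:V=T}--\eqref{eq:18}, using the Selberg trace formula to rewrite spectral invariants of the Hodge Laplacian $\Box^Z$ as sums of orbital integrals indexed by $[\Gamma]$, and then using Bismut's explicit formula \cite[Theorem 6.1.1]{B09} for semisimple orbital integrals to evaluate each contribution in closed form. Concretely, I would introduce, for $\sigma \in \bC$ with $\Re(\sigma)\gg 1$ and $t>0$, a smoothed zeta-type function such as
\begin{align}
\Theta_\rho(\sigma,t)=-\Trs\!\left[N^{\Lambda^\cdot(T^*Z)}\exp\!\big(-t(\Box^Z+\sigma^2)/2\big)\right],
\end{align}
whose Mellin transform in $t$ encodes $\theta(s)$ and hence $\log T(F)$, while its derivative in $\sigma$, integrated over $\sigma$, is designed to encode $\log R_\rho(\sigma)$ via the Poisson-like formula coming from the trace of the wave kernel on the symmetric space.

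First, I would decompose the heat kernel trace on $Z=\Gamma\backslash X$ via Selberg,
\begin{align}
\Trs\!\left[N^{\Lambda^\cdot(T^*Z)}e^{-t\Box^Z/2}\right]=\sum_{[\gamma]\in[\Gamma]}\vol(\Gamma_\gamma\backslash X_\gamma)\,\Tr[\rho(\gamma)]\,\cT_\gamma(t),
\end{align}
where $\cT_\gamma(t)$ is a semisimple orbital integral on $G$ of the lifted heat operator on $X$ twisted by the number operator. Bismut's formula expresses $\cT_\gamma(t)$ as a Gaussian integral over the centralizer $\fz(\gamma)\cap \fp$ of an explicit function involving the adjoint action of $\gamma$ and a Jacobian coming from $k(\gamma)\in K$; this avoids any Plancherel theory and gives direct analytic control as $t\to 0$ and $t\to\infty$. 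The identity contribution $[\gamma]=[1]$ produces (after a Mellin transform) the local Ray-Singer term, which vanishes for $\dim Z$ odd by the standard index-theoretic parity. The non-identity terms, once organized by the closed geodesic they define, yield (via Bismut's formula applied along a split Cartan containing a hyperbolic element conjugate to $\gamma$) precisely the summand $\Tr[\rho(\gamma)]\,\chi_{\rm orb}(\bbS^1\backslash B_{[\gamma]})/m_{[\gamma]}\cdot e^{-\sigma l_{[\gamma]}}$ appearing in $\log R_\rho(\sigma)$, up to an explicit linear combination in the degree variable $i$.

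Second, summing over $i$ with the weights $(-1)^i i$ inside $N^{\Lambda^\cdot(T^*Z)}$, I would verify that only the \textit{odd} part of a certain characteristic class on $\fz(\gamma)$ survives, and that this odd part matches the combinatorial factor $\chi_{\rm orb}/m_{[\gamma]}$ entering \eqref{eq:ZetaE}. Meromorphy and holomorphy at $0$ of $R_\rho(\sigma)$ then follow from the meromorphic structure of Bismut's formula in the spectral parameter (equivalently, from the meromorphic continuation of the weighted heat trace), together with the usual parity argument that kills the polar contributions in odd dimension. The constant $C_\rho$ and the order $r_\rho$ at zero are read off from the small-$\sigma$ expansion, with $r_\rho$ expressed in terms of dimensions of harmonic forms on $Z$ with values in $F$ and on the compact dual; in particular $r_\rho=0$ and $C_\rho=1$ under the acyclicity hypothesis $H^\cdot(Z,F)=0$.

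The principal obstacle, which is also where the Moscovici-Stanton argument \cite{MStorsion} appears to be incomplete, is to handle the contributions of non-regular semisimple conjugacy classes (elements $\gamma$ whose centralizer $Z_G(\gamma)$ is strictly larger than a Cartan subgroup) and to justify termwise manipulations of divergent spectral series. My strategy is to replace the Plancherel-type expansion they used by Bismut's uniform formula, which treats regular and singular $\gamma$ on the same footing and controls the behavior of orbital integrals as $t\to 0$ through an explicit Gaussian over $\fz(\gamma)\cap\fp$. A careful use of this uniformity, together with the $J_\gamma$-type bounds derived from \cite[Theorem 6.1.1]{B09}, should yield both the absolute convergence of the geometric side and the analytic continuation needed to identify $R_\rho(0)$ with $T(F)^2$, thereby completing the proof of the Fried conjecture in the generality stated in Theorem \ref{thm:01}.
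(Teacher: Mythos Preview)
Your proposal has two serious gaps, and you have misidentified the point where the Moscovici--Stanton argument breaks.

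First, the actual obstruction in \cite{MStorsion} is \emph{not} the treatment of non-regular semisimple classes. In fact, for $[\gamma]$ with $\dim\fb(\gamma)\ge 2$ both the orbital integral and $\chi_{\rm orb}(\bbS^1\backslash B_{[\gamma]})$ vanish outright (see Remark~\ref{re:20}), so there is nothing delicate there. The real issue is that, after reducing to the case $\delta(G)=1$ with compact center, one wants to write $R_\rho(\sigma)$ as an alternating product of Selberg zeta functions $Z_{\eta_j,\rho}$ attached to the $K_M$-representations $\Lambda^j(\fn^*)$, and then identify each $Z_{\eta_j,\rho}$ with a regularized graded determinant of a Casimir operator on $Z$. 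For that last step the relevant virtual bundle must live on $Z=\Gamma\backslash G/K$, hence must come from an element of $RO(K)$, not merely $RO(K_M)$. Moscovici--Stanton only had a bundle on $\Gamma\backslash G/K_M$. The paper's contribution is Theorem~\ref{thm:keyrep}: the $K_M$-action on $\fn$ lifts to $RO(K)$, proved case by case via the classification of real simple Lie algebras with $\delta(\fg)=1$ (namely $\mathfrak{sl}_3(\bR)$ and $\mathfrak{so}(p,q)$ with $pq>1$ odd). Your outline never touches this lifting problem, and Bismut's formula alone does not circumvent it: you still need an operator on $Z$ whose heat trace the orbital integrals compute.

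Second, and more seriously, your treatment of the implication $H^\cdot(Z,F)=0\Rightarrow C_\rho=1,\ r_\rho=0$ is a hand-wave. The constants $r_j$ in \eqref{eq:Cr} are orders of zero of the $Z_{\eta_j,\rho}$ at the relevant shifts, i.e.\ graded multiplicities $m_{\eta_j,\rho}(0)$ of the kernel of $C^{\fg,Z,\widehat{\eta}_j,\rho}$. There is no direct Hodge-theoretic reason these vanish when $H^\cdot(Z,F)=0$: the operators $C^{\fg,Z,\widehat{\eta}_j,\rho}$ are not the Hodge Laplacian. The paper establishes \eqref{eq:t02} in Section~\ref{sec:proofthm1} by decomposing $L^2(\Gamma\backslash G,\widehat{p}^*F)$ into irreducible unitary $G$-representations, expressing $r_j$ via $(\fm,K_M)$-cohomology of $\fn$-homology of the Harish-Chandra modules (using the Hecht--Schmid character formula), and then invoking the Vogan--Zuckerman classification together with Salamanca-Riba's theorem to conclude that acyclicity forces $n_\rho(\pi)=0$ for every $\pi$ with $\chi_\pi=0$. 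None of this is visible from a small-$\sigma$ expansion of heat traces; it is genuinely representation-theoretic input that your proposal omits entirely.
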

Let $\delta(G)$ \index{D@$\delta(G)$} be the nonnegative integer  
defined by the difference between the complex ranks of $G$ and $K$. 
Since $\dim Z$ is odd, $\delta(G)$ is odd.
If $\delta(G)\neq1$, Theorem \ref{thm:01} is originally due to 
Moscovici-Stanton \cite{MStorsion} and was recovered by Bismut \cite{B09}. 
Indeed, it was proved in \cite[Corollary 2.2, Remark 3.7]{MStorsion} 
or  \cite[Theorem 7.9.3]{B09} that $T(F) = 1$ and  $\chi_{\rm orb}\(\mathbb{S}^1\backslash B_{[\gamma]}\)=0$ for all $[\gamma]\in [\Gamma]- \{1\}$.

Remark that both of the above two proofs use the Selberg trace 
formula. However, in the evaluation of the geometric side of the 
Selberg trace formula and of orbital integrals, Moscovici-Stanton 
relied on Harish-Chandra's Plancherel theory, while Bismut  used his explicit formula  \cite[Theorem 6.1.1]{B09} obtained via  the hypoelliptic Laplacian.

Our proof of Theorem \ref{thm:01}  relies on Bismut's formula.

%
%
%

\subsection{Our results on $R_\rho(\sigma)$}
%
Assume that  $\delta(G)=1$. To show that $R_\rho(\sigma)$ extends as 
a meromorphic function on $\bC$ when $Z$ is hyperbolic, Fried  \cite{FriedRealtorsion} showed that $R_\rho(\sigma)$ is  an alternating product of certain Selberg zeta functions.
Moscovici-Stanton's idea was to introduce the more general Selberg zeta functions and to get a similar formula for $R_\rho(\sigma)$.

Let us recall some facts about reductive group $G$ with $\delta(G)=1$. In this case, there exists a unique (up to conjugation) standard parabolic subgroup $Q\subset G$ with Langlands decomposition $Q=M_QA_QN_Q$ such that $\dim  A_Q=1$. Let $\fm, \fb, \fn$ be the  Lie algebras of $M_Q, A_Q, N_Q$.
Let $\alpha\in \fb^*$ be  such that for $a\in \fb$, $\ad(a)$ acts on 
$\fn$ as a scalar $\<\alpha,a\>\in \bR$ (c.f. Proposition \ref{prop:nnam}).
Let $M$ be the connected component of identity of $M_Q$.  Then $M$ is 
a connected reductive group with  maximal compact subgroup $K_M=M\cap K$ and with Cartan decomposition $\fm=\fp_\fm\oplus \fk_\fm$. We have an identity of real $K_M$-representations
\begin{align}\label{eq:introdp}
  \fp\simeq\fp_\fm\oplus\fb \oplus \fn.
\end{align}

An observation  due to Moscovici-Stanton is that $\chi_{\rm orb}(\mathbb{S}^1\backslash B_{[\gamma]})\neq0$ only if $\gamma$ can be conjugated by an element of $G$ into $A_QK_M$. For $\sigma\in \bC$, we define the formal Selberg zeta function by
\begin{align}
Z_{j}(\sigma)=\exp\(-\sum_{[\gamma]\in 
[\Gamma]-\{1\}}\Tr[\rho(\gamma)]\frac{\chi_{\rm 
orb}(\mathbb{S}^1\backslash 
B_{[\gamma]})}{m_{[\gamma]}}\frac{\Tr^{\Lambda^j(\fn^*)}\[\Ad(k^{-1})\]}{\big|\det(1-\Ad(e^ak^{-1}))|_{\fn\oplus \theta\fn}\big|^{1/2}}e^{-\sigma l_{[\gamma]}}\),
\end{align}
where $a\in \fb,k\in K_M$ are such that $\gamma$ can be conjugated to 
$e^ak^{-1}$. We remark that $l_{[\gamma]}=|a|$.
To show the meromorphicity of $Z_{j}(\sigma)$, Moscovici-Stanton tried to identify $Z_{j}(\sigma)$ with the geometric side of the zeta regularized determinant of the resolvent of some elliptic operator acting on some vector bundle on $Z$.  However, the vector bundle used in \cite{MStorsion}, whose construction involves the adjoint representation of $K_M$ on $\Lambda^i(\fp_\fm^*)\otimes \Lambda^i(\fn^*)$, does not live on $Z$, but only on $\Gamma\backslash G/K_{M}$.

We complete this gap  by showing that such an
objet exists as a virtual vector bundle on $Z$ in the sense of $K$-theory. More precisely,
let $RO(K), RO(K_M)$ \index{R@$RO(K_M),RO(K)$} be the real 
representation rings  of $K$ and $K_M$. We can verify that the 
restriction $RO(K)\to RO(K_M)$ is injective. Note that  
$\fp_\fm,\fn\in RO(K_M)$. In Subsection \ref{sec:varsi}, using the classification theory of real simple Lie algebras, we show  $\fp_\fm,\fn$ are in the image of $RO(K)$.  For $0\l j \l \dim \fn$,  let $E_j=E^+_j-E^-_j\in RO(K)$ such that the following identity in  $RO(K_M)$ holds:
\begin{align}\label{eq:in22}
 \Big(\sum^{\dim \fp_\fm}_{i=0} (-1)^i\Lambda^i(\fp_\fm^*)\Big)\otimes \Lambda^j(\fn^*)=E_j|_{K_M}.
\end{align}

Let $\cE_j=G\times_K E_j$ be a $\mathbf{Z}_2$-graded  vector bundle 
on $X$. It descends to a $\mathbf{Z}_2$-graded  vector bundle $\cF_j$ 
on $Z$. Let $C_j$ be a  Casimir operator of $G$ action on 
$C^\infty(Z,\cF_j\otimes_\bR F)$. In Theorem \ref{thm:detfor}, we 
show that there are $\sigma_j\in \bR$ and  an odd polynomial $P_j$ 
such that if $\Re(\sigma)\gg 1$,  $Z_j(\sigma)$ is holomorphic and
\begin{align}\label{eq:detfoin}
 Z_{j}(\sigma)={\rm det}_{\rm gr}\(C_j+\sigma_j+\sigma^2\)\exp\big(r\vol(Z) P_j(\sigma)\big),
\end{align}
where $\det_{\rm gr}$ is the zeta regularized $\mathbf{Z}_2$-graded determinant.
In particular, $Z_{j}(\sigma)$ extends meromorphically to $\bC$. 

By a direct calculation of linear algebra, we have
\begin{align}\label{eq:RbyZin}
  R_\rho(\sigma)=\prod_{j=0}^{\dim \fn}Z_{j}\(\sigma+\big(j-\frac{\dim\fn}{2}\big)|\alpha|\)^{(-1)^{j-1}},
\end{align}
from which we get the meromorphic extension of $R_\rho(\sigma)$.
Note that the meromorphic function
\begin{align}
T(\sigma)=\prod_{i=1}^{\dim Z}\det\(\sigma+\Box^Z|_{\Omega^i(Z,F)}\)^{(-1)^ii}\index{T@$T(\sigma)$}
\end{align}
has a Laurent expansion near $\sigma=0$,
\begin{align}\label{eq:RbyZinsss}
T(\sigma)=T(F)^2\sigma^{\chi'(X,F)}+\cO(\sigma^{\chi'(X,F)+1}),
\end{align}
where $\chi'(X,F)$ is the derived Euler number (c.f. \eqref{eq:Eulderi}). 
Note also that the Hodge Laplacian $\Box^Z$ coincides with the Casimir operator acting on $\Omega^\cdot(Z,F)$.
The Laurent expansion \eqref{eq:t01} can be deduced form 
\eqref{eq:detfoin}-\eqref{eq:RbyZinsss}  and the identity in $RO(K)$,
\begin{align}
\sum_{i=1}^{\dim \fp}(-1)^{i-1}i\Lambda^i(\fp^*)=\sum^{\dim \fn}_{j=0} (-1)^jE_j.
\end{align}

\subsection{Proof of Equation \eqref{eq:t02}}
To understand how the acyclicity of $F$ is reflected in the function $R_\rho (\sigma)$, we need some deep results of representation theory. Let $\widehat{p}: \Gamma\backslash G \to Z$ be the natural projection.
The enveloping algebra of $U(\fg)$ acts on $C^\infty(\Gamma\backslash G, \widehat{p}^*F)$.
Let $\cZ(\fg)$ be the center of $U(\fg)$.
 Let $V^\infty\subset C^\infty(\Gamma\backslash G,\widehat{p}^*F)$ be the subspace of $C^\infty(\Gamma\backslash G,\widehat{p}^*F)$ on which the action of $\cZ(\fg)$ vanishes, and let $V$ be the closure of $V^\infty$ in $L^2(\Gamma\backslash G, \widehat{p}^*F)$.
Then $V$ is a unitary representation of $G$.
The compactness  of $\Gamma\backslash G$ implies that $V$ is
a finite sum of irreducible unitary representations of $G$.
By  standard arguments \cite[Ch VII, Theorem 3.2, Corollary 3.4]{BW}, the cohomology $H^\cdot(Z,F)$ is canonically isomorphic to  the $(\fg,K)$-cohomology
$H^\cdot(\fg,K;V)$ of $V$.

 Vogan-Zuckerman \cite{VoganZuckerman} and Vogan \cite{Vogan2} classified all irreducible unitary representations with nonzero  $(\fg,K)$-cohomology. On the other hand, Salamanca-Riba \cite{Salamanca} showed that any irreducible  unitary representation with vanishing $\cZ(\fg)$-action is in the class specified by  Vogan and Zuckerman, which means that it possesses nonzero $(\fg,K)$-cohomology.

By the above considerations, the acyclicity of $F$ is equivalent to $V=0$.  This is  essentially the algebraic ingredient in the proof of \eqref{eq:t02}. Indeed, in Corollary \ref{cor:forr1}, we give a formula for the constants $C_\rho$ and $r_\rho$, obtained by Hecht-Schmid formula \cite{HechtSchmid} with the help of the $\fn$-homology of $V$.

\subsection{The organization of the article}
This article  is organized as follows. In Sections \ref{sec:inv}, we 
recall the definitions of certain  characteristic forms and of the analytic torsion.


In Section \ref{Sec:Pred}, we introduce the reductive groups and the fundamental rank $\delta(G)$ of $G$.

In Section \ref{sec:sel},  we introduce the symmetric space. We recall basic principles for the Selberg trace formula, and we state formulas by Bismut \cite[Theorem 6.1.1]{B09} for semisimple orbital integrals. We recall the proof given by Bismut \cite[Theorem 7.9.1]{B09} of a vanishing result of the analytic torsion $T(F)$ in the case $\delta(G)\neq1$, which is originally due to Moscovici-Stanton \cite[Corollary 2.2]{MStorsion}.

In Section \ref{sec:dyn}, we introduce the dynamical zeta function 
$R_\rho(\sigma)$, and we state  Theorem \ref{thm:01} as Theorem 
\ref{thm:01z}. We prove Theorem \ref{thm:01} when $\delta(G)\neq 1$ or 
when $G$ has 
noncompact center. 

%

Sections \ref{sec:5}-\ref{sec:proofthm1} are devoted to establish 
Theorem \ref{thm:01}  when $G$ has compact center and when $\delta(G)=1$.  

In Section \ref{sec:5}, we introduce   geometric objects 
associated with such reductive groups $G$.

In Section \ref{Sec:qusi},  we introduce  Selberg zeta 
functions, and we prove that $R_{\rho}(\sigma)$ extends 
meromorphically, and we establish  Equation \eqref{eq:t01}. 
%
%

Finally, in Section
\ref{sec:proofthm1}, after recalling some constructions and results of representation theory,
we prove that \eqref{eq:t02} holds.

In the whole paper, we use the superconnection formalism of Quillen \cite{Quillensuper} and \cite[Section 1.3]{BGV}.
If $A$ is a $\bZ_2$-graded algebra, if $a,b\in A$,  the supercommutator $[a, b]$ is
given by
 \begin{align}
   [a,b]=ab-(-1)^{\deg a \deg b}ba.
 \end{align}
If $B$ is another $\bZ_2$-graded algebra, we denote by 
$A\widehat{\otimes} B$ the super tensor product algebra of $A$ and $B$.
If $E = E^+ \oplus E^-$ is a $\bZ_2$-graded vector space, the algebra $\End(E)$ is $\bZ_2$-graded. If $\tau = \pm 1$ on
$E^\pm$, if $a \in \End(E)$, the supertrace $\Trs[a]$ is defined by
\begin{align}
 \Trs[a]=\Tr[\tau a].
\end{align}
We make the convention that $\mathbf{N}=\{0,1, 2,\cdots\}$ and $\mathbf{N}^*=\{1,2,\cdots\}$.

\subsection{Acknowledgement}
I am deeply grateful to Prof. J.-M. Bismut for sharing his insights on this problem. Without his help,
this paper would never have been written. I am indebted to Prof. W. M\"uller for his detailed explanation
on the paper \cite{MStorsion}. I would also like to thank Prof. L. Clozel and Prof. M. Vergne for very useful discussions.
 I appreciate the encouragement and constant support of Prof. X. Ma. I am also indebted to  referees  who read the manuscript very carefully and offered detailed suggestions for its improvement.

This article was prepared while I was visiting Max-Planck-Institut f\"ur Mathematik at Bonn and the
Humboldt-Universit\"at zu Berlin. I would like to thank both institutions for their hospitality.

This research has been financially supported by Collaborative Research Centre ``Space-Time-Matte'' (SFB 647), funded by the German Research Foundation (DFG).

\settocdepth{subsection}
\section{Characteristic forms and analytic torsion}\label{sec:inv}
The purpose of this section is to recall some basic constructions and properties  of characteristic forms and the analytic torsion.  

This section is organized as follows. 
In Subsection \ref{sec:Euler}, we recall the construction of the 
Euler  form, the $\widehat{A}$-form and the Chern character form.

In Subsection \ref{sec:regdet}, we introduce the regularized determinant.

Finally, in Subsection \ref{sec:anator}, we recall the definition of the analytic torsion of flat vector bundles.

\subsection{Characteristic forms}\label{sec:Euler}
If $V$ is a real or complex vector space of dimension $n$, we denote 
by $V^*$ the dual space and by 
$\Lambda^\cdot(V)=\sum_{i=0}^n\Lambda^i(V)$ its exterior algebra.
Let $Z$ be a  smooth manifold. If $V$ is a vector bundle on $Z$, we 
denote by $\Omega^\cdot(Z,V)$ the space of smooth differential forms 
with values in $V$. When $V=\bR$, we write $\Omega^\cdot(Z)$ instead.



%

Let $E$ be a real Euclidean vector bundle of rank $m$ with a metric 
connection $\nabla^E$. Let $R^E=\nabla^{E,2} $ be the curvature of 
$\nabla^{E}$. It is a $2$-form with values in antisymmetric endomorphisms of $E$.

If $A$ is an antisymmetric matrix, denote by $\mathrm{Pf}[A]$ the 
Pfaffian \cite[eq. (3.3)]{BZ92} of $A$. Then $\mathrm{Pf}[A]$ is a polynomial function of $A$, which is a square root of $\det[A]$.
Let $o(E)$ be the orientation line of $E$.  The Euler  form $e\(E,\nabla^{E}\)$ of $\(E,\nabla^E\)$ is given by
\begin{align}\label{eq:eE}
  e\(E,\nabla^{E}\)=\mathrm{Pf}\[\frac{R^{E}}{2\pi}\]\in \Omega^{m}\big(Z,o(E)\big). \index{E@$e(E,\nabla^E)$}
\end{align}
 If $m$ is odd, then $e(E,\nabla^{E})=0$.

For $x\in \bC$, set \index{A@$\widehat{A}$}
\begin{align}\label{eq:defAhat}
  \widehat{A}(x)=\frac{x/2}{\sinh(x/2)}.
\end{align}
The form $\widehat{A}\(E,\nabla^E\)$ of $\(E,\nabla^E\)$ is given by\index{A@$\widehat{A}(E,\nabla^E)$}
\begin{align}\label{eq:Ahatg}
  \widehat{A}\(E,\nabla^E\)=\[\det\(\widehat{A}\(-\frac{R^E}{2i\pi}\)\)\]^{1/2}\in \Omega^\cdot(Z).\end{align}

If $E'$ is a complex Hermitian vector bundle equipped with a metric connection 
$\nabla^{E'}$  with curvature $R^{E'}$, the Chern character form $\mathrm{ch}\(E',\nabla^{E'}\)$ of $(E',\nabla^{E'})$ is given by
\begin{align}\label{eq:chg}
  \mathrm{ch}\(E',\nabla^{E'}\)=\Tr\[\exp\(-\frac{R^{E'}}{2i\pi}\)\]\in \Omega^\cdot(Z).\index{C@$\mathrm{ch}(E',\nabla^{E'})$}
\end{align}

The differential forms $e\(E,\nabla^E\), \widehat{A}\(E,\nabla^E\)$ 
and $\mathrm{ch}\(E',\nabla^{E'}\)$ are closed. They are the 
Chern-Weil representatives  of the Euler class of $E$, the $\widehat{A}$-genus of $E$ and the Chern character of $E'$.

\subsection{Regularized determinant}\label{sec:regdet}
Let $(Z,g^{TZ})$ be a smooth  closed Riemannian manifold of dimension $m$. Let $(E,g^{E})$ be
a Hermitian vector bundle on $Z$. The metrics $g^{TZ}$, $g^E$ induce an $L^2$-metric  on $C^\infty(Z,E)$.

Let $P$ be a second order elliptic differential operator acting on $C^\infty(Z,E)$.  Suppose that $P$ is  formally self-adjoint and nonnegative.  Let $P^{-1}$ be the inverse of $P$ acting on the orthogonal space to $ \ker(P)$. For $\Re(s)>m/2$, set
\begin{align}\label{eq:zetaP}
\theta_P(s)=-\Tr\[\(P^{-1}\)^s\].\index{T@$\theta_P(s)$}
\end{align}
By \cite{Seeley66} or \cite[Proposition 9.35]{BGV}, $\theta(s)$ has a meromorphic extension to $s\in \bC$ which is holomorphic at $s=0$.  The regularized determinant  of $P$ is defined as
\begin{align}\label{eq:Mch}
\det(P)=\exp\big(\theta'_P(0)\big).\index{D@$\det(P)$}
\end{align}

Assume now that $P$ is  formally self-adjoint and bounded from below. Denote by $\Sp(P)$ the spectrum  of $P$.
For $\lambda\in \Sp(P)$, set
\begin{align}
m_P(\lambda)=\dim_{\bC} \ker(P-\lambda)\index{M@$m_P(\lambda)$}
\end{align}
its multiplicity.  If  $\sigma\in \bR$ is such that $P + \sigma> 0$, 
then $\det(P + \sigma)$\index{D@$\det(P+\sigma)$} is defined by 
\eqref{eq:Mch}. Voros \cite{Voros} has shown that the function $\sigma\to \det(P + \sigma)$, defined for $\sigma\gg1$, extends holomorphically  to $\bC$ with zeros at $\sigma=-\lambda$ of the order $m_{P}(\lambda)$, where $\lambda\in \Sp(P)$.

\subsection{Analytic torsion}\label{sec:anator}
Let $Z$ \index{Z@$Z$} be a smooth connected closed manifold of 
dimension $m$ \index{M@$m$} with fundamental group $\Gamma$.\index{G@$\Gamma$}
Let $F$ \index{F@$F$} be a complex flat vector bundle on $Z$ of rank $r$. \index{R@$r$} Equivalently, $F$ can be obtained via a complex representation $\rho:\Gamma\to \GL_r(\bC)$.\index{R@$\rho$}

Let $H^\cdot(Z,F)=\oplus_{i=0}^m H^i(Z,F)$ \index{H@$H^\cdot(Z,F)$} be the cohomology of the sheaf of locally flat
sections of $F$.
We define the Euler number and the derived Euler number by
\begin{align}\label{eq:Eulderi}
&\chi(Z,F)=\sum_{i=0}^m(-1)^i\dim_{\bC}H^i(Z,F),&  \chi'(Z,F)=\sum_{i=1}^m(-1)^ii\dim_\bC H^i(Z,F).\index{C@$\chi(Z,F),\chi'(Z,F)$}
\end{align}

Let $\(\Omega^\cdot(Z,F),d^Z\)$ \index{O@$\Omega^\cdot(Z,F)$} \index{D@$d^Z$} be the de Rham complex of smooth sections of $\Lambda^\cdot(T^*Z)\otimes_\bR F$ on $Z$. We have a canonical isomorphism of vector spaces
\begin{align}
  H^\cdot\(\Omega^\cdot(Z,F),d^Z\)\simeq H^\cdot(Z,F).
\end{align}
In the sequel, we will also consider the trivial line bundle $\bR$. We denote simply by $H^\cdot(Z)$ and
$\chi(Z)$ \index{C@$\chi(Z)$} the corresponding objects. Note that, in this case, the complex dimension  in  \eqref{eq:Eulderi} should be replaced by the real dimension.

Let $g^{TZ}$ \index{G@$g^{TZ}$} be a Riemmanian  metric  on $TZ$, and let $g^F$ \index{G@$g^{F}$} be a Hermitian metric on $F$. They induce an $L^2$-metric $\<\, ,\>_{\Omega^\cdot(Z,F)}$ on  $\Omega^\cdot(Z,F)$. Let $d^{Z,*}$ be the formal adjoint of $d^Z$ with respect to $\<\, ,\>_{\Omega^\cdot(Z,F)}$. Put
\begin{align}\label{eq:DiracHodge}
  &D^Z=d^Z+d^{Z,*},&\Box^Z=D^{Z,2}=\[d^Z,d^{Z,*}\].\index{D@$d^{Z,*}$}\index{D@$D^Z$}  \end{align}
Then,  $\Box^Z$ \index{B@$\Box^Z$}
 is a  formally self-adjoint nonnegative second order elliptic operator acting on  $\Omega^\cdot(Z,F)$. 
By  Hodge theory, we have a canonical isomorphism of vector spaces
\begin{align}\label{eq:Hodge}
  \ker \Box^Z\simeq H^\cdot(Z,F).
\end{align}
%

\begin{defin}
  The analytic torsion of $F$ is a positive real number defined by
\begin{align}
   T\(F,g^{TZ},g^F\)=\prod_{i=1}^{m}\det\(\Box^Z|_{\Omega^i(Z,F)}\)^{(-1)^ii/2}.
\end{align}
\end{defin}

Recall that the flat vector bundle $F$ carries a flat metric $g^{F}$ 
if and only if the holonomy representation $\rho$ factors through 
$\mathrm{U}(r)$. In this case, $F$ is said to be unitarily flat.  If $Z$ is an even dimensional orientable manifold and if $F$ is unitarily flat with a flat metric $g^{F}$, by Poincar\'e duality,  $T(F,g^{TZ},g^F)=1$. If $\dim Z$ is odd and if $H^\cdot(Z,F)=0$, by \cite[Theorem 4.7]{BZ92}, then $T(F,g^{TZ},g^F)$ does not depend on  $g^{TZ}$ or $g^{F}$.
 In the sequel,  we write instead
\begin{align}\label{eq:tauZF}
  T(F)=T\(F,g^{TZ},g^F\).\index{T@$T(F)$}
\end{align}

By  Subsection \ref{sec:regdet},
\begin{align}\label{eq:Tsigma}
T(\sigma)=\prod_{i=1}^{\dim Z}\det\(\sigma+\Box^Z|_{\Omega^i(Z,F)}\)^{(-1)^ii}\index{T@$T(\sigma)$}
\end{align}
is meromorphic on $\mathbf{C}$.  When $\sigma\to 0$, we have
\begin{align}\label{eq:tors}
T(\sigma)=T(F)^2\sigma^{\chi'(Z,F)}+\cO(\sigma^{\chi'(Z,F)+1}).
\end{align}

\section{Preliminaries on reductive groups}\label{Sec:Pred}
The purpose of this section is to recall some basic facts about reductive groups. 

This section is organized as follows. In Subsection  \ref{sec:red}, we introduce the  reductive group $G$.  

In Subsection \ref{sec:semi}, we introduce the semisimple elements of $G$, and we recall some related constructions.

In Subsection \ref{sec:cart}, we  recall  some properties of Cartan subgroups of $G$. We introduce a nonnegative integer $\delta(G)$, which has paramount importance in the whole article. We  also recall Weyl's integral formula on reductive groups. 

Finally, in Subsection \ref{sec:regular}, we introduce the regular elements of $G$. 


\subsection{The reductive group}\label{sec:red}
Let $G$\index{G@$G$} be a linear connected real reductive group \cite[p. 3]{Knappsemi}, that means $G$ is a closed connected group of real matrices that is stable under transpose.  Let $\theta\in \mathrm{Aut}(G)$ \index{T@$\theta$} be the Cartan involution.  Let $K$ \index{K@$K$} be the maximal compact subgroup of $G$ of the points of $G$
that are fixed by $\theta$.

Let $\fg$ $\index{G@$\fg$}$ be the Lie algebra of $G$, and let 
$\fk\subset \fg$ \index{K@$\fk$} be the Lie algebra of $K$. The Cartan involution $\theta$ acts naturally as a Lie algebra automorphism of $\fg$. Then $\fk$ is the eigenspace of $\theta$ associated with the eigenvalue $1$. Let $\fp$ \index{P@$\fp$} be the eigenspace with the eigenvalue $-1$, so that
\begin{align}\label{eq:cartan1}
  \fg=\fp\oplus\fk.
\end{align}
Then we have
\begin{align}\label{eq:kpkp}
  &[\fk,\fk]\subset \fk, &[\fp,\fp]\subset \fk,&& [\fk,\fp]\subset \fp.
\end{align}
Put
\begin{align}\label{eq:mpnk}
  &m=\dim\fp,&n=\dim\fk.\index{M@$m$}\index{N@$n$}
\end{align}
By \cite[Proposition 1.2]{Knappsemi}, we have the diffeomorphism
\begin{align}\label{eq:cartan2}
 (Y,k)\in \fp\times K\to  e^Y k\in G.
\end{align}

Let $B$ \index{B@$B$} be a real-valued  nondegenerate bilinear symmetric form on $\fg$ which is invariant under the adjoint action $\Ad$ of $G$ on $\fg$, and also under $\theta$. Then \eqref{eq:cartan1} is an orthogonal splitting of $\fg$ with respect to $B$. We assume $B$ to be positive on $\fp$, and negative on $\fk$. The form $\<\cdot,,\cdot\>=-B(\cdot,\theta\cdot)$ defines an $\Ad(K)$-invariant scalar product on $\fg$ such that the splitting \eqref{eq:cartan1} is still orthogonal. We denote by $|\cdot|$ \index{1@$\lvert\cdot\rvert$}the corresponding norm.

Let $Z_G\subset G$\index{Z@$Z_G$} be the center of $G$ with Lie algebra $\fz_\fg\subset \fg$.\index{Z@$\fz_\fg$}
Set
\begin{align}\label{eq:zzpzk}
&  \fz_{\fp}=\fz_{\fg}\cap \fp,&\fz_{\fk}=\fz_{\fg}\cap \fk.\index{Z@$\fz_{\fp},\fz_{\fk}$}
\end{align}
By \cite[Corollary 1.3]{Knappsemi}, $Z_G$ is reductive. As in \eqref{eq:cartan1} and \eqref{eq:cartan2}, we have the Cartan decomposition
\begin{align}\label{eq:ZG}
&\fz_\fg=\fz_{\fp}\oplus \fz_\fk,& Z_G=\exp (\fz_\fp) (Z_G\cap K).
\end{align}

Let $\fg_\bC=\fg\otimes_\bR\bC$ \index{G@$\fg_\bC$}be the complexification of $\fg$ and let $\fu=\sqrt{-1}\fp\oplus \fk$\index{U@$\fu$}
be the compact form of $\fg$. Let $G_\bC$ \index{G@$G_\bC$} and 
$U$\index{U@$U$} be the connected group of complex matrices 
associated with the Lie algebras $\fg_\bC$ and $\fu$.
By \cite[Propositions 5.3 and 5.6]{Knappsemi}, if $G$ has compact center, i.e., its center $Z_G$ is compact, then $G_\bC$ is a linear connected complex reductive group with maximal compact subgroup $U$.


Let $U(\fg)$ \index{U@$U(\fg)$} be the enveloping algebra of $\fg$,  and let $\mathcal{Z}(\fg)\subset U(\fg)$ \index{Z@$\mathcal{Z}(\fg)$} be the center of $U(\fg)$.
Let $C^\fg\in U(\fg)$ be the Casimir element. If $e_1,\cdots,e_m$ is an orthonormal basis of $\fp$, and if $e_{m+1},\cdots,e_{m+n}$ is an  orthonormal basis of $\fk$, then
\begin{align}\label{eq:Cg}
  C^\fg=-\sum_{i=1}^{m}e^2_i+\sum_{i=m+1}^{n+m}e_i^{2}.\index{C@$C^{\fg}$}
\end{align}
Classically, $C^\fg\in \mathcal{Z}(\fg)$.

We define $C^{\fk}$\index{C@$C^{\fk}$} similarly. Let $\tau$ \index{T@$\tau$}be a finite dimensional representation of $K$ on $V$. We denote by $C^{\fk,V}$ or $C^{\fk,\tau}\in \End(V)$ the corresponding Casimir operator acting on $V$, so that
\begin{align}\label{eq:ckkckp}
C^{\fk,V}=C^{\fk,\tau}=\sum_{i=m+1}^{m+n}\tau(e_i)^2.\index{C@$C^{\fk,V},C^{\fk,\tau}$}
\end{align}




\subsection{Semisimple elements}\label{sec:semi}
If $\gamma\in G$, we denote by $Z(\gamma)\subset G$ \index{Z@$Z(\gamma)$} the centralizer of $\gamma$ in $G$, and by $\fz(\gamma)\subset \fg$ \index{Z@$\fz(\gamma)$}its Lie algebra. If $a\in \fg$, let $Z(a)\subset G$  \index{Z@$Z(a)$} be the stabilizer of $a$ in $G$, and let $\fz(a)\subset \fg$ \index{Z@$\fz(a)$} be its Lie algebra. 

An element $\gamma \in G$ is said to be semisimple if $\gamma$ can be conjugated to $e^ak^{-1}$ such that
\begin{align}\label{eq:asr}
   &a\in \fp, &k\in K, &&\Ad(k)a=a.
\end{align}
%
Let $\gamma=e^ak^{-1}$ such that \eqref{eq:asr} holds.  By \cite[eq. (3.3.4), (3.3.6)]{B09}, we have
\begin{align}\label{eq:Zr}
 & Z(\gamma)=Z(a)\cap Z(k),&\fz(\gamma)=\fz(a)\cap \fz(k).
\end{align}
Set
\begin{align}\label{eq:prkr}
 & \fp(\gamma)=\fz(\gamma)\cap\fp,& \fk(\gamma)=\fz(\gamma)\cap \fk.\index{P@$\fp(\gamma)$}\index{K@$\fk(\gamma)$}
\end{align}
From \eqref{eq:Zr} and \eqref{eq:prkr}, we get
\begin{align}\label{eq:fzr}
  \fz(\gamma)=\fp(\gamma)\oplus\fk(\gamma).
\end{align}


By \cite[Proposition 7.25]{KnappLie}, $Z(\gamma)$ is a reductive  subgroup of $G$ with maximal compact subgroup $K(\gamma)=Z(\gamma)\cap K$, and with Cartan decomposition  \eqref{eq:fzr}.  Let $Z^0(\gamma)$ be the connected component of the identity in $Z(\gamma)$. Then $Z^0(\gamma)$ \index{Z@$Z^0(\gamma)$} is a reductive subgroup of $G$, with maximal compact subgroup $Z^0(\gamma)\cap K$. Also, $ Z^0(\gamma)\cap K$ coincides with $K^0(\gamma)$,  the connected component of the identity in $K(\gamma)$.

An element $\gamma\in G$ is said to be elliptic if $\gamma$ is conjugated to an element of $K$. Let $\gamma\in G$ be semisimple and nonelliptic. Up to conjugation, we can assume $\gamma=e^ak^{-1}$ such that \eqref{eq:asr} holds and that $a\neq0$.
By \eqref{eq:Zr}, $a\in \fp(\gamma)$.
Let $\fz^{a,\bot}(\gamma), \fp^{a,\bot}(\gamma)$ 
\index{Z@$\fz^{a,\bot}(\gamma)$}\index{P@$\fp^{a,\bot}(\gamma)$} be 
respectively the orthogonal spaces to $a$ in $\fz(\gamma), 
\fp(\gamma)$, so that
\begin{align}\label{eq:zabot}
  \fz^{a,\bot}(\gamma)=\fp^{a,\bot}(\gamma)\oplus \fk(\gamma).
\end{align}
Moreover, $ \fz^{a,\bot}(\gamma)$ is a Lie algebra. Let $Z^{a,\bot,0}(\gamma)$ be the connected subgroup of $Z^0(\gamma)$ that is associated with the Lie algebra $\fz^{a,\bot}(\gamma)$.
By \cite[eq. (3.3.11)]{B09}, $Z^{a,\bot,0}(\gamma)$ is reductive with maximal compact subgroup $K^0(\gamma)$ with Cartan decomposition \eqref{eq:zabot}, and
\begin{align}\label{eq:Z0a}
  Z^0(\gamma)=\mathbf{R}\times Z^{a,\bot,0}(\gamma),
\end{align}
so that $e^{ta}$ maps into $t|a|$.


\subsection{Cartan subgroups}\label{sec:cart}
A  Cartan subalgebra of $\fg$ is a maximal  abelian subalgebra of $\fg$. A Cartan subgroup of $G$ is  the centralizer of a Cartan subalgebra.

By \cite[Theorem 5.22]{Knappsemi}, there is only a finite number of 
nonconjugate (via $K$) $\theta$-stable Cartan subalgebras $\fh_1$, $\cdots$, $\fh_{l_0}$.\index{H@$\fh_i,H_i$} \index{L@$l_0$} Let $H_1$, $\cdots$, $H_{l_0}$ be the corresponding Cartan subgroup. Clearly, the Lie algebra of $H_i$ is $\fh_i$.  Set
\begin{align}
&  \fh_{i\fp}=\fh_i\cap \fp,&\fh_{i\fk}=\fh_i\cap \fk.\index{H@$ \fh_{i\fp}, \fh_{i\fk}$}
\end{align}
We call $\dim \fh_{i\fp}$ the noncompact dimension of $\fh_i$.
By \cite[Theorem 5.22 (c)]{Knappsemi} and \cite[Proposition 7.25]{KnappLie}, $H_i$ is an abelian  reductive group with maximal compact subgroup $ H_i\cap K$, and with  Cartan decomposition
\begin{align}\label{eq:Hcartan}
  &\fh_i=\fh_{i\fp}\oplus \fh_{i\fk}, &H_i=\exp(\fh_{i\fp})( H_i\cap K).
\end{align}
 Note that in general, $H_i$ is not necessarily  connected.

Let $ W(H_i,G)$ be the  Weyl group. If $N_K(\fh_i)\subset K$ and $Z_K(\fh_i)\subset K$ are   the normalizer and centralizer of $\fh_i$ in $K$, then
\begin{align}\label{eq:Weylgroup}
  W(H_i,G)=N_K(\fh_i)/Z_K(\fh_i).\index{W@$W(H_i,G)$}
\end{align}

In the whole paper, we fix a maximal torus $T$ \index{T@$T$} of $K$. Let $\ft\subset \fk$ be the Lie algebra of $T$. Set
\begin{align}\label{eq:defb}
  \fb=\{Y\in \fp:[Y, \ft]=0\}.\index{B@$\fb$}
\end{align}
By \eqref{eq:zzpzk} and \eqref{eq:defb}, we have 
\begin{align}\label{eq:defbzp11}
\fz_{\fp}\subset\fb.
\end{align}

Put
\begin{align}\label{eq:carhH}
  \fh=\fb\oplus \ft.\index{H@$\fh$}
\end{align}
By \cite[Theorem 5.22 (b)]{Knappsemi}, $\fh$ is the $\theta$-stable Cartan subalgebra of $\fg$ with  minimal noncompact dimension.
Also, every  $\theta$-stable Cartan subalgebra  with minimal 
noncompact dimension is conjugated  to $\fh$ by an element of $K$. Moreover, the corresponding Cartan subgroup $H\subset G$ of $G$ is connected, so that
\begin{align}\label{eq:H=BT}
  H=\exp(\fb)T.
\end{align}
We may  assume that $\fh_1=\fh$ and $H_1=H$.

Note that the complexification $\fh_{i\bC}=\fh_i\otimes_\bR\bC$  of $\fh_i$ is a Cartan subalgebra of $\fg_\bC$.
All the $\fh_{i\bC}$ are conjugated by  inner automorphisms of $\fg_\bC$. Their common complex dimension $\dim_\bC \fh_{i\bC}$ is called the complex rank $\rk_\bC(G)$ \index{R@$\mathrm{rk}_\bC$}of $G$.

\begin{defin}\label{def:dg}
Put\index{D@$\delta(G)$}
\begin{align}\label{eq:dg}
  \delta(G)=\rk_\bC(G)-\rk_\bC(K)\in \mathbf{N}.
\end{align}
\end{defin}
By \eqref{eq:defb} and \eqref{eq:dg}, we have
\begin{align}\label{eq:dg=db}
  \delta(G)=\dim \fb.
\end{align}
Note that $m-\delta(G)$ is even.
We will see that $\delta(G)$ plays an important role in our article.

\begin{re}
If $\fg$ is a real reductive Lie algebra, then $\delta(\fg)\in \mathbf{N}$\index{D@$\delta(\fg)$} can be defined in the same way as in \eqref{eq:dg=db}. Since $\fg$ is reductive, by \cite[Corolary 1.56]{KnappLie}, we have
\begin{align}\label{eq:g=zgg}
  \fg=\fz_\fg\oplus[\fg,\fg],
\end{align}
where $[\fg,\fg]$ is a semisimple Lie algebra. By \eqref{eq:ZG} and \eqref{eq:g=zgg}, we have
\begin{align}\label{eq:gzgg}
 \delta(\fg)=\dim\fz_\fp+\delta([\fg,\fg]).
\end{align}
\end{re}

%

\begin{prop}\label{prop:dgdz}
   The element $\gamma\in G$ is semisimple if and only if $\gamma$ can be conjugated into $ \cup_{i=1}^{l_0}H_i$. In this case,
\begin{align}\label{eq:dgdz}
  \delta(G)\l \delta\big(Z^0(\gamma)\big).
\end{align}
The two sides of \eqref{eq:dgdz} are equal  if and only if $\gamma$ can be conjugated into $H$.
\end{prop}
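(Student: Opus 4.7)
The plan is to reduce the proposition to the structural fact that, for semisimple $\gamma$, the connected centralizer $Z^0(\gamma)$ is reductive of the same complex rank as $G$, so that every $\theta$-stable Cartan subalgebra of $\fz(\gamma)$ is automatically a $\theta$-stable Cartan subalgebra of $\fg$. Granting this rank equality, the rest of the proposition becomes a bookkeeping exercise with $\theta$-stable Cartan subalgebras and the list $\fh_1,\dots,\fh_{l_0}$.

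First, for the characterization of semisimplicity: if $\gamma\in H_i$, the decomposition $H_i=\exp(\fh_{i\fp})(H_i\cap K)$ from \eqref{eq:Hcartan} writes $\gamma=e^ak^{-1}$ with $a\in\fh_{i\fp}\subset\fp$ and $k^{-1}\in H_i\cap K$, and the abelianness of $\fh_i$ gives $\Ad(k)a=a$, so $\gamma$ is semisimple. Conversely, assume $\gamma=e^ak^{-1}$ is semisimple as in \eqref{eq:asr}. By \cite[Proposition 7.25]{KnappLie}, $Z^0(\gamma)$ is reductive, and $\rk_\bC Z^0(\gamma)=\rk_\bC G$ since the commuting Jordan factors $e^a$ and $k^{-1}$ make $\gamma$ semisimple in $G_\bC$, hence contained in some maximal torus of $G_\bC$. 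Pick any $\theta$-stable Cartan subalgebra $\fh'$ of $\fz(\gamma)$; by the rank equality, $\fh'$ is also a $\theta$-stable Cartan subalgebra of $\fg$. The inclusion $\fh'\subset\fz(\gamma)$ gives $\gamma\in Z_G(\fh')$, and \cite[Theorem 5.22]{Knappsemi} conjugates $\fh'$ onto one of the $\fh_i$ by an element of $K$, carrying $\gamma$ into the corresponding $H_i$.

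Next, for the inequality $\delta(G)\leqslant\delta(Z^0(\gamma))$, choose $\fh'$ as above but with $\dim\fh'_\fp=\delta(Z^0(\gamma))$, the minimum of noncompact dimensions over $\theta$-stable Cartan subalgebras of $\fz(\gamma)$. Viewed as a $\theta$-stable Cartan subalgebra of $\fg$, $\fh'$ has the same noncompact dimension, which is at least $\delta(G)$ by the very definition of $\delta(G)$.

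Finally, for the equality clause: if $\delta(G)=\delta(Z^0(\gamma))$, the Cartan $\fh'$ produced above achieves the minimal noncompact dimension in $\fg$, hence by the $K$-conjugacy uniqueness stated right after \eqref{eq:carhH} it is $K$-conjugate to $\fh$; transporting by this conjugation sends $\gamma$ into $Z_G(\fh)=H$. Conversely, if $\gamma$ can be conjugated into $H$, one may assume $\gamma\in H$, so $\fh\subset\fz(\gamma)$ by abelianness of $H$; since $\dim_\bC\fh_\bC=\rk_\bC G=\rk_\bC Z^0(\gamma)$, $\fh$ is a $\theta$-stable Cartan subalgebra of $\fz(\gamma)$ with noncompact dimension $\delta(G)$, forcing $\delta(Z^0(\gamma))\leqslant\delta(G)$; combined with the already established inequality, equality holds. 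The one nontrivial input is the rank equality $\rk_\bC Z^0(\gamma)=\rk_\bC G$, which is the main obstacle to verify cleanly from the conventions of \cite{Knappsemi,KnappLie}; everything else is linear algebra once this is in hand.
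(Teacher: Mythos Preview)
Your proof is correct and follows essentially the same architecture as the paper's: reduce everything to the rank equality $\rk_\bC G=\rk_\bC Z^0(\gamma)$, then use a $\theta$-stable Cartan subalgebra of $\fz(\gamma)$ of minimal noncompact dimension as the bridge between $\delta(G)$ and $\delta(Z^0(\gamma))$.

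The one place you diverge is precisely the point you flag as ``the main obstacle'': you argue the rank equality by passing to $G_\bC$ and invoking that a semisimple element lies in a maximal torus. The paper instead proves it in two elementary real steps: first, any $\theta$-stable Cartan subalgebra of $\fg$ containing $a$ lies in $\fz(a)$, giving $\rk_\bC G=\rk_\bC Z^0(a)$; second, choosing a maximal torus $T$ of $K$ containing $k$, the Cartan $\fh=\fb\oplus\ft$ of \eqref{eq:carhH} lies in $\fz(k)$, giving $\rk_\bC G=\rk_\bC Z^0(k)$. Applying the second step inside $Z^0(a)$ (with $\gamma=e^ak^{-1}$ and $Z(\gamma)=Z(a)\cap Z(k)$) then yields $\rk_\bC Z^0(a)=\rk_\bC Z^0(\gamma)$. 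This two-step argument stays entirely within the real reductive framework of \cite{Knappsemi} and avoids any appeal to $G_\bC$, which is cleaner given the paper's conventions.
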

\begin{proof}If $\gamma\in H_i$, by the Cartan decomposition \eqref{eq:Hcartan}, there exist
$a\in \fh_{i\fp}$ and $k\in K\cap H_i$ such that  $\gamma=e^ak^{-1}$. 
Since $H_{i}$ is the centralizer of $\fh_{i}$, we have  
$\Ad(\gamma)a=a$. Therefore, $\Ad(k)a=a$, so that  $\gamma$ is semisimple.

Assume that $\gamma\in G$ is semisimple and such that \eqref{eq:asr} holds. We claim that
\begin{align}\label{eq:rk1}
 \rk_\bC(G)=  \rk_{\bC}\big(Z^0(\gamma)\big).
\end{align}
Indeed, let $\fh'\subset \fg$ be a $\theta$-invariant Cartan subalgebra of $\fg$ containing $a$. Then, $\fh'\subset \fz(a)$. It implies
\begin{align}\label{eq:GZ0k}
  \rk_\bC(G)=\rk_\bC\big(Z^0(a)\big).
\end{align}
By choosing a maximal torus $T$ containing $k$, by \eqref{eq:carhH}, we have $\fh\subset \fz(k)$. Then
\begin{align}\label{eq:GZ0K}
  \rk_\bC(G)=\rk_\bC\big(Z^0(k)\big).
\end{align}
If we replace $G$ by $Z^0(a)$ in \eqref{eq:GZ0K}, by \eqref{eq:Zr}, we get
\begin{align}\label{eq:ZZ0k}
  \rk_\bC(Z^0(a))=\rk_\bC\big(Z^0(\gamma)\big).
\end{align}
By \eqref{eq:GZ0k} and \eqref{eq:ZZ0k}, we get \eqref{eq:rk1}.

%
%

Let $\fh(\gamma)\subset \fz(\gamma)$ \index{H@$\fh(\gamma)$} be the 
$\theta$-invariant Cartan subalgebra defined as in \eqref{eq:carhH} when $G$ is replaced by $Z^0(\gamma)$. By \eqref{eq:rk1}, $\fh(\gamma)$ is also a Cartan subalgebra of $\fg$. Moreover, $\gamma$ is an element of the Cartan subgroup of $G$ associated to $\fh(\gamma)$. In particular, $\gamma$ can be conjugated into some $H_i$.

By the minimality of noncompact dimension of $\fh$, we have
\begin{align}\label{eq:bsbr}
 \delta(G)= \dim \fh\cap \fp \l \dim \fh(\gamma)\cap \fp=\delta\big(Z^0(\gamma)\big),
\end{align}
which completes  the proof of \eqref{eq:dgdz}.

It is obvious  that if $\gamma$ can be conjugated into $H$, the 
equality in \eqref{eq:bsbr} holds. If the equality holds in \eqref{eq:bsbr}, by the uniqueness  of the Cartan subalgebra with minimal noncompact dimension, there is $k'\in K$ such that
\begin{align}
  \Ad(k')\fh(\gamma)=\fh,
\end{align}
which  implies that $k'\gamma k^{\prime,-1}\in H$. The proof of our proposition  is completed.
\end{proof}


Now we recall the Weyl integral formula on $G$, which will be used in Section \ref{sec:proofthm1}.
Let $dv_{H_i}$ and $dv_{H_i\backslash G}$ \index{D@$dv_{H_i},dv_{H_i\backslash G}$} be respectively the Riemannian volumes on $H_i$ and $H_i\backslash G$ induced by $-B(\cdot,\theta\cdot)$. By \cite[Theorem 8.64]{KnappLie},
  for a nonnegative measurable function $f$ on $G$, we have
\begin{multline}\label{eq:weylnoncom}
  \int_{g\in G}f(g)dv_G=\sum_{i=1}^{l_0}\frac{1}{|W(H_i,G)|}\\
\int_{\gamma\in H_i}\(\int_{g\in H_i\backslash G}f(g^{-1}\gamma g)dv_{ H_i\backslash G} \)\left|\mathrm{det}\big(1-\Ad(\gamma)\big)|_{\fg/\fh_i}\right|dv_{H_i}.
\end{multline}
\subsection{Regular elements}\label{sec:regular}
For  $0\l j\l m+n-\rk_\bC(G)$, let  $D_j$ be the analytic function on $G$ such that for $t\in \bR$ and $\gamma\in G$, we have
\begin{align}\label{eq:detr}
  \mathrm{det}\big(t+1-\Ad(\gamma)\big)|_{\fg}=t^{\rk_\bC(G)}\bigg(\sum^{m+n-\rk_\bC(G)}_{j=0} D_j(\gamma)t^j\bigg).
\end{align}
If $\gamma\in H_i$, then
\begin{align}
  D_{0}(\gamma)=\mathrm{det}\big(1-\Ad(\gamma)\big)|_{\fg/\fh_i}.
\end{align}

We call $\gamma\in G$ regular if $D_{0}(\gamma)\neq0$. Let $G'\subset G$ \index{G@$G'$} be the subset of regular elements of $G$. Then $G'$ is open in $G$, such that $G- G'$ has zero measure with respect to the Riemannian volume $dv_G$ on $G$ induced by $-B(\cdot,\theta\cdot)$. For $1\l i\l l_0$, set
\begin{align}\label{eq:Mul1}
&H_i'= H_i\cap G', &G_i'=\bigcup_{g\in G} g^{-1}H'_ig.\index{H@$H_i'$}\index{G@$G_i'$}
\end{align}
By \cite[Theorem 5.22 (d)]{Knappsemi}, $G_i'$ is open, and we have the disjoint union
\begin{align}
  G'=\coprod_{1\l i\l l_0} G_i'.
\end{align}


\section{Orbital integrals and Selberg trace formula}\label{sec:sel}
 The purpose of this section is to recall  Bismut's  semisimple orbital integral formula  \cite[Theorem 6.1.1]{B09}
and the Selberg trace formula.
%

This section is organized as follows. In Subsections  \ref{sec:sym},
we introduce  the Riemannian  symmetric space $X=G/K$, and we give a formula for its Euler form.

In Subsection \ref{sec:orb}, we recall the definition of  semisimple orbital integrals.

In Subsection \ref{sec:J},  we recall  Bismut's explicit formula for the semisimple orbital integrals associated to the heat operator of the Casimir element.

In Subsection \ref{sec:Gamma}, we introduce a discrete torsion-free cocompact subgroup $\Gamma$ of $G$. We state the Selberg trace formula. 

Finally, in Subsection \ref{sec:t=0}, we recall  Bismut's proof of a 
vanishing result on the analytic torsion in the case $\delta(G)\neq 1$, which is originally due to Moscovici-Stanton \cite{MStorsion}.

\subsection{The symmetric space}\label{sec:sym}
We use the notation of Section \ref{Sec:Pred}.
 Let $\omega^\fg$ be the canonical left-invariant $1$-form on $G$ with values in $\fg$, and let $\omega^\fp, \omega^\fk$ be
its components in $\fp, \fk$, so that
\begin{align}\label{eq:wgkp}
  \omega^\fg=\omega^\fp+\omega^\fk.\index{O@$ \omega^\fg,\omega^\fp,\omega^\fk$}
\end{align}

Let $X=G/K$ \index{X@$X$} be the  associated symmetric space. Then
\begin{align}
p:G\to X=G/K\index{p@$P$}
\end{align}
is a $K$-principle bundle, equipped with the connection form $\omega^\fk$. By \eqref{eq:kpkp} and \eqref{eq:wgkp}, the curvature of $\omega^\fk$ is given by
\begin{align}\label{eq:Ok}
  \Omega^\fk=-\frac{1}{2}\[\omega^\fp,\omega^\fp\].\index{O@$\Omega^\fk$}
\end{align}

Let $\tau$ \index{T@$\tau$} be a finite dimensional orthogonal representation of $K$ on the real Euclidean space $E_\tau$.\index{E@$E_\tau$}
Then  $\cE_\tau=G\times_K E_\tau$\index{E@$\cE_\tau$}
is a real Euclidean vector bundle
on $X$, which is naturally equipped with a Euclidean connection 
$\nabla^{\cE_\tau}$. The space of smooth sections 
$C^\infty({X},\cE_\tau)$ on ${X}$ can be identified with the 
$K$-invariant subspace $C^\infty(G,E_\tau)^K$ of smooth 
$E_\tau$-valued functions on $G$. Let $C^{\fg,X,\tau}$ 
\index{C@$C^{\fg,X,\tau}$}be the Casimir element  of $G$ acting on 
$C^\infty(X,\cE_\tau)$. Then $C^{\fg,X,\tau}$ is a  formally self-adjoint  second order elliptic differential operator which is bounded from below.

Observe that $K$ acts isometrically on $\fp$. Using the above construction, the tangent bundle
$TX=G\times_K\fp$
 is equipped with a Euclidean metric $g^{TX}$ \index{G@$g^{TX}$} and a Euclidean connection $\nabla^{TX}$.\index{N@$\nabla^{TX}$}
Also, $\nabla^{TX}$ is the Levi-Civita connection on $(TX,g^{TX})$ with curvature $R^{TX}$. 
Classically, $(X,g^{TX})$ is a Riemannian manifold of nonpositive sectional curvature.
For $x,y\in X$, we denote by $d_X(x,y)$ \index{D@$d_X(x,y)$} the Riemannian distance on $X$. 

If $E_\tau=\Lambda^\cdot(\fp^*)$, then $C^\infty(X,\cE_\tau)=\Omega^\cdot(X)$. In this case, we write $C^{\fg,X}=C^{\fg,X,\tau}$. By \cite[Proposition 7.8.1]{B09}, $C^{\fg,X}$ \index{C@$C^{\fg,X}$} coincides with the Hodge Laplacian acting on
$\Omega^\cdot(X)$.

Let us state a formula for $e\(TX,\nabla^{TX}\)$. Let $o(TX)$ \index{O@$o(TX)$} be the orientation line of $TX$.
Let $dv_{X}$ \index{D@$dv_X$} be the $G$-invariant Riemnnian volume form on $X$. If $\alpha\in \Omega^\cdot\big(X,o(TX)\big)$ is
of maximal degree and $G$-invariant, set $[\alpha]^{\max} \in \bR$ such that
\begin{align}\label{eq:amax}
\alpha=[\alpha]^{\max}dv_{X}.\index{A@$[\alpha]^{\max}$}
\end{align}

Recall that if $G$ has compact center, then $U$ is the compact form 
of $G$. If  $\delta(G)=0$, by \eqref{eq:gzgg}, $G$ has compact center. In this case, $T$ are  maximal torus of both $U$ and $K$. Let $W(T,U),W(T,K)$ \index{W@$W(T,K)$} be respectively the Weyl group. Let $\vol(U/K)$ \index{V@$\vol(\cdot)$} be the volume of $U/K$ with respecte to the volume form induced by $-B$.

\begin{prop}\label{cor:Euler}
If $\delta(G)\neq0$, $ \[e\(TX,\nabla^{TX}\)\]^{\max}=0$. If $\delta(G)=0$,
\begin{align}\label{eq:eulWW}
  \[e\(TX,\nabla^{TX}\)\]^{\max}=(-1)^{\frac{m}{2}}\frac{|W(T,U)|/|W(T,K)|}{\vol(U/K)}.
\end{align}
\end{prop}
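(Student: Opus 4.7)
Since $\nabla^{TX}$ is $G$-invariant, the closed form $e(TX,\nabla^{TX})\in \Omega^m(X,o(TX))$ is itself $G$-invariant, so $[e(TX,\nabla^{TX})]^{\max}$ is a constant on $X$ that may be computed at the origin $o=eK$ where $T_oX\simeq \fp$. The curvature at $o$ is the standard symmetric-space expression $R^{TX}|_o(Y_1,Y_2)Y_3=-[[Y_1,Y_2],Y_3]$ for $Y_i\in \fp$, so one must compute the Pfaffian of an antisymmetric-endomorphism-valued $2$-form on $\fp$.

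First I would treat the case $\fz_\fp\neq 0$ by hand. Pick a nonzero $a\in \fz_\fp$. Centrality of $a$ in $\fg$ forces $R^{TX}|_o(\cdot,\cdot)a=-[[\cdot,\cdot],a]=0$, and antisymmetry of the Riemann tensor then gives $\langle R^{TX}|_o(\cdot,\cdot)Y,a\rangle=0$ for every $Y\in \fp$. Hence the corresponding antisymmetric matrix has the $a$-row and $a$-column identically zero, so its Pfaffian vanishes, and $[e(TX,\nabla^{TX})]^{\max}=0$. By \eqref{eq:defbzp11} and \eqref{eq:dg=db} one has $\delta(G)\g \dim\fz_\fp>0$, so this is consistent with the first claim. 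By \eqref{eq:gzgg}, the remaining case is $\fz_\fp=0$, i.e.\ $G$ has compact center.

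Assuming $G$ has compact center, I would pass to the compact dual $X_u=U/K$, equipped with the $U$-invariant Riemannian metric induced by $-B|_\fu$. A short bracket computation using $[iY_1,iY_2]=-[Y_1,Y_2]$ and $[\fk,iY]=i[\fk,Y]$ shows that, under the isometric $\bR$-linear identification $T_oX\simeq \fp\simeq i\fp\simeq T_oX_u$ sending $Y\leftrightarrow iY$, the curvatures satisfy $R^{TX_u}|_o=-R^{TX}|_o$. Since the Pfaffian is homogeneous of degree $m/2$, this gives the identity
\begin{align*}
[e(TX_u,\nabla^{TX_u})]^{\max}=(-1)^{m/2}[e(TX,\nabla^{TX})]^{\max}.
\end{align*}
As $e(TX_u,\nabla^{TX_u})$ is $U$-invariant on $X_u$, it is a constant multiple of $dv_{X_u}$, and the Gauss-Bonnet-Chern theorem yields
\begin{align*}
[e(TX_u,\nabla^{TX_u})]^{\max}\cdot \vol(U/K)=\int_{X_u}e(TX_u,\nabla^{TX_u})=\chi(X_u).
\end{align*}

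To finish, I would invoke the Hopf-Samelson formula, which asserts that $\chi(U/K)=|W(T,U)|/|W(T,K)|$ when $\rk(U)=\rk(K)$, and vanishes otherwise. Since $U$ and $G$ share the same complexified Lie algebra, $\rk(U)=\rk_\bC(G)$, so $\rk(U)=\rk(K)$ is equivalent to $\delta(G)=0$ by Definition \ref{def:dg}. Combining this with the two displayed identities immediately gives $[e(TX,\nabla^{TX})]^{\max}=0$ when $\delta(G)\neq 0$ (recovering the first case of the proposition independently of the $\fz_\fp$-argument), and the explicit formula \eqref{eq:eulWW} when $\delta(G)=0$. The main obstacles are bookkeeping the factors of $i$ in the duality identity $R^{TX_u}|_o=-R^{TX}|_o$ and citing the Hopf-Samelson theorem; both are classical, so no essentially new difficulty is expected.
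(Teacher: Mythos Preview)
Your proposal is correct and follows essentially the same route as the paper: the paper dismisses the noncompact-center case as ``trivial'' (your $\fz_\fp\neq 0$ argument makes this explicit), then invokes Hirzebruch proportionality to obtain $[e(TX,\nabla^{TX})]^{\max}=(-1)^{m/2}\chi(U/K)/\vol(U/K)$ and cites Bott's formula for $\chi(U/K)$ in terms of Weyl groups. Your curvature sign-flip computation $R^{TX_u}|_o=-R^{TX}|_o$ together with Gauss--Bonnet is precisely a self-contained derivation of Hirzebruch proportionality for the Euler form, and your Hopf--Samelson citation is the same Weyl-group formula the paper attributes to Bott.
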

\begin{proof}
If $G$ has noncompact center (thus $\delta(G)\neq0$), it is trivial 
that  $ \[e\(TX,\nabla^{TX}\)\]^{\max}=0$. Assume now,  $G$ has 
compact center. By Hirzebruch proportionality (c.f. \cite[Theorem 
22.3.1]{Hirzebruch_top_methods_AG} for a proof for Hermitian symmetric spaces, and the 
proof for general case is identical), we have
\begin{align}\label{eq:eUK}
\[e\(TX,\nabla^{TX}\)\]^{\max}=(-1)^{\frac{m}{2}}\frac{\chi(U/K)}{\vol(U/K)}.
\end{align}
Proposition \ref{cor:Euler} is a consequence of \eqref{eq:eUK},   \cite[Theorem II]{Bott1965} and Bott's formula  \cite[p. 175]{Bott1965} and of the fact that $\delta(G)=\rk_\bC(U)-\rk_\bC(K)$.
%
%
\end{proof}

Let $\gamma\in G$ be a semisimple element as in \eqref{eq:asr}. Let
\begin{align}
X(\gamma)=Z(\gamma)/K(\gamma)\index{X@$X(\gamma)$}
\end{align}
be the associated symmetric space. Clearly,
\begin{align} \label{eq:Xr}
X(\gamma)=Z^0(\gamma)/K^0(\gamma).
\end{align}

Suppose that $\gamma$ is nonelliptic. Set
\begin{align}\label{eq:Xa0}
X^{a,\bot}(\gamma)=  Z^{a,\bot,0}(\gamma)/K^0(\gamma).\index{X@$X^{a,\bot}(\gamma)$}
\end{align}
%
%
By \eqref{eq:Z0a}, \eqref{eq:Xr} and \eqref{eq:Xa0}, we have
\begin{align}\label{eq:ZXM}
  X(\gamma)=\mathbf{R}\times X^{a,\bot}(\gamma),
\end{align}
so that the action $e^{ta}$ on $X(\gamma)$ is just the translation by $t|a|$ on $\bR$.

\subsection{The semisimple orbital integrals}\label{sec:orb}
Recall that $\tau$ is a finite dimensional orthogonal representation of $K$ on the real Euclidean space $E_\tau$, and that $C^{\fg,X,\tau}$ is the Casimir element of $G$ acting on $C^\infty(X,\cE_\tau)$.

Let $p_t^{X,\tau}(x,x')$ \index{P@$p_t^{X,\tau}(x,x')$} be the smooth kernel of $\exp(-tC^{\fg,X,\tau}/2)$
 with respect to the Riemannian volume $dv_X$ on $X$. 
Classically, for $t>0$, there exist $c>0$ and $C>0$ such that for  $x,x'\in X$,
\begin{align}\label{eq:Qt}
   \left|p_t^{X,\tau}(x,x')\right|\l C\exp\(-c\ d_X^{2}(x,x')\).
\end{align}


Set
\begin{align}
  p_t^{X,\tau}(g)=p_t^{X,\tau}(p1,pg).\index{P@$p_t^{X,\tau}(g)$}
\end{align}
For  $g\in G$ and $k,k'\in K$,  we have
\begin{align}\label{eq:qtautype}
   p_t^{X,\tau}(kgk')=\tau(k)p_t^{X,\tau}(g)\tau(k').
\end{align}
Also, we can recover $p_t^{X,\tau}(x,x')$ by
\begin{align}
  p_t^{X,\tau}(x,x')=p_t^{X,\tau}(g^{-1}g'),
\end{align}
where $g,g'\in G$ are such that $pg=x,pg'=x'$.

In the sequel, we do not distinguish   $p_t^{X,\tau}(x,x')$ and 
$p^{X,\tau}_t(g)$. We refer to both of them as being the smooth kernel of $\exp(-tC^{\fg,X,\tau}/2)$.


Let $dv_{K^0(\gamma)\backslash K}$ and $dv_{Z^0(\gamma)\backslash G}$ \index{D@$dv_{K^0(\gamma)\backslash K},dv_{Z^0(\gamma)\backslash G}$} be the  Riemannian volumes on $K^0(\gamma)\backslash K$ and $Z^0(\gamma)\backslash G$ induced by $-B(\cdot,\theta\cdot)$. Let $\vol(K^0(\gamma)\backslash K)$ \index{V@$\vol(\cdot)$} be the volume of $K^0(\gamma)\backslash K$ with respect to $dv_{K^0(\gamma)\backslash K}$.

\begin{defin}\label{def:orbital}Let $\gamma\in G$ be semisimple.   The orbital integral of $\exp(-tC^{\fg,X,\tau}/2)$ is defined by
\begin{align}\label{eq:TRrz}
  \Tr^{[\gamma]}\[\exp\(-tC^{\fg,X,\tau}/2\)\]=\frac{1}{\vol(K^0(\gamma)\backslash K)}\int_{g\in Z^0(\gamma)\backslash G}\Tr^{E_\tau}\[p_t^{X,\tau}(g^{-1}\gamma g)\]dv_{Z^0(\gamma)\backslash G}.\index{T@$\Tr^{[\gamma]}[\cdot]$}
\end{align}
\end{defin}
\begin{re}
  Definition \ref{def:orbital}  is equivalent to  \cite[Definition 4.2.2]{B09}, where the volume forms are normalized  such that $\vol(K^0(\gamma)\backslash K)=1$.
\end{re}

\begin{re}
As the notation $\Tr^{[\gamma]}$ indicates, the orbital integral only depends on the conjugacy class of $\gamma$ in $G$.
However, the notation $[\gamma]$ (c.f. Subsection \ref{sec:Gamma}) will 
be used later for the conjugacy class in the  discrete group $\Gamma$. 
\end{re}

\begin{re}
We will also consider the case where $E_\tau$ is a $\bZ_2$-graded or virtual  representation of $K$. We will use the notation $\Trs^{[\gamma]}[q]$ \index{T@$\Trs^{[\gamma]}[\cdot]$}
 when the trace on the right-hand side of \eqref{eq:TRrz} is replaced by the supertrace  on $E_\tau$.
\end{re}

\subsection{ Bismut's formula for semisimple orbital integrals}\label{sec:J}Let us recall the explicit formula for $\Tr^{[\gamma]}\[\exp(-tC^{\fg,X,\tau}/2)\]$, for any semisimple element $\gamma\in G$, obtained by Bismut \cite[Theorem 6.1.1]{B09}.
 %

Let $\gamma=e^ak^{-1}\in G$ be semisimple as in \eqref{eq:asr}.
Set
\begin{align}\label{eq:z0k0p0}
 &\fz_0=\fz(a), &\fp_0=\fz(a)\cap \fp,&&\fk_0=\fz(a)\cap\fk.\index{Z@$\fz_0$} \index{P@$\fp_0$} \index{K@$\fk_0$}
\end{align}
Then
\begin{align}
  \fz_0=\fp_0\oplus \fk_0.
\end{align}

By \eqref{eq:Zr}, \eqref{eq:prkr} and \eqref{eq:z0k0p0}, we have $\fp(\gamma)\subset\fp_0$ and $\fk(\gamma)\subset\fk_0$. Let $\fp_0^{\bot}(\gamma)$,
$\fk_0^{\bot}(\gamma)$, $\fz_0^\bot(\gamma)$ \index{Z@$\fz_0^{\bot}(\gamma)$} \index{P@$\fp_0^{\bot}(\gamma)$} \index{K@$\fk_0^{\bot}(\gamma)$}
be the orthogonal spaces of $\fp(\gamma)$, $\fk(\gamma)$, $\fz(\gamma)$ in $\fp_0$, $\fk_0$, $\fz_0$. Let
$\fp_0^{\bot}$, $\fk_0^{\bot}$, $\fz_0^\bot$ \index{Z@$\fz_0^{\bot}$} \index{P@$\fp_0^{\bot}$} \index{K@$\fk_0^{\bot}$} be the orthogonal spaces of $\fp_0$, $\fk_0$, $\fz_0$ in $\fp$, $\fk$, $\fz$.
Then we have
\begin{align}\label{eq:z0kopobot}
  &\fp=\fp(\gamma)\oplus \fp_0^\bot(\gamma)\oplus \fp^\bot_0,
  &\fk=\fk(\gamma)\oplus \fk_0^\bot(\gamma)\oplus \fk^\bot_0.
\end{align}

Recall that $\widehat{A}$ is the  function defined in \eqref{eq:defAhat}.
\begin{defin} For $Y\in \fk(\gamma)$, put
\begin{multline}\label{eq:J}
  J_{\gamma}({Y})=\frac{1}{\left|\det\big(1-\Ad(\gamma)\big)|_{\fz_0^\bot}\right|^{1/2}}\frac{\widehat{A}\big(i\ad({Y})|_{\fp(\gamma)}\big)}{\widehat{A}\big(i\ad(Y)|_{\fk(\gamma)}\big)}\\
  \[\frac{1}{\det\big(1-\Ad(k^{-1})\big)|_{\fz_0^\bot(\gamma)}}\frac{\det\big(1-\exp(-i\ad(Y))\Ad(k^{-1})\big)|_{\fk_0^\bot(\gamma)}}{\det\big(1-\exp(-i\ad(Y))\Ad(k^{-1})\big)|_{\fp_0^\bot(\gamma)}}\]^{1/2}.\index{J@$J_{\gamma}$}
\end{multline}
\end{defin}
As explained in \cite[Section 5.5]{B09}, there is a natural choice for the square root in \eqref{eq:J}.
Moreover, $J_{\gamma}$ is an  $\Ad\big(K^0(\gamma)\big)$-invariant analytic function on $\fk(\gamma)$, and there exist $c_\gamma>0, C_\gamma>0$, such that for $Y\in \fk(\gamma)$,
\begin{align}\label{eq:Jrexp}
   |J_{\gamma}(Y)|\l C_\gamma \exp{(c_\gamma|Y|)}.
\end{align}
By \eqref{eq:J}, we have
\begin{align}\label{eq:J1}
  J_1(Y)=\frac{\widehat{A}(i\ad(Y)|_{\fp})}{\widehat{A}(i\ad(Y)|_{\fk})}.
\end{align}

For $Y\in \fk(\gamma)$, let $dY$ be the Lebesgue measure on $\fk(\gamma)$ induced by $-B$. Recall that $C^{\fk,\fp}$ and $C^{\fk,\fk}$ are defined in \eqref{eq:ckkckp}.
The main result of \cite[Theorem 6.1.1]{B09} is the following.

\begin{thm}\label{thm:Bis}
For $t>0$, we have
\begin{multline}\label{eq:trrJr}
\Tr^{[\gamma]}\[\exp\(-tC^{\fg,X,\tau}/2\)\]=\frac{1}{(2\pi t)^{\dim \fz(\gamma)/2}}\exp\(-\frac{|a|^2}{2t}+\frac{t}{16}\Tr^{\fp}[C^{\fk,\fp}]+\frac{t}{48}\Tr^{\fk}\[C^{\fk,\fk}\]\)\\
  \int_{Y\in \fk(\gamma)}J_\gamma(Y)
  \Tr^{E_\tau}\[\tau\(k^{-1}\)\exp(-i{\tau(Y)})\]\exp\(-|{Y}|^2/2t\)dY.
\end{multline}
\end{thm}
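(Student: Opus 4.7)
My plan is to follow the strategy pioneered by Bismut, which replaces the elliptic Casimir heat operator $\exp(-tC^{\fg,X,\tau}/2)$ by a family of hypoelliptic operators $\mathcal{L}_{b}^{X,\tau}$ acting on sections of an appropriate bundle on the total space of $TX$, parametrized by $b>0$. These operators are constructed as a sum of a horizontal drift coming from the geodesic vector field, a harmonic oscillator in the fibres, and a Clifford-type coupling; they interpolate between the Brownian motion on $X$ as $b\to 0$ (where the orbital integral reduces to the desired left-hand side of \eqref{eq:trrJr}) and the parallel transport along the geodesic flow as $b\to\infty$. The first main step is therefore to give meaning to the orbital integral $\Tr^{[\gamma]}[\exp(-t\mathcal{L}_{b}^{X,\tau})]$, using the Gaussian off-diagonal decay analogous to \eqref{eq:Qt}, and then to prove that it is independent of $b>0$ and coincides with the elliptic orbital integral at $b=0^{+}$. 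This invariance comes from a deformation argument: after a Getzler-type rescaling one shows that the $b$-derivative of the integrand is an exact quantity whose orbital integral vanishes by integration by parts on $Z^0(\gamma)\backslash G$.

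Next, I would analyze the $b\to\infty$ limit. In this regime, the heat kernel $p_{t,b}^{X,\tau}(g^{-1}\gamma g)$ concentrates on the set of geodesics in $X$ that are displaced by $\gamma$, i.e., on the minimizing set for the displacement function $d_X(x,\gamma x)$. Since $\gamma=e^ak^{-1}$ is semisimple, this minimizing set is exactly $X(\gamma)=Z^0(\gamma)/K^0(\gamma)\hookrightarrow X$, and along the geodesic $s\mapsto pe^{sa}$ the holonomy of $\gamma$ acts on the normal bundle via the linearized action $\Ad(k^{-1})$ on $\fz_0^{\bot}$, $\fz_0^{\bot}(\gamma)$, etc. Localizing $\mathcal{L}_b^{X,\tau}$ near the fixed set, one sees that in the normal directions it becomes a twisted harmonic oscillator whose supertrace can be evaluated exactly by a Mehler-type formula. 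This produces the prefactor $\bigl|\det(1-\Ad(\gamma))|_{\fz_0^\bot}\bigr|^{-1/2}$ together with the second square-root factor in \eqref{eq:J}, while integration along the one-parameter subgroup $e^{sa}$ and the Gaussian cost of returning to the fixed set after time $t$ yields $(2\pi t)^{-\dim\fz(\gamma)/2}\exp(-|a|^2/(2t))$.

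The tangential directions to the fixed set are not damped: they contribute an integral over $\fk(\gamma)$, which arises by linearizing $K^0(\gamma)\ni e^{iY}k \mapsto $ the rotational part of the fibrewise holonomy. The matching of the $\widehat{A}$-functions in \eqref{eq:J} with the McKean-Singer supertrace of the fibrewise oscillator is a version of the local index computation, where $\widehat{A}(i\ad(Y)|_{\fp(\gamma)})/\widehat{A}(i\ad(Y)|_{\fk(\gamma)})$ appears as the Jacobian of the exponential map from $\fz(\gamma)$ to $Z^0(\gamma)$ combined with the rotational modification of the fibre Gaussian. Finally the factor $\Tr^{E_\tau}[\tau(k^{-1})\exp(-i\tau(Y))]$ records the action of $\gamma$ on the twist $\cE_\tau$, and the extra exponential $\exp\bigl(\tfrac{t}{16}\Tr^{\fp}C^{\fk,\fp}+\tfrac{t}{48}\Tr^{\fk}C^{\fk,\fk}\bigr)$ is the universal scalar curvature renormalization that appears when relating the rescaled hypoelliptic symbol to the Casimir on $X$.

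The main obstacle is the hypoelliptic invariance step: proving both well-definedness of $\Tr^{[\gamma]}[\exp(-t\mathcal{L}_{b}^{X,\tau})]$ (via uniform Gaussian estimates that survive as $b$ varies) and its independence of $b$, in the presence of a noncompact fixed set when $\gamma$ is nonelliptic. Once this is in hand, the $b\to\infty$ computation is essentially a Lefschetz fixed-point formula on a noncompact symmetric space, and the identification of the Bismut density \eqref{eq:J} with the outcome is a bookkeeping of Jacobians, Mehler kernels, and Chern-Weil representatives.
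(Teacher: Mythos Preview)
The paper does not prove this theorem at all: it is quoted verbatim as \cite[Theorem 6.1.1]{B09} and used as a black box throughout. Your outline is a fair high-level summary of Bismut's hypoelliptic-Laplacian proof from that monograph, so in spirit you are reproducing the original argument rather than diverging from the paper; but since the paper itself offers no proof to compare against, there is nothing further to contrast.
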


\subsection{A discrete subgroup of $G$}\label{sec:Gamma}
Let $\Gamma\subset G$ \index{G@$\Gamma$} be a discrete torsion-free 
cocompact subgroup of $G$. By \cite[Lemma 1]{Selberg60}, $\Gamma$ 
contains the identity element and nonelliptic semisimple elements. 
Also, $\Gamma$ acts isometrically on the left on $X$. This action lifts to all the homogeneous Euclidean vector bundles $\cE_\tau$ constructed in Subsection \ref{sec:sym}, and preserves the corresponding connections.

Take $Z=\Gamma\backslash X=\Gamma \backslash G/K$. \index{Z@$Z$}Then $Z$ is a connected closed orientable Riemannian locally symmetric manifold with nonpositive sectional curvature. Since $X$ is contractible,
  $\pi_1(Z)=\Gamma$ and $X$ is the universal cover of $Z$. We denote by $\widehat{p}:\Gamma\backslash G\to Z$ and $\widehat{\pi}:X\to Z$ \index{P@$\widehat{p},\widehat{\pi}$}the natural projections, so that the diagram
\begin{align}
\begin{aligned}
\xymatrix{
G \ar[d]^p \ar[r] &\Gamma \backslash G\ar[d]^{\widehat{p}}\\
X \ar[r]^{\widehat{\pi}} &Z}
\end{aligned}
\end{align}
commutes.

The Euclidean vector bundle $\cE_\tau$ descends to a Euclidean vector bundle $\cF_\tau=\Gamma \backslash\cE_\tau$ \index{F@$\cF_\tau$} on $Z$. Take $r\in \mathbf{N}^*$. \index{R@$r$} Let $\rho:\Gamma\to \mathrm{U}(r)$ be a unitary representation of $\Gamma$.  \index{R@$\rho$} \index{F@$F$} Let
 $(F,\nabla^F,g^F)$ be the unitarily flat vector bundle on $Z$ associated to $\rho$. 
  Let $C^{\fg,Z,\tau,\rho}$  be the Casimir element of $G$ acting on $C^\infty(Z,\cF_{\tau}\otimes_\bC F)$.
As in Subsection \ref{sec:sym},  when $E_\tau=\Lambda^\cdot(\fp^*)$, we write $C^{\fg,Z,\rho}=C^{\fg,Z,\tau,\rho}$. Then,
\begin{align}\label{eq:D=C}
  \Box^Z=C^{\fg,Z,\rho}.
\end{align}

Recall that  $p_t^{X,\tau}(x,x')$ is the smooth kernel of $\exp(-tC^{\fg,X,\tau}/2)$ with respect to $dv_X$. 



\begin{prop}\label{prop:heatconv}
 There exist $c>0$, $C>0$ such that for $t>0$ and $x\in X$, we have
\begin{align}\label{eq:Fheat2}
   \sum_{\gamma\in \Gamma- \{1\}} \left| p_t^{X,\tau}(x, \gamma x)\right|\l C\exp\(-\frac{c}{t}+Ct\).
\end{align}
\end{prop}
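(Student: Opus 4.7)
The plan is to combine a Gaussian off-diagonal bound on $p_t^{X,\tau}$ with the exponential volume growth of $\Gamma$-orbits in $X$ and the positivity of the injectivity radius of $Z$.

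First I would upgrade \eqref{eq:Qt} to a bound with explicit $t$-dependence: there exists $C > 0$ such that for all $t > 0$ and $x, y \in X$,
\begin{align}\label{eq:gaussX-plan}
\bigl|p_t^{X,\tau}(x,y)\bigr| \l C \exp\Bigl(Ct - \frac{d_X(x,y)^2}{Ct}\Bigr).
\end{align}
Such a pointwise bound is standard for heat kernels of second-order self-adjoint elliptic operators on manifolds of bounded geometry. Here one derives it from the Weitzenb\"ock identity, which shows that $C^{\fg,X,\tau}/2$ differs from the Bochner Laplacian on $\cE_\tau$ by a bounded endomorphism, together with the classical scalar Gaussian heat kernel estimate on the symmetric space $X$ of nonpositive curvature (via Cheeger--Gromov--Taylor, or direct comparison with the Euclidean heat kernel, exploiting the $G$-invariance of $C^{\fg,X,\tau}$ and the bounded geometry of $X$).

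Next I would use cocompactness. Since $Z = \Gamma\backslash X$ is compact, the injectivity radius $\iota > 0$ is strictly positive, and $\Gamma$ acts freely isometrically on $X$, so $d_X(x, \gamma x) \g \iota$ for every $x \in X$ and $\gamma \in \Gamma - \{1\}$. Moreover, cocompactness combined with the exponential volume growth of balls in $X$ yields constants $A, a > 0$ independent of $x$ such that for all $R > 0$,
\begin{align}\label{eq:countX-plan}
\bigl|\{\gamma \in \Gamma : d_X(x, \gamma x) \l R\}\bigr| \l A e^{aR}.
\end{align}

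I would then decompose the sum over spherical shells. Setting $\iota_0 = \lfloor \iota \rfloor$ and combining \eqref{eq:gaussX-plan} with \eqref{eq:countX-plan} gives
\begin{align}
\sum_{\gamma \in \Gamma - \{1\}} \bigl|p_t^{X,\tau}(x,\gamma x)\bigr| \l C' e^{Ct}\sum_{k = \iota_0}^\infty \exp\Bigl(a(k+1) - \frac{k^2}{Ct}\Bigr).
\end{align}
Completing the square, $ak - k^2/(Ct)$ attains its maximum $a^2Ct/4$ at $k_* = aCt/2$. When $t$ is small enough that $k_* < \iota$, the exponent is decreasing on the range of summation and the series is controlled by its first term $\exp(-\iota^2/(Ct))$, contributing a factor $\exp(-c/t)$. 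When $t$ is large, the series is controlled by a Gaussian integral centred at $k_*$, contributing a factor $\exp(C''t)$. In either regime the total is bounded by $C \exp(-c/t + Ct)$, as required. The main obstacle is the uniform Gaussian estimate \eqref{eq:gaussX-plan}; once that is in hand the rest is a routine counting-plus-Gaussian argument, and verifying it carefully is where the symmetric space structure of $X$ and the boundedness of the Weitzenb\"ock term for $\cE_\tau$ are essential.
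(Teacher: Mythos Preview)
Your approach is essentially the paper's: Gaussian off-diagonal bound on $p_t^{X,\tau}$, exponential orbit-counting from cocompactness, and positive injectivity radius to extract the $e^{-c/t}$ factor. The paper obtains the Gaussian estimate via Li--Yau for the scalar kernel together with an It\^o-formula comparison (rather than a Weitzenb\"ock argument) and sums by an integral trick rather than a shell decomposition; note that the standard Gaussian bound carries a prefactor $C/t^N$, which your stated inequality \eqref{eq:gaussX-plan} omits, but this is harmless here since $d_X(x,\gamma x)\g\iota>0$ lets $e^{-\iota^2/(2Ct)}$ absorb any polynomial in $1/t$.
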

\begin{proof}
%
By \cite[Remark p.1, Lemma 2]{Milnor_fundgroup} or \cite[eq. 
(3.19)]{MaMar_cover}, there is $C>0$ such that for all $r\g 0$,  
$x\in X$, we have 
\begin{align}\label{eq:N<er}
	\big|\big\{\gamma\in \Gamma: d_{X}(x,\gamma x)\l 
	r\big\}\big|\l Ce^{Cr}. 
\end{align}
	
%
%
%
%

We claim that  there exist $c>0, C>0$ and $N\in \mathbf{N}$ such that for  $t>0$ and $x,x'\in X$, we have
\begin{align}\label{eq:estheat}
 \left|  p_t^{X,\tau}(x, x')\right|\l \frac{C}{t^N}\exp\(-c\frac{d_X^2(x, x')}{t}+Ct\).
\end{align}
Indeed, if $\tau=\mathbf{1}$, $p_t^{X,\mathbf{1}}(x, x')$ is the heat 
kernel for the Laplace-Beltrami operator. In this case, 
\eqref{eq:estheat} is a consequence of  the Li-Yau estimate 
\cite[Corollary 3.1]{Li_Yau} and of the fact that $X$ is a symmetric 
space.  For general $\tau$, using the It\^o formula as in \cite[eq.  (12.30)]{BZ92}), we can show that there is $C>0$ such that
\begin{align}
\left|  p_t^{X,\tau}(x, x')\right|\l C e^{Ct} p_t^{X,\mathbf{1}}(x, x'),
\end{align}
from which we get \eqref{eq:estheat}\footnote{See \cite[Theorem 
4]{MaMar_cover} for another proof of \eqref{eq:estheat} using finite propagation speed of 
solutions of hyperbolic equations.}.

Note that there exists $c_0>0$ such that for all $\gamma\in 
\Gamma-\{1\}$ and $x\in X$,
\begin{align}\label{eq:dxrx}
  d_X(x,\gamma x)\g c_0.
\end{align}
By \eqref{eq:estheat} and \eqref{eq:dxrx}, there exist $c_1>0$, $c_2>0$ and $C>0$ such that
for $t>0$, $x\in X$ and $\gamma \in \Gamma-\{1\} $, we have
\begin{align}\label{eq:hk20}
 \left|  p_t^{X,\tau}(x, \gamma x)\right|\l C 
 \exp\(-\frac{c_1}{t}-c_2\frac{d_X^2(x,\gamma x)}{t}+Ct\).
\end{align}

By \eqref{eq:N<er} and\eqref{eq:hk20},  for $t>0$ and $x\in X$, we have
\begin{align}\label{eq:hk1}
\begin{aligned}
  \sum_{\gamma\in \Gamma-\{1\}} \left|p_t^{X,\tau}(x, \gamma 
  x)\right|&\l   C\sum_{\gamma\in \Gamma} 
  \exp\(-\frac{c_1}{t}-c_2\frac{d_X^2(x,\gamma x)}{t}+Ct\)\\
&=c_2C\exp\(-\frac{c_1}{t}+Ct\)\sum_{\gamma\in 
\Gamma}\int^{\infty}_{d_X^2(x,\gamma x)/t}\exp(-c_2r)dr\\
&  =c_2C\exp\(-\frac{c_1}{t}+Ct\)\int_0^\infty \big|\big\{\gamma\in 
\Gamma:   d_X(x,\gamma x)\l \sqrt{rt}\big\}\big|\exp(-c_2r)dr\\
&\l C'\exp\(-\frac{c_1}{t}+Ct\)  \int_0^\infty \exp\(-c_2r+C\sqrt{rt}\)dr.
\end{aligned}
\end{align}
From  \eqref{eq:hk1}, we get \eqref{eq:Fheat2}. The proof of our proposition is completed.
%
%
%
%
%
%
\end{proof}

For $\gamma\in \Gamma$, set
\begin{align}\label{eq:Gr}
\Gamma(\gamma)=Z(\gamma)\cap \Gamma.\index{G@$\Gamma(\gamma)$}
\end{align}
Let $[\gamma]$ be the conjugacy class of $\gamma$ in $\Gamma$.\index{G@$[\gamma]$}
Let $[\Gamma]$ be the set of all the conjugacy classes of $\Gamma$.\index{G@$[\Gamma]$}

The following proposition is \cite[Lemma 2]{Selberg60}. We include a proof for the sake of completeness.


\begin{prop}\label{prop:cocompact}
If $\gamma\in \Gamma$,   then $\Gamma(\gamma)$ is cocompact in $Z(\gamma)$.
\end{prop}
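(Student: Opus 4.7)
The plan is to realize $\Gamma(\gamma) \backslash Z(\gamma)$ as a compact subspace of the compact space $\Gamma \backslash G$ via the natural map $\iota \colon \Gamma(\gamma) \backslash Z(\gamma) \to \Gamma \backslash G$, $\Gamma(\gamma) z \mapsto \Gamma z$, induced by the inclusion $Z(\gamma) \hookrightarrow G$.

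The first step is to show that $\Gamma \cdot Z(\gamma)$ is closed in $G$. The conjugacy class $[\gamma] = \{\delta \gamma \delta^{-1} : \delta \in \Gamma\}$ lies inside the closed discrete subset $\Gamma \subset G$; since any compact set in $G$ meets $\Gamma$ in only finitely many points, $[\gamma]$ is itself closed and discrete in $G$. Let $\psi \colon G \to G$ be the continuous map $\psi(g) = g \gamma g^{-1}$. A direct check gives $\psi^{-1}([\gamma]) = \Gamma Z(\gamma)$: indeed $g \gamma g^{-1} = \delta \gamma \delta^{-1}$ for some $\delta \in \Gamma$ if and only if $\delta^{-1} g \in Z(\gamma)$, i.e., $g \in \Gamma Z(\gamma)$. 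Hence $\Gamma Z(\gamma)$ is closed in $G$, and its image $\Gamma Z(\gamma)/\Gamma$ is closed, hence compact, in the compact space $\Gamma \backslash G$ (recall that $\Gamma$ is cocompact).

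The second step is to check that $\iota$ is a continuous injection with image $\Gamma Z(\gamma)/\Gamma$. Injectivity follows from $\Gamma \cap Z(\gamma) = \Gamma(\gamma)$. To conclude compactness of $\Gamma(\gamma) \backslash Z(\gamma)$, I would verify that $\iota$ is proper, equivalently a homeomorphism onto its image. Suppose $z_n \in Z(\gamma)$ and $\Gamma z_n \to \Gamma w$ in $\Gamma \backslash G$; pick $\delta_n \in \Gamma$ with $\delta_n z_n \to w$ in $G$. Applying $\psi$ and using $z_n \in Z(\gamma)$, one gets $\delta_n \gamma \delta_n^{-1} \to w \gamma w^{-1}$; as $[\gamma]$ is closed and discrete in $G$, this sequence must be eventually constant, say equal to $\delta_0 \gamma \delta_0^{-1}$ for some $\delta_0 \in \Gamma$. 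Then $\epsilon_n := \delta_0^{-1} \delta_n \in \Gamma(\gamma)$ for large $n$, and $\epsilon_n z_n = \delta_0^{-1}(\delta_n z_n) \to \delta_0^{-1} w$; the limit lies in $Z(\gamma)$ since $(\delta_0^{-1} w) \gamma (\delta_0^{-1} w)^{-1} = \gamma$. This exhibits the required convergent lift in $\Gamma(\gamma) \backslash Z(\gamma)$.

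The main obstacle will be this last properness step, where the discreteness of the $\Gamma$-conjugacy class of $\gamma$ in $G$ must be used in an essential way; the remaining steps are formal unwindings of definitions that use only that $\Gamma \subset G$ is a cocompact discrete subgroup.
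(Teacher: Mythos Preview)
Your proof is correct and follows essentially the same route as the paper: both arguments show that $\Gamma\cdot Z(\gamma)$ is closed in $G$ as the preimage of the closed set $[\gamma]$ under the conjugation map $g\mapsto g\gamma g^{-1}$, deduce that $\Gamma\backslash\Gamma\cdot Z(\gamma)$ is compact in $\Gamma\backslash G$, and then identify this with $\Gamma(\gamma)\backslash Z(\gamma)$. The only difference is in this last identification: where you verify by hand, via a sequential properness argument exploiting the discreteness of $[\gamma]$, that $\iota$ is a homeomorphism onto its image, the paper simply observes that $Z(\gamma)$ acts transitively on the right on $\Gamma\backslash\Gamma\cdot Z(\gamma)$ with stabilizer $\Gamma(\gamma)$ at the identity coset, and invokes the orbit--stabilizer theorem to get the homeomorphism $\Gamma(\gamma)\backslash Z(\gamma)\simeq\Gamma\backslash\Gamma\cdot Z(\gamma)$ directly.
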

\begin{proof}
Since $\Gamma$ is discrete, $[\gamma]$ is closed in $G$. The inverse image of $[\gamma]$ by the continuous  map $g\in G\to g\gamma g^{-1}\in G$ is $\Gamma\cdot Z(\gamma)$. Then $\Gamma\cdot Z(\gamma)$ is closed in $G$. Since $\Gamma\backslash G$ is compact,
the closed subset $\Gamma\backslash \Gamma\cdot Z(\gamma)\subset \Gamma\backslash G$ is then compact.

The group $Z(\gamma)$ acts  transitively on the right on 
$\Gamma\backslash \Gamma \cdot Z(\gamma)$. The stabilizer at $[1]\in 
\Gamma\backslash \Gamma\cdot Z(\gamma)$ is $\Gamma(\gamma)$. Hence $ 
\Gamma(\gamma)\backslash Z(\gamma)\simeq \Gamma\backslash \Gamma 
\cdot Z(\gamma)$ is compact. The proof of our proposition is completed.
\end{proof}

Let $\vol\big(\Gamma(\gamma)\backslash X(\gamma)\big)$ be the volume of $\Gamma(\gamma)\backslash X(\gamma)$
with respect to the volume form  induced by
$dv_{X(\gamma)}$. Clearly, $\vol\big(\Gamma(\gamma)\backslash X(\gamma)\big)$ depends only on the conjugacy class $[\gamma]\in [\Gamma]$.

By the property of heat kernels on compact manifolds, the operator $\exp\(-tC^{\fg,Z,\tau,\rho}/2\)$ is  trace class. Its trace is given by the Selberg trace formula: 
\begin{thm}\label{thm:sel}
There exist $c>0$, $C>0$ such that
  for $t>0$, we have
  \begin{align}\label{eq:h11exp}
    \sum_{[\gamma]\in [\Gamma]- \{1\}} \vol\big(\Gamma(\gamma)\backslash X(\gamma)\big)  \left|\Tr^{[\gamma]}\[\exp\(-tC^{\fg,X,\tau}/2\)\]\right|\l C\exp\(-\frac{c}{t}+Ct\).
  \end{align}
  For $t>0$, the following identity holds:
\begin{align}\label{eq:sel}
  \Tr\[\exp\(-tC^{\fg,Z,\tau,\rho}/2\)\]=\sum_{[\gamma]\in [\Gamma]} \vol\big(\Gamma(\gamma)\backslash X(\gamma)\big) \Tr[\rho(\gamma)] \Tr^{[\gamma]}\[\exp(-tC^{\fg,X,\tau}/2)\].
\end{align}
\end{thm}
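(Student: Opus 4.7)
The plan is to run the standard Selberg-style argument: realize the heat kernel on $Z$ as a $\Gamma$-average of the heat kernel on $X$, integrate its pointwise trace over a fundamental domain, regroup the resulting sum by conjugacy classes in $\Gamma$, and unfold each conjugacy-class contribution to an orbital integral via the centralizer $Z(\gamma)$. Proposition \ref{prop:heatconv} will supply the uniform-in-$t$ control needed to justify every absolute convergence step.

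First, I would realize the heat kernel $p_t^{Z,\tau,\rho}$ of $\exp(-tC^{\fg,Z,\tau,\rho}/2)$ as the $\Gamma$-series
\begin{align*}
p_t^{Z,\tau,\rho}(x,x') = \sum_{\gamma\in\Gamma} p_t^{X,\tau}(\tilde x,\gamma\tilde x')\otimes\rho(\gamma),
\end{align*}
where $\tilde x,\tilde x'\in X$ are lifts of $x,x'\in Z$. Proposition \ref{prop:heatconv} guarantees absolute and locally uniform convergence, and identifies the series as the fundamental solution of $\partial_{t}+C^{\fg,Z,\tau,\rho}/2$. Integrating the fibrewise trace over a measurable fundamental domain $\mathcal F\subset X$ for $\Gamma$ and invoking Fubini, I obtain
\begin{align*}
\Tr[\exp(-tC^{\fg,Z,\tau,\rho}/2)] = \sum_{\gamma\in\Gamma}\Tr[\rho(\gamma)]\int_{\mathcal F}\Tr^{E_\tau}[p_t^{X,\tau}(\tilde x,\gamma\tilde x)]\,dv_{X}(\tilde x).
\end{align*}

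Next I would repackage the sum according to conjugacy classes $[\gamma]\in[\Gamma]$. For a fixed representative $\gamma$, the map $\alpha\in\Gamma(\gamma)\backslash\Gamma\mapsto\alpha^{-1}\gamma\alpha\in[\gamma]$ is a bijection; changing variables $\tilde y=\alpha\tilde x$ and using the $G$-invariance of $p_t^{X,\tau}$, the translates $\alpha\mathcal F$, $\alpha\in\Gamma(\gamma)\backslash\Gamma$, tile a fundamental domain for $\Gamma(\gamma)$, giving
\begin{align*}
\sum_{\gamma_{0}\in[\gamma]}\int_{\mathcal F}\Tr^{E_\tau}[p_t^{X,\tau}(\tilde x,\gamma_{0}\tilde x)]\,dv_{X}=\int_{\Gamma(\gamma)\backslash X}\Tr^{E_\tau}[p_t^{X,\tau}(\tilde y,\gamma\tilde y)]\,dv_{X}(\tilde y).
\end{align*}

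Finally I would unfold the inner integral through the centralizer. Writing $\tilde y=pg$, the function $g\mapsto\Tr^{E_\tau}[p_t^{X,\tau}(g^{-1}\gamma g)]$ is left-$Z(\gamma)$-invariant (since $z^{-1}\gamma z=\gamma$ for $z\in Z(\gamma)$) and right-$K$-invariant by \eqref{eq:qtautype}, hence descends to $Z(\gamma)\backslash G/K$. By Proposition \ref{prop:cocompact} the quotient $\Gamma(\gamma)\backslash Z(\gamma)$ is compact, so Fubini along the chain $\Gamma(\gamma)\subset Z(\gamma)\subset G$ (together with the right $K$-action) disintegrates the integral into the volume $\vol(\Gamma(\gamma)\backslash X(\gamma))$ times exactly the orbital integral \eqref{eq:TRrz}, producing \eqref{eq:sel}. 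The convergence estimate \eqref{eq:h11exp} then follows by repeating the unfolding with absolute values in place of traces: Proposition \ref{prop:heatconv} bounds $\sum_{\gamma\in\Gamma-\{1\}}|p_{t}^{X,\tau}(\tilde x,\gamma\tilde x)|$ uniformly by $C\exp(-c/t+Ct)$, and integration over $\mathcal F$ (whose volume is $\vol(Z)<\infty$) yields the required bound. The main bookkeeping obstacle will be matching the volume normalizations among $\Gamma(\gamma)\backslash X(\gamma)$, $Z(\gamma)\backslash G$ and $K^0(\gamma)\backslash K$, so that the constant emerging from the unfolding agrees with the normalization fixed in Definition \ref{def:orbital} (in particular, using $X(\gamma)=Z^0(\gamma)/K^0(\gamma)$ from \eqref{eq:Xr} to pass between $Z(\gamma)$ and its identity component); this is routine but requires care.
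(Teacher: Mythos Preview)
Your proposal is correct and follows essentially the same approach as the paper: the paper's proof cites \cite[eq.~(4.8.11), (4.8.15)]{B09} for the unfolding identity \eqref{eq:sel2} (your third and fourth paragraphs), then combines it with Proposition~\ref{prop:heatconv} to obtain \eqref{eq:h11exp}, and refers to \cite[Section~4.8]{B09} for the well-known derivation of \eqref{eq:sel}. You have simply spelled out the standard Selberg unfolding that the paper delegates to the reference, including the normalization bookkeeping you flagged at the end.
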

\begin{proof}
	Let $F\subset X$ be a fundamental domain of $Z$ in $X$.  By \cite[eq. (4.8.11), (4.8.15)]{B09}, we have
\begin{align}\label{eq:sel2}
 \sum_{\gamma'\in [\gamma]}\int_{x\in F}\Tr^{E_\tau}\[p_t^{X,\tau}(x,\gamma' x)\]dx=\vol\big(\Gamma(\gamma)\backslash X(\gamma)\big) \Tr^{[\gamma]}\[\exp(-tC^{\fg,X,\tau}/2)\].
\end{align}
By  \eqref{eq:Fheat2} and \eqref{eq:sel2}, we get \eqref{eq:h11exp}. 
The proof of \eqref{eq:sel} is well known (c.f. \cite[Section 4.8]{B09}). 
\end{proof}

\subsection{A formula for $ 
\Trs^{[\gamma]}\[N^{\Lambda^\cdot(T^*X)}\exp\(-tC^{\fg,X}/2\)\]$}\label{sec:t=0}

Let $\gamma=e^ak^{-1}\in G$ be semisimple such that \eqref{eq:asr} holds. Let $\ft(\gamma)\subset \fk(\gamma)$ \index{T@$\ft(\gamma)$} be a Cartan subalgebra of $\fk(\gamma)$.
Set
\begin{align}\label{eq:defbr}
  \fb(\gamma)=\{Y\in \fp: \Ad(k)Y=Y, [Y,\ft(\gamma)]=0\}.\index{B@$ \fb(\gamma)$}
\end{align}
Then,
\begin{align}\label{eq:abr}
  a\in \fb(\gamma).
\end{align}
By definition, $\dim \fp-\dim \fb(\gamma)$ is even.

Since $k$ centralizes $\ft(\gamma)$, by \cite[Theorem 4.21]{Knappsemi}, there is $k'\in K$ such that
\begin{align}
  &k'\ft(\gamma)k^{\prime-1}\subset \ft, &k'kk^{\prime-1}\in T.
\end{align}
Up to a conjugation on $\gamma$, we can assume directly that $\gamma=e^ak^{-1}$ with
\begin{align}\label{eq:trkpo}
& \ft(\gamma)\subset \ft,&k\in T.
\end{align}
By \eqref{eq:defb}, \eqref{eq:defbr}, and \eqref{eq:trkpo}, we have
\begin{align}\label{eq:bbr}
  \fb\subset \fb(\gamma).
\end{align}

\begin{prop}\label{prop:dbdbr}
   A semisimple element $\gamma\in G$ can be conjugated into $H$ if and only if
\begin{align}
  \label{eq:b=br}
\dim \fb= \dim \fb(\gamma).
\end{align}
\end{prop}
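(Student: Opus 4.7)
The plan is to prove the equivalence by directly exploiting the inclusion $\fb\subset\fb(\gamma)$ from \eqref{eq:bbr}, rather than routing through $\delta(Z^0(\gamma))$. Throughout, I will use that a $K$-conjugation of $\gamma$ (bringing it into the normal form \eqref{eq:trkpo}) affects neither side of the proposed equivalence: conjugation into $H$ is insensitive to prior conjugation, and $\fb(\gamma)$ depends only on the conjugacy class of $\gamma$. So I may assume from the outset that $\gamma=e^ak^{-1}$ satisfies $\ft(\gamma)\subset\ft$ and $k\in T$.

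For the forward direction, suppose $\gamma$ can be conjugated into $H$. By \eqref{eq:H=BT} I may assume $\gamma=e^ak^{-1}$ with $a\in\fb$ and $k\in T$ (note $\Ad(k)a=a$ since $\fh=\fb\oplus\ft$ is abelian). The key step is to show that $\ft$ itself is a valid choice for $\ft(\gamma)$: since $H$ is abelian and $\gamma\in H$, one has $\fh\subset\fz(\gamma)$, hence $\ft\subset\fk(\gamma)$; moreover $\ft$ is maximal abelian in $\fk$, so a fortiori in $\fk(\gamma)\subset\fk$. Taking $\ft(\gamma)=\ft$, the definition \eqref{eq:defbr} reads $\fb(\gamma)=\{Y\in\fp:[Y,\ft]=0,\ \Ad(k)Y=Y\}$. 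The first condition forces $Y\in\fb$ by definition of $\fb$, and the second is automatic since $k\in T$ and $T$ acts trivially on $\fb$ (again by abelianness of $\fh$). Thus $\fb(\gamma)=\fb$, so the dimensions agree.

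For the backward direction, assume \eqref{eq:trkpo} and $\dim\fb=\dim\fb(\gamma)$. Combined with the inclusion $\fb\subset\fb(\gamma)$ from \eqref{eq:bbr}, this forces $\fb=\fb(\gamma)$. By \eqref{eq:abr}, $a\in\fb(\gamma)=\fb$. Since \eqref{eq:trkpo} also gives $k\in T$, one concludes $\gamma=e^ak^{-1}\in\exp(\fb)T=H$ via the Cartan decomposition \eqref{eq:H=BT}.

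The only subtle point is the verification, in the forward direction, that $\ft$ qualifies as a Cartan subalgebra of $\fk(\gamma)$; this is the one place where the particular geometry of $\fh$ (its position as the $\theta$-stable Cartan subalgebra of minimal noncompact dimension containing $\ft$) is used essentially. There is no serious computational obstacle: both implications reduce to manipulating inclusions of abelian subalgebras together with the elementary fact that $T$ centralizes $\fb$. Should one prefer a more conceptual route, one can instead show that $\dim\fb(\gamma)=\delta\bigl(Z^0(\gamma)\bigr)$ by identifying $\fb(\gamma)\oplus\ft(\gamma)$ (after verifying $\fb(\gamma)\subset\fp(\gamma)$) with the $\theta$-stable Cartan subalgebra of $\fz(\gamma)$ of minimal noncompact dimension, and then apply Proposition \ref{prop:dgdz}; but the direct approach above bypasses this analysis.
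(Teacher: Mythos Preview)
Your proof is correct. The forward direction is essentially the paper's argument, with the justification of $\ft(\gamma)=\ft$ spelled out more carefully than the paper's one-line assertion.

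The backward direction, however, is genuinely different and more direct than the paper's. The paper introduces the fundamental Cartan subalgebra $\fh(\gamma)$ of $\fz(\gamma)$, shows $\fh(\gamma)_\fp=\fb(\gamma)\cap\fp(\gamma)$, obtains the chain $\dim\fb\leq\dim\fh(\gamma)_\fp\leq\dim\fb(\gamma)$, and then invokes the equality case of Proposition~\ref{prop:dgdz} (which says $\delta(G)=\delta(Z^0(\gamma))$ forces $\gamma$ into a conjugate of $H$). You bypass all of this: from the inclusion $\fb\subset\fb(\gamma)$ of \eqref{eq:bbr} and the dimension hypothesis you get $\fb=\fb(\gamma)$ outright, and then the already-available facts $a\in\fb(\gamma)$ and $k\in T$ from the normalization \eqref{eq:trkpo} place $\gamma$ in $H$ directly. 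Your route is shorter and avoids the appeal to Proposition~\ref{prop:dgdz}; the paper's route has the side benefit of exhibiting the intermediate quantity $\dim\fh(\gamma)_\fp=\delta(Z^0(\gamma))$ sandwiched between $\dim\fb$ and $\dim\fb(\gamma)$, which connects the proposition to the fundamental-rank inequality.

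One small caution about your parenthetical ``more conceptual route'': the claim $\dim\fb(\gamma)=\delta(Z^0(\gamma))$ would require $\fb(\gamma)\subset\fp(\gamma)$, i.e.\ that every $Y\in\fb(\gamma)$ commutes with $a$. This is not obvious from the definition (one only knows $Y$ is $\Ad(k)$-fixed and commutes with $\ft(\gamma)$), and indeed the paper only asserts $\fh(\gamma)_\fp=\fb(\gamma)\cap\fp(\gamma)$, allowing the possibility that the inclusion is strict. Since your main argument does not rely on this, it causes no harm.
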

\begin{proof}
If $\gamma\in H$, then $\ft(\gamma)=\ft$. By \eqref{eq:defbr}, we get $\fb=\fb(\gamma)$, which implies \eqref{eq:b=br}.

Recall that $\fh(\gamma)\subset \fz(\gamma)$ 
is defined as in \eqref{eq:carhH}, when $G$ is replaced by $Z^0(\gamma)$ and $\ft$ is replaced by $\ft(\gamma)$. It is a $\theta$-invariant Cartan subalgebra of both $\fg$ and $\fz(\gamma)$. Let
$\fh(\gamma)=\fh(\gamma)_\fp\oplus \fh(\gamma)_\fk$ be the Cartan decomposition. Then,
\begin{align}\label{eq:hp}
 & \fh(\gamma)_ \fp=\{Y\in \fp(\gamma): 
 [Y,\ft(\gamma)]=0\}=\fb(\gamma)\cap \fp(\gamma),&\fh(\gamma)_\fk=\ft(\gamma).
\end{align}
From \eqref{eq:dgdz} and \eqref{eq:hp}, we get
\begin{align}\label{eq:bhpbr}
  \dim \fb\l \dim  \fh(\gamma)_\fp \l \dim \fb(\gamma).
\end{align}
By \eqref{eq:bhpbr}, if $\dim \fb= \dim \fb(\gamma)$, then $\dim \fb= \dim  \fh(\gamma)_\fp$.
By Proposition \ref{prop:dgdz}, $\gamma$ can be conjugated into $H$. 
The proof of our proposition is completed.
\end{proof}

The following Proposition extends \cite[Theorem 7.9.1]{B09}.
\begin{thm}\label{prop:vanishT}
 Let $\gamma\in G$ be semisimple such that $\dim \fb(\gamma)\g 2 $. For $Y\in \fk(\gamma)$, we have
\begin{align}\label{eq:vanishT}
  \Trs^{\Lambda^\cdot(\fp^*)}\[N^{\Lambda^\cdot(\fp^*)}\Ad(k^{-1})\exp(-i\ad(Y))\]=0.
\end{align}
In particular, for $t>0$, we have
\begin{align}\label{eq:vanishT2}
    \Trs^{[\gamma]}\[N^{\Lambda^\cdot(T^*X)}\exp\(-tC^{\fg,X}/2\)\]=0.
  \end{align}
\end{thm}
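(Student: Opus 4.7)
My plan is to reduce the pointwise identity \eqref{eq:vanishT} to a short linear-algebra computation on $\fb(\gamma)\subset\fp$, and then to deduce \eqref{eq:vanishT2} from Bismut's orbital integral formula.

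I would first argue that the left-hand side of \eqref{eq:vanishT} is $\Ad(K^0(\gamma))$-invariant as a function of $Y$. Any $k'\in K^0(\gamma)$ centralizes $\gamma=e^ak^{-1}$, hence also $k$, so $\Ad(k')$ commutes with $\Ad(k^{-1})$. Combined with the cyclicity of the supertrace and the fact that $\Ad(k')$ preserves the grading of $\Lambda^\cdot(\fp^*)$, and so commutes with $N^{\Lambda^\cdot(\fp^*)}$, this yields the invariance. Since $\ft(\gamma)$ is a Cartan subalgebra of the reductive Lie algebra $\fk(\gamma)$, every $Y\in\fk(\gamma)$ is $\Ad(K^0(\gamma))$-conjugate into $\ft(\gamma)$, so I may assume $Y\in\ft(\gamma)$. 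Setting $A:=\Ad(k^{-1})\exp(-i\ad(Y))$ on $\fp$, I would then check that $A$ preserves the $B$-orthogonal splitting $\fp=\fb(\gamma)\oplus\fb(\gamma)^\bot$ and that $A|_{\fb(\gamma)}=\mathrm{id}$. For $Z\in\fb(\gamma)$, \eqref{eq:defbr} gives $\Ad(k)Z=Z$ and $[Y,Z]=0$ (the latter because $Y\in\ft(\gamma)$), so $AZ=Z$; the invariance of $\fb(\gamma)^\bot$ follows because $\Ad(k^{-1})$ is an isometry and $\ad(Y)$ is antisymmetric on $(\fp,-B|_\fp)$.

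Passing to exterior algebras, $\Lambda^\cdot(\fp^*)=\Lambda^\cdot(\fb(\gamma)^*)\widehat{\otimes}\Lambda^\cdot(\fb(\gamma)^{\bot,*})$, $N^{\Lambda^\cdot(\fp^*)}=N_1\otimes 1+1\otimes N_2$, and the induced action of $A$ splits as $\Lambda^\cdot A_1\otimes \Lambda^\cdot A_2$ with $A_1=\mathrm{id}$. The standard Leibniz expansion of the supertrace then gives
\begin{align*}
\Trs^{\Lambda^\cdot(\fp^*)}\bigl[N^{\Lambda^\cdot(\fp^*)}\Lambda^\cdot A\bigr]=\Trs\bigl[N_1\Lambda^\cdot A_1\bigr]\det(1-A_2)+\det(1-A_1)\,\Trs\bigl[N_2\Lambda^\cdot A_2\bigr].
\end{align*}
The second summand vanishes because $\det(1-\mathrm{id}_{\fb(\gamma)^*})=0$. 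For the first, $\Trs[N_1\Lambda^\cdot\mathrm{id}]=\sum_{j=0}^d(-1)^j\,j\binom{d}{j}$ with $d:=\dim\fb(\gamma)$, and this sum equals the value at $t=1$ of $\frac{d}{dt}(1-t)^d=-d(1-t)^{d-1}$, which is zero precisely when $d\g 2$. This combinatorial identity, powered by the hypothesis on $\dim\fb(\gamma)$, is the heart of the argument.

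Finally, \eqref{eq:vanishT2} follows from \eqref{eq:vanishT} by applying the supertraced variant of Theorem \ref{thm:Bis} with $E_\tau=\Lambda^\cdot(\fp^*)$ and with $N^{\Lambda^\cdot(\fp^*)}$ inserted into the trace (legitimate because $N^{\Lambda^\cdot(\fp^*)}$ commutes with $C^{\fg,X}$ and with $\tau(K)$): the integrand over $Y\in\fk(\gamma)$ contains the pointwise factor $\Trs^{\Lambda^\cdot(\fp^*)}[N^{\Lambda^\cdot(\fp^*)}\tau(k^{-1})\exp(-i\tau(Y))]$, which vanishes by \eqref{eq:vanishT}.
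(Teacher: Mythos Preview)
Your proof is correct and follows essentially the same route as the paper: reduce to $Y\in\ft(\gamma)$ by $\Ad(K^0(\gamma))$-invariance, use that $\Ad(k^{-1})\exp(-i\ad(Y))$ fixes $\fb(\gamma)$ pointwise, and exploit $\dim\fb(\gamma)\g2$ to kill the weighted supertrace. The only cosmetic difference is that the paper packages the last step via the identity $\Trs^{\Lambda^\cdot(\fp^*)}[N\,\Ad(k^{-1})e^{-i\ad(Y)}]=\partial_b|_{b=0}\det(1-e^b\Ad(k)e^{i\ad(Y)})|_\fp$ from \cite[eq.~(7.9.1)]{B09}, whereas you unfold the equivalent computation directly through the tensor splitting $\Lambda^\cdot(\fp^*)=\Lambda^\cdot(\fb(\gamma)^*)\widehat{\otimes}\Lambda^\cdot(\fb(\gamma)^{\bot,*})$.
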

\begin{proof}
Since the left-hand side of \eqref{eq:vanishT} is 
$\Ad\big(K^0(\gamma)\big)$-invariant, it is enough to show 
\eqref{eq:vanishT} for $Y\in \ft(\gamma)$. If $Y\in \ft(\gamma)$, by 
\cite[eq. (7.9.1)]{B09}, we have
\begin{align}\label{eq:moddtor0}
  \Trs^{\Lambda^\cdot(\fp^*)}\[N^{\Lambda^\cdot(\fp^*)}\Ad(k^{-1})\exp(-i\ad(Y))\]=\frac{\p}{\p b}\Big|_{b=0}\mathrm{det}\(1-e^b\Ad(k)\exp(i\ad(Y))\)|_{\fp}.
\end{align}
Since $\dim \fb(\gamma)\g2$, by \eqref{eq:moddtor0}, we get \eqref{eq:vanishT} for $Y\in \ft(\gamma)$.

By \eqref{eq:trrJr} and \eqref{eq:vanishT}, we get \eqref{eq:vanishT2}.
The proof of our theorem  is completed.
\end{proof}

In this way, Bismut \cite[Theorem 7.9.3]{B09} recover  \cite[Corollary 2.2]{MStorsion}.

\begin{cor}\label{cor:ms}Let $F$ be a unitarily flat vector bundle on $Z$. Assume that  $\dim Z$ is odd and $\delta(G)\neq 1$. Then for any $t>0$, we have
\begin{align}\label{eq:BMS}
    \Trs\[N^{\Lambda^\cdot(T^*Z)}\exp\(-t\Box^Z/2\)\]=0.
  \end{align}
In particular,
  \begin{align}\label{eq:tau=0}
    T(F)=1.
  \end{align}
\end{cor}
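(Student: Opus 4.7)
The plan is to reduce the global supertrace to a sum of orbital integrals via the graded version of the Selberg trace formula (Theorem \ref{thm:sel} applied with $\tau=\Lambda^\cdot(\fp^*)$ and the insertion of the degree operator), and then to kill each orbital integral by invoking Theorem \ref{prop:vanishT}. Using \eqref{eq:D=C}, we have
\begin{align*}
\Trs\bigl[N^{\Lambda^\cdot(T^*Z)}\exp(-t\Box^Z/2)\bigr]=\sum_{[\gamma]\in [\Gamma]}\vol\bigl(\Gamma(\gamma)\backslash X(\gamma)\bigr)\Tr[\rho(\gamma)]\Trs^{[\gamma]}\bigl[N^{\Lambda^\cdot(T^*X)}\exp(-tC^{\fg,X}/2)\bigr],
\end{align*}
where each $\gamma\in\Gamma$ is semisimple (the identity, or nonelliptic semisimple, since $\Gamma$ is torsion-free and cocompact).

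The key numerical input is the parity constraint. Since $\dim Z=m=\dim\fp$ is odd and $m-\delta(G)$ is even, $\delta(G)$ is odd; together with the hypothesis $\delta(G)\neq 1$, this forces $\delta(G)\geqslant 3$. For the trivial class, $\fb(1)=\fb$, so $\dim\fb(1)=\delta(G)\geqslant 3\geqslant 2$. For any nontrivial semisimple $\gamma$, after a $K$-conjugation we may assume $\ft(\gamma)\subset\ft$ and $k\in T$, whence by \eqref{eq:bbr} we have $\fb\subset\fb(\gamma)$, and therefore $\dim\fb(\gamma)\geqslant\delta(G)\geqslant 2$. Thus the hypothesis of Theorem \ref{prop:vanishT} holds for \emph{every} conjugacy class, so every orbital integral $\Trs^{[\gamma]}[N^{\Lambda^\cdot(T^*X)}\exp(-tC^{\fg,X}/2)]$ vanishes. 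Summing yields \eqref{eq:BMS}.

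To deduce \eqref{eq:tau=0}, I would use the Mellin transform representation of $\theta(s)$. For $\Re(s)$ large,
\begin{align*}
\theta(s)=-\frac{1}{\Gamma(s)}\int_0^\infty t^{s-1}\Trs\bigl[N^{\Lambda^\cdot(T^*Z)}\bigl(\exp(-t\Box^Z)-P_{\ker\Box^Z}\bigr)\bigr]\,dt,
\end{align*}
extended meromorphically to $\bC$. Letting $t\to\infty$ in \eqref{eq:BMS} shows that $\Trs[N^{\Lambda^\cdot(T^*Z)}P_{\ker\Box^Z}]=0$ as well, so the integrand vanishes identically, hence $\theta\equiv 0$, and thus $T(F)=\exp(\theta'(0)/2)=1$.

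The substantive obstacle is essentially already packaged in Theorem \ref{prop:vanishT} (and its parity count from \eqref{eq:moddtor0}); beyond that, the only point requiring care is the arithmetic of ranks ensuring $\delta(G)\geqslant 3$, together with justifying the interchange of sum and integral in the Mellin step, which is controlled by the heat-kernel decay estimates \eqref{eq:h11exp} and \eqref{eq:Fheat2}.
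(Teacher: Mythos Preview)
Your proof is correct and follows the same route as the paper: reduce to orbital integrals via the Selberg trace formula \eqref{eq:sel} together with \eqref{eq:D=C}, observe that the parity forces $\delta(G)\geqslant 3$, use \eqref{eq:bbr} to get $\dim\fb(\gamma)\geqslant\delta(G)\geqslant 2$ for every $[\gamma]$, and apply Theorem~\ref{prop:vanishT}. Your deduction of $T(F)=1$ via the Mellin transform and the $t\to\infty$ limit is also the standard way to make ``in particular'' explicit. One small remark: no interchange of sum and integral is actually needed in the Mellin step, since \eqref{eq:BMS} already gives the vanishing of the \emph{global} supertrace for each fixed $t$; the Mellin integral is then trivially zero.
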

\begin{proof}Since $\dim Z$ is odd, $\delta(G)$ is odd. Since 
	$\delta(G)\neq 1$,  $\delta(G)\g3$.  By \eqref{eq:bbr}, $\dim 
	\fb(\gamma)\g\delta(G)\g3$,  so  \eqref{eq:BMS} is a consequence of \eqref{eq:D=C}, \eqref{eq:sel} and \eqref{eq:vanishT2}.
\end{proof}

Suppose that $\delta(G)=1$. Up to sign, we fix an element  $a_{1}\in \fb$ such 
that $B(a_{1},a_{1})=1$. As in Subsection \ref{sec:semi}, set  
\index{M@$M$} \index{K@$K_{M}$}
\begin{align}\label{eq:MPMKM1}
&M=Z^{a_{1},\bot,0}(e^{a_{1}}),& K_{M}=K^{0}(e^{a_{1}}),
\end{align}
and \index{M@$\fm$} \index{P@$\fp_{\fm}$} \index{K@$\fk_{\fm}$}
\begin{align}\label{eq:mpkd1}
&\fm=\fz^{a_{1},\bot}(e^{a_{1}}), 
&\fp_{\fm}=\fp^{a_{1},\bot}(e^{a_{1}}), &&\fk_{\fm}=\fk(e^{a_{1}}).
\end{align}
As in Subsection \ref{sec:semi}, $M$  
is a connected reductive group with Lie algebra $\fm$, with maximal 
compact subgroup $K_{M}$, and  with Cartan decomposition 
$\fm=\fp_{\fm}\oplus\fk_{\fm}.$  Let \index{X@$X_{M}$}
\begin{align}\label{eq:XM}
X_{M}=M/K_{M}
\end{align}
be the corresponding symmetric space.  By definition, $T\subset M$ is a 
compact Cartan subgroup. Therefore $\delta(M)=0$, and $\dim \fp_{\fm}$ is even.

Assume  that $\delta(G)=1$ and that $G$ has  noncompact center, so 
that $\dim \fz_{\fp}\g1$.  By 
\eqref{eq:defbzp11},  we find that 
$a_{1}\in\fz_{\fp}$, so that  $Z^{0}(a_{1})=G$. By \eqref{eq:Z0a} and \eqref{eq:ZXM}, we have
\begin{align}\label{eq:G=RM}
	&G=\bR\times M, &K=K_{M}, &&X=\bR\times X_{M}. 
\end{align}

Let $\gamma\in G$ be a semisimple element such that $\dim 
\fb(\gamma)=1$. By Proposition \ref{prop:dbdbr}, we may assume that $\gamma=e^{a}k^{-1}$ with $a\in \fb$ and 
$k\in T$. 
\begin{prop}
	We have 
 	\begin{align}\label{eq:dgn11}
 		\Trs^{[1]}\[N^{\Lambda^{\cdot}(T^{*}X)}\exp\(-tC^{\fg, 
		X}/2\)\]=-\frac{1}{\sqrt{2\pi 
		t}}\[e\(TX_{M},\nabla^{TX_{M}}\)\]^{\max}.
 	\end{align}
	If $\gamma=e^{a}k^{-1}$ with $a\in \fb$, $a\neq0$, and $k\in T$, then 
	\begin{align}\label{eq:dgn1}
		\Tr^{[\gamma]}\[N^{\Lambda^{\cdot}(T^{*}X)}\exp\(-tC^{\fg,X}/2\)\]=-\frac{1}{\sqrt{2\pi t}}e^{-|a|^{2}/2t}\[e\(TX^{a,\bot}(\gamma),\nabla^{TX^{a,\bot}(\gamma)}\)\]^{\max}.
	\end{align}
\end{prop}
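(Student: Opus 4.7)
The plan is to apply Bismut's explicit formula (Theorem \ref{thm:Bis}) to both orbital integrals with $\tau=\Lambda^{\cdot}(\fp^{*})$, then exploit the noncompact central direction $\bR a_{1}$ to reduce everything to an orbital integral on the reductive subgroup $M$, whose associated symmetric space $X_{M}$ is even-dimensional since $\delta(M)=0$.

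First, I would use the standard alternating-trace identity
\begin{align*}
\Trs^{\Lambda^{\cdot}(\fp^{*})}\[N^{\Lambda^{\cdot}(\fp^{*})}\Ad(k^{-1})\exp(-i\ad(Y))\]=\frac{\p}{\p b}\Big|_{b=0}\det\bigl(1-e^{b}\Ad(k^{-1})\exp(-i\ad(Y))\bigr)\big|_{\fp},
\end{align*}
which is the analogue of (7.9.1) in \cite{B09} underlying Theorem~\ref{prop:vanishT}. Under the orthogonal splitting $\fp=\bR a_{1}\oplus \fp_{\fm}$, the fact that $a_{1}\in \fz_{\fp}$ forces both $\Ad(k^{-1})$ and $\ad(Y)$ to act trivially on $\bR a_{1}$ for every $Y\in \fk(\gamma)=\fk_{\fm}(k)$. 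Hence the determinant factors as $(1-e^{b})\cdot\det(1-e^{b}\Ad(k^{-1})e^{-i\ad(Y)})|_{\fp_{\fm}}$, and differentiation at $b=0$ collapses to $-\det(1-\Ad(k^{-1})e^{-i\ad(Y)})|_{\fp_{\fm}}$. This is the mechanism by which the vanishing of Theorem~\ref{prop:vanishT} degenerates in the boundary case $\dim\fb(\gamma)=1$ into a nonzero, explicit quantity.

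Next, I would substitute this back into Bismut's formula. The splitting $\fg=\bR\oplus\fm$ gives $\fz(\gamma)=\bR\oplus\fz_{\fm}(k)$, and by inspection $J_{\gamma}=J^{M}_{k^{-1}}$ because the $\bR a_{1}$ direction contributes trivially to the $\widehat{A}$-quotient and to all the normalizing determinants on $\fz_{0}^{\bot}$, $\fp_{0}^{\bot}(\gamma)$, $\fk_{0}^{\bot}(\gamma)$; similarly $\Tr^{\fp}[C^{\fk,\fp}]=\Tr^{\fp_{\fm}}[C^{\fk_{\fm},\fp_{\fm}}]$ and $\Tr^{\fk}[C^{\fk,\fk}]=\Tr^{\fk_{\fm}}[C^{\fk_{\fm},\fk_{\fm}}]$. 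Writing $(2\pi t)^{-\dim\fz(\gamma)/2}=(2\pi t)^{-1/2}(2\pi t)^{-\dim\fz_{\fm}(k)/2}$ produces the prefactor $1/\sqrt{2\pi t}$, and the remainder is exactly Bismut's formula applied on $M$ with $\tau_{M}=\Lambda^{\cdot}(\fp_{\fm}^{*})$ at the element $k^{-1}\in K_{M}$. Thus one obtains
\begin{align*}
\Trs^{[\gamma]}\[N^{\Lambda^{\cdot}(T^{*}X)}e^{-tC^{\fg,X}/2}\]=-\frac{e^{-|a|^{2}/2t}}{\sqrt{2\pi t}}\,\Trs^{[k^{-1}]_{M}}\[e^{-tC^{\fm,X_{M}}/2}\],
\end{align*}
with $|a|=0$ recovering the identity case.

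Finally, since $\delta(M)=0$ and $k^{-1}$ is elliptic, the centralizer $Z^{0}_{M}(k)$ also has vanishing fundamental rank (it contains $T$ as a compact Cartan), so $X_{M}(k^{-1})=Z^{0}_{M}(k)/K_{M}^{0}(k)$ is even-dimensional. The equivariant local index/Gauss--Bonnet theorem on symmetric spaces then yields
\begin{align*}
\Trs^{[k^{-1}]_{M}}\[e^{-tC^{\fm,X_{M}}/2}\]=\bigl[e\(TX_{M}(k^{-1}),\nabla^{TX_{M}(k^{-1})}\)\bigr]^{\max},
\end{align*}
independently of $t$. Combined with the identifications $X_{M}(1)=X_{M}$ and $X_{M}(k^{-1})=X^{a,\bot}(\gamma)$ (both following from the defining Cartan decompositions in Subsections \ref{sec:semi} and \ref{sec:sym}), this delivers \eqref{eq:dgn11} and \eqref{eq:dgn1}. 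The main obstacle lies in this last step: namely, extracting the $t$-independent equivariant Euler form from Bismut's integral. In practice this is done either by invoking Proposition \ref{cor:Euler} applied to the even-dimensional symmetric space $X_{M}(k^{-1})$, or by carrying out a direct Mathai--Quillen-type evaluation of the remaining Gaussian integral over $\fk_{\fm}(k)$ and recognizing the outcome as the Chern--Weil Pfaffian.
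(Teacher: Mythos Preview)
Your argument is correct and lands on the same intermediate identity as the paper,
\[
\Trs^{[\gamma]}\Bigl[N^{\Lambda^{\cdot}(T^{*}X)}e^{-tC^{\fg,X}/2}\Bigr]=-\frac{e^{-|a|^{2}/2t}}{\sqrt{2\pi t}}\,\Trs^{[k^{-1}]}\Bigl[e^{-tC^{\fm,X_{M}}/2}\Bigr],
\]
after which both proofs invoke the equivariant Gauss--Bonnet identity on the even-dimensional symmetric space $X_{M}$. The execution differs, however. The paper exploits the product structure $G=\bR\times M$, $X=\bR\times X_{M}$ of \eqref{eq:G=RM} directly at the heat-kernel level: $\Lambda^{\cdot}(\fp^{*})=\Lambda^{\cdot}(\bR^{*})\widehat{\otimes}\Lambda^{\cdot}(\fp_{\fm}^{*})$ makes the number operator split as $N\otimes 1+1\otimes N$, and since $\Trs^{\Lambda^{\cdot}(\bR^{*})}[1]=0$ while $\Trs^{\Lambda^{\cdot}(\bR^{*})}[N]=-1$, the whole orbital integral factors in one line into the scalar heat kernel on $\bR$ times the orbital supertrace on $X_{M}$. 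Your route---feeding everything through Bismut's formula \eqref{eq:trrJr} for $G$, differentiating the determinant on $\fp=\bR a_{1}\oplus\fp_{\fm}$, and then matching $J_{\gamma}$, the Casimir traces, and the dimension prefactor with Bismut's formula for $M$---is valid but amounts to re-deriving that \eqref{eq:trrJr} respects the product decomposition. For the final step, note that Proposition~\ref{cor:Euler} is not the right citation (it evaluates $[e(TX_{M},\nabla^{TX_{M}})]^{\max}$, not the orbital supertrace itself); the precise statement you need is \cite[Theorem~7.8.13]{B09}, which the paper cites directly.
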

\begin{proof}
%
%
	By \eqref{eq:G=RM}, for $\gamma=e^{a}k^{-1}$ with $a\in \fb$ and $k\in T$, we have 
	\begin{align}\label{eq:dgnc2}
		\Tr_{s}^{[\gamma]}\[N^{\Lambda^{\cdot}(T^{*}X)}\exp(-tC^{\fg,X}/2)\]=-\Tr^{[e^{a}]}\[\exp(t\Delta^{\bR}/2)\]\Tr_{s}^{[k^{-1}]}\[\exp(-tC^{\fm,X_{M}}/2)\],
   \end{align}
where $\Delta^{\bR}$ is the   Laplace-Beltrami operator acting on $C^{\infty}(\bR)$. 
 
Clearly, 
\begin{align}\label{eq:obR}
\Tr^{[e^{a}]}\[\exp(t\Delta^{\bR}/2)\]=\frac{1}{\sqrt{2\pi 
t}}e^{-|a|^{2}/2t}.
\end{align}
By \cite[Theorem 7.8.13]{B09}, we have 
\begin{align}\label{eq:dgnc1}
\Tr_{s}^{[1]}\[\exp(-tC^{\fm,X_{M}}/2)\]=\[e\(TX_{M},\nabla^{TX_{M}}\)\]^{\max}.
\end{align}
and
\begin{align}\label{eq:dgnc12}
\Tr_{s}^{[k^{-1}]}\[\exp(-tC^{\fm,X_{M}}/2)\]=\[e\(TX^{a,\bot}(\gamma),\nabla^{TX^{a,\bot}(\gamma)}\)\]^{\max}.
\end{align}
By \eqref{eq:dgnc2}-\eqref{eq:dgnc12}, we get \eqref{eq:dgn11} and \eqref{eq:dgn1}, 
which completes the proof of our proposition. 
\end{proof}


\section{The solution to Fried conjecture}\label{sec:dyn}
We use the notation in Sections \ref{Sec:Pred} and \ref{sec:sel}. 
Also, we assume that $\dim \fp$ is odd. The purpose of this section 
is to introduce the Ruelle dynamical zeta function on  $Z$ and to
state our main result, which contains the solution of the Fried 
conjecture in the case of locally symmetric spaces. 

This section is organized as follows.
In Subsection \ref{sec:Vinva}, we describe the closed geodesics on $Z$.


In Subsection \ref{sec:main}, we define the dynamical zeta function 
and state Theorem \ref{thm:01z}, which is the main result of the 
article. 

Finally, in Subsection \ref{Sec:dg=1}, we establish Theorem 
\ref{thm:01z} when $G$ has noncompact center and $\delta(G)=1$.

%
%
%

\subsection{The space of closed geodesics}\label{sec:Vinva}
 By \cite[Proposition 5.15]{DuistermaatKolkVaradarajan}, the set of  
 nontrivial closed geodesics  on  $Z$ consists of a disjoint union of smooth connected closed submanifolds
\begin{align}
\coprod_{[\gamma]\in [\Gamma]-[1]}B_{[\gamma]}.
\end{align}
Moreover, $B_{[\gamma]}$ is diffeomorphic to $\Gamma(\gamma)\backslash X(\gamma)$. All the  elements of $B_{[\gamma]}$ have the same length $|a|>0$, if $\gamma$ can be conjugated to $e^ak^{-1}$ as in \eqref{eq:asr}. \index{B@$B_{[\gamma]}$} Also, the geodesic flow induces a canonical locally free action of $\mathbb{S}^1$ on $B_{[\gamma]}$, so that $\mathbb{S}^1\backslash B_{[\gamma]}$ is a closed orbifold. The $\mathbb{S}^1$-action is not necessarily effective. Let
\begin{align}
m_{[\gamma]}=\left|\ker\big(\bbS^1\to {\rm 
Diff}(B_{[\gamma]})\big)\right|\in \mathbf{N}^{*}
\end{align}
be the generic multiplicity.\index{M@$m_{[\gamma]}$}

Following \cite{SatakeGaussB}, if $S$ is a closed Riemannian orbifold 
with Levi-Civita connection $\nabla^{TS}$, then $e\big(TS,\nabla^{TS}\big)\in \Omega^{\dim S}\big(S,o(TS)\big)$ is still well define, and the  Euler characteristic  $\chi_{\rm orb}(S)\in \mathbf{Q}$ is given by \index{C@$\chi_{\rm orb}$}
\begin{align}
\chi_{\rm orb}(S)=\int_S e\big(TS,\nabla^{TS}\big).
\end{align}


\begin{prop}\label{prop:v=e} For $\gamma\in \Gamma-\{1\}$, the following identity holds:
\begin{align}\label{eq:V=e}
  \frac{\chi_{\rm orb}(\mathbb{S}^1\backslash B_{[\gamma]})}{m_{[\gamma]}}=\frac{\vol\big(\Gamma(\gamma)\backslash X(\gamma)\big)}{|a|}\[e\(TX^{a,\bot}(\gamma),\nabla^{TX^{a,\bot}(\gamma)}\)\]^{\max}.
\end{align}
\end{prop}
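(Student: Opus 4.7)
The plan is to exploit the diffeomorphism $B_{[\gamma]}\cong\Gamma(\gamma)\backslash X(\gamma)$ together with the splitting $X(\gamma)=\bR\times X^{a,\bot}(\gamma)$ from \eqref{eq:ZXM}, under which the flow $e^{ta}$ translates the first factor by $t|a|$. Thus the $\mathbb{S}^{1}$-action on $B_{[\gamma]}$ arising from the geodesic flow with $[0,1]$-parameterization of a closed geodesic of length $|a|$ is induced by the action of $\bR/\bZ$ on $X(\gamma)$, where $\theta$ acts as translation by $\theta|a|$ on the $\bR$-factor. This action commutes with $\Gamma(\gamma)$ because $\bR$ is central in $Z^{0}(\gamma)$ by \eqref{eq:Z0a}.

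The first step is to identify the kernel of $\mathbb{S}^{1}\to\mathrm{Diff}(B_{[\gamma]})$. By Proposition \ref{prop:cocompact}, $\Gamma(\gamma)$ is cocompact in $Z(\gamma)$, so its intersection with the central $\bR$ of $Z^{0}(\gamma)$ is an infinite cyclic group $v\bZ$ for some $v>0$. An element $\theta\in\mathbb{S}^{1}$ acts trivially on $B_{[\gamma]}$ exactly when $\theta|a|\in v\bZ$, so that kernel has order $|a|/v$; by the very definition of $m_{[\gamma]}$ this forces $v=|a|/m_{[\gamma]}$.

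Next I would pass to the effective quotient $\widetilde{\mathbb{S}^{1}}=\mathbb{S}^{1}/(\bZ/m_{[\gamma]})$, which acts locally freely on $B_{[\gamma]}$, so $\mathbb{S}^{1}\backslash B_{[\gamma]}$ is a closed Riemannian orbifold, and the projection $X(\gamma)\to X^{a,\bot}(\gamma)$ descends to a Riemannian submersion $B_{[\gamma]}\to\mathbb{S}^{1}\backslash B_{[\gamma]}$ whose generic fibre is a circle of length $|a|/m_{[\gamma]}$. In particular, $\mathbb{S}^{1}\backslash B_{[\gamma]}$ is locally Riemannian isometric to $X^{a,\bot}(\gamma)$. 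Because the Euler form $e(TX^{a,\bot}(\gamma),\nabla^{TX^{a,\bot}(\gamma)})$ is $Z^{a,\bot,0}(\gamma)$-invariant and of top degree, it equals the constant $\bigl[e(TX^{a,\bot}(\gamma),\nabla^{TX^{a,\bot}(\gamma)})\bigr]^{\max}$ times $dv_{X^{a,\bot}(\gamma)}$ at every point, and the orbifold Gauss-Bonnet-Chern theorem therefore gives
\begin{equation*}
\chi_{\mathrm{orb}}(\mathbb{S}^{1}\backslash B_{[\gamma]})=\bigl[e\bigl(TX^{a,\bot}(\gamma),\nabla^{TX^{a,\bot}(\gamma)}\bigr)\bigr]^{\max}\,\vol(\mathbb{S}^{1}\backslash B_{[\gamma]}),
\end{equation*}
while Fubini along the submersion with fibre length $|a|/m_{[\gamma]}$ yields $\vol(\Gamma(\gamma)\backslash X(\gamma))=(|a|/m_{[\gamma]})\,\vol(\mathbb{S}^{1}\backslash B_{[\gamma]})$. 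Combining these and dividing by $m_{[\gamma]}$ delivers \eqref{eq:V=e}.

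The main obstacle is the constant-tracking in the first step: verifying that $\Gamma(\gamma)\cap\bR=(|a|/m_{[\gamma]})\bZ$. This requires being careful about the possible disconnectedness of $Z(\gamma)$ relative to $Z^{0}(\gamma)$ --- one must argue that elements of $\Gamma(\gamma)$ not lying in $Z^{0}(\gamma)$ do not introduce a further $\bR$-translation --- and about normalizing the $[0,1]$-parameterization so that $\theta=1$ corresponds exactly to translation by $|a|$. The geometric heart of the argument (orbifold Gauss-Bonnet together with the invariance of the Euler form) is comparatively routine once this constant is pinned down.
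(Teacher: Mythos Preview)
Your approach is essentially the same as the paper's: identify $B_{[\gamma]}$ with $\Gamma(\gamma)\backslash X(\gamma)$, use the splitting \eqref{eq:ZXM}, then combine orbifold Gauss--Bonnet (the paper's \eqref{eq:orb1}) with the fibre-length/Fubini computation (the paper's \eqref{eq:orb2}). The geometric core is correct.

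However, you have manufactured your own ``main obstacle''. You try to pin down $m_{[\gamma]}$ by computing $\Gamma(\gamma)\cap\bR$, which, as you note, is delicate because $\Gamma(\gamma)$ need not sit inside $Z^{0}(\gamma)$ and the triviality condition for $e^{\theta a}$ is $e^{\theta a}\in\Gamma(\gamma)K(\gamma)$, not $e^{\theta a}\in\Gamma(\gamma)$. The paper sidesteps this entirely: since $\gamma=e^{a}k^{-1}$, one has $e^{a}=\gamma k$ with $\gamma\in\Gamma(\gamma)$ and $k\in K(\gamma)$; because $k$ commutes with every element of $Z(\gamma)$, left multiplication by $e^{a}$ on $\Gamma(\gamma)\backslash X(\gamma)$ equals left multiplication by $\gamma$, hence is the identity. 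This single observation yields the $\bR/\bZ\simeq\mathbb{S}^{1}$-action directly, and then \eqref{eq:orb2} follows from the generic-fibre-length argument you already give, without ever needing to identify $\Gamma(\gamma)\cap\bR$ or worry about components of $Z(\gamma)$. Replace your first step with this factorization and the proof is complete.
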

\begin{proof}
Take $\gamma\in \Gamma- \{1\}$. We can assume that $\gamma=e^ak^{-1}$ as in \eqref{eq:asr} with $a\neq0$. By \eqref{eq:Zr} and \eqref{eq:Gr}, for $t\in \bR$, $e^{ta}$ commutes with  elements of $\Gamma(\gamma)$. Thus,
$e^{ta}$  acts on the left on $\Gamma(\gamma)\backslash X(\gamma)$. Since $e^a=\gamma k$,
$\gamma\in \Gamma(\gamma)$, $k\in K(\gamma)$ and $k$ commutes  with 
elements of $Z(\gamma)$, we see that $e^a$ acts as identity on 
$\Gamma(\gamma)\backslash X(\gamma)$. This induces an  
$\bR/\mathbf{Z}\simeq \mathbb{S}^1 $ action on $\Gamma(\gamma)\backslash X(\gamma)$ which coincides with the $\mathbb{S}^1$-action on $B_{[\gamma]}$. Therefore,
\begin{align}\label{eq:orb1}
\chi_{\rm orb}(\mathbb{S}^1\backslash B_{[\gamma]})=\vol(\mathbb{S}^1\backslash B_{[\gamma]})\[e\(TX^{a,\bot}(\gamma),\nabla^{TX^{a,\bot}(\gamma)}\)\]^{\max}
\end{align}
and
\begin{align}\label{eq:orb2}
\frac{\vol(\mathbb{S}^1\backslash B_{[\gamma]})}{m_{[\gamma]}}=\frac{\vol\big(\Gamma(\gamma)\backslash X(\gamma)\big)}{|a|}.
\end{align}
By \eqref{eq:orb1} and \eqref{eq:orb2}, we get \eqref{eq:V=e}. The 
proof of our proposition  is completed.
\end{proof}

\begin{cor}\label{cor:V=0}
Let $\gamma\in \Gamma- \{1\}$.  If $\dim \fb(\gamma)\g 2$, then
\begin{align}\label{eq:V=0}
  \chi_{\rm orb}\big(\mathbb{S}^1\backslash B_{[\gamma]}\big)=0.
\end{align}
\end{cor}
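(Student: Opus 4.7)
The plan is to reduce the corollary, via Proposition~\ref{prop:v=e}, to the vanishing of the Euler density $\bigl[e\bigl(TX^{a,\bot}(\gamma),\nabla^{TX^{a,\bot}(\gamma)}\bigr)\bigr]^{\max}$. Since $X^{a,\bot}(\gamma)$ is the Riemannian symmetric space attached to the reductive group $Z^{a,\bot,0}(\gamma)$, Proposition~\ref{cor:Euler} applied to $Z^{a,\bot,0}(\gamma)$ in place of $G$ shows that this Euler density vanishes as soon as $\delta\bigl(Z^{a,\bot,0}(\gamma)\bigr)\neq 0$. So the corollary reduces to the purely Lie-algebraic inequality $\delta\bigl(Z^{a,\bot,0}(\gamma)\bigr)\g 1$.

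To evaluate $\delta\bigl(Z^{a,\bot,0}(\gamma)\bigr)$, I would use the direct product decomposition $Z^{0}(\gamma)=\bR\times Z^{a,\bot,0}(\gamma)$ from \eqref{eq:Z0a}, which realizes $\bR a$ as a central, one-dimensional direct summand of $\fz(\gamma)$. Every $\theta$-stable Cartan subalgebra of $\fz(\gamma)$ then contains $a$ and splits as $\bR a \oplus \fh''$ with $\fh''\subset\fz^{a,\bot}(\gamma)$ a $\theta$-stable Cartan. Passing to the minimum over noncompact dimensions yields
\[\delta\bigl(Z^{a,\bot,0}(\gamma)\bigr)=\delta\bigl(Z^{0}(\gamma)\bigr)-1=\dim\fh(\gamma)_{\fp}-1,\]
so it suffices to prove $\dim\fh(\gamma)_{\fp}\g 2$.

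Finally, since $\dim Z$ is odd, $\delta(G)=\dim\fb$ is odd. If $\delta(G)\g 3$, then the inequality $\dim\fh(\gamma)_{\fp}\g\dim\fb$ from \eqref{eq:bhpbr} already gives $\dim\fh(\gamma)_{\fp}\g 3\g 2$, and there is nothing more to do. If $\delta(G)=1$, the assumption $\dim\fb(\gamma)\g 2>1=\dim\fb$ combined with Proposition~\ref{prop:dbdbr} shows that $\gamma$ cannot be conjugated into $H$. Suppose now, toward a contradiction, that $\dim\fh(\gamma)_{\fp}=1=\delta(G)$. Then $\fh(\gamma)$ would be a $\theta$-stable Cartan subalgebra of $\fg$ of minimal noncompact dimension, and by the uniqueness statement recalled immediately before \eqref{eq:H=BT} it would be $K$-conjugate to $\fh$; since $\gamma$ lies in the Cartan subgroup attached to $\fh(\gamma)$, this would conjugate $\gamma$ into $H$, contradicting the previous sentence. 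Hence $\dim\fh(\gamma)_{\fp}\g 2$, completing the reduction.

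The only delicate step is the case $\delta(G)=1$, where it is the sharp equivalence in Proposition~\ref{prop:dbdbr}, and not merely the inequality \eqref{eq:bhpbr}, that is needed to convert the hypothesis on $\fb(\gamma)$ into a bound on $\dim\fh(\gamma)_{\fp}$. All remaining steps are formal consequences of the splitting $Z^0(\gamma)=\bR\times Z^{a,\bot,0}(\gamma)$, Proposition~\ref{cor:Euler}, and the uniqueness of the minimal noncompact Cartan of $\fg$.
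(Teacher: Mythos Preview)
Your proof is correct and follows essentially the same route as the paper: reduce via Proposition~\ref{prop:v=e} and Proposition~\ref{cor:Euler} to showing $\delta\bigl(Z^{a,\bot,0}(\gamma)\bigr)\g 1$, use \eqref{eq:Z0a} to rewrite this as $\delta\bigl(Z^{0}(\gamma)\bigr)\g 2$, and then split into the cases $\delta(G)\g 3$ and $\delta(G)=1$. The only cosmetic difference is that in the $\delta(G)=1$ case the paper invokes Proposition~\ref{prop:dgdz} directly for the strict inequality, whereas you unpack that proposition's proof inline via the uniqueness of the minimal noncompact Cartan subalgebra.
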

\begin{proof}
By Propositions \ref{cor:Euler} and \ref{prop:v=e}, it is enough to show that
  \begin{align}\label{eq:d1ab}
   \delta(Z^{a,\bot,0}(\gamma))\g 1.
  \end{align}
By \eqref{eq:Z0a} and \eqref{eq:dgdz}, we have
  \begin{align}\label{eq:dza}
    \delta(Z^{a,\bot,0}(\gamma))=\delta(Z^0(\gamma))-1\g \delta(G)-1.
  \end{align}
  Recall $\dim \fp$ is odd, therefore $\delta(G)$ is odd.  If $\delta(G)\g 3$,  by \eqref{eq:dza}, we get \eqref{eq:d1ab}.
If $\delta(G)=1$, then $\dim \fb(\gamma)\g 2>\delta(G)$.
By Propositions \ref{prop:dgdz} and \ref{prop:dbdbr}, the inequality 
in \eqref{eq:dza} is strict, which implies \eqref{eq:d1ab}. The proof 
of our corollary  is completed.
\end{proof}

\begin{re}\label{re:20}
By Theorem \ref{prop:vanishT} and Corollary \ref{cor:V=0}, both $\Trs^{[\gamma]}\[N^{\Lambda^\cdot(T^*X)}\exp\(-tC^{\fg,X}/2\)\]$ and $\chi_{\rm orb}\big(\mathbb{S}^1\backslash B_{[\gamma]}\big)$ vanish when $\dim \fb(\gamma)\g2$.
\end{re}

\subsection{Statement of the main result}\label{sec:main}
Recall that $\rho:\Gamma\to \mathrm{U}(r)$ is a unitary 
representation of $\Gamma$ and that $(F,\nabla^F,g^F)$ is the 
unitarily  flat vector bundle on $Z$ associated with $\rho$.
\begin{defin}\label{def:Rr}
The Ruelle dynamical zeta function $R_\rho(\sigma)$ is said to be well defined, if the following properties hold:\index{X@$\Xi_{\rho}$}
\begin{enumerate}
  \item For $\sigma \in \bC, \Re(\sigma)\gg1$, the sum \begin{align}\label{eq:Rrho}
  \Xi_\rho(\sigma)=\sum_{[\gamma]\in [\Gamma]-\{1\}}\frac{\chi_{\rm orb}\big(\mathbb{S}^1\backslash B_{[\gamma]}\big)}{m_{[\gamma]}}\Tr[\rho(\gamma)]e^{-\sigma|a|}
\end{align}
 converges to a holomorphic function.
  \item The function $R_\rho(\sigma)=\exp\big(\Xi_\rho(\sigma)\big)$ \index{R@$R_\rho(\sigma)$}has a meromorphic extension to $\sigma\in \bC$.
\end{enumerate}
\end{defin}
If  $\delta(G)\neq1$, by Corollary \ref{cor:V=0},
\begin{align}\label{eq:Rr=0}
  R_\rho(\sigma)\equiv1.
\end{align}

The main result of this article is the solution of the Fried conjecture. We restate Theorem \ref{thm:01} as follows.
\begin{thm}\label{thm:01z}
The dynamical zeta function  $R_\rho(\sigma)$ is  well defined. There 
exist explicit constants $C_\rho\in \bR^{*}$ and $r_\rho\in \mathbf{Z}$ (c.f. \eqref{eq:Cr}) such that, when  $\sigma\to 0$, we have
\begin{align}\label{eq:t01z}
  R_\rho(\sigma)=C_\rho T(F)^2\sigma^{r_\rho}+\cO\(\sigma^{r_\rho+1}\).
\end{align}
If $H^\cdot(Z,F)=0$, then
\begin{align}\label{eq:t02z}
  &C_\rho=1,&r_\rho=0,
\end{align}
so that
\begin{align}\label{eq:t03z}
R_\rho(0)=T(F)^2.
\end{align}
\end{thm}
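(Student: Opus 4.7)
The plan is to reduce first to a single genuinely hard case. By Corollary~\ref{cor:ms} combined with Corollary~\ref{cor:V=0}, if $\delta(G)\neq 1$ then $T(F)=1$ and every orbifold Euler characteristic $\chi_{\rm orb}(\bbS^{1}\backslash B_{[\gamma]})$ vanishes, so $R_{\rho}(\sigma)\equiv 1$ and \eqref{eq:t01z}--\eqref{eq:t03z} hold trivially with $C_{\rho}=1$, $r_{\rho}=0$. If $\delta(G)=1$ and $G$ has noncompact center, the splitting \eqref{eq:G=RM} factors $Z$ as $\bbS^{1}\times Z_{M}$ with $\delta(M)=0$, and \eqref{eq:dgn11}--\eqref{eq:dgn1} reduce the entire statement to the circle case by K\"unneth. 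The essential task is therefore $\delta(G)=1$ with $G$ of compact center, which is what the remainder of the plan addresses.

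In that case, Corollary~\ref{cor:V=0} shows that only classes having a representative $e^{a}k^{-1}$ with $a\in\fb$, $k\in T$ contribute to $\Xi_{\rho}(\sigma)$. I would introduce the family of formal Selberg zeta functions
\begin{align*}
Z_{j}(\sigma)=\exp\Big(-\sum_{[\gamma]\neq 1}\Tr[\rho(\gamma)]\,\frac{\chi_{\rm orb}(\bbS^{1}\backslash B_{[\gamma]})}{m_{[\gamma]}}\cdot\frac{\Tr^{\Lambda^{j}(\fn^{*})}[\Ad(k^{-1})]}{\big|\det(1-\Ad(e^{a}k^{-1}))|_{\fn\oplus\theta\fn}\big|^{1/2}}\,e^{-\sigma|a|}\Big)
\end{align*}
and check by pure linear algebra, expanding the determinant on $\fn\oplus\theta\fn$, the factorization \eqref{eq:RbyZin} of $R_{\rho}$ as an alternating product of the $Z_{j}$. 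This localizes the problem to the meromorphy and zeros of each individual $Z_{j}$.

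The analytic heart of the argument is the identity \eqref{eq:detfoin}, expressing $Z_{j}(\sigma)$ as a graded regularized determinant $\det_{\rm gr}(C_{j}+\sigma_{j}+\sigma^{2})\exp(r\vol(Z)P_{j}(\sigma))$ of a Casimir operator $C_{j}$ acting on a virtual bundle on $Z$. Since the natural $K_{M}$-representation $\sum_{i}(-1)^{i}\Lambda^{i}(\fp_{\fm}^{*})\otimes\Lambda^{j}(\fn^{*})$ is only realized on $\Gamma\backslash G/K_{M}$, the crux is to lift it to a virtual $K$-representation $E_{j}\in RO(K)$ as in \eqref{eq:in22}; this is exactly the gap left open in \cite{MStorsion}, and I would close it by a case-by-case analysis based on the classification of the real simple Lie algebras with $\delta(\fg)=1$. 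Granted this lift, one applies the Selberg trace formula (Theorem~\ref{thm:sel}) to $e^{-tC_{j}/2}$ and invokes Bismut's explicit semisimple orbital integral formula \eqref{eq:trrJr}: the identity class contributes the polynomial $r\vol(Z)P_{j}(\sigma)$, while the hyperbolic classes at $\gamma=e^{a}k^{-1}$ assemble, after a Mellin transform in $t$ and \eqref{eq:Mch}, into $-\log Z_{j}(\sigma)$. Combining \eqref{eq:detfoin} with \eqref{eq:RbyZin}, with the Laurent expansion \eqref{eq:tors} of $T(\sigma)$ and with the $RO(K)$-identity $\sum_{i}(-1)^{i-1}i\,\Lambda^{i}(\fp^{*})=\sum_{j}(-1)^{j}E_{j}$, produces \eqref{eq:t01z} together with explicit formulas for $C_{\rho}$ and $r_{\rho}$.

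For the acyclic statement \eqref{eq:t02z} I would switch to representation theory. Letting $V\subset L^{2}(\Gamma\backslash G,\widehat{p}^{*}F)$ denote the closure of the subspace on which the augmentation ideal of $\cZ(\fg)$ acts trivially, Matsushima's identification gives $H^{\cdot}(Z,F)\simeq H^{\cdot}(\fg,K;V)$, and the Vogan--Zuckerman \cite{VoganZuckerman}/Vogan \cite{Vogan2}/Salamanca-Riba \cite{Salamanca} theorems show that $H^{\cdot}(Z,F)=0$ is equivalent to $V=0$. I would then compute the orders and leading coefficients of the $Z_{j}$ at the points $\sigma=-(j-\tfrac{1}{2}\dim\fn)|\alpha|$ via the Hecht--Schmid character identity \cite{HechtSchmid}, which expresses these local data in terms of the $\fn$-homology of $V$; when $V=0$ all such contributions disappear, and the cancellations in \eqref{eq:RbyZin} force $r_{\rho}=0$ and $C_{\rho}=1$. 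The principal obstacle throughout is the first analytic step: producing the virtual bundle $E_{j}\in RO(K)$ and then carrying out the delicate trace-formula bookkeeping that transforms Bismut's orbital integral formula into the clean regularized determinant identity \eqref{eq:detfoin}. Everything downstream is, in principle, either linear algebra or an appeal to results already in the literature.
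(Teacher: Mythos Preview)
Your outline matches the paper's proof closely: the same trichotomy on $\delta(G)$ and the center, the same Selberg zeta functions $Z_j$ and their alternating-product factorization \eqref{eq:RbyZin} of $R_\rho$, the lift of $\fn$ and $\fp_\fm$ from $RO(K_M)$ to $RO(K)$ via the classification of real simple Lie algebras with $\delta=1$, the determinant identity \eqref{eq:detfoin} obtained from Bismut's orbital-integral formula through the Selberg trace formula, and the Hecht--Schmid and Vogan--Zuckerman/Salamanca-Riba endgame for \eqref{eq:t02z}.

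One step as written would fail. In the noncompact-center case you assert that \eqref{eq:G=RM} factors $Z$ as $\bbS^1\times Z_M$ and appeal to K\"unneth; but $\Gamma\subset\bR\times M$ need not be a product subgroup, so $Z=\Gamma\backslash X$ is in general not a Riemannian product and no K\"unneth argument on $Z$ is available. The paper (Theorem~\ref{thm:dg=1Rd1}) sidesteps this by noting that it is the \emph{orbital integrals}, not the quotient, that factor: since $X=\bR\times X_M$ the heat kernel is a product, giving \eqref{eq:dgnc2}, and then the Mellin-transform argument through the trace formula yields $R_\rho(\sigma)=\exp\big(r\vol(Z)[e(TX_M,\nabla^{TX_M})]^{\max}\sigma\big)\,T(\sigma^2)$ directly. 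The repair is local and the rest of your plan is sound.
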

\begin{proof}
When $\delta(G)\neq 1$,  Theorem \ref{thm:01z} is a consequence of 
\eqref{eq:tau=0} and \eqref{eq:Rr=0}. When  $\delta(G)=1$ and when
$G$ has noncompact center, we will show  Theorem \ref{thm:01z}
in Subsection \ref{Sec:dg=1}. When  $\delta(G)=1$ and when
$G$ has compact center, we will show that $R_\rho(\sigma)$ is well 
defined such that \eqref{eq:t01z} holds in Section \ref{Sec:qusi}, and we will show \eqref{eq:t02z} in Section \ref{sec:proofthm1}.
\end{proof}

\subsection{Proof of Theorem \ref{thm:01z} when $G$ has noncompact center and 
$\delta(G)=1$}\label{Sec:dg=1}
We assume that $\delta(G)=1$ and that $G$  has 
noncompact center. Let us show the  following refined version of Theorem \ref{thm:01z}. 

\begin{thm}\label{thm:dg=1Rd1}
 There is $\sigma_0>0$ \index{S@$\sigma_0$}such that
\begin{align}\label{eq:seed1}
   \sum_{[\gamma]\in  [\Gamma]- \{1\}}\frac{\left| \chi_{\rm orb}\big(\bbS^1\backslash B_{[\gamma]}\big)\right|}{m_{[\gamma]}}e^{-\sigma_0|a|}<\infty.
\end{align}	
The dynamical zeta function $R_{\rho}(\sigma)$ extends 
meromorphically to $\sigma\in \bC$ such that
\begin{align}
	\label{eq:R=Tdg=1}
	R_{\rho}(\sigma)=\exp\({r 
	\vol(Z)\[e\(TX_{M},\nabla^{TX_{M}} 
	\)\]^{\max}\sigma}\)T\(\sigma^{2}\).
\end{align}
If $\chi'(Z,F)=0$, then $R_{\rho}(\sigma) $ is holomorphic at $\sigma=0$ and 
\begin{align}\label{eq:RTd1}
	R_{\rho}(0)=T(F)^{2}.
\end{align}	
\end{thm}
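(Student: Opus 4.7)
The plan is to compute the heat supertrace $\Trs[N^{\Lambda^{\cdot}(T^{*}Z)}\exp(-t\Box^{Z})]$ in closed form via the Selberg trace formula, recover $\log T(\sigma^{2})$ from it by Mellin transform, and compare the result term-by-term with $\Xi_{\rho}(\sigma)$. The starting point is to insert the orbital integrals \eqref{eq:dgn11} and \eqref{eq:dgn1} into \eqref{eq:sel} (after rescaling $t\mapsto 2t$ to pass from $C^{\fg,X}/2$ to $\Box^{Z}=C^{\fg,Z,\rho}$), use Proposition \ref{prop:v=e} to convert each factor $\vol(\Gamma(\gamma)\backslash X(\gamma))[e(TX^{a,\bot}(\gamma),\nabla^{TX^{a,\bot}(\gamma)})]^{\max}$ into $|a|\,\chi_{\rm orb}(\bbS^{1}\backslash B_{[\gamma]})/m_{[\gamma]}$, and observe that Theorem \ref{prop:vanishT} together with Corollary \ref{cor:V=0} suppresses all classes with $\dim\fb(\gamma)\g 2$. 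This should yield, for every $t>0$, the exact identity
\begin{equation*}
\Trs\[N^{\Lambda^{\cdot}(T^{*}Z)}e^{-t\Box^{Z}}\]=-\frac{1}{\sqrt{4\pi t}}\(r\vol(Z)\[e(TX_{M},\nabla^{TX_{M}})\]^{\max}+\sum_{[\gamma]\neq[1]}|a|\Tr[\rho(\gamma)]\frac{\chi_{\rm orb}(\bbS^{1}\backslash B_{[\gamma]})}{m_{[\gamma]}}e^{-|a|^{2}/(4t)}\).
\end{equation*}
Absolute convergence of the sum, and by Laplace transforming against $e^{-\sigma^{2}t}dt$ also \eqref{eq:seed1} for some $\sigma_{0}>0$, should follow from the estimate \eqref{eq:h11exp} applied to the scalar heat trace together with the uniform boundedness of $[e(TX^{a,\bot}(\gamma),\nabla)]^{\max}$ supplied by Proposition \ref{cor:Euler}.

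The second step is the Mellin transform $\log T(\sigma^{2})=-\frac{d}{ds}|_{s=0}\frac{1}{\Gamma(s)}\int_{0}^{\infty}t^{s-1}e^{-\sigma^{2}t}\Trs[N^{\Lambda^{\cdot}(T^{*}Z)}e^{-t\Box^{Z}}]\,dt$. Substituting the identity above, the constant term contributes $\int_{0}^{\infty}t^{s-3/2}e^{-\sigma^{2}t}dt=\Gamma(s-1/2)\sigma^{1-2s}$, whose $s$-derivative at $0$ (where $1/\Gamma(s)\sim s$ and $\Gamma(-1/2)=-2\sqrt{\pi}$) gives precisely $-r\vol(Z)[e(TX_{M},\nabla^{TX_{M}})]^{\max}\sigma$. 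Each non-identity term contributes a Mellin-Bessel integral proportional to $(|a|/(2\sigma))^{s-1/2}K_{s-1/2}(|a|\sigma)$; using $K_{-1/2}(x)=\sqrt{\pi/(2x)}\,e^{-x}$ and differentiating at $s=0$ reduces this to $\Tr[\rho(\gamma)]\chi_{\rm orb}(\bbS^{1}\backslash B_{[\gamma]})/m_{[\gamma]}\cdot e^{-|a|\sigma}$. Summing yields
\begin{equation*}
\log T(\sigma^{2})=-r\vol(Z)\[e(TX_{M},\nabla^{TX_{M}})\]^{\max}\sigma+\Xi_{\rho}(\sigma),\qquad \Re(\sigma)\gg 1,
\end{equation*}
which exponentiates to \eqref{eq:R=Tdg=1}. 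The meromorphic extension of $R_{\rho}$ to $\bC$ is then inherited from that of $T(\sigma^{2})$ recalled in Subsection \ref{sec:anator}.

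For the acyclic case, when $\chi'(Z,F)=0$ the Laurent expansion \eqref{eq:tors} collapses to $T(\sigma^{2})=T(F)^{2}+O(\sigma^{2})$, so $T(\sigma^{2})$ is holomorphic at $\sigma=0$ with value $T(F)^{2}$; since $\exp(r\vol(Z)[e(TX_{M},\nabla^{TX_{M}})]^{\max}\sigma)=1$ at $\sigma=0$, \eqref{eq:RTd1} follows immediately from \eqref{eq:R=Tdg=1}.

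The main obstacle I foresee is the Mellin bookkeeping in the second step: the $1/\Gamma(s)$ prefactor vanishes linearly at $s=0$, so its cancellation with the regular values of the Mellin-Bessel integrals must be tracked precisely, and termwise differentiation of the infinite sum over $[\gamma]$ must be justified uniformly by the convergence estimates from the first step. Once this is in place, the remainder of the argument is essentially algebraic manipulation of explicit formulas.
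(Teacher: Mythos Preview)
Your proposal is correct and follows essentially the same route as the paper. The only differences are cosmetic: the paper uses the elementary integral \eqref{eq:elect} in place of your $K_{s-1/2}$ (which sidesteps the obstacle you flagged, since the $t$-integral is already entire in $s$ and $\partial_{s}(1/\Gamma(s))|_{s=0}=1$ picks out its value at $s=0$), and it derives \eqref{eq:seed1} directly by applying \eqref{eq:h11exp} to the $N$-weighted supertrace via \eqref{eq:thoghmazeta4d1} rather than through a separate uniform bound on the Euler form factors.
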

\begin{proof}
Following \eqref{eq:zetaP}, for $(s,\sigma)\in \bC\times \bR$ such 
that $\Re(s)> m/2$ and $\sigma>0$, put
\begin{multline}\label{eq:T12d1}
  \theta_{\rho}(s,\sigma)=-\Tr\[N^{\Lambda^{\cdot}(T^{*}Z)}\(C^{\fg,Z,\rho}+\sigma\)^{-s}\]\\
  =-\frac{1}{\Gamma(s)}\int_0^\infty \Trs\[N^{\Lambda^{\cdot}(T^{*}Z)}\exp\big(-t (C^{\fg,Z,\rho}+\sigma)\big)\]t^{s-1}dt.
\end{multline}
Let us show that there is $\sigma_{0}>0$ such that 
 \eqref{eq:seed1} holds true and that  for 
 $\sigma>\sigma_{0}$, we have  
 \begin{align}\label{eq:Zjdd1}
\Xi_{\rho}(\sigma)=\frac{\p }{\p s}\theta_{\rho}(0,\sigma^2)  
+r\vol(Z)\[e\(TX_{M},\nabla^{TX_{M}}\)\]^{\max}\sigma.
\end{align}

By \eqref{eq:dgn11}, 
for $(s,\sigma)\in \bC\times \bR$ such that $\Re(s)>1/2$ and 
$\sigma>0$, the function
\begin{align}\label{eq:t1d1}
  \theta_{\rho,1}(s,\sigma)=&-\frac{r\vol(Z)}{\Gamma(s)}\int_0^\infty 
  \Trs^{[1]}\[N^{\Lambda^{\cdot}(T^{*}X)}\exp\(-t\big(C^{\fg,X}+\sigma\big)\)\]t^{s-1}dt
\end{align}
is well defined so that 
\begin{align}\label{eq:t1exd1}
  \theta_{\rho,1}(s,\sigma)=\frac{r\vol(Z)}{2\sqrt{\pi}}\[e\(TX_{M},\nabla^{TX_{M}}\)\]^{\max}\frac{\Gamma(s-1/2)}{\Gamma(s)}\sigma^{1/2-s}.
\end{align}
Therefore, for $\sigma>0$ fixed, the function $s\to 
\theta_{\rho,1}(s,\sigma)$ has a meromorphic extension to $s\in\bC $ 
which is holomorphic at $s=0$ so that
\begin{align}\label{eq:t1fid1}
  \frac{\p}{\p s} \theta_{\rho,1}\(0,\sigma\)=-r\vol(Z)\[e\(TX_{M},\nabla^{TX_{M}}\)\]^{\max}\sigma^{1/2}.
\end{align}

For $(s,\sigma)\in \bC\times \bR$ such that 
$\Re(s)>m/2$ and $\sigma>0$, set 
\begin{align}\label{eq:M1d1}
  \theta_{\rho,2}(s,\sigma)=\theta_{\rho}(s,\sigma)-\theta_{\rho,1}(s,\sigma).
\end{align}
By \eqref{eq:vanishT2},   \eqref{eq:dgn1}, \eqref{eq:V=e}, and 
\eqref{eq:V=0}, for $[\gamma]\in [\Gamma]-\{1\}$, we have 
\begin{multline}\label{eq:thoghmazeta4d1}
\vol\big(\Gamma(\gamma)\backslash 
X(\gamma)\big)\Trs^{[\gamma]}\[N^{\Lambda^{\cdot}(T^{*}X)}\exp\(-tC^{\fg,X}\)\]\\
=- \frac{1}{2\sqrt{\pi t}}\frac{ 
\chi_{\rm orb}\big(\bbS^1\backslash 
B_{[\gamma]}\big)}{m_{[\gamma]}}|a|\exp\(-\frac{|a|^2}{4t}\).
\end{multline}
By \eqref{eq:h11exp} and \eqref{eq:thoghmazeta4d1}, there exist 
$C_1>0$, $C_2>0$, and $C_3>0$ such that for $t>0$, we have
\begin{align}\label{eq:thoghmazetad1}
  \sum_{[\gamma]\in [\Gamma]- \{1\}}\frac{\left| \chi_{\rm 
  orb}\big(\bbS^1\backslash B_{[\gamma]}\big)\right|}{m_{[\gamma]}}|a|\exp\(-\frac{|a|^2}{4t}\)\l C_1\exp\(-\frac{C_2}{t}+C_3t\).
\end{align}
Take $\sigma_0=\sqrt{2C_3}$. 
Since $\rho$ is unitary, by 
\eqref{eq:sel}, \eqref{eq:T12d1},     \eqref{eq:M1d1}, and 
\eqref{eq:thoghmazetad1},
for $(s,\sigma)\in 
\bC\times \bR$ such that $\Re(s)>m/2$ and $\sigma\g\sigma_{0}$, we have 
\begin{multline}\label{eq:t2d1}
\theta_{\rho,2}\(s,\sigma^{2}\)=\frac{1}{2\sqrt{\pi }\Gamma(s)}\int_0^\infty
\sum_{[\gamma]\in [\Gamma]- \{1\}}\Tr[\rho(\gamma)] 
\frac{ \chi_{\rm orb}\big(\bbS^1\backslash 
B_{[\gamma]}\big)}{m_{[\gamma]}}|a|\exp\(-\frac{|a|^2}{4t}-\sigma^{2} 
t\)t^{s-3/2}dt.
\end{multline}
Moreover, for $\sigma\g\sigma_{0}$ fixed, the function
$s\to \theta_{\rho,2}(s,\sigma^{2})$ extends holomorphically  to $ \bC$, so that
\begin{align}\label{eq:massid1}
  \frac{\p}{\p s} \theta_{\rho,2}(0,\sigma^{2})=\frac{1}{2\sqrt{\pi}}\int_0^\infty\sum_{[\gamma]\in [\Gamma]-\{1\}}\Tr[\rho(\gamma)]\frac{\chi_{\rm orb}\big(\bbS^1\backslash B_{[\gamma]}\big)}{m_{[\gamma]}}
  |a|\exp\(-\frac{|a|^2}{4t}-\sigma^{2} t\)\frac{dt}{t^{3/2}}.
\end{align}
Using  the formula\footnote{We give a proof of 
\eqref{eq:elect} when  $B_{1}=B_{2}=1$. Indeed, we have
$$\int_0^\infty\exp\(-\frac{1}{t}-t\)\frac{dt}{t^{{3/2}}}=\frac{1}{2}\int_0^\infty\exp\(-\frac{1}{t}-t\)\(\frac{1}{t^{3/2}}+\frac{1}{t^{1/2}}\)dt.$$
Using the change of variables  $u=t^{1/2}-t^{-1/2}$, we get \eqref{eq:elect}.} 
 that for $B_1>0,B_2\g0$,
\begin{align}\label{eq:elect}
  \int_0^\infty\exp\(-\frac{B_1}{t}-B_2t\)\frac{dt}{t^{{3/2}}}=\sqrt{\frac{\pi}{B_1}}\exp\big(-2\sqrt{B_1B_2}\big),
\end{align}
by \eqref{eq:thoghmazetad1}, \eqref{eq:massid1}, and by Fubini 
Theorem,  we get 
\eqref{eq:seed1}. Also, for $\sigma\g\sigma_{0}$, we have  
\begin{align}\label{eq:t2befud1}
   \frac{\p}{\p s} \theta_{\rho,2}(0,\sigma^2)
=\Xi_{\rho}(\sigma).
\end{align}

By \eqref{eq:t1fid1}, \eqref{eq:M1d1}, and \eqref{eq:t2befud1},  we 
get $\eqref{eq:Zjdd1}$. By taking the exponentials, we get 
\eqref{eq:R=Tdg=1} for $\sigma\g \sigma_{0}$. Since the right-hand 
side of \eqref{eq:R=Tdg=1} is meromorphic on $\sigma\in \bC$,  then 
$R_{\rho}$ has a meromorphic extension to $ \bC$. By \eqref{eq:tors} 
and \eqref{eq:R=Tdg=1}, we get \eqref{eq:RTd1}.
The proof of  
our theorem  is completed.
\end{proof}

\section{Reductive groups $G$ with compact center and $\delta(G)=1$}\label{sec:5}
In this section, we assume that $\delta(G)=1$ and that $G$ has  
compact  center. 
The purpose of this section is to introduce some geometric objects 
associated with  $G$. Their proprieties are proved by  algebraic 
arguments based on the classification of real simple Lie 
algebras $\fg$ with $\delta(\fg)=1$. The results of this section will 
be used in Section \ref{Sec:qusi}, in order to evaluate certain 
orbital integrals. 


This section is organized as follows.
In Subsection \ref{sec:dG=1},  we introduce  a splitting $\fg=\fb\oplus \fm\oplus \fn\oplus 
\ol{\fn}$, associated with  the action of $\fb$ on $\fg$.

In Subsection \ref{sec:Yb}, we construct a natural compact Hermitian 
symmetric space $Y_\fb$, which will be used  in the calculation of 
orbital integrals in Subection \ref{sec:evapri}.

In Subsection \ref{sec:varsi}, we state one key result, which says 
that  the  action of $K_{M}$ 
on  $\fn$  lift to $K$. The purpose of the following subsections is to prove this result. 



 In Subsection \ref{sec:class}, we state a  classification result of 
 real simple Lie algebras $\fg$ with $\delta(\fg)=1$, which asserts 
 that they just contain 
 $\mathfrak{sl}_3(\bR)$ and $\mathfrak{so}(p,q)$ with $pq>1$ odd. This 
 result has already been used by Moscovici-Stanton \cite{MStorsion}.
 
 In Subsections \ref{subsec:SL3} and \ref{subsec:SOPQ}, we study the 
 Lie
 groups $\mathrm{SL}_3(\bR)$ and $\SO^0(p,q)$ with $pq>1$ odd, and 
 the structure of the associated Lie groups $M$, $K_{M}$. 
 
In Subsection \ref{sec:i}, we study the connected component $G_*$ of 
the identity of the isometry  group of $X=G/K$. We show that $G_*$ has a factor $\mathrm{SL}_3(\bR)$ or $\SO^0(p,q)$ with $pq>1$ odd.

Finally, in Subsections \ref{sec:proofnn1}-\ref{sec:G1=0}, we show 
several unproven results stated   in Subsections 
\ref{sec:dG=1}-\ref{sec:varsi}. Most of the 
results  are shown  case by case for the group $\mathrm{SL}_3(\bR)$ 
and $\SO^0(p,q)$ with $pq>1$ odd. We prove the corresponding results for general $G$  using a natural morphism $i_G:G\to G_*$.

\subsection{A splitting  of $\fg$}\label{sec:dG=1}
We use the notation in \eqref{eq:MPMKM1}-\eqref{eq:XM}. Let 
$Z(\fb)\subset G$ 
be the stabilizer of $\fb$ in $G$, and 
let $\fz(\fb)\subset \fg$ be its Lie algebra. 

We define $\fp(\fb)$, $\fk(\fb)$, $\fp^{\bot}(\fb)$, 
$\fk^{\bot}(\fb)$, $\fz^{\bot}(\fb)$ in an obvious way as in Subsection \ref{sec:semi}.  
By  \eqref{eq:mpkd1}, we have
\begin{align}\label{eq:tao1}
&\fp(\fb)=\fb\oplus\fp_{\fm},&\fk(\fb)=\fk_{\fm}.
\end{align}
Also,
\begin{align}\label{eq:mpk1}
 & \fp=\fb\oplus\fp_\fm\oplus\fp^\bot(\fb),&\fk=\fk_\fm\oplus\fk^\bot(\fb).
\end{align}
Let $Z^{0}(\fb)$ \index{Z@$Z^0(\fb)$} be the connected component of 
the identity in $Z(\fb)$. By \eqref{eq:Z0a}, we have
\begin{align}\label{eq:Z0UB}
  Z^0(\fb)=\bR\times M.
\end{align}
The group $K_M$ acts trivially on $\fb$. It also acts on $\fp_\fm$, 
$\fp^\bot(\fb)$, $\fk_\fm$ and $\fk^{\bot}(\fb)$, and  preserves the 
splittings \eqref{eq:mpk1}.




Recall that we have fixed $a_{1}\in \fb$ such that 
$B(a_{1},a_{1})=1$. The choice of $a_{1}$ fixes an orientation of 
$\fb$. 
Let $\fn\subset \fz^{\bot}(\fb)$ be the direct sum of the eigenspaces 
of $\ad(a_{1})$ with the positive eigenvalues.
Set $\ol{\fn}=\theta\fn$. Then $\ol{\fn}$ is the direct sum of the eigenspaces with  negative eigenvalues, and
\begin{align}\label{eq:zbnn}
\fz^{\bot}(\fb)=\fn\oplus \ol{\fn}.\index{N@$\fn,\ol{\fn}$}
\end{align}
Clearly, $Z^0(\fb)$ acts on $\fn$ and $\ol{\fn}$ by  adjoint action. 
Since $K_M$ is fixed by $\theta$, we have  isomorphisms of representations of $K_M$,
\begin{align}\label{eq:n=pbK}
  &X\in \fn\to X-\theta X\in  \fp^\bot(\fb),&X \in \fn\to X+\theta X\in  \fk^\bot(\fb).
\end{align}
In the sequel, if $f\in \fn$, we denote $\ol{f}=\theta f\in \ol{\fn}$.

By \eqref{eq:mpk1} and \eqref{eq:n=pbK}, we have $\dim \fn=\dim 
\fp-\dim \fp_{\fm}-1$. Since $\dim \fp$ is odd and since $\dim \fp_\fm$ is even,  $\dim \fn$ is even. Set
\begin{align}
  l=\frac{1}{2}\dim \fn.\index{L@$l$}
\end{align}
Note that since $G$ has compact center, we have $\fb\not\subset \fz_{\fg}$. 
Therefore,  $\fz^{\bot}(\fb)\neq0$ and $l>0$.

\begin{re}Let $\mathfrak{q}\subset \fg$ \index{Q@$\fq$}be the direct 
	sum of the eigenspaces of $\ad(a_{1})$ with  nonnegative 
	eigenvalues. Then $\mathfrak{q}$ is a proper parabolic subalgebra of $\fg$, with Langlands decomposition $\mathfrak{q}= \fm\oplus \fb\oplus\fn$ \cite[Section VII.7]{KnappLie}. Let $Q\subset G$ be the corresponding parabolic subgroup of $G$, and let $Q=M_QA_QN_Q$ be the corresponding Langlands decomposition. Then $M$ is the connected component of the identity in $M_Q$, and $\fb,\fn$ are  the Lie algebras of $A_Q$ and $N_Q$.
\end{re}

\begin{prop}\label{prop:nnam1}
Any element of $\fb$ acts on $\fn$ and $\ol{\fn}$ as a scalar, i.e., there exists $\alpha\in \fb^*$\index{A@$\alpha$}  such that for $a\in \fb$, $f\in \fn$, we have
\begin{align}\label{eq:ab1}
  &[a,f]=\<\alpha,a\>f, &[a,\ol{f}]=-\<\alpha,a\>\ol{f}.
\end{align}
\end{prop}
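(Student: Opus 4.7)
The plan is to reduce the identity \eqref{eq:ab1} to a one-variable eigenvalue statement for $\ad(a_{1})$, then to the case of a simple Lie algebra, and finally to appeal to the classification of simple real Lie algebras with $\delta(\fg)=1$ that will be established in Subsection \ref{sec:class}.

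Since $\delta(G)=\dim\fb=1$ by \eqref{eq:dg=db}, we have $\fb=\bR a_{1}$. Thus it suffices to prove that $\ad(a_{1})$ acts on $\fn$ as a single positive scalar $c$: then $\alpha\in\fb^{*}$ defined by $\alpha(a_{1})=c$ satisfies $[a,f]=\alpha(a)f$ for all $a\in\fb$ and $f\in\fn$, and the second identity in \eqref{eq:ab1} follows by applying $\theta$ and using $\theta a_{1}=-a_{1}$ together with $\theta\fn=\ol{\fn}$.

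I would next reduce to the case where $\fg$ is simple. Decompose $\fg=\fz_{\fg}\oplus\fg_{1}\oplus\cdots\oplus\fg_{s}$ into the center and a direct sum of $\theta$-stable simple ideals. Since $G$ has compact center, $\fz_{\fp}=0$, and as $\delta$ is additive over direct sums of reductive Lie algebras, \eqref{eq:gzgg} applied inductively yields $1=\delta(G)=\sum_{i}\delta(\fg_{i})$. Consequently, exactly one simple ideal, say $\fg_{1}$, satisfies $\delta(\fg_{1})=1$, while the remaining $\fg_{i}$ have a compact Cartan subalgebra. Hence $a_{1}\in\fg_{1}$, $\ad(a_{1})$ is trivial on $\fz_{\fg}\oplus\fg_{2}\oplus\cdots\oplus\fg_{s}$, and $\fn\subset\fg_{1}$. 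Therefore the proposition for $\fg$ follows from the claim for the simple factor $\fg_{1}$.

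The classification in Subsection \ref{sec:class} identifies such a simple $\fg$ as either $\mathfrak{sl}_{3}(\bR)$ or $\mathfrak{so}(p,q)$ with $pq>1$ and $p,q$ odd. In each case, the explicit realizations of $\fp$, $\fk$, $\ft$, and $\fb$ given in Subsections \ref{subsec:SL3} and \ref{subsec:SOPQ} determine $a_{1}$ up to sign and, by a direct matrix computation, the eigenspace decomposition of $\ad(a_{1})$ on $\fg$; in each case exactly one positive eigenvalue appears, which establishes the required scalar action. The main obstacle is uniformity across the two families, since the Lie algebra structure on $\fn$ differs substantially: for $\mathfrak{so}(1,q)$ the space $\fn$ is abelian, while for $\mathfrak{sl}_{3}(\bR)$ and $\mathfrak{so}(p,q)$ with $p,q\geq 3$ it is two-step nilpotent of Heisenberg type. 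What ultimately forces the scalar statement is that the restriction to the one-dimensional $\fb$ of the restricted root system is of type $A_{1}$ with no higher multiple $2\alpha$, a feature that the classification guarantees in these specific low-rank cases.
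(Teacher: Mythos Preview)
Your overall strategy matches the paper's proof exactly: reduce to the simple factor carrying $\fb$ via the decomposition of Theorem~\ref{thm:cla}, then verify the single-eigenvalue property of $\ad(a_{1})$ on $\fn$ by the explicit matrix computations \eqref{eq:hasl3} and \eqref{eq:hasp} in Subsections~\ref{subsec:SL3} and~\ref{subsec:SOPQ}. The paper's proof in Subsection~\ref{sec:proofnn1} is literally two lines citing these computations.

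However, your final paragraph contains a factual error that contradicts the very statement you are proving. You assert that for $\mathfrak{sl}_{3}(\bR)$ and $\mathfrak{so}(p,q)$ with $p,q\geq 3$ the space $\fn$ is ``two-step nilpotent of Heisenberg type.'' This is false: in every case $\fn$ is abelian. Indeed, once $\ad(a_{1})$ acts on $\fn$ by the single scalar $c$, the bracket $[\fn,\fn]$ lies in the $2c$-eigenspace of $\ad(a_{1})$, which is zero; this is exactly Proposition~\ref{prop:nnam}. You can also see it directly from \eqref{eq:nsl3} (the $2\times 1$ block in $\mathfrak{sl}_{3}$ squares to zero) and \eqref{eq:nsopq}. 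You appear to be conflating the $\fn$ of this paper, which is the nilradical of the \emph{maximal} parabolic $Q$ with $\dim A_{Q}=1$, with the nilradical of a minimal parabolic relative to a maximally split Cartan $\fa$; the latter is indeed non-abelian for $\mathfrak{sl}_{3}(\bR)$ and for $\mathfrak{so}(p,q)$ of higher real rank, but it plays no role here. Your concluding sentence about the restricted root system being of type $A_{1}$ with no $2\alpha$ is the correct heuristic, but note that it is precisely incompatible with a Heisenberg $\fn$.
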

\begin{proof}
The proof of Proposition \ref{prop:nnam1}, based on the 
classification theory of real simple Lie algebras, will be given in 
Subsection \ref{sec:proofnn1}.
\end{proof}
Let $a_0\in \fb$ \index{A@$a_0$} be such that
\begin{align}\label{eq:ab3}
\<\alpha,a_0\>=1.
\end{align}

\begin{prop}\label{prop:nnam}
We have
\begin{align}\label{eq:nnam}
&[\fn,\ol{\fn}]\subset \fz(\fb),&[\fn,\fn]=[\ol{\fn},\ol{\fn}]=0.
\end{align}
Also,
\begin{align}\label{eq:Bnn=0}
&  B|_{\fn\times \fn=0}, &B|_{\ol{\fn}\times \ol{\fn}}=0.
\end{align}
\end{prop}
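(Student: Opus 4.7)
The plan is to derive both the bracket relations and the isotropy of $B$ directly from the eigenvalue description of the $\fb$-action established in Proposition \ref{prop:nnam1}, together with the Jacobi identity and the $\ad$-invariance of $B$. The key observation is that since Proposition \ref{prop:nnam1} asserts that $\ad(a)$ acts on $\fn$ as the single scalar $\langle \alpha, a\rangle$ (and similarly on $\ol{\fn}$ with the scalar $-\langle\alpha,a\rangle$), the only $\ad(\fb)$-weights occurring on $\fz^{\bot}(\fb) = \fn \oplus \ol{\fn}$ are $\pm \alpha$, with no weight $\pm 2\alpha$ present anywhere in $\fg$.

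First I would treat the bracket relations. For $f, g \in \fn$ and $a \in \fb$, the Jacobi identity gives
\begin{align*}
[a, [f,g]] = [[a,f],g] + [f,[a,g]] = 2\langle \alpha, a\rangle [f,g],
\end{align*}
so $[f,g]$ lies in the $2\alpha$-eigenspace of $\ad(\fb)$ inside $\fg$. But by Proposition \ref{prop:nnam1} the only $\ad(\fb)$-weights on $\fg$ are $0$ and $\pm\alpha$, hence $[f,g]=0$. Applying $\theta$ (which swaps $\fn$ and $\ol{\fn}$ and sends $\alpha$ to $-\alpha$) yields $[\ol{\fn}, \ol{\fn}]=0$. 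For $f \in \fn$ and $\ol{g} \in \ol{\fn}$, the same computation gives $[a, [f,\ol{g}]] = (\langle\alpha,a\rangle - \langle\alpha,a\rangle)[f,\ol{g}] = 0$ for every $a \in \fb$, so $[f,\ol{g}]$ is centralized by $\fb$, i.e., $[f,\ol g] \in \fz(\fb)$.

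For the vanishing of $B$, I would use the $\Ad(G)$-invariance of $B$ infinitesimally: for $f, g \in \fn$ and $a \in \fb$,
\begin{align*}
0 = B([a,f], g) + B(f, [a,g]) = 2 \langle \alpha, a\rangle B(f, g).
\end{align*}
Choosing $a = a_0 \in \fb$ with $\langle \alpha, a_0\rangle = 1$ (which exists by \eqref{eq:ab3}) forces $B(f,g)=0$, proving $B|_{\fn \times \fn} = 0$. The identity $B|_{\ol{\fn}\times\ol{\fn}}=0$ follows either by the same argument with the sign flipped, or by applying $\theta$ and using $\theta$-invariance of $B$. Since all steps are formal consequences of Proposition \ref{prop:nnam1} and the basic invariance/symmetry properties already in hand, I do not anticipate any genuine obstacle—the only subtlety was securing the single-weight statement in Proposition \ref{prop:nnam1}, which the authors will have proved separately by classification.
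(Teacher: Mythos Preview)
Your proof is correct and follows essentially the same approach as the paper: the paper also uses that $\ad(a)$ acts on $[\fn,\fn]$, $[\ol\fn,\ol\fn]$, $[\fn,\ol\fn]$ by the scalars $2\langle\alpha,a\rangle$, $-2\langle\alpha,a\rangle$, $0$ (forcing the first two to vanish and the last to lie in $\fz(\fb)$), and then derives $B(f_1,f_2)=B([a_0,f_1],f_2)=-B(f_1,[a_0,f_2])=-B(f_1,f_2)$ from $\ad$-invariance. Your write-up is slightly more explicit in invoking the Jacobi identity, but the argument is the same.
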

\begin{proof}
By \eqref{eq:ab1}, $a\in \fb$ acts  on $[\fn,\ol{\fn}]$, 
$[\fn,\fn]$, and $[\ol{\fn},\ol{\fn}]$ by multiplication by $0$, 
$2\<\alpha,a\>$, and $-2\<\alpha,a\>$. Equation \eqref{eq:nnam} 
follows. 

If $f_1,f_2\in \fn$, by \eqref{eq:ab1} and \eqref{eq:ab3}, we have
\begin{align}\label{eq:Bf12}
  B(f_1,f_2)=B([a_0,f_1],f_2)=-B(f_1,[a_0,f_2])=-B(f_1,f_2).
\end{align}
From \eqref{eq:Bf12}, we get the first equation of \eqref{eq:Bnn=0}. 
We obtain  the second equation of \eqref{eq:Bnn=0} by the same 
argument. The 
proof of our proposition is completed.
\end{proof}

\begin{re}Clearly, we have
\begin{align}\label{eq:fzbot1}
&\[\fz(\fb),\fz(\fb)\]\subset \fz(\fb).
\end{align}
Since $\fz(\fb)$ preserves $B$ and since $\fz^\bot(\fb)$ is the orthogonal space to $\fz(\fb)$ in $\fg$ with respect to $B$, we have
\begin{align}
 \[\fz(\fb),\fz^\bot(\fb)\]\subset \fz^{\bot}(\fb)
\end{align}
By \eqref{eq:zbnn} and \eqref{eq:nnam}, we get
\begin{align}\label{eq:fzbot}
 \[\fz^{\bot}(\fb),\fz^\bot(\fb)\]\subset \fz(\fb).
\end{align}
We note the similarity between \eqref{eq:kpkp} and   equations \eqref{eq:fzbot1}-\eqref{eq:fzbot}. In the sequel, We call such a pair $(\fz,\fz(\fb))$ a symmetric pair.
\end{re}

For $k\in K_M$, let $M(k)\subset M$ \index{M@$M(k),M^0(k)$}be the  
centralizer of $k$ in $M$, and let $\fm(k)$ \index{M@$\fm(k)$}be its 
Lie algebra. Let $M^0(k)$ be the connected component of the identity 
in $M(k)$. Let  $\fp_\fm(k)$ \index{P@$\fp_\fm(k)$}and 
$\fk_\fm(k)$\index{K@$\fk_\fm(k)$} be the analogues  of $\fp(\gamma)$ and $\fk(\gamma)$ in \eqref{eq:prkr}, so that
\begin{align}\label{eq:mkpk}
  \fm(k)=\fp_\fm(k)\oplus \fk_\fm(k).
\end{align}
Since $k$ is elliptic in $M$,  $M^0(k)$ is  reductive with maximal compact subgroup $K_M^0(k)=M^0(k)\cap K$ and with Cartan decomposition \eqref{eq:mkpk}. Let
\begin{align}
  X_M(k)=M^0(k)/K^0_M(k)\index{X@$X_M(k)$}
\end{align}
 be the corresponding symmetric space. Note that $\delta\big(M^0(k)\big)=0$ and $\dim X_M(k)$ is even.

Clearly, if $\gamma=e^ak^{-1}\in H$ with $a\in \fb$, $a\neq0$, $k\in T$, then
\begin{align}\label{eq:Kr=kMk}
  &\fp(\gamma)=\fp_\fm(k),&\fk(\gamma)=\fk_\fm(k),&&Z^{a,\bot,0}(\gamma)=M(k),&&&K^0(\gamma)=K^0_M(k).
\end{align}
\begin{prop}\label{prop:det1/2fi}
For $\gamma=e^ak^{-1}\in H$ with $a\in \fb,a\neq0,k\in T$, we have
\begin{align}\label{eq:det1/2fi}
\begin{aligned}
  \left|\mathrm{det}\big(1-\Ad(\gamma)\big)|_{\fz_0^\bot}\right|^{1/2}=&\sum_{j=0}^{2l}(-1)^j\Tr^{\Lambda^j(\fn^*)}\[\Ad\(k^{-1}\)\]e^{(l-j)\<\alpha,a\>}\\
=&\sum_{j=0}^{2l}(-1)^j\Tr^{\Lambda^j(\fn^*)}\[\Ad\(k^{-1}\)\]e^{(l-j)|\alpha||a|}.
\end{aligned}
\end{align}
\end{prop}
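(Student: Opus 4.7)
The approach is to directly evaluate $\det(1-\Ad(\gamma))|_{\fz_{0}^{\bot}}$, recognize it as the square of the claimed sum, and then verify the sign. Since $\delta(G)=1$, the subspace $\fb$ is one-dimensional, and by Proposition~\ref{prop:nnam1} we have $\<\alpha,a\>\neq 0$ whenever $a\in\fb-\{0\}$. Therefore $\fz_{0}=\fz(a)=\fz(\fb)$ and $\fz_{0}^{\bot}=\fn\oplus\ol{\fn}$. Since $k\in T$ centralizes $\fb$, $\Ad(k^{-1})$ preserves this splitting, and since $\theta k=k$, its restrictions to $\fn$ and $\ol{\fn}$ share the same eigenvalues $\lambda_{1},\ldots,\lambda_{2l}$. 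With $c=\<\alpha,a\>$ and $A=\Ad(k^{-1})|_{\fn}$, the scalar action of $\Ad(e^{a})$ gives
\[
\det(1-\Ad(\gamma))|_{\fz_{0}^{\bot}}=\det(I-e^{c}A)\,\det(I-e^{-c}A).
\]

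Next, I would analyze the sum $S:=\sum_{j=0}^{2l}(-1)^{j}\Tr^{\Lambda^{j}(\fn^{*})}[\Ad(k^{-1})]\,e^{(l-j)c}$. Using $\Tr^{\Lambda^{j}(\fn^{*})}[\Ad(k^{-1})]=e_{j}(\lambda^{-1})$ and the expansion $\det(I+tA)=\sum_{j}t^{j}\Tr\Lambda^{j}A$, combined with the fact that $A$ is orthogonal with $\det A=1$ (as $T$ is connected), the multiset $\{\lambda_{i}\}$ is closed under inversion, so $e_{j}(\lambda^{-1})=e_{j}(\lambda)=e_{2l-j}(\lambda)$. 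This yields the two equivalent factorizations
\[
S=e^{-lc}\det(I-e^{c}A)=e^{lc}\det(I-e^{-c}A),
\]
whose product is $S^{2}=\det(1-\Ad(\gamma))|_{\fz_{0}^{\bot}}$. Hence this determinant is non-negative, and $|\det(1-\Ad(\gamma))|_{\fz_{0}^{\bot}}|^{1/2}=|S|$.

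The delicate point is to show $S\geq 0$, so that the absolute value disappears. Grouping eigenvalues of $A$: complex conjugate pairs $(\lambda,\bar{\lambda}=\lambda^{-1})$ with $\lambda\neq\pm 1$ contribute $|1-e^{c}\lambda|^{2}>0$; the eigenvalue $-1$ contributes $(1+e^{c})^{m_{-}}>0$; and the eigenvalue $+1$ contributes $(1-e^{c})^{m_{+}}$. The key structural input is that $m_{+}$ is even: decomposing $\fn_{\bC}$ into $T$-weight spaces $V_{\chi}$, the $+1$-eigenspace of $\Ad(k^{-1})|_{\fn}$ equals $\bigoplus_{\chi(k)=1}V_{\chi}$, and since $T$ is connected the only self-inverse character is $\chi=1$, for which $V_{1}=(\fn_{\bC})^{T}=0$ because $\fg^{T}=\fb\oplus\ft=\fh$ is transverse to $\fn$. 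All other pairs $(\chi,\chi^{-1})$ with $\chi(k)=1$ occur jointly, forcing $m_{+}$ even. This positivity verification is the main obstacle of the proof.

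Finally, the second equality in the proposition, with $|\alpha||a|$ replacing $\<\alpha,a\>$, follows from $S(c)=S(-c)$: substituting $j\mapsto 2l-j$ and using the identity $\Tr^{\Lambda^{2l-j}(\fn^{*})}[\Ad(k^{-1})]=\Tr^{\Lambda^{j}(\fn^{*})}[\Ad(k^{-1})]$ (equivalent to $e_{j}(\lambda)=e_{2l-j}(\lambda)$) yields the desired symmetry, completing the proof.
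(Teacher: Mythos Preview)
Your proof is correct and follows essentially the same skeleton as the paper's: split $\fz_0^\bot=\fn\oplus\ol{\fn}$, show the determinant equals the square of the claimed sum, verify the sign, and then use a symmetry to pass from $\langle\alpha,a\rangle$ to $|\alpha||a|$. The paper reaches the same endpoint by writing $\left|\det(1-\Ad(\gamma))|_{\fz_0^\bot}\right|^{1/2}=e^{l\langle\alpha,a\rangle}\det(1-\Ad(\gamma))|_{\ol{\fn}}$, expanding the $\ol{\fn}$-determinant via $\Lambda^j(\ol{\fn})$, and invoking the $K_M$-isomorphism $\ol{\fn}\simeq\fn^*$; for the second equality it replaces $\gamma$ by $\theta\gamma$.

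Two points of genuine difference are worth noting. First, for positivity the paper simply asserts ``since $\dim\ol{\fn}$ is even, the right-hand side is positive''; this is immediate when $\langle\alpha,a\rangle>0$ (all eigenvalues of $\Ad(\gamma)|_{\ol{\fn}}$ then have modulus $<1$), but for $\langle\alpha,a\rangle<0$ one needs either to swap the roles of $\fn$ and $\ol{\fn}$ or to know that the $+1$-eigenspace of $\Ad(k^{-1})|_{\fn}$ is even-dimensional. Your weight-space argument, using $\fn_\bC^T=0$ (which follows from $\fg^{\ft}=\fh$ and $\fn\cap\fh=0$) together with the pairing $V_\chi\leftrightarrow V_{\chi^{-1}}$ on the real module $\fn$, handles both signs of $\langle\alpha,a\rangle$ uniformly and makes this step fully explicit. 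Second, your derivation of the identity $e_j(\lambda)=e_{2l-j}(\lambda)$ (equivalently $\Tr^{\Lambda^j(\fn^*)}[\Ad(k^{-1})]=\Tr^{\Lambda^{2l-j}(\fn^*)}[\Ad(k^{-1})]$) from $\det A=1$ and closure under inversion gives the symmetry $S(c)=S(-c)$ directly, which is a slightly cleaner route to the second equality than the paper's $\gamma\mapsto\theta\gamma$ substitution.
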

\begin{proof}
We claim that
  \begin{align}\label{eq:det1/2}
  \left|\mathrm{det}\big(1-\Ad(\gamma)\big)|_{\fz_0^\bot}\right|^{1/2}=e^{l\<\alpha,a\>}\mathrm{det}\big(1-\Ad(\gamma)\big)|_{\ol{\fn}}.
\end{align}
Indeed, since $\dim \ol{\fn}$ is even, the right-hand side of \eqref{eq:det1/2} is positive. By \eqref{eq:zbnn}, we have
\begin{align}\label{eq:toot1}
\mathrm{det}\big(1-\Ad(\gamma)\big)|_{\fz_0^\bot}=\mathrm{det}\big(1-\Ad(\gamma)\big)|_{\fn}\mathrm{det}\big(1-\Ad(\gamma)\big)|_{\ol{\fn}}.
\end{align}
Since  $\ol{\fn}=\theta\fn$, we have
\begin{align}\label{eq:toot9}
\mathrm{det}\big(1-\Ad(\gamma)\big)|_{\fn}=\mathrm{det}\big(1-\Ad(\theta\gamma)\big)|_{\ol{\fn}}=\mathrm{det}\big(\Ad(\theta\gamma)\big)|_{\overline{\fn}}\mathrm{det}\big(\Ad(\theta\gamma)^{-1}-1\big)|_{\ol{\fn}}.
\end{align}
Since $\dim \ol{\fn}=2l$ is even, and since $(\theta\gamma)^{-1}=e^{a}k$ and $k$  acts unitarily on $\fn$,  by \eqref{eq:ab1} and \eqref{eq:toot9}, we have
\begin{align}\label{eq:toot2}
\mathrm{det}\big(1-\Ad(\gamma)\big)|_{\fn}=e^{2l\<\alpha,a\>}\mathrm{det}\big(1-\Ad(e^{a}k)\big)|_{\ol{\fn}}=e^{2l\<\alpha,a\>}\mathrm{det}\big(1-\Ad(\gamma)\big)|_{\ol{\fn}}.
\end{align}
By \eqref{eq:toot1} and \eqref{eq:toot2}, we get \eqref{eq:det1/2}.

Classically,
\begin{align}\label{eq:det1/2=n}
  \mathrm{det}\big(1-\Ad(\gamma)\big)|_{\ol{\fn}}=\sum_{j=0}^{2l}(-1)^j\Tr^{\Lambda^j(\ol{\fn})}\[\Ad\(k^{-1}\)\]e^{-j\<\alpha,a\>}.
\end{align}
Using the isomorphism of $K_{M}$-representations $\fn^*\simeq \ol{\fn}$,
by \eqref{eq:det1/2}, \eqref{eq:det1/2=n}, we get the first equation of \eqref{eq:det1/2fi} and the second equation of \eqref{eq:det1/2fi} if $a$ is positive in $\fb$. For the case $a$ is negative in $\fb$, it is enough to remark that
replacing $\gamma$ by $\theta\gamma$ does not change the left-hand side of \eqref{eq:det1/2fi}.
The proof of our proposition is completed.
\end{proof}

\subsection{A compact Hermitian symmetric space $Y_{\fb}$}\label{sec:Yb}
%


Let $\fu(\fb)\subset \fu$ and $\fu_\fm\subset \fu$ be  the compact form of $\fz(\fb)$ and $\fm$. \index{U@$\fu(\fb),\fu(\fm)$}Then,
\begin{align}\label{eq:99}
& \fu(\fb)=\sqrt{-1}\fb\oplus \fu_\fm,&\fu_{\fm}=\sqrt{-1}\fp_\fm\oplus \fk_\fm.
\end{align}
Since $\delta(M)=0$,  $M$ has compact center. By \cite[Proposition 5.3]{Knappsemi}, let $U_M$  be the compact form of $M$.

Let $U(\fb)\subset U, A_0\subset U$ \index{U@$U(\fb),U_M$}\index{A@$A_0$} be the  connected subgroups of $U$ associated with Lie algebras $\fu(\fb)$,$\sqrt{-1}\fb$.
By \eqref{eq:99}, $A_0$ is in the center of $U(\fb)$, and
\begin{align}\label{eq:Ub=S1UM}
U(\fb)=A_0 U_M.
\end{align}
By \cite[Corollaire 4.51]{KnappLie},   the stabilizer of $\fb$ in $U$ is a closed connected subgroup of $U$, and so it coincides with  $U(\fb)$.

\begin{prop}\label{prop:A0S1}
The group $A_0$ is closed in $U$, and is diffeomorphic to a
circle $\bbS^1$. 
\end{prop}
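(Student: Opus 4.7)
The plan is the following. Since $\dim \fb = 1$, the connected subgroup $A_0 \subset U$ is one-dimensional and abelian, so as an abstract Lie group it is either $\bR$ or $\bbS^1$. A closed subgroup of the compact group $U$ is automatically compact, so once $A_0$ is shown to be closed in $U$ it must be $\bbS^1$. The task therefore reduces to exhibiting some $T_1 > 0$ such that $\exp_U(T_1 \sqrt{-1}\, a_1) = e$.

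To produce such a $T_1$, I would first work at the level of the adjoint representation. By Proposition \ref{prop:nnam1} and the decomposition $\fg_\bC = \fz(\fb)_\bC \oplus \fn \oplus \ol{\fn}$ of \eqref{eq:zbnn}, the operator $\ad(\sqrt{-1}\, a_1)$ annihilates $\fz(\fb)_\bC$ and acts by the scalars $\pm \sqrt{-1}\,|\alpha|$ on $\fn$ and $\ol{\fn}$ respectively. Hence the curve $t \mapsto \Ad(\exp_U(t \sqrt{-1}\, a_1))$ has period $T_0 = 2\pi/|\alpha|$, and the element $\zeta_0 := \exp_U(T_0 \sqrt{-1}\, a_1)$ belongs to $\ker(\Ad|_U) = Z(U)$.

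The second step is to upgrade this adjoint-level relation to a genuine relation inside $U$. Because $G$ has compact center we have $\fz_\fp = 0$, hence $\fz_\fg \subset \fk$, so that $\fb \subset \fp \subset [\fg, \fg]$ and $\sqrt{-1}\, \fb \subset [\fu, \fu]$. Let $U_{\mathrm{ss}} \subset U$ be the connected Lie subgroup with Lie algebra $[\fu, \fu]$; it is closed in $U$ since the derived subgroup of a compact Lie group is closed. Then $A_0 \subset U_{\mathrm{ss}}$, and in particular $\zeta_0 \in Z(U) \cap U_{\mathrm{ss}} \subset Z(U_{\mathrm{ss}})$. Since $U_{\mathrm{ss}}$ is a compact connected semisimple Lie group, $Z(U_{\mathrm{ss}})$ is finite, so $\zeta_0^N = e$ for some $N \in \mathbf{N}^*$; taking $T_1 = N T_0$ concludes the argument.

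The main obstacle I anticipate is justifying the inclusion $\fb \subset [\fg, \fg]$: it is precisely here that the compact-center hypothesis on $G$ enters in an essential way. Without it, $\fz_\fp$ could be nonzero and $A_0$ could project nontrivially into the identity component of $Z(U)$, which is itself a positive-dimensional torus in which one-parameter subgroups need not be closed, and the argument would collapse.
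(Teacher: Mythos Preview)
Your argument is correct, and it takes a genuinely different route from the paper's proof.

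The paper proves Proposition~\ref{prop:A0S1} by going back to the classification: it passes from $U$ to $U_{ss}$ (as you do), then to the simply connected cover $\widetilde{U}_{ss}$, and then uses the splitting of $G_*$ established in Proposition~\ref{cor:Gspli} to reduce to the isometry group of $X$, where $A_{*0}$ is verified to be a circle by the explicit descriptions of $\fb$ in the $\mathrm{SL}_3(\bR)$ and $\mathrm{SO}^0(p,q)$ cases. A finite-covering argument then transports the circle back to $A_0$.

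Your proof is more direct: once Proposition~\ref{prop:nnam1} is in hand, you read off that $\Ad(\exp_U(t\sqrt{-1}\,a_1))$ has period $T_0 = 2\pi/|\alpha|$, so $\zeta_0 = \exp_U(T_0\sqrt{-1}\,a_1)$ is central in $U$; then the compact-center hypothesis forces $\fb \subset [\fg,\fg]$ and hence $A_0 \subset U_{ss}$, so $\zeta_0$ lies in the finite group $Z(U_{ss})$ and has finite order. This avoids any second appeal to classification or to the isometry group $G_*$. The tradeoff is that your argument still depends on Proposition~\ref{prop:nnam1}, which the paper proves via classification, so classification is not entirely eliminated; but given that proposition, your route to Proposition~\ref{prop:A0S1} is self-contained and conceptually cleaner than the paper's case-by-case verification.
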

\begin{proof}
The proof of Proposition \ref{prop:A0S1}, based on the classification 
theory of real simple Lie algebras, will be given in Subsection \ref{sec:proofnn1}.
\end{proof}

Set
\begin{align}\label{eq:Yb0}
  Y_{\fb}=U/U(\fb).\index{Y@$Y_\fb$}
\end{align}
We will see that $Y_\fb$ is a compact Hermitian symmetric space.

Recall that the bilinear form $-B$ induces  an $\Ad(U)$-invariant metric on $\fu$. Let $\fu^{\bot}(\fb)$ be the orthogonal space to $\fu(\fb)$ in $\fu$,\index{U@$\fu^\bot(\fb)$} such that
\begin{align}\label{eq:deum}
  \fu=\fu(\fb)\oplus\fu^{\bot}(\fb).
\end{align}
Also, we have
\begin{align}
 \fu^{\bot}(\fb) =\sqrt{-1}\fp^{\bot}(\fb)\oplus \fk^{\bot}(\fb)
\end{align}

By \eqref{eq:fzbot1}-\eqref{eq:fzbot}, we have
\begin{align}
  \label{eq:fubot}
&\[\fu(\fb),\fu(\fb)\]\subset \fu(\fb),& \[\fu(\fb),\fu^\bot(\fb)\]\subset \fu^{\bot}(\fb),&&  \[\fu^{\bot}(\fb),\fu^\bot(\fb)\]\subset \fu(\fb).
\end{align}
Thus, $(\fu,\fu(\fb))$ is a symmetric pair.

Set
\begin{align}\label{eq:Ja0}
  J=\sqrt{-1}\ad(a_0)|_{\fu^{\bot}({\fb})}\in \End\(\fu^{\bot}({\fb})\).\index{J@$J$}
\end{align}
By \eqref{eq:ab1}-\eqref{eq:Bnn=0}, $J$ is a $U(\fb)$-invariant complex structure on $\fu^{\bot}(\fb)$, which preserves the restriction $B|_{\fu^\bot(\fb)}$.  Moreover, $\fn_\bC=\fn\otimes_\bR\bC$ and $\ol{\fn}_\bC=\ol{\fn}\otimes_\bR\bC$ are the eigenspaces of $J$ associated with the eigenvalues $\sqrt{-1}$ and $-\sqrt{-1}$, such that
\begin{align}\label{eq:ubotnn}
  \fu^\bot(\fb)\otimes_\bR\bC=\fn_\bC\oplus\ol{\fn}_\bC.
\end{align}
The bilinear form $-B$ induces a Hermitian metric on $\fn_\bC$ such that for $f_1,f_2\in \fn_\bC$,\index{1@$\langle,\rangle_{\fn_\bC}$}
\begin{align}\label{eq:Hern}
  \<f_1,f_2\>_{\fn_\bC}=-B(f_1,\ol{f_2}).
\end{align}

Since $J$ commutes with the action of $U(\fb)$, $U(\fb)$ preserves the splitting \eqref{eq:ubotnn}. Therefore, $U(\fb)$ acts on $\fn_\bC$ and $\ol{\fn}_\bC.$ In particular,  $U(\fb)$ acts on $\Lambda^\cdot(\ol{\fn}_\bC^*)$. If $S^{\fu^\bot(\fb)}$ \index{S@$S^{\fu^\bot(\fb)}$}is the spinor of $(\fu^\bot(\fb), -B)$, by \cite{Hitchin74}, we have the isomorphism of representations of $U(\fb)$,
\begin{align}\label{eq:Rnspinc}
 \Lambda^\cdot(\ol{\fn}_\bC^*)\simeq S^{\fu^\bot(\fb)}\otimes \det(\fn_\bC)^{1/2}.
\end{align}
Note that $M$ has compact center $Z_{M}$. By \cite[Proposition 
5.5]{Knappsemi}, $M$ is a product of a connected semisimple Lie group 
and the connected component of the identity in $Z_M$. Since both of 
these two groups act trivially on 
$\det(\fn)$, the same is true for  $M$.  Since the action of $U_{M}$ on  $\fn_\bC$ can be obtained by 
the restriction of the induced action of  $M_\bC$  on $\fn_\bC$, 
$U_M$ acts trivially on $\det(\fn_{\bC})$. By \eqref{eq:Rnspinc}, we have the isomorphism of  representations of $U_M$,
\begin{align}\label{eq:Rnspin}
   \Lambda^\cdot(\ol{\fn}_\bC^*)\simeq  S^{\fu^\bot(\fb)}.
\end{align}

As in Subsection \ref{sec:sym},
let $\omega^{\fu}$ be the canonical left invariant $1$-form on $U$ with values in $\fu$, and let $\omega^{\fu(\fb)}$ and $\omega^{\fu^{\bot}(\fb)}$ be the $\fu(\fb)$ and $\fu^{\bot}(\fb)$
components of $\omega^{\fu}$, so that
\begin{align}
  \omega^{\fu}=\omega^{\fu(\fb)}+\omega^{\fu^{\bot}(\fb)}.\index{O@$ \omega^{\fu},\omega^{\fu(\fb)},\omega^{\fu^{\bot}(\fb)}$}
\end{align}
Then, $U\to  Y_{\fb}$ is a $U(\fb)$-principle bundle, equipped with a connection form $\omega^{\fu(\fb)}$. Let $\Omega^{\fu(\fb)}$ \index{O@$\Omega^{\fu(\fb)}$} be the curvature form. As in \eqref{eq:Ok}, we have
\begin{align}\label{eq:Rub}
 \Omega^{\fu(\fb)}=-\frac{1}{2}\[\omega^{\fu^{\bot}(\fb)},\omega^{\fu^{\bot}(\fb)}\].
\end{align}

The real tangent bundle
\begin{align}\label{eq:TYb}
TY_{\fb}=U\times_{U(\fb)}\fu^{\bot}(\fb)
\end{align}
is equipped with a Euclidean metric and a Euclidean connection $\nabla^{TY_\fb}$, which coincides with the Levi-Civita connection. By \eqref{eq:Ja0},
$J$ induces an almost complex structure on $TY_\fb$.
Let $T^{(1,0)}Y_{\fb}$ and $T^{(0,1)}Y_{\fb}$ be the holomorphic and anti-holomorphic tangent bundles. Then
\begin{align}\label{eq:hah}
&T^{(1,0)}Y_{\fb}=U\times_{U(\fb)}\fn_\bC,  &T^{(0,1)}Y_{\fb}=U\times_{U(\fb)}\ol{\fn}_\bC.
\end{align}
By \eqref{eq:nnam} and \eqref{eq:hah}, $J$ is integrable.

The form $-B(\cdot,J\cdot)$ induces a K\"ahler form $\omega^{Y_\fb}\in \Omega^2(Y_\fb)$ on $Y_\fb$. Clearly, $\omega^{Y_\fb}$ is closed, therefore $(Y_\fb,\omega^{Y_\fb})$ is a K\"ahler manifold. Let $f_1,\ldots,f_{2l} \in \fn$ be such that
\begin{align}
  -B(f_i,\ol{f}_j)=\delta_{ij}.
\end{align}
Then $f_1,\ldots,f_{2l} $ is an orthogonal basis of $\fn_\bC$ with respect to $\<\cdot,\cdot\>_{\fn_\bC}$. Let $f^1,\ldots,f^{2l}$ be the dual base of $\fn_\bC^*$. The K\"ahler form $\omega^{Y_\fb}$ on $Y_\fb$ is given by\index{O@$ \omega^{Y_\fb}$}
\begin{align}\label{eq:kahher}
  \omega^{Y_\fb}=-\sum_{1\l i,j\l 2l}B(f_i,J\ol{f}_j)f^i\ol{f}^j=-\sqrt{-1}\sum_{1\l i\l 2l}f^i\ol{f}^i.
  \end{align}

Let us give a more explicit description of $Y_\fb$, although this description will not be needed in the following sections.
\begin{prop}\label{eq:propYb}The homogenous space  $Y_{\fb}$ is an irreducible compact  Hermitian symmetric space of  the type AIII or BDI.
\end{prop}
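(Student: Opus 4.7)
The plan is to combine the explicit Hermitian structure on $Y_\fb$ already constructed in this subsection with the classification of real simple Lie algebras having $\delta=1$ that will be stated in Subsection \ref{sec:class}. Symmetry of $Y_\fb$ follows at once from \eqref{eq:fubot}, showing that $(\fu,\fu(\fb))$ is a symmetric pair, and the complex structure $J$ from \eqref{eq:Ja0} together with the K\"ahler form \eqref{eq:kahher} makes $Y_\fb$ a compact K\"ahler homogeneous space of $U$. Therefore only the irreducibility and the identification of the type remain.

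First I would reduce to the case where $\fg$ is simple. Since $G$ has compact center, the Cartan decomposition $Z_G=\exp(\fz_\fp)(Z_G\cap K)$ forces $\fz_\fp=0$, so by \eqref{eq:gzgg} we have $\delta([\fg,\fg])=1$. Decomposing $[\fg,\fg]$ into simple ideals, exactly one ideal $\fg_1$ satisfies $\delta(\fg_1)=1$ while the others have $\delta=0$; the one-dimensional subspace $\fb$ then lies in $\fg_1\cap \fp$. Consequently the compact center $Z_G$ and the compact forms of the remaining simple ideals are contained in $U(\fb)$ and split off as common direct factors of $U$ and $U(\fb)$, so $Y_\fb$ is isomorphic to the analogous space built from $\fg_1$ alone. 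Irreducibility of $Y_\fb$ as a Hermitian symmetric space is then inherited from that of the $\fg_1$ model.

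By the classification invoked above, $\fg_1$ is either $\mathfrak{sl}_3(\bR)$ or $\mathfrak{so}(p,q)$ with $pq>1$ odd. In the first case $U$ is locally isomorphic to $\mathrm{SU}(3)$ and, after conjugation, $\fb\subset\fp$ is spanned by a diagonal traceless symmetric matrix of the form $\mathrm{diag}(2,-1,-1)$; its centralizer in $\mathrm{SU}(3)$ is $\mathrm{S}(\mathrm{U}(1)\times \mathrm{U}(2))$, and hence $Y_\fb\simeq \bC\rP^{2}$, the irreducible Hermitian symmetric space of type AIII with parameters $(1,2)$. In the second case $U$ is locally isomorphic to $\mathrm{SO}(p+q)$ and a generator of $\fb$ can be taken to be an elementary hyperbolic block pairing one coordinate of the $p$-block with one coordinate of the $q$-block; its centralizer in $\mathrm{SO}(p+q)$ is $\mathrm{SO}(p+q-2)\times \mathrm{SO}(2)$, so $Y_\fb\simeq \mathrm{SO}(p+q)/\bigl(\mathrm{SO}(p+q-2)\times \mathrm{SO}(2)\bigr)$ is the complex quadric, the irreducible Hermitian symmetric space of type BDI with parameters $(p+q-2,2)$.

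The main technical point will be to check that the complex structure $J=\sqrt{-1}\ad(a_0)|_{\fu^{\bot}(\fb)}$ coincides with the standard invariant complex structure on each of the two models, and in particular that the tangent space splitting \eqref{eq:ubotnn} matches the standard holomorphic/anti-holomorphic decomposition. This follows from the eigenspace description of $\fn$ and $\ol\fn$ provided by Proposition \ref{prop:nnam1}, matched against the explicit root space decompositions of $\mathrm{SU}(3)$ at $\mathrm{diag}(2,-1,-1)$ and of $\mathrm{SO}(p+q)$ at the chosen generator; since each model admits, up to sign, a unique $U(\fb)$-invariant complex structure on the tangent space at the base point, the identification is forced, and the proposition follows.
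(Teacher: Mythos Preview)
Your strategy matches the paper's: reduce via the classification (Theorem~\ref{thm:cla}) to the two simple models $\mathfrak{sl}_3(\bR)$ and $\mathfrak{so}(p,q)$, then identify $Y_\fb$ explicitly as $\mathrm{SU}(3)/\mathrm{U}(2)$ or $\SO(p+q)/(\SO(p+q-2)\times\SO(2))$.

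The one place where the paper is more careful is your reduction step. You assert that the compact forms of the simple ideals with $\delta=0$ and the compact center ``split off as common direct factors of $U$ and $U(\fb)$''. At the Lie algebra level this is immediate, since $\fb\subset\fg_1$ forces $\fg_2\oplus\fz_\fg\subset\fz(\fb)$. But at the group level $U$ need not be a direct product of its simple factors; it is only a quotient of such a product by a finite central subgroup. The paper handles this by first passing from $U$ to its semisimple part $U_{ss}$ (eq.~\eqref{eq:Yb1}), then lifting to the simply connected cover $\widetilde{U}_{ss}$ (eq.~\eqref{eq:Yb2}), where the product decomposition genuinely holds. It further routes the argument through the isometry group $G_*$ of $X$ (Proposition~\ref{cor:Gspli}), which is an honest direct product $G_1\times G_2$ of groups with $G_1=\mathrm{SL}_3(\bR)$ or $\SO^0(p,q)$, so that the explicit computations of Subsections~\ref{subsec:SL3}--\ref{subsec:SOPQ} apply verbatim. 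Your direct approach reaches the same conclusion, but you should either pass to simply connected covers or note that the center of the relevant compact simple group acts trivially on the quotient, so that the local isomorphism you invoke suffices to pin down $Y_\fb$.
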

\begin{proof}
The proof of Proposition \ref{eq:propYb}, based on the classification 
theory of real simple Lie algebras, will be given in Subsection \ref{sec:propYb}.
\end{proof}

 Since $U_\fm$ acts on $\fu_\fm$ and $A_0$ acts trivially on $\fu_\fm$, by \eqref{eq:Ub=S1UM}, then $U(\fb)$ acts on $\fu_\fm$. Put\index{N@$N_\fb$}
\begin{align}\label{eq:defFb}
  N_\fb=U\times_{U(\fb)} \fu_{\fm}.
\end{align}
Then, $N_\fb$ is  a Euclidean vector bundle on $Y_\fb$ equipped  with a metric 
connection $\nabla^{N_\fb}$. We equip  the trivial connection 
$\nabla^{\sqrt{-1}\fb}$ on the trivial line bundle $\sqrt{-1}\fb$ on 
$Y_{\fb}$. 
%
%
Since 
$U(\fb)$ preserves the first splitting in \eqref{eq:99}, we have
\begin{align}\label{eq:AA4}
\sqrt{-1}\fb\oplus N_\fb=U\times_{U_{\fb}} U(\fb).
\end{align}
Moreover, the induced connection is given by 
\begin{align}\label{eq:AA5}
\nabla^{\sqrt{-1}\fb\oplus N_\fb}= \nabla^{\sqrt{-1}\fb}\oplus 
\nabla^{N_\fb}.
\end{align}

By  \eqref{eq:deum}, \eqref{eq:TYb}, and \eqref{eq:AA4}, we have
\begin{align}\label{eq:TGAMF1}
    TY_{\fb}\oplus \sqrt{-1}\fb\oplus N_\fb =\fu,
\end{align}
where $\fu$ stands for  the corresponding trivial bundle on $Y_\fb$.
\begin{prop}
  The following identity of closed forms holds on $Y_\fb$:
\begin{align}\label{eq:AA1}
\widehat{A}\(TY_{\fb},\nabla^{TY_{\fb}}\)\widehat{A}\(N_\fb,\nabla^{N_\fb}\)=1.
\end{align}
\end{prop}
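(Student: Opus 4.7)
The plan is to combine the multiplicativity of $\widehat{A}$ under orthogonal direct sums, the stable triviality given by \eqref{eq:TGAMF1}, and the rigidity of $U$-invariant closed forms on the compact symmetric space $Y_\fb=U/U(\fb)$, whose symmetric-pair structure $(\fu,\fu(\fb))$ is recorded in \eqref{eq:fubot}.

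First, since $\sqrt{-1}\fb$ is a trivial real line bundle on $Y_\fb$ and $\nabla^{\sqrt{-1}\fb}$ is flat, I have $\widehat{A}(\sqrt{-1}\fb,\nabla^{\sqrt{-1}\fb})=1$. Using \eqref{eq:AA5} and the multiplicativity of $\widehat{A}$-forms under orthogonal direct sums of metric connections, the identity \eqref{eq:AA1} is equivalent to
\begin{align*}
\widehat{A}\bigl(TY_\fb\oplus\sqrt{-1}\fb\oplus N_\fb,\nabla^{\oplus}\bigr)=1,
\end{align*}
where $\nabla^{\oplus}=\nabla^{TY_\fb}\oplus\nabla^{\sqrt{-1}\fb}\oplus\nabla^{N_\fb}$. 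By \eqref{eq:TGAMF1}, the underlying Euclidean bundle is isomorphic to the trivial bundle $Y_\fb\times\fu$ via the map $[g,X]\mapsto(gU(\fb),\Ad(g)X)$, which is an isometry since $\Ad(U)$ preserves $-B$. In particular, all Pontryagin classes of this bundle vanish, so its $\widehat{A}$-class in $H^\cdot(Y_\fb,\bR)$ equals $1$, and the closed form $\widehat{A}(\nabla^{\oplus})$ is cohomologous to the constant function $1$.

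Next, I would observe that each of $\nabla^{TY_\fb}$, $\nabla^{\sqrt{-1}\fb}$, $\nabla^{N_\fb}$ is the canonical invariant connection associated to a representation of $U(\fb)$ on the corresponding homogeneous bundle over $U/U(\fb)$, so the direct sum $\nabla^{\oplus}$ is $U$-invariant. Consequently $\widehat{A}(\nabla^{\oplus})$ is a $U$-invariant closed form on $Y_\fb$.

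Finally, I would invoke the classical theorem of Cartan for compact symmetric spaces: when $(\fu,\fu(\fb))$ is a symmetric pair and $U$ is compact, every $U$-invariant form on $Y_\fb=U/U(\fb)$ is automatically closed, and the natural map from invariant forms to de~Rham cohomology is an isomorphism; in particular, any $U$-invariant closed form cohomologous to zero must vanish identically. Applied to the positive-degree components of $\widehat{A}(\nabla^{\oplus})-1$, this forces $\widehat{A}(\nabla^{\oplus})=1$ as a differential form, which yields \eqref{eq:AA1}. The only point requiring verification is the $U$-invariance of $\nabla^{\oplus}$, which is immediate from the homogeneous origin of all three constituent connections; the rest is a transparent combination of standard facts.
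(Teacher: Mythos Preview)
Your argument is correct and follows the same route as the paper: both reduce, via multiplicativity and \eqref{eq:AA5}, to showing that the $\widehat{A}$-form of the full associated bundle $TY_\fb\oplus\sqrt{-1}\fb\oplus N_\fb\simeq U\times_{U(\fb)}\fu$ equals $1$, using the triviality \eqref{eq:TGAMF1}. The paper invokes \cite[Proposition~7.1.1]{B09} for that step, whereas you supply it explicitly via topological triviality plus Cartan's theorem on invariant forms for the compact symmetric space $Y_\fb$ (whose symmetric-pair structure is \eqref{eq:fubot}); this is a legitimate way to furnish the content of the cited reference.
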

\begin{proof}
Proceeding as in \cite[Proposition 7.1.1]{B09}, by \eqref{eq:deum}, 
\eqref{eq:TYb}, and \eqref{eq:AA4}, we have 
\begin{align}\label{eq:AA2}
\widehat{A}\(TY_{\fb},\nabla^{TY_{\fb}}\)\widehat{A}\(\sqrt{-1}\fb\oplus N_\fb,\nabla^{\sqrt{-1}\fb\oplus N_\fb}\)=1.
\end{align}
By  \eqref{eq:AA5}, we have
\begin{align}\label{eq:AA3}
\widehat{A}\(\sqrt{-1}\fb\oplus N_\fb,\nabla^{\sqrt{-1}\fb\oplus N_\fb}\)=\widehat{A}\(N_\fb,\nabla^{N_\fb}\).
\end{align}
By \eqref{eq:AA2} and (\ref{eq:AA3}), we get (\ref{eq:AA1}). The 
proof of our proposition is completed. 
\end{proof}

Recall that the curvature form $\Omega^{\fu(\fb)}$ is a $2$-form on $Y_\fb$
with values in $U\times_{U(\fb)}\fu(\fb)$. Recall that $a_0\in \fb$ is defined in \eqref{eq:ab3}. Let $ \Omega^{\fu_{\fm}}$ \index{O@$\Omega^{\fu_{\fm}}$} be the $\fu_\fm$-component of $\Omega^{\fu(\fb)}$.  By
\eqref{eq:ab3}, \eqref{eq:Rub} and \eqref{eq:kahher}, we have
\begin{align}\label{eq:Rububb}
  \Omega^{\fu(\fb)}=\sqrt{-1}\frac{a_0}{|a_0|^2}\otimes \omega^{Y_{\fb}} + \Omega^{\fu_{\fm}}.
\end{align}
By \eqref{eq:Rububb}, the curvature of $(N_\fb,\nabla^{N_\fb})$ \index{R@$R^{N_\fb}$} is given by
\begin{align}\label{eq:RN}
  R^{N_\fb}=\ad\(\Omega^{\fu(\fb)}\)\big|_{\fu_\fm}=\ad\(\Omega^{\fu_\fm}\)\big|_{\fu_\fm}.
\end{align}
Also, $B\(\Omega^{\fu(\fb)},\Omega^{\fu(\fb)}\)$ and 
$B\(\Omega^{\fu_\fm},\Omega^{\fu_\fm}\)$ are well defined $4$-forms 
on $Y_\fb$. We have an analogue of \cite[eq. (7.5.19)]{B09}.

\begin{prop}\label{prop:BOO=0}
  The following identities hold:
\begin{align}\label{eq:BOO=0}
  &B\(\Omega^{\fu(\fb)},\Omega^{\fu(\fb)}\)=0,&B\(\Omega^{\fu_\fm},\Omega^{\fu_\fm}\)=\frac{\omega^{Y_{\fb},2}}{|a_0|^2}.
\end{align}
\end{prop}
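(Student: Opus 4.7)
The plan is to establish the first identity $B(\Omega^{\fu(\fb)}, \Omega^{\fu(\fb)}) = 0$ directly and then to derive the second identity from it by substituting the decomposition \eqref{eq:Rububb}.

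For the first identity, I will use the structure equation \eqref{eq:Rub}, $\Omega^{\fu(\fb)} = -\frac{1}{2}[\omega^{\fu^\bot(\fb)}, \omega^{\fu^\bot(\fb)}]$, together with the $\Ad(U)$-invariance of $B$. Writing $\omega = \omega^{\fu^\bot(\fb)}$ for brevity, and noting that for $\fg_\bC$-valued forms the invariance reads $B([\alpha,\beta],\gamma) = B(\alpha,[\beta,\gamma])$ without any sign correction (since $B$ is scalar-valued), I get
\begin{equation*}
  4\, B(\Omega^{\fu(\fb)}, \Omega^{\fu(\fb)}) = B([\omega,\omega], [\omega,\omega]) = B(\omega, [\omega,[\omega,\omega]]).
\end{equation*}
The triple bracket vanishes identically: evaluated on tangent vectors $(u,v,w)$, it equals $2\bigl([\omega(u),[\omega(v),\omega(w)]] + [\omega(v),[\omega(w),\omega(u)]] + [\omega(w),[\omega(u),\omega(v)]]\bigr)$, which is zero by the Jacobi identity in $\fu_\bC$. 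This proves the first identity.

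For the second identity, I will plug the decomposition \eqref{eq:Rububb}, $\Omega^{\fu(\fb)} = \sqrt{-1}\frac{a_0}{|a_0|^2}\omega^{Y_\fb} + \Omega^{\fu_\fm}$, into the first identity and exploit the $B$-orthogonality of the splitting $\fu(\fb) = \sqrt{-1}\fb \oplus \fu_\fm$. Indeed, $\fb \perp \fp_\fm$ in $\fp$ follows from the definition $\fp_\fm = \fp^{a_1,\bot}(e^{a_1})$ given in \eqref{eq:mpkd1}, while $\fb \subset \fp$ is $B$-orthogonal to $\fk \supset \fk_\fm$ by the standard orthogonality of the Cartan decomposition under $B$. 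Extending $B$ $\bC$-bilinearly to $\fg_\bC$, I have $B(\sqrt{-1}a_0,\sqrt{-1}a_0) = -B(a_0,a_0) = -|a_0|^2$. The cross terms vanish by orthogonality, and expansion yields $0 = -\omega^{Y_\fb,2}/|a_0|^2 + B(\Omega^{\fu_\fm}, \Omega^{\fu_\fm})$, which is precisely the second identity.

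There is essentially no obstacle beyond careful bookkeeping of signs and conventions for Lie-algebra-valued differential forms; the substantive input is only the graded Jacobi identity $[\omega,[\omega,\omega]] = 0$ together with the invariance of $B$. Note that this argument is parallel in spirit to the standard Chern--Weil fact that $B$-polynomials in the curvature of a bi-invariant connection on a symmetric pair are closed and produce characteristic classes; here the particularly strong conclusion $B(\Omega^{\fu(\fb)},\Omega^{\fu(\fb)})=0$ is a reflection of the fact that $(\fu,\fu(\fb))$ is a symmetric pair so the curvature is purely of the bracket form $[\omega,\omega]$.
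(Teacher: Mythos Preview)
Your proof is correct and follows essentially the same approach as the paper's: both use the curvature formula \eqref{eq:Rub} together with the $\Ad$-invariance of $B$ and the Jacobi identity to obtain the first equation, and then derive the second from the decomposition \eqref{eq:Rububb}. The only difference is presentational --- the paper expands in an orthogonal basis $e_1,\dots,e_{4l}$ of $\fu^\bot(\fb)$ to carry out the same computation you perform in the coordinate-free graded formalism.
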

\begin{proof}
 If $e_1,\cdots,e_{4l}$ is an orthogonal  basis of $\fu^\bot(\fb)$, by \eqref{eq:Rub}, we have
\begin{multline}\label{eq:BOO1}
  B\(\Omega^{\fu(\fb)},\Omega^{\fu(\fb)}\)
=\frac{1}{4}\sum_{1\l i,j,i',j'\l 4l}B\big([e_i,e_j],[e_{i'},e_{j'}]\big)e^ie^je^{i'}e^{j'}\\
=\frac{1}{4}\sum_{1\l i,j,i',j'\l 4l}B\big([[e_i,e_j],e_{i'}],e_{j'}\big)e^ie^je^{i'}e^{j'}.
\end{multline}
Using the Jacobi identity and \eqref{eq:BOO1}, we get the first equation of \eqref{eq:BOO=0}.

The second equation of \eqref{eq:BOO=0} is a consequence of 
\eqref{eq:Rububb} and the first equation of \eqref{eq:BOO=0}. The 
proof of our proposition is completed.
\end{proof}


\subsection{Auxiliary virtual representations of $K$}\label{sec:varsi}
Let $RO(K_M)$ and $RO(K)$  \index{R@$RO(K_M),RO(K)$}be  the real 
representation rings of $K_M$ and $K$.  Let $\iota:K_M\to K$ 
\index{I@$\iota$}be the injection.  We denote by
\begin{align}\label{eq:irkkm}
  \iota^*: RO(K)\to RO(K_M)
\end{align}
the induced morphism of rings. Since $K_M$ and $K$ have the same 
maximal torus $T$, $\iota^{*}$ is injective.


\begin{prop}\label{prop:sumsum}
  The following identity in  $RO(K_M)$ holds:
\begin{align}\label{eq:sumsum}
\iota^*\( \sum_{i=1}^{m} (-1)^{i-1}i \Lambda^i(\fp^*)\)=\sum_{i=0}^{\dim \fp_\fm}\sum_{j=0}^{2l}(-1)^{i+j}\Lambda^{i}(\fp_\fm^*)\otimes \Lambda^j(\fn^*). \end{align}
\end{prop}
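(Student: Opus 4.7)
The plan is purely algebraic: use the decomposition of $\fp$ as a $K_M$-representation together with a Leibniz-type identity for a suitable generating polynomial in $RO(K_M)[t]$.

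First, combining \eqref{eq:mpk1} with the $K_M$-equivariant isomorphism $\fp^\bot(\fb) \simeq \fn$ from \eqref{eq:n=pbK}, we obtain an isomorphism of real $K_M$-representations
\begin{align*}
  \fp \simeq \fp_\fm \oplus \fb \oplus \fn,
\end{align*}
where $\fb$ is a one-dimensional trivial $K_M$-module. Therefore, in $RO(K_M)$,
\begin{align*}
  \Lambda^\cdot(\fp^*) \simeq \Lambda^\cdot(\fp_\fm^*)\otimes \Lambda^\cdot(\fb^*)\otimes\Lambda^\cdot(\fn^*).
\end{align*}

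Next, for any finite-dimensional real $K_M$-representation $W$, introduce the two elements of $RO(K_M)$
\begin{align*}
  \Lambda_{-1}(W)=\sum_{k\g 0}(-1)^k\Lambda^k(W^*),\qquad \phi(W)=\sum_{k\g 1}(-1)^{k-1}k\,\Lambda^k(W^*).
\end{align*}
Formally, $\Lambda_{-1}(W)$ and $\phi(W)$ are obtained by evaluating $\Lambda_t(W)=\sum_k t^k\Lambda^k(W^*)\in RO(K_M)[t]$ and its derivative at $t=-1$. From the multiplicativity $\Lambda_t(W_1\oplus W_2)=\Lambda_t(W_1)\Lambda_t(W_2)$, the Leibniz rule yields
\begin{align*}
  \phi(W_1\oplus W_2)=\phi(W_1)\Lambda_{-1}(W_2)+\Lambda_{-1}(W_1)\phi(W_2).
\end{align*}

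Applying this identity twice to $\fp\simeq\fp_\fm\oplus\fb\oplus\fn$ gives
\begin{align*}
  \phi(\fp)=\phi(\fp_\fm)\Lambda_{-1}(\fb)\Lambda_{-1}(\fn)+\Lambda_{-1}(\fp_\fm)\phi(\fb)\Lambda_{-1}(\fn)+\Lambda_{-1}(\fp_\fm)\Lambda_{-1}(\fb)\phi(\fn).
\end{align*}
Since $K_M$ acts trivially on $\fb$ and $\dim\fb=1$, we have $\Lambda_{-1}(\fb)=1-1=0$ and $\phi(\fb)=1$ in $RO(K_M)$. Only the middle term survives, and we obtain
\begin{align*}
  \phi(\fp)=\Lambda_{-1}(\fp_\fm)\,\Lambda_{-1}(\fn)=\sum_{i=0}^{\dim\fp_\fm}\sum_{j=0}^{2l}(-1)^{i+j}\Lambda^i(\fp_\fm^*)\otimes\Lambda^j(\fn^*),
\end{align*}
which is exactly the desired identity, since $\iota^{*}\bigl(\sum_{i\g 1}(-1)^{i-1}i\Lambda^i(\fp^*)\bigr)=\phi(\fp)$ in $RO(K_M)$.

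There is no real obstacle; the only point that requires care is the vanishing $\Lambda_{-1}(\fb)=0$, which relies crucially on $\dim\fb=1$ and on $K_M$ acting trivially on $\fb$. This latter fact is precisely \eqref{eq:defb} together with the definition $K_M=K^{0}(e^{a_1})$ for $a_1\in\fb$.
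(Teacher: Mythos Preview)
Your proof is correct and follows essentially the same approach as the paper: both use the $K_M$-decomposition $\fp\simeq\fp_\fm\oplus\fb\oplus\fn$ with $\fb$ a one-dimensional trivial module, together with the Hirzebruch generating polynomial $\Lambda_t$ and the observation that differentiating and evaluating at $t=-1$ kills all terms containing $\Lambda_{-1}(\fb)=0$ except the one carrying $\phi(\fb)=1$. The paper packages the Leibniz computation as the single identity $\Lambda'_{-1}(V\oplus\mathbf{1})=\Lambda_{-1}(V)$ and then applies it with $V=\fp_\fm\oplus\fn$, whereas you expand the three-fold Leibniz rule explicitly; the content is identical.
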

\begin{proof}
 For a representation $V$ of $K_M$, we use the multiplication notation introduced by Hirzebruch. Put
\begin{align}
\Lambda_y(V)=\sum_i y^i\Lambda^i(V)
\end{align}
a polynomial of $y$ with coefficients in $RO(K_M)$. In particular,
\begin{align}
&\Lambda_{-1}(V)=\sum_i (-1)^i\Lambda^i(V),&\Lambda'_{-1}(V)=\sum_i (-1)^{i-1}i\Lambda^i(V).
\end{align}
Denote by $\mathbf{1}$ the trivial representation.
Since $\Lambda_1(\mathbf{1})=0,\Lambda'_{-1}(\mathbf{1})=\mathbf{1}$, we get
\begin{align}\label{eq:hiz}
\Lambda'_{-1}(V\oplus \mathbf{1})=\Lambda_{-1}(V).
\end{align}

By \eqref{eq:mpk1}, \eqref{eq:n=pbK}, and by the fact that $K_M$ acts trivially on $\fb$, we have the isomorphism of $K_M$-representations
\begin{align}\label{eq:p1pn}
  \fp\simeq \mathbf{1}\oplus \fp_\fm\oplus \fn.
\end{align}
Taking $V=\fp_\fm\oplus \fn$, by \eqref{eq:hiz} and  \eqref{eq:p1pn},
we get \eqref{eq:sumsum}. The proof of our proposition is completed.
\end{proof}

The following theorem is crucial.
\begin{thm}\label{thm:keyrep}
The adjoint representations of $K_M$ on $\fn$  has a unique lift in $RO(K)$.
\end{thm}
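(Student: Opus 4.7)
The plan is to establish existence first (the main content) and then handle uniqueness by one line.

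\textbf{Uniqueness.} Since $K_M$ and $K$ share the maximal torus $T$, the restriction morphism $\iota^{*}\colon RO(K)\to RO(K_M)$ is injective (a virtual representation of $K$ is determined by its character on $T$, which is the same as the character restricted to $K_M$). Hence any lift of the adjoint representation of $K_M$ on $\fn$ is unique as soon as it exists.

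\textbf{Existence: reduction to simple factors.} First, I will reduce to the case where $\fg$ is simple. The Lie algebra $\fg$ splits as $\fz_{\fg}\oplus [\fg,\fg]$, and by \eqref{eq:gzgg}, since $G$ has compact center and $\delta(G)=1$, we have $\delta([\fg,\fg])=1$. Writing $[\fg,\fg]$ as a sum of simple ideals, exactly one of those simple ideals has $\delta=1$ and all others have $\delta=0$; the subalgebra $\fn\subset \fg$ lies entirely in the unique $\delta=1$ factor. The group $K$ decomposes compatibly, and the factors with $\delta=0$ contribute trivially to the $K_M$-representation $\fn$. So it suffices to prove the existence of a lift when $\fg$ is simple with $\delta(\fg)=1$.

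\textbf{Existence: the classification and case analysis.} By the classification result to be recalled in Subsection~\ref{sec:class}, a simple real Lie algebra $\fg$ with $\delta(\fg)=1$ is isomorphic to $\mathfrak{sl}_{3}(\bR)$ or to $\mathfrak{so}(p,q)$ with $pq>1$ odd. For each of these cases, I will exhibit an explicit element of $RO(K)$ whose restriction to $K_M$ equals $\fn$.
\begin{itemize}
\item For $\fg=\mathfrak{sl}_{3}(\bR)$, one has $K=\SO(3)$ and $K_M\subset \SO(2)$, with $\fn$ a two-dimensional real $K_M$-representation. Since $K_M$-weights can be realized from the standard $\SO(3)$-representation on $\bR^{3}$, one easily writes $\fn=(\bR^{3}-\mathbf{1})|_{K_M}$ in $RO(K_M)$.
\item For $\fg=\mathfrak{so}(p,q)$, one has $K=\SO(p)\times \SO(q)$ and $K_M=\SO(p-1)\times \SO(q-1)$, and $\fn$ is the tensor product of the standard representations of $\SO(p-1)$ and $\SO(q-1)$. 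A direct calculation then gives $\fn=(\bR^{p}\otimes \bR^{q}-\bR^{p}-\bR^{q}+\mathbf{1})|_{K_M}$ in $RO(K_M)$, so $\fn=(\bR^{p}\otimes \bR^{q}-\bR^{p}-\bR^{q}+\mathbf{1})\in RO(K)$ provides the required lift.
\end{itemize}

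\textbf{Passage from $G_*$ to $G$.} In general $G$ need not itself be $\SL_{3}(\bR)$ or $\SO^{0}(p,q)$, but rather $i_{G}\colon G\to G_{*}$, where $G_{*}$ is the connected component of the isometry group of $X$, has kernel in the compact center and induces an isomorphism of $X=G/K$ with $G_{*}/K_{*}$. Pulling back along $i_{G}$ sends the $K_*$-lift of $\fn$ constructed above to a $K$-lift in $RO(K)$, since $i_G$ restricts to a surjection $K\to K_*$ (with kernel in the center of $K$). This step, whose details will be handled in Subsections~\ref{sec:i}--\ref{sec:G1=0}, is the main technical obstacle: one must verify that the $K_*$-module structure on $\fn$ (regarded as the corresponding subspace of $\fg_{*}$) is compatible with its $K_M$-structure after pullback, which rests on the fact that the adjoint actions of $K$ on $\fn$ and of $K_*$ on its counterpart factor through the same quotient. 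Combining the three pieces — uniqueness, case-by-case existence for simple $\fg$, and pullback via $i_G$ — yields the theorem.
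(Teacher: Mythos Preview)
Your overall architecture---uniqueness from injectivity of $\iota^{*}$, explicit lifts in the two simple cases, and pullback along $i_G$---is exactly the paper's approach. However, there is a genuine error in your $\mathfrak{so}(p,q)$ case.

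You assert that the $K_M$-representation $\fn$ is the tensor product $\sigma_{p-1}\boxtimes\sigma_{q-1}$. It is not. From the explicit description in \eqref{eq:nsopq}, $\fn$ is parametrized by a pair $(v_1,v_2)\in\bR^{p-1}\oplus\bR^{q-1}$, and $K_M\simeq\SO(p-1)\times\SO(q-1)$ acts on these two blocks separately. Thus as a $K_M$-module
\[
\fn\;\simeq\;\sigma_{p-1}\boxtimes\mathbf{1}\;\oplus\;\mathbf{1}\boxtimes\sigma_{q-1},
\]
a direct sum, not a tensor product. (A dimension count already flags the problem: $\dim\fn=p+q-2$, whereas $\dim(\sigma_{p-1}\boxtimes\sigma_{q-1})=(p-1)(q-1)$; these disagree except accidentally when $p=q=3$.) What you have written down, $\bR^{p}\otimes\bR^{q}-\bR^{p}-\bR^{q}+\mathbf{1}$, restricts to $\sigma_{p-1}\boxtimes\sigma_{q-1}$, which is the $K_M$-module $\fp_{\fm}$, not $\fn$.

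The correct lift for $\fn$ is the one in \eqref{eq:ptoq}:
\[
(\sigma_{p}-\mathbf{1})\boxtimes\mathbf{1}\;\oplus\;\mathbf{1}\boxtimes(\sigma_{q}-\mathbf{1})\;\in\;RO(K),
\]
whose restriction to $K_M$ is visibly $\sigma_{p-1}\boxtimes\mathbf{1}\oplus\mathbf{1}\boxtimes\sigma_{q-1}$. With this correction (and the analogous easy check for $\mathfrak{sl}_3(\bR)$, which you have right), the rest of your outline goes through as in the paper.
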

\begin{proof}
The injectivity of $\iota^*$ implies the uniqueness. The proof of the 
existence of the lifting of $\fn$, based on  the  classification 
theorem of real simple Lie algebras, will be given in Subsection \ref{sec:liftn}.
\end{proof}

\begin{cor}\label{cor:key}
For $i,j\in\mathbf{N}$,  the adjoint representations of $K_M$ on $\Lambda^i(\fp^*_\fm)$ and $\Lambda^j(\fn^*)$  have unique lifts in $RO(K)$.
\end{cor}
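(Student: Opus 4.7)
The strategy is to deduce existence from Theorem \ref{thm:keyrep} together with the Cartan splitting \eqref{eq:p1pn}, and then to pass to exterior powers using the $\lambda$-ring structure on $RO(K)$. Uniqueness, on the other hand, is essentially free.

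First I would dispose of uniqueness: it was observed just after \eqref{eq:irkkm} that $\iota^*: RO(K)\to RO(K_M)$ is injective, since $K_M$ and $K$ share the common maximal torus $T$, and characters determine virtual representations. Hence any lift, if it exists, is unique.

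For existence, the plan is as follows. By Theorem \ref{thm:keyrep}, the $K_M$-representation $\fn$ admits a (unique) lift $\widetilde{\fn}\in RO(K)$. Using the isomorphism of $K_M$-representations \eqref{eq:p1pn},
\begin{equation*}
\iota^{*}[\fp] \;=\; [\mathbf{1}] + [\fp_\fm] + [\fn] \quad\text{in }RO(K_M),
\end{equation*}
one obtains a lift of $\fp_\fm$ by setting
\begin{equation*}
\widetilde{\fp_\fm}\;=\;[\fp] - [\mathbf{1}] - \widetilde{\fn}\;\in\;RO(K),
\end{equation*}
where $[\fp]\in RO(K)$ is the genuine adjoint representation of $K$ coming from the Cartan decomposition \eqref{eq:cartan1}. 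Thus both $\fp_\fm$ and $\fn$ belong to $\iota^{*}(RO(K))$.

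The final step is to upgrade this to exterior powers. Recall that $RO(K)$ is a $\lambda$-ring, with $\lambda$-operations $\lambda^{i}$ that agree with $V\mapsto \Lambda^{i}(V)$ on genuine representations; the restriction $\iota^{*}$ is a morphism of $\lambda$-rings, because restriction commutes with exterior powers of honest representations and the $\lambda$-operations on virtual classes are determined by the identities $\lambda_t(V+W)=\lambda_t(V)\lambda_t(W)$. Consequently, the image $\iota^{*}(RO(K))\subset RO(K_M)$ is stable under all $\lambda^{i}$. Applying this to $\widetilde{\fp_\fm}$ and $\widetilde{\fn}$ yields lifts of $\Lambda^{i}(\fp_\fm)$ and $\Lambda^{j}(\fn)$ in $RO(K)$. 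Finally, since $K_M$ acts orthogonally on $\fp_\fm$ and $\fn$, the invariant inner products furnish canonical $K_M$-equivariant isomorphisms $\fp_\fm\simeq \fp_\fm^{*}$ and $\fn\simeq \fn^{*}$, and similarly for $K$ acting on the lifts; this identifies $\Lambda^{i}(\fp_\fm^{*})$ and $\Lambda^{j}(\fn^{*})$ with $\Lambda^{i}(\fp_\fm)$ and $\Lambda^{j}(\fn)$ as virtual representations, concluding the proof.

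There is essentially no obstacle here once Theorem \ref{thm:keyrep} is granted; the only subtlety worth flagging is the use of the $\lambda$-ring structure on $RO(K)$, which is what allows exterior powers of a virtual representation to be defined without ambiguity, and which is what makes the formal subtraction $\widetilde{\fp_\fm}=[\fp]-[\mathbf{1}]-\widetilde{\fn}$ interact correctly with the exterior power functor.
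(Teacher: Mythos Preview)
Your proof is correct and follows the same strategy as the paper: uniqueness from injectivity of $\iota^*$, existence of the lift of $\fp_\fm$ from \eqref{eq:p1pn} and Theorem~\ref{thm:keyrep}, and passage to exterior powers. The paper compresses steps 3--5 into a single sentence, whereas you spell out explicitly that the image $\iota^*(RO(K))$ is a sub-$\lambda$-ring of $RO(K_M)$, which is precisely what justifies taking $\Lambda^j$ of a virtual lift; this is the implicit content of the paper's phrase ``the same is true for the $\Lambda^j(\fn^*)$''.
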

\begin{proof}
	As before, it is enough to show the existence of lifts.
	Since the representation of $K_{M}$ on $\fn$ lifts to $K$, the same 
is true for the $\Lambda^{j}(\fn^{*})$. By \eqref{eq:p1pn}, 
this extends to the $\Lambda^{i}(\fp_{\fm}^{*})$. The proof of our 
corollary is completed. 
\end{proof}

Denote by $\eta_j$\index{E@$\eta_j$} the adjoint representation of $M$ on $\Lambda^j(\fn^*)$.
Recall that by \eqref{eq:ubotnn}, $U(\fb)$ acts on $\fn_\bC$. 
Recall also that $C^{\fu_\fm,\eta_j}\in \End\big(\Lambda^j(\fn^*_\bC)\big)$, \index{C@$C^{\fu_\fm,\eta_j}$} $C^{\fu(\fb),\fu^\bot(\fb)}\in \End(\fu^\bot(\fb))$ \index{C@$C^{\fu(\fb),\fu^\bot(\fb)}$} are defined in \eqref{eq:ckkckp}.

\begin{prop}\label{prop:bggbuu}For $0\l j\l 2l$,  $C^{\fu_\fm,\eta_j}$ is a scalar so that
\begin{align}\label{eq:BggBuu}
C^{\fu_\fm,\eta_j}=\frac{1}{8}\Tr^{\fu^\bot(\fb)}\[C^{\fu(\fb),\fu^\bot(\fb)}\]+(j-l)^2|\alpha|^2.
\end{align}
\end{prop}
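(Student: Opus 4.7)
The orthogonal decomposition $\fu(\fb)=\sqrt{-1}\fb\oplus\fu_\fm$ (with respect to $-B$) gives $C^{\fu(\fb),V}=C^{\sqrt{-1}\fb,V}+C^{\fu_\fm,V}$ on any $\fu(\fb)$-module $V$. Since $\theta$ is $K_M$-equivariant and exchanges $\fn$ with $\ol\fn$, we have $\Lambda^j(\fn_\bC^*)\simeq\Lambda^j(\ol\fn_\bC^*)$ as $\fu_\fm$-modules and therefore $C^{\fu_\fm,\eta_j}=C^{\fu_\fm,\Lambda^j(\ol\fn_\bC^*)}$. By Proposition \ref{prop:nnam1}, $\sqrt{-1}\fb$ acts on $\ol\fn_\bC$ by the character $-\sqrt{-1}\alpha$, so on $\Lambda^j(\ol\fn_\bC^*)$ it acts by the scalar $j\sqrt{-1}|\alpha|$; squaring gives $C^{\sqrt{-1}\fb,\Lambda^j(\ol\fn_\bC^*)}=-j^2|\alpha|^2$. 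The proposition is thus equivalent to
\begin{align*}
C^{\fu(\fb),\Lambda^j(\ol\fn_\bC^*)}=\tfrac{1}{8}\Tr^{\fu^\bot(\fb)}\bigl[C^{\fu(\fb),\fu^\bot(\fb)}\bigr]+\bigl((j-l)^2-j^2\bigr)|\alpha|^2.
\end{align*}

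I would establish this using the spinor identification \eqref{eq:Rnspinc}: $\Lambda^\cdot(\ol\fn_\bC^*)\simeq S^{\fu^\bot(\fb)}\otimes\det(\fn_\bC)^{1/2}$ as $U(\fb)$-modules. The character $\tau:=\det(\fn_\bC)^{1/2}$ is trivial on $\fu_\fm$ (as used in the paragraph preceding \eqref{eq:Rnspin}) and acts on $\sqrt{-1}a_1$ by $l\sqrt{-1}|\alpha|$. The spinor $V:=S^{\fu^\bot(\fb)}$ has $\sqrt{-1}\fb$-weights $(k-l)\sqrt{-1}|\alpha|$ for $k=0,\dots,2l$, and under the above isomorphism the summand $\Lambda^j(\ol\fn_\bC^*)$ corresponds to the weight-$(j-l)\sqrt{-1}|\alpha|$ subspace $V_{j-l}\otimes\tau$. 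A direct expansion of the Casimir on the tensor product $V\otimes\tau$ yields, on each weight subspace,
\begin{align*}
C^{\fu(\fb),V\otimes\tau}\big|_{V_w\otimes\tau}=C^{\fu(\fb),V}\big|_{V_w}+2w\sqrt{-1}|\alpha|\cdot\tau(\sqrt{-1}a_1)+\tau(\sqrt{-1}a_1)^2,
\end{align*}
and substituting $w=j-l$, $\tau(\sqrt{-1}a_1)=l\sqrt{-1}|\alpha|$, shows that the target identity follows once we know $C^{\fu(\fb)}$ acts on $V=S^{\fu^\bot(\fb)}$ as the scalar $\tfrac{1}{8}\Tr^{\fu^\bot(\fb)}[C^{\fu(\fb),\fu^\bot(\fb)}]$.

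This last assertion is a Parthasarathy--Lichnerowicz type formula for the symmetric pair $(\fu,\fu(\fb))$, and constitutes the main technical step. I would prove it by a direct Clifford-algebra computation: the spin representation of $X\in\fu(\fb)$ on $S^{\fu^\bot(\fb)}$ is given by $\sigma(X)=\tfrac{1}{4}\sum_{i,j}\<[X,e_i],e_j\>c(e_i)c(e_j)$ for a $-B$-orthonormal real basis $\{e_i\}$ of $\fu^\bot(\fb)$. Computing $C^{\fu(\fb),S}=\sum_\alpha\sigma(Y_\alpha)^2$ for an orthonormal basis $\{Y_\alpha\}$ of $\fu(\fb)$, and then expanding using the Clifford relations $c(e_i)c(e_j)+c(e_j)c(e_i)=-2\delta_{ij}$, the symmetric-pair bracket $[\fu(\fb),\fu^\bot(\fb)]\subset\fu^\bot(\fb)$, the Jacobi identity, and the $\Ad$-invariance of $B$, one finds that all Clifford-degree-$\geq 2$ contributions cancel, leaving the scalar $\tfrac{1}{8}\sum_{\alpha,i}\<[Y_\alpha,[Y_\alpha,e_i]],e_i\>=\tfrac{1}{8}\Tr^{\fu^\bot(\fb)}[C^{\fu(\fb),\fu^\bot(\fb)}]$. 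The Clifford-degree-$2$ cancellation is the main obstacle, but it is a standard identity for spinors on symmetric pairs (closely related to the computations in \cite[Section 7]{B09}), and the fact that the result is a genuine scalar (rather than merely scalar on irreducible constituents) is guaranteed by Proposition \ref{eq:propYb}, which implies $Y_\fb=U/U(\fb)$ is irreducible Hermitian symmetric and hence that $C^{\fu(\fb),\fu^\bot(\fb)}$ is itself a scalar.
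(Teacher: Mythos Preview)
Your proof is correct and follows essentially the same strategy as the paper: both rest on the spinor identification \eqref{eq:Rnspinc} and the key fact that $C^{\fu(\fb),S^{\fu^\bot(\fb)}}=\tfrac{1}{8}\Tr^{\fu^\bot(\fb)}[C^{\fu(\fb),\fu^\bot(\fb)}]$, which the paper simply quotes as \cite[eq.~(7.8.6)]{B09} rather than rederiving via your Clifford computation. The paper's route is slightly shorter---it applies the splitting $C^{\fu(\fb)}=C^{\fu_\fm}+C^{\sqrt{-1}\fb}$ directly to $S\simeq\Lambda^\cdot(\ol\fn^*_\bC)\otimes\det(\fn_\bC)^{-1/2}$ in one line (see \eqref{eq:fubfu}--\eqref{eq:bkaka}), avoiding your tensor-product expansion---and your final appeal to the irreducibility of $Y_\fb$ is unnecessary, since the Clifford computation you outline already produces a genuine scalar.
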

\begin{proof}
 Equation \eqref{eq:BggBuu} was proved in   \cite[Lemma 
 2.5]{MStorsion}. 
 We give here  a more conceptual proof. 


Recall that 
$(\fu,\fu(\fb))$ is a compact symmetric pair. Let $S^{\fu^\bot(\fb)}$ 
be the $\fu^{\bot}(\fb)$-spinors \cite[Section 7.2]{B09}. Let 
$C^{\fu(\fb),S^{\fu^\bot(\fb)}}$ be the Casimir element  of $\fu(\fb)$ acting on  $S^{\fu^\bot(\fb)}$ defined as in \eqref{eq:ckkckp}.  By \cite[eq. (7.8.6)]{B09},
$C^{\fu(\fb),S^{\fu^\bot(\fb)}}$ is a scalar such that
\begin{align}\label{eq:fubfu}
 C^{\fu(\fb),S^{\fu^\bot(\fb)}}=\frac{1}{8}\Tr\[C^{\fu(\fb),\fu^\bot(\fb)}\].\end{align}

Let $C^{\fu_\fm,\Lambda^\cdot(\ol{\fn}^*_\bC)}$ be the Casimir element of $\fu_\fm$ acting on $\Lambda^\cdot(\ol{\fn}^*_\bC)$.
By \eqref{eq:Cg}, \eqref{eq:Rnspinc} and \eqref{eq:Rnspin}, we have
\begin{align}
 C^{\fu(\fb),S^{\fu^\bot(\fb)}}= C^{\fu_\fm,\Lambda^\cdot(\ol{\fn}^*_\bC)}-\(\Ad(a_1)|_{\Lambda^\cdot(\ol{\fn}_\bC^*)\otimes \det^{-1/2}(\fn_\bC)}\)^2.
\end{align}
By \eqref{eq:ab1}, we have
\begin{align}\label{eq:bkaka}
  \Ad(a_1)|_{\Lambda^j(\ol{\fn}_\bC^*)\otimes \det^{-1/2}(\fn_\bC)}=(j-l)|\alpha|.
\end{align}
By \eqref{eq:fubfu}-\eqref{eq:bkaka}, we get \eqref{eq:BggBuu}. The 
proof of our proposition is completed.
\end{proof}

Let $\gamma=e^{a}k^{-1}\in G$ be such that \eqref{eq:asr} holds. 
Since  $\Lambda^{\cdot}(\fp^{*}_{\fm})\in RO(K)$, for $Y\in \fk(\gamma)$, 
$\Trs^{\Lambda^{\cdot}(\fp^{*}_{\fm})}\[k^{-1}\exp(-iY)\]$ is well 
defined. 
 We have an analogue of \eqref{eq:vanishT}. 
\begin{prop}\label{prop:G1=0}If $\dim \fb(\gamma)\g2$, then for $Y\in \fk(\gamma)$, we have 
\begin{align}\label{eq:G1=0}
 \Trs^{\Lambda^{\cdot}(\fp^{*}_{\fm})}\[k^{-1}\exp(-iY)\]=0. 
\end{align}
\end{prop}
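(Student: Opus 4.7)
The plan is to adapt the proof of Proposition \ref{prop:vanishT}. By the $\Ad(K^{0}(\gamma))$-invariance of the left-hand side of \eqref{eq:G1=0} and the conjugacy of maximal tori in $\fk(\gamma)$ under $K^{0}(\gamma)$, I would first reduce to the case $Y \in \ft(\gamma)$. After the normalization \eqref{eq:trkpo}, $\ft(\gamma) \subset \ft$ and $k \in T$. Since $\fb$ commutes with $\ft$, one has $\ft \subset \fk_{\fm}$, so that both $k \in K_{M}$ and $Y \in \fk_{\fm}$. The operator $A = \Ad(k^{-1})\exp(-i\ad Y)$ then preserves the $K_{M}$-stable splitting $\fp = \fb \oplus \fp_{\fm} \oplus \fp^{\perp}(\fb)$, and on this range the virtual $K$-supertrace provided by Corollary \ref{cor:key} coincides with the genuine $K_{M}$-character, namely $\Trs^{\Lambda^{\cdot}(\fp_{\fm}^{*})}[k^{-1}\exp(-iY)] = \det(1 - A)|_{\fp_{\fm}}$.

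It then suffices to produce a nonzero fixed vector of $A$ in $\fp_{\fm,\bC}$; for generic $Y \in \ft(\gamma)$ this fixed space equals $\fp_{\fm}^{k,\ft(\gamma)} = \fp_{\fm} \cap \fb(\gamma)$. Because $\Ad(k)$ and $\ad(\ft(\gamma))$ both preserve the splitting $\fp = (\fb \oplus \fp_{\fm}) \oplus \fp^{\perp}(\fb)$ (the first since $k \in T$ centralizes $\fb$, the second since $\ft(\gamma) \subset \fk_{\fm}$ centralizes $a_{1}$), one obtains the decomposition $\fb(\gamma) = (\fb(\gamma) \cap (\fb \oplus \fp_{\fm})) \oplus (\fb(\gamma) \cap \fp^{\perp}(\fb))$, with $\fb \subset \fb(\gamma) \cap (\fb \oplus \fp_{\fm})$. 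In the generic case $\fb(\gamma) \cap \fp^{\perp}(\fb) = 0$, the hypothesis $\dim \fb(\gamma) \geq 2$ together with $\dim \fb = 1$ immediately forces $\fb(\gamma) \cap \fp_{\fm} \neq 0$, which gives the desired vanishing.

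The subtle case is $\fb(\gamma) \cap \fp^{\perp}(\fb) \neq 0$, equivalently $(\fn_{\bC})^{k,\ft(\gamma)} \neq 0$. Here I would pick $0 \neq X$ in this space and examine the bracket $[X, \theta X] \in (\fb \oplus \fp_{\fm})^{k,\ft(\gamma)} = \fb \oplus (\fp_{\fm} \cap \fb(\gamma))$, the goal being to show that its $\fp_{\fm}$-component is nonzero for some admissible $X$; this is the main obstacle, since a priori the bracket could land entirely in $\fb$, and in that eventuality one would have to invoke the explicit root structure of the simple Lie algebras with $\delta(\fg) = 1$ classified in Subsection \ref{sec:class}. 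An alternative route that bypasses this obstruction is to combine Propositions \ref{prop:sumsum} and \ref{prop:vanishT} to obtain, as an identity of analytic functions of $Y \in \ft(\gamma)$,
\begin{equation*}
\det(1-A)|_{\fp_{\fm}} \cdot \det(1-A)|_{\fn} = -\Trs^{\Lambda^{\cdot}(\fp^{*})}\[N^{\Lambda^{\cdot}(\fp^{*})} A\] = 0,
\end{equation*}
and then conclude by analyticity: whenever the second factor on the left is not identically zero, the first factor vanishes identically, and the residual case where both factors vanish identically is precisely the degenerate case above, which one then treats by the structural argument.
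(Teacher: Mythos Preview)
Your reduction to $Y\in\ft(\gamma)$, the identification $\Trs^{\Lambda^{\cdot}(\fp_{\fm}^{*})}[k^{-1}\exp(-iY)]=\det(1-A)|_{\fp_{\fm}}$, and the goal $\fb(\gamma)\cap\fp_{\fm}\neq0$ all match the paper exactly (this is \eqref{eq:TREhat} and \eqref{eq:dbrp1}). Your ``generic'' case is also fine and in fact slightly cleaner than the paper's, which instead decomposes the single vector $a=a^{1}+a^{2}+a^{3}\in\fb\oplus\fp_{\fm}\oplus\fp^{\perp}(\fb)$ and disposes of $a^{2}\neq0$ and $a^{2}=a^{3}=0$ immediately.

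The gap is in the subtle case. Your route (b) is not a bypass: the dichotomy it produces is \emph{identical} to your direct one, since $\det(1-A)|_{\fn}$ vanishes identically on $\ft(\gamma)$ precisely when $(\fn_{\bC})^{k,\ft(\gamma)}\neq0$, i.e.\ when $\fb(\gamma)\cap\fp^{\perp}(\fb)\neq0$. So (b) only re-proves your generic case via Theorem~\ref{prop:vanishT} and still leaves the hard case untouched. As for (a), the $\fp_{\fm}$-component of $[X,\theta X]$ can genuinely vanish (e.g.\ in $\mathfrak{so}(p,q)$ when $X$ corresponds to $v^{1}\neq0$, $v^{2}=0$ in \eqref{eq:nsopq}), so there is no uniform bracket argument. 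The paper handles the hard case by the explicit classification: for $\mathrm{SL}_{3}(\bR)$ one gets $k=1$, $\ft(\gamma)=0$, hence $\fb(\gamma)=\fp$; for $\mathrm{SO}^{0}(p,q)$ with $a^{3}=v^{1}+v^{2}$, either both $v^{i}\neq0$ and then $v^{1}\boxtimes v^{2}\in\fp_{\fm}\cap\fb(\gamma)$, or one $v^{i}=0$ and then an explicit $K$-conjugation (\eqref{eq:apva}--\eqref{eq:rtoH}) moves $\gamma$ into $H$, contradicting $\dim\fb(\gamma)\geq2$ via Proposition~\ref{prop:dbdbr}. The general case is pulled back from $G_{*}$ via $i_{G}$. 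This last mechanism---showing $\gamma$ is conjugate into $H$ rather than exhibiting a fixed vector in $\fp_{\fm}$---is what your sketch is missing.
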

\begin{proof}
	The proof of Proposition \ref{prop:G1=0}, based on the classification 
theory of real simple Lie algebras, will be given in Subsection \ref{sec:G1=0}.
%
%
%
\end{proof}

\subsection{A classification of  real reductive Lie algebra $\fg$ with $\delta(\fg)=1$}\label{sec:class}
Recall that $G$ is a real reductive group with compact center, such 
that $\delta(G)=1$.


\begin{thm}\label{thm:cla}
  We have a decomposition of Lie algebras \index{G@$\fg_1,\fg_2$}
\begin{align}\label{eq:g=g1g2}
  \fg=\fg_1\oplus\fg_2,
\end{align}
where
\begin{align}
  \fg_1= \mathfrak{sl}_3(\bR) \hbox{ or } \mathfrak{so}(p,q),
\end{align}
with $pq>1$ odd, and  $\fg_2$ is real reductive  with $\delta(\fg_2)=0$.
\end{thm}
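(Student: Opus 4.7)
The plan is to reduce to the case of a simple Lie algebra and then invoke the Cartan classification of real simple Lie algebras.

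First, since $G$ has compact center, $\fz_{\fp}=0$. Combined with \eqref{eq:gzgg}, this gives $\delta([\fg,\fg])=\delta(\fg)=1$. Decompose the semisimple ideal as $[\fg,\fg]=\bigoplus_{i=1}^{N}\fs_{i}$ into simple factors. Any $\theta$-stable Cartan subalgebra of $[\fg,\fg]$ (resp.\ maximal compact subalgebra) is the direct sum of such subalgebras of the $\fs_{i}$. Hence $\delta$ is additive on direct sums:
\begin{align*}
\sum_{i=1}^{N}\delta(\fs_{i})=\delta([\fg,\fg])=1.
\end{align*}
Since each $\delta(\fs_{i})\in\mathbf{N}$, exactly one factor $\fs_{i_{0}}$ has $\delta(\fs_{i_{0}})=1$ and all others vanish. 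Setting $\fg_{1}=\fs_{i_{0}}$ and $\fg_{2}=\fz_{\fg}\oplus\bigoplus_{i\neq i_{0}}\fs_{i}$ then gives the required decomposition, provided we know that the only simple real Lie algebras with $\delta=1$ are $\mathfrak{sl}_{3}(\bR)$ and $\mathfrak{so}(p,q)$ with $pq>1$ odd.

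The remaining step is therefore the classification of simple real Lie algebras $\fs$ with $\delta(\fs)=1$, which I would carry out case by case via $\delta(\fs)=\mathrm{rk}_{\bC}(\fs)-\mathrm{rk}_{\bC}(\fk_{\fs})$ using the list of real forms (e.g.\ Knapp, \emph{Lie Groups Beyond an Introduction}, Appendix C). Compact simple algebras have $\delta=0$. For the classical noncompact forms: $\mathfrak{su}(p,q)$, $\mathfrak{sp}(n,\bR)$, $\mathfrak{sp}(p,q)$, and $\mathfrak{so}^{*}(2n)$ all have $\delta=0$; $\mathfrak{sl}_{n}(\bR)$ has $\delta=n-1-\lfloor n/2\rfloor$, so $\delta=1$ iff $n\in\{3,4\}$; $\mathfrak{su}^{*}(2n)$ has $\delta=n-1$, so $\delta=1$ iff $n=2$; $\mathfrak{so}(p,q)$ has $\delta=1$ iff $p$ and $q$ are both odd. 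Complex simple algebras, viewed as real, have $\delta$ equal to their complex rank, which forces $\mathfrak{sl}_{2}(\bC)$. A direct check on $E_{6},E_{7},E_{8},F_{4},G_{2}$ yields only $\delta\in\{0,2\}$ for all real forms. Applying the low-dimensional isomorphisms
\begin{align*}
\mathfrak{sl}_{2}(\bC)\cong\mathfrak{so}(1,3),\quad\mathfrak{su}^{*}(4)\cong\mathfrak{so}(1,5),\quad\mathfrak{sl}_{4}(\bR)\cong\mathfrak{so}(3,3),
\end{align*}
absorbs all stray cases into the orthogonal family, leaving exactly $\mathfrak{sl}_{3}(\bR)$ and $\mathfrak{so}(p,q)$ with $pq>1$ odd.

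The main obstacle is really just the bookkeeping of the classification, which is routine but lengthy; one can shorten it by first noting that $\delta(\fs)=0$ is equivalent to $\fs$ admitting a compact Cartan subalgebra (the equal rank condition), which immediately eliminates all the Hermitian and equal rank families, and then only the handful of non-equal-rank classical and exceptional forms must be inspected in detail. This is essentially the argument used in \cite[Section 2]{MStorsion}, which we follow.
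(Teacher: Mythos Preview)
Your argument is correct and follows essentially the same route as the paper: reduce to $[\fg,\fg]$ via $\fz_\fp=0$ and \eqref{eq:gzgg}, use additivity of $\delta$ on simple factors, and then invoke the Cartan classification of real simple Lie algebras with $\delta=1$. The paper is terser, citing \cite[Remark 7.9.2]{B09} for the classification step rather than running through the case check you sketch, but the structure and content are the same.
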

\begin{proof}
Since $G$ has compact center, by \eqref{eq:ZG}, $\fz_\fp=0$.
By \eqref{eq:gzgg}, we have
\begin{align}
 \delta([\fg,\fg])=1.
\end{align}
As in \cite[Remark 7.9.2]{B09}, by the classification theory  of real simple Lie algebras, we have
\begin{align}\label{eq:gssg1g2}
[\fg,\fg]=\fg_1\oplus \fg_2',
\end{align} where
\begin{align}
  \fg_1=\mathfrak{sl}_3(\bR) \hbox{ or } \mathfrak{so}(p,q),
\end{align}
with $pq>1$ odd, and where $\fg'_2$ is semisimple  with $\delta(\fg'_2)=0$. Take
\begin{align}\label{eq:gssg2}
  \fg_2=\fz_\fk\oplus \fg_2'.
\end{align}
By \eqref{eq:g=zgg}, \eqref{eq:gssg1g2}-\eqref{eq:gssg2}, we get \eqref{eq:g=g1g2}.
The proof of our theorem  is completed.
\end{proof}

%
%
%

\subsection{The group $\mathrm{SL}_3(\bR)$}\label{subsec:SL3}
In this subsection, we assume that $G=\mathrm{SL}_3(\bR)$, so that $K=\mathrm{SO}(3)$. We have
\begin{align}\label{eq:pksl3}
\begin{aligned}
 &\fp=\left\{\left(
          \begin{array}{ccc}
            x & a_1 & a_2 \\
            a_1 & y & a_3 \\
            a_2 & a_3 & -x-y \\
          \end{array}
        \right):x,y, a_1,a_2, a_3\in \bR
\right\},\\
 &\fk=\left\{\left(
          \begin{array}{ccc}
            0 & a_1 & a_2 \\
            -a_1 & 0 & a_3 \\
            -a_2 & -a_3 & 0 \\
          \end{array}
        \right): a_1,a_2, a_3\in \bR
\right\}.
\end{aligned}
\end{align}
Let
\begin{align}\label{eq:TSL3}
 T=\left\{\left(
          \begin{array}{cc}
            A & 0  \\
            0 & 1 \\
          \end{array}
        \right):A\in \mathrm{SO}(2)\right\}\subset  K
\end{align}
 be a maximal torus of $K$.

By  \eqref{eq:defb}, \eqref{eq:pksl3} and \eqref{eq:TSL3}, we have
\begin{align}\label{eq:fbsl3}
  \fb=\left\{\left(
          \begin{array}{ccc}
            x & 0 & 0 \\
            0 & x & 0 \\
            0 & 0 & -2x \\
          \end{array}
        \right): x\in \bR
\right\}\subset \fp.
\end{align}
By \eqref{eq:fbsl3}, we get
\begin{align}\label{eq:pmkmsl3}
 & \fp_\fm=\left\{\left(
          \begin{array}{ccc}
            x & a_1 & 0 \\
            a_1 & -x & 0 \\
            0 & 0 & 0 \\
          \end{array}
        \right):x, a_1\in \bR
\right\},&\fp^\bot(\fb)=\left\{\left(
          \begin{array}{ccc}
            0 & 0 & a_2 \\
            0 & 0 & a_3 \\
            a_2 & a_3 & 0 \\
          \end{array}
        \right):a_2, a_3\in \bR
\right\}.
\end{align}
Also,
\begin{align}\label{eq:MKMsl3}
 &\fk_\fm=\ft, &K_M=T,&
 &M=\left\{\left(
          \begin{array}{cc}
            A & 0  \\
            0 & 1 \\
          \end{array}
        \right):A\in \mathrm{SL}_2(\bR)\right\}.
\end{align}

By \eqref{eq:fbsl3}, we can orient $\fb$ by $x>0$. Thus,
\begin{align}\label{eq:nsl3}
  \fn=\left\{\left(
          \begin{array}{ccc}
            0 & 0 & a_2 \\
            0 & 0 & a_3 \\
            0 & 0 & 0 \\
          \end{array}
        \right):a_2, a_3\in \bR
\right\}.
\end{align}
By \eqref{eq:fbsl3} and \eqref{eq:nsl3}, since for $x\in \bR, a_{2}\in \bR, 
a_{3}\in \bR$, 
\begin{align}\label{eq:hasl3}
\[\left(
          \begin{array}{ccc}
            x & 0 & 0 \\
            0 & x & 0 \\
            0 & 0 & -2x \\
          \end{array}
        \right), \left(
          \begin{array}{ccc}
            0 & 0 & a_2 \\
            0 & 0 & a_3 \\
            0 & 0 & 0 \\
          \end{array}
        \right)\]=3x\left(
          \begin{array}{ccc}
            0 & 0 & a_2 \\
            0 & 0 & a_3 \\
            0 & 0 & 0 \\
          \end{array}
        \right),
	\end{align}
we find that $\fb$ acts on $\fn$ as a scalar. 

  Denote by $ \rm{Isom}^0(G/K)$ the connected component of the  identity of the isometric group of $X=G/K$. Since $G$ acts isometrically on $G/K$, we have the morphism of groups\index{I@$i_G$}
\begin{align}\label{eq:defig}
i_G:G\to \rm{Isom}^0(G/K).
\end{align}
\begin{prop}\label{prop:SLeff}
The morphism $i_G$ is an isomrphism, i.e.,
\begin{align}\label{eq:isoSL3}
\mathrm{SL}_3(\bR)\simeq\mathrm{Isom}^0\big(\mathrm{SL}_3(\bR)/\SO(3)\big).
\end{align}
\end{prop}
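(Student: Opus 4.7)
The plan is to use standard Riemannian symmetric space theory. The proof splits naturally into two steps: injectivity of $i_G$, then a dimension/Lie-algebra count for surjectivity.

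For injectivity, I would first observe that $\ker(i_G)$ is a closed normal subgroup of $G = \mathrm{SL}_3(\bR)$ acting trivially on $X = G/K$. Any such element must fix every point $gK \in X$, hence must lie in $\bigcap_{g \in G} g K g^{-1}$. This intersection is a compact normal subgroup of the connected simple Lie group $\mathrm{SL}_3(\bR)$, which has no compact factors; by the standard fact that a compact normal subgroup of a connected semisimple Lie group of noncompact type is central, it is contained in $Z(G)$. For $G = \mathrm{SL}_3(\bR)$ the center is $\{cI : c \in \bR,\, c^3 = 1\} = \{I\}$, so $\ker(i_G) = \{I\}$.

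For surjectivity, I would invoke the classical theorem that for an irreducible Riemannian symmetric space of noncompact type, the Lie algebra of $\mathrm{Isom}^0(X)$ coincides with the transvection algebra $\fg$ (see, e.g., Helgason, \emph{Differential Geometry, Lie Groups, and Symmetric Spaces}, Ch.~V, \S 4). Since $X = \mathrm{SL}_3(\bR)/\SO(3)$ is simply connected (by Cartan--Hadamard), irreducible, of noncompact type, and associated with the simple Lie algebra $\mathfrak{sl}_3(\bR)$, this gives $\mathrm{Lie}\big(\mathrm{Isom}^0(X)\big) = \mathfrak{sl}_3(\bR)$. Hence $\dim i_G(G) = \dim \mathrm{Isom}^0(X)$, and as $i_G(G)$ is a connected Lie subgroup of $\mathrm{Isom}^0(X)$ of full dimension, the two coincide.

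The principal obstacle, were one to write out the proof without citing Helgason, would be the Lie-algebra identification in the second step: one needs to exclude "extra" Killing vector fields on $X$ beyond those coming from $\fg$. This is done using the fact that $X$ has no flat de Rham factor (noncompact type) and is irreducible, so the Killing algebra cannot decompose further and is exhausted by the infinitesimal transvections parametrized by $\fg$. In our setting this is classical and no genuine obstruction arises; the entire argument is quite short once these standard facts are in hand.
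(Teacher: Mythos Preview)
Your argument is correct, but the route differs from the paper's. The paper invokes \cite[Theorem V.4.1]{HelgasonSymmetric} to reduce the entire statement to showing that $K=\SO(3)$ acts effectively on $\fp$ via the adjoint representation, and then checks this by an explicit matrix computation: any $k\in K$ fixing $\fb$ pointwise must have the block form $\left(\begin{smallmatrix} A & 0 \\ 0 & \det(A)^{-1}\end{smallmatrix}\right)$, and fixing $\fp^{\bot}(\fb)$ then forces $A=1$.

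By contrast, you split the problem into injectivity and surjectivity handled separately. Your injectivity argument, via $\ker(i_G)\subset Z(G)=\{1\}$, is slicker here precisely because $\mathrm{SL}_3(\bR)$ has trivial center; it avoids any matrix manipulation. Your surjectivity step cites essentially the same circle of results in Helgason as the paper does. The paper's approach has the advantage of packaging both directions into a single criterion (effectiveness of $K$ on $\fp$) and of being reusable verbatim for $\SO^0(p,q)$ in the next proposition, where the center is \emph{not} trivial (it is $\{\pm 1\}$ for $p+q$ even, but for $pq$ odd the center is still trivial---nevertheless the effectiveness check is done the same concrete way). Your abstract center argument also works there, but the paper's hands-on verification keeps the two cases parallel and entirely self-contained after the Helgason citation.
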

\begin{proof}By \cite[Theorem V.4.1]{HelgasonSymmetric}, it is enough 
	to show that $K$ acts on $\fp$ effectively. Assume that $k\in K$ 
	acts on $\fp$ as the identity. Thus, $k$ fixes the elements of $\fb$. As in \eqref{eq:MKMsl3}, there is $A\in \mathrm{GL}_2(\bR)$ such that
\begin{align}\label{eq:kAO2}
  k=\left(
      \begin{array}{cc}
        A & 0 \\
        0 & \det^{-1}(A) \\
      \end{array}
    \right).
\end{align}
Since $k$ fixes also the elements of $\fp^\bot(\fb)$, by \eqref{eq:pmkmsl3} and \eqref{eq:kAO2}, we get $A=1$.
Therefore, $k=1$. The proof of our proposition  is completed.
\end{proof}

\subsection{The group $G=\mathrm{SO}^0(p,q)$ with $pq>1$ odd}\label{subsec:SOPQ}
In this subsection, we assume that $G=\mathrm{SO}^0(p,q)$, so that $K=\mathrm{SO}(p)\times \mathrm{SO}(q)$,
with $pq>1$ odd.

In the sequel, if $l,l'\in \mathbf{N}^*$, let $\mathrm{Mat}_{l,l'}(\bR)$ be the space of real matrices  of $l$ raws and $l'$ columns. If $L\subset \mathrm{Mat}_{l,l}(\bR)$ is a matrix group, we denote by $\sigma_l$ the standard representation of $L$ on $\bR^l$. 
We have
\begin{align}
&  \fp=\left\{\left(
          \begin{array}{cc}
            0 & B \\
            B^t & 0 \\
          \end{array}
        \right)
:B\in \mathrm{Mat}_{p,q}(\bR)\right\},&\fk=\left\{\left(
          \begin{array}{cc}
            A & 0 \\
            0 & D \\
          \end{array}
        \right)
:A\in \mathfrak{so}(p),D\in \mathfrak{so}(q)\right\}.
\end{align}

Let
\begin{align}
  T_{p-1}=\left\{\left(
          \begin{array}{ccc}
            A_1
 & 0 & 0 \\
            0 & \ddots & 0 \\
            0 & 0 & A_{(p-1)/2}
 \\
          \end{array}
        \right):A_1,\ldots, A_{(p-1)/2}\in \SO(2)
\right\}\subset \SO(p-1)
\end{align}
 be a maximal torus of $\SO(p-1)$.
Then,
\begin{align}\label{eq:tsopq}
  T=\left\{\left(
               \begin{array}{ccc}
                 A & 0 & 0 \\
                 0 & \left(
                       \begin{array}{cc}
                         1 & 0 \\
                         0 & 1 \\
                       \end{array}
                     \right)
 & 0 \\
                 0 & 0 & B \\
               \end{array}
             \right)\in K:A\in T_{p-1},B\in T_{q-1}
\right\}\subset K
\end{align}
 is a maximal torus of $K$.

By \eqref{eq:defb} and \eqref{eq:tsopq}, we have
\begin{align}\label{eq:bpmpkmsopq}
  \fb&=\left\{\left(
               \begin{array}{ccc}
                 0 & 0 & 0 \\
                 0 & \left(
                       \begin{array}{cc}
                         0 & x \\
                         x & 0 \\
                       \end{array}
                     \right)
 & 0 \\
                 0 & 0 & 0 \\
               \end{array}
             \right)\in \fp:x\in \bR
\right\},\notag\\
\fp_\fm&=\left\{\left(
               \begin{array}{ccc}
                 0 & 0 & B \\
                 0 & \left(
                       \begin{array}{cc}
                         0 & 0 \\
                         0 & 0 \\
                       \end{array}
                     \right)
 & 0 \\
                 B^t & 0 & 0 \\
               \end{array}
             \right)\in \fp:B\in \mathrm{Mat}_{p-1,q-1}(\bR)\right\},\\
\fp^{\bot}(\fb)&=\left\{\left(
               \begin{array}{ccc}
                 0 & \begin{array}{cc}
                       0 & v_1
                     \end{array}
   & 0
 \\
                 \begin{array}{c}
       0 \\
       v_1^t
     \end{array} & \left(
                       \begin{array}{cc}
                         0 & 0 \\
                         0 & 0 \\
                       \end{array}
                     \right)
 & \begin{array}{c}
        v_2^t \\
      0
     \end{array} \\
                 0 & \begin{array}{cc}
                       v_2 & 0
                     \end{array} & 0 \\
               \end{array}
             \right)\in \fp:v_1\in \bR^{p-1},v_2\in \bR^{q-1}
\right\},\notag
\end{align}
where $v_1,v_2$ are considered as column vectors.  
Also,
\begin{align}\label{eq:bpmpkmsopq1}
\fk_\fm=\left\{\left(
               \begin{array}{ccc}
                 A & 0 & 0 \\
                 0 & \left(
                       \begin{array}{cc}
                         0 & 0 \\
                         0 & 0 \\
                       \end{array}
                     \right)
 & 0 \\
                 0 & 0 & D \\
               \end{array}
             \right)\in\fk:A\in \mathfrak{so}(p-1),D\in \mathfrak{so}(q-1)
\right\}.
\end{align}
By \eqref{eq:bpmpkmsopq} and \eqref{eq:bpmpkmsopq1}, we get
\begin{align}\label{eq:MKM}
\begin{aligned}
  M&=\left\{\left(
               \begin{array}{ccc}
                 A & 0 & B \\
                 0 & \left(
                       \begin{array}{cc}
                         1 & 0 \\
                         0 & 1 \\
                       \end{array}
                     \right)
 & 0 \\
                 C & 0 & D \\
               \end{array}
             \right)\in G: \left(
                             \begin{array}{cc}
                               A & B \\
                               C & D \\
                             \end{array}
                           \right)\in
\SO^0(p-1,q-1)\right\},\\
K_M&=\left\{\left(
               \begin{array}{ccc}
                 A & 0 & 0 \\
                 0 & \left(
                       \begin{array}{cc}
                         1 & 0 \\
                         0 & 1 \\
                       \end{array}
                     \right)
 & 0 \\
                 0 & 0 & D \\
               \end{array}
             \right)\in K: A\in \SO(p-1), D\in \SO(q-1)\right\}.
\end{aligned}
\end{align}

By \eqref{eq:bpmpkmsopq}, we can orient $\fb$ by $x>0$.
Then,
\begin{align}\label{eq:nsopq}
  \fn=\left\{\left(
               \begin{array}{ccc}
                 0 & \begin{array}{cc}
                       -v_1 & v_1
                     \end{array}
   & 0
 \\
                 \begin{array}{c}
       v_1^t \\
       v_1^t
     \end{array} & \left(
                       \begin{array}{cc}
                         0 & 0 \\
                         0 & 0 \\
                       \end{array}
                     \right)
 & \begin{array}{c}
       v_2^t \\
       v_2^t
     \end{array} \\
                 0 & \begin{array}{cc}
                       v_2 & -v_2
                     \end{array} & 0 \\
               \end{array}
             \right)\in \fg:v_1\in \bR^{p-1},v_2\in \bR^{q-1}
\right\}.
\end{align}
By \eqref{eq:bpmpkmsopq} and \eqref{eq:nsopq}, since for $x\in \bR$, $v_{1}\in \bR^{p-1}, v_{2}\in \bR^{q-1}$,  
\begin{align}\label{eq:hasp}
\[\left(
               \begin{array}{ccc}
                 0 & 0 & 0 \\
                 0 & \left(
                       \begin{array}{cc}
                         0 & x \\
                         x & 0 \\
                       \end{array}
                     \right)
 & 0 \\
                 0 & 0 & 0 \\
               \end{array}
             \right),\left(
               \begin{array}{ccc}
                 0 & \begin{array}{cc}
                       -v_1 & v_1
                     \end{array}
   & 0
 \\
                 \begin{array}{c}
       v_1^t \\
       v_1^t
     \end{array} & \left(
                       \begin{array}{cc}
                         0 & 0 \\
                         0 & 0 \\
                       \end{array}
                     \right)
 & \begin{array}{c}
       v_2^t \\
       v_2^t
     \end{array} \\
                 0 & \begin{array}{cc}
                       v_2 & -v_2
                     \end{array} & 0 \\
               \end{array}
             \right)\]
			 =x
			 \left(
               \begin{array}{ccc}
                 0 & \begin{array}{cc}
                       -v_1 & v_1
                     \end{array}
   & 0
 \\
                 \begin{array}{c}
       v_1^t \\
       v_1^t
     \end{array} & \left(
                       \begin{array}{cc}
                         0 & 0 \\
                         0 & 0 \\
                       \end{array}
                     \right)
 & \begin{array}{c}
       v_2^t \\
       v_2^t
     \end{array} \\
                 0 & \begin{array}{cc}
                       v_2 & -v_2
                     \end{array} & 0 \\
               \end{array}
             \right),
\end{align}
we find that $\fb$ acts on $\fn$ as a scalar.

\begin{prop}\label{prop:SOeff}
We have an isomorphism of Lie groups
\begin{align}\label{eq:isoSOpq}
\mathrm{SO}^0(p,q)\simeq \mathrm{Isom}^0\big(\mathrm{SO}^0(p,q)/\mathrm{SO}(p)\times \mathrm{SO}(q)\big),
\end{align}
where $pq> 1$ is odd.
\end{prop}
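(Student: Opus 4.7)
The plan is to mimic the proof of Proposition \ref{prop:SLeff}: by \cite[Theorem V.4.1]{HelgasonSymmetric}, the map $i_G$ is an isomorphism if and only if $K=\SO(p)\times\SO(q)$ acts effectively on $\fp$ via the adjoint action. So the whole task reduces to computing the kernel of this isotropy representation.

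Concretely, I would parametrize $k\in K$ as $k=\mathrm{diag}(A,D)$ with $A\in\SO(p)$, $D\in\SO(q)$, and parametrize $\fp$ by the block matrix $\begin{pmatrix}0 & B\\ B^{t} & 0\end{pmatrix}$ with $B\in\mathrm{Mat}_{p,q}(\bR)$. A direct computation shows that the adjoint action of $k$ sends $B$ to $ABD^{-1}$. Thus $k$ fixes $\fp$ pointwise if and only if
\begin{align*}
 AB = BD \qquad \text{for all } B\in\mathrm{Mat}_{p,q}(\bR).
\end{align*}

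Next, I would plug in the elementary matrices $B=E_{ij}$ to force structural constraints on $A$ and $D$. Writing entries, $(AE_{ij})_{kl}=A_{ki}\delta_{jl}$ and $(E_{ij}D)_{kl}=\delta_{ik}D_{jl}$; letting $(i,j,k,l)$ vary forces $A$ and $D$ to be diagonal with a common diagonal entry $\lambda\in\bR$, i.e.\ $A=\lambda I_{p}$ and $D=\lambda I_{q}$. Orthogonality of $A$ gives $\lambda^{2}=1$, and the condition $\det(A)=\lambda^{p}=1$ together with $p$ odd (which follows from $pq$ being odd) yields $\lambda=1$. Hence $k=1$, the isotropy representation is faithful, and Helgason's theorem gives \eqref{eq:isoSOpq}.

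The only conceptual input is the parity of $pq$: if either $p$ or $q$ were even, then $-I_{p}$ or $-I_{q}$ would lie in $\SO$, and the central element $\mathrm{diag}(-I_{p},-I_{q})$ would act trivially on $\fp$, giving a nontrivial kernel of $i_{G}$. This is the only place in the argument where the oddness of $pq$ (already used to place $\mathfrak{so}(p,q)$ in the classification of Theorem \ref{thm:cla}) actually intervenes. There is no real obstacle; the proof is a short linear algebra verification, entirely parallel to Proposition \ref{prop:SLeff}.
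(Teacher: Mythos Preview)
Your proposal is correct and follows essentially the same approach as the paper: reduce via \cite[Theorem V.4.1]{HelgasonSymmetric} to showing that $K=\SO(p)\times\SO(q)$ acts effectively on $\fp$, then use the oddness of $p$ and $q$ together with $\det=1$ to rule out the only candidate $(-I_p,-I_q)$. The only cosmetic difference is that the paper phrases the isotropy representation as $\sigma_p\boxtimes\sigma_q$ on $\bR^p\otimes\bR^q$ and argues via eigenvalues (if $\lambda\mu=1$ for every eigenvalue pair then both factors are scalar), whereas you work directly with $B\mapsto ABD^{-1}$ on $\mathrm{Mat}_{p,q}(\bR)$ and test against elementary matrices; these are the same computation under the identification $\bR^p\otimes\bR^q\simeq\mathrm{Mat}_{p,q}(\bR)$.
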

\begin{proof}As in the proof of Proposition \ref{prop:SLeff}, it 
	enough to show that $K$ acts effectively on $\fp$.  The 
	representation of $K\simeq \SO(p)\times \SO(q)$ on $\fp$ is 
	equivalent to $\sigma_p\boxtimes \sigma_q$. Assume that $(k_1,k_2)\in \SO(p)\times \SO(q)$ acts on  $\bR^p\boxtimes \bR^q$ as the identity. If $\lambda$ is any eigenvalue of $k_1$ and if $\mu$ is any eigenvalue of $k_2$, then
\begin{align}\label{eq:lm=1}
  \lambda\mu=1.
\end{align}
By \eqref{eq:lm=1}, both $k_1$ and $k_2$ are scalars. Using the fact 
that $\det(k_1)=\det(k_2)=1$ and that $p,q$ are odd, we deduce 
$k_1=1$ and $k_2=1$. The proof of our proposition  is completed.
\end{proof}

\subsection{The isometry group of $X$}\label{sec:i}
We return to the general case, where $G$ is only supposed to be such 
that  $\delta(G)=1$ and have compact center.

\begin{prop}
 The symmetric space $G/K$ is of the noncompact type.
\end{prop}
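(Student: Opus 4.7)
The plan is to verify the three defining properties of a Riemannian symmetric space of the noncompact type: simple connectedness, nonpositive sectional curvature, and absence of a Euclidean de Rham factor. The first two have already been recorded for $X=G/K$, since the Cartan diffeomorphism \eqref{eq:cartan2} exhibits $X$ as diffeomorphic to the vector space $\fp$, and nonpositive sectional curvature was noted in Subsection \ref{sec:sym}. The real content therefore lies in ruling out a Euclidean factor, which by the de Rham decomposition amounts to showing $\fz_{\fp}=0$.

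First I would extract $\fz_{\fp}=0$ from the compact-center hypothesis. The Cartan decomposition of the reductive subgroup $Z_{G}$ recorded in \eqref{eq:ZG} reads $Z_{G}=\exp(\fz_{\fp})(Z_{G}\cap K)$, with $\exp$ restricting to a diffeomorphism from $\fz_{\fp}$ onto a closed vector subgroup of $Z_{G}$. Since $Z_{G}$ is compact, that vector subgroup must be trivial, whence $\fz_{\fp}=0$.

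Next, combining \eqref{eq:g=zgg}, $\fg=\fz_{\fg}\oplus[\fg,\fg]$ with $[\fg,\fg]$ semisimple, the vanishing of $\fz_{\fp}$ places $\fz_{\fg}$ entirely in $\fk$, so $\fp\subset[\fg,\fg]$ and $[\fg,\fg]=\fp\oplus\bigl([\fg,\fg]\cap\fk\bigr)$ is a $\theta$-stable Cartan decomposition of a semisimple Lie algebra; the associated Riemannian symmetric space is still $X$.

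Finally, the de Rham decomposition writes $X$ as a product of a Euclidean factor (now trivial) and irreducible Riemannian symmetric spaces. The nonpositivity of the sectional curvature excludes any compact-type irreducible factor, and any compact simple ideal of $[\fg,\fg]$ would lie entirely in $\fk$ and contribute nothing to $X$. Hence every surviving irreducible factor of $X$ comes from a noncompact simple real form (consistent with $B$ being positive on $\fp$), and $X$ is of the noncompact type. As a cross-check, Theorem \ref{thm:cla} provides the explicit splitting $\fg=\fg_{1}\oplus\fg_{2}$ with $\fg_{1}\in\{\mathfrak{sl}_{3}(\bR),\ \mathfrak{so}(p,q)\text{ with }pq>1\text{ odd}\}$ manifestly noncompact simple, and $\fg_{2}$ reductive with $\delta(\fg_{2})=0$ and compact center inherited from $G$, so that the argument above applies factorwise. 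The only point one must be careful about is that a priori a simple factor of $[\fg,\fg]$ could be compact; this is the step where $B>0$ on $\fp$ (together with $\fp\subset[\fg,\fg]$) does the real work, but it is immediate since a compact real form has trivial $\fp$ and therefore cannot contribute.
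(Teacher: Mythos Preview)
Your argument is correct. Both you and the paper hinge on the same observation, namely that compactness of $Z_G$ forces $\fz_\fp=0$, but the packaging differs. The paper argues group-theoretically: since $Z_G$ is compact one has $Z_G^0\subset K$, and writing $G=Z_G^0 G_{ss}$ with $G_{ss}$ the connected subgroup for $[\fg,\fg]$ (semisimple with finite center, maximal compact $K_{ss}=G_{ss}\cap K$), the inclusion $G_{ss}\hookrightarrow G$ induces $G_{ss}/K_{ss}\simeq G/K$, and one then invokes the standard fact that the quotient of a connected semisimple group with finite center by a maximal compact subgroup is of noncompact type. You instead verify the geometric characterization directly: simple connectedness from the Cartan diffeomorphism, nonpositive curvature, and no Euclidean de Rham factor because $\fz_\fp=0$. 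Your route is a little longer but more self-contained; the paper's is terser but leans on \cite[Proposition 5.5]{Knappsemi} and the standard dictionary. The extra material you include (ruling out compact simple ideals, the cross-check via Theorem \ref{thm:cla}) is sound but not needed once $\fz_\fp=0$ is in hand.
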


\begin{proof}\label{re:ss} Let $Z^0_G$ be the connected component of the identity in $Z_G$, and let $G_{ss}\subset G$ be the connected  subgroup of $G$ associated with the Lie algebra $[\fg,\fg]$. By \cite[Proposition 5.5]{Knappsemi}, $G_{ss}$ is closed in $G$, such that
\begin{align}
  G=Z^0_GG_{ss}.
\end{align}
Moreover, $G_{ss}$ is semisimple  with finite center, with
  maximal compact subgroup $K_{ss}=G_{ss}\cap K$. Also, the imbedding $G_{ss}\to G$ induces a diffeomorphism
\begin{align}
  G_{ss}/K_{ss}\simeq G/K.
\end{align}
Therefore,  $X$ is a symmetric space of the noncompact type.
\end{proof}
Put\index{G@$G_*$}
\begin{align}
  G_*=\mathrm{Isom}^0(X),
\end{align}
and let $K_*\subset G_*$ \index{K@$K_*$} be the stablizer of  $p1\in X$ fixed. Then $G_*$ is a semisimple Lie group with trivial center, and with maximal compact subgroup $K_*$. We denote by $\fg_*$ and $\fk_*$ the Lie algebras of $G_*$ and $K_*$.\index{G@$\fg_*$}\index{P@$\fp_*$}\index{K@$\fk_*$} Let
\begin{align}\label{eq:giso}
  \fg_*=\fp_*\oplus \fk_*
\end{align}
be the corresponding  Cartan decomposition. Clearly,
\begin{align}\label{eq:GK=GK}
  G_*/K_*\simeq X.
\end{align}

The morphism $i_G:G\to G_*$ defined in \eqref{eq:defig} induces a  morphism $i_\fg:\fg\to \fg_*$ of Lie algebras. By \eqref{eq:cartan2} and \eqref{eq:GK=GK},  $i_\fg$ induces an  isomorphism of vector spaces
\begin{align}\label{eq:pfp}
\fp\simeq \fp_*.
\end{align}
By the property  of $\fk_*$ and by \eqref{eq:pfp}, we have
\begin{align}\label{eq:kpp}
  \fk_*=[\fp_*,\fp_*]=i_\fg[\fp,\fp]\subset i_{\fg}\fk.
\end{align}
Thus $i_G,i_\fg$ are surjective.

\begin{prop}\label{cor:Gspli}
We have\index{G@$G_1,G_2$}
\begin{align}
  G_*=G_1\times G_2
\end{align}
where $G_1=\mathrm{SL}_3(\bR)$ or $G_1=\SO^0(p,q)$ with $pq>1$ odd, and where $G_2$ is a semisimple Lie group  with trivial center with $\delta(G_2)=0$.
\end{prop}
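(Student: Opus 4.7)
The plan is to combine the algebraic decomposition from Theorem \ref{thm:cla} with the fact that $G_{*}=\mathrm{Isom}^{0}(X)$ is the adjoint group of its Lie algebra, and then to identify the explicit factor using Propositions \ref{prop:SLeff} and \ref{prop:SOeff}.

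First I would determine $\ker(i_{\fg})$. Since $i_{\fg}|_{\fp}:\fp\to\fp_{*}$ is an isomorphism by \eqref{eq:pfp}, the ideal $\fI=\ker(i_{\fg})$ lies entirely in $\fk$. For any $X\in\fI$ and $Y\in\fp$, we have $[X,Y]\in\fp\cap\fI=0$, so $X\in\fz_{\fg}$. Conversely, because $G$ has compact center, $\fz_{\fp}=0$ and $Z_{G}\subset K$, hence $Z_{G}$ acts trivially on $X=G/K$. Thus $\ker(i_{\fg})=\fz_{\fg}$ and $\fg_{*}\simeq\fg/\fz_{\fg}$.

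Next I would apply Theorem \ref{thm:cla} to write $\fg=\fg_{1}\oplus\fg_{2}$, where $\fg_{1}$ is the simple algebra $\mathfrak{sl}_{3}(\bR)$ or $\mathfrak{so}(p,q)$ with $pq>1$ odd, and $\fg_{2}$ is reductive with $\delta(\fg_{2})=0$. Since $\fg_{1}$ has trivial center, $\fz_{\fg}=\fz_{\fg_{2}}$, and the reductive decomposition $\fg_{2}=\fz_{\fg_{2}}\oplus[\fg_{2},\fg_{2}]$ yields
\begin{align*}
\fg_{*}\simeq\fg_{1}\oplus[\fg_{2},\fg_{2}].
\end{align*}
The ideal $[\fg_{2},\fg_{2}]$ is semisimple; moreover $\delta([\fg_{2},\fg_{2}])=\delta(\fg_{2})-\dim(\fz_{\fg_{2}}\cap\fp)=0$ because $\fz_{\fg_{2}}\subset\fz_{\fg}\subset\fk$.

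Finally, since $X$ is of the noncompact type by the preceding proposition, $G_{*}=\mathrm{Isom}^{0}(X)$ is semisimple with trivial center, so it coincides with the adjoint group of $\fg_{*}$. The decomposition $\fg_{*}=\fg_{1}\oplus[\fg_{2},\fg_{2}]$ into commuting ideals therefore lifts to a direct product $G_{*}=G_{1}\times G_{2}$, where $G_{1}$ and $G_{2}$ are the adjoint groups of the respective ideals. By Propositions \ref{prop:SLeff} and \ref{prop:SOeff}, the adjoint group of $\fg_{1}$ is exactly $\mathrm{SL}_{3}(\bR)$ or $\SO^{0}(p,q)$, while $G_{2}$ is semisimple with trivial center and $\delta(G_{2})=\delta([\fg_{2},\fg_{2}])=0$, as required. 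The main point requiring care is the identification of $G_{*}$ with the adjoint group of $\fg_{*}$, but this is standard for noncompact symmetric spaces once one knows $G_{*}$ is centerless and connected.
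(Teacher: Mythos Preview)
Your approach is genuinely different from the paper's and, modulo one slip, it works. The paper argues geometrically: it invokes the de~Rham decomposition $X=\prod X_i$ into irreducible factors, the factorization $G_*=\prod\mathrm{Isom}^0(X_i)$ from Kobayashi--Nomizu, and then reads off the single factor with $\delta=1$ via Theorem~\ref{thm:cla} together with Propositions~\ref{prop:SLeff} and~\ref{prop:SOeff}. You instead compute $\fg_*$ algebraically as a quotient of $\fg$ and lift the splitting to the adjoint group. Your route is more self-contained (no appeal to de~Rham decomposition) and makes the identification of $G_1$ with the adjoint group of $\fg_1$ explicit.

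The slip is in the line ``$[X,Y]\in\fp\cap\fI=0$, so $X\in\fz_\fg$.'' From $[X,\fp]=0$ you can only conclude that $X$ centralizes $\fp$, not all of $\fg$. If $[\fg,\fg]$ has a compact simple ideal $\fg_c$, then $\fg_c\subset\fk$ (since $B$ is negative definite on $\fg_c$ but positive on $\fp$), so $\fg_c$ centralizes $\fp$ without lying in $\fz_\fg$. The correct statement is
\[
\ker(i_\fg)=\fz_\fg\oplus\bigl(\text{sum of the compact simple ideals of }[\fg,\fg]\bigr),
\]
which you can prove by noting that for each \emph{noncompact} simple ideal $\fg_i$ one has $\fk_i=[\fp_i,\fp_i]$, so any $X_i\in\fk_i$ centralizing $\fp_i$ is central in $\fg_i$, hence zero. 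With this correction $\fg_*\simeq\fg_1\oplus\fg_2'$, where $\fg_2'$ is the sum of the noncompact simple ideals of $[\fg_2,\fg_2]$; this is still semisimple with $\delta(\fg_2')=0$, and the rest of your argument goes through unchanged.
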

\begin{proof}By \cite[Theorem IV.6.2]{Kobayashi_Nomizu_I}, let $X=\prod_{i=1}^{l_1}X_l$ be the de Rham decomposition of $(X,g^{TX})$. Then every $X_i$ is an irreducible symmetric space of the noncompact type. By \cite[Theorem VI.3.5]{Kobayashi_Nomizu_I}, we have
\begin{align}\label{eq:Isopro}
  G_*=\prod_{i=1}^{l_1}\mathrm{Isom}^0(X_i),
\end{align}
By Theorem \ref{thm:cla},  \eqref{eq:isoSL3}, \eqref{eq:isoSOpq} and \eqref{eq:Isopro}, Proposition  \ref{cor:Gspli} follows.
\end{proof}

\subsection{Proof of Proposition \ref{prop:nnam1}}\label{sec:proofnn1}
  By \eqref{eq:g=g1g2} and by the definition of $\fb$ and $\fn$, we have
  \begin{align}\label{eq:bning1}
    \fb,\fn \subset \fg_1.
  \end{align}
  Proposition \ref{prop:nnam1} follows from \eqref{eq:hasl3} and 
  \eqref{eq:hasp}.
\qed
\subsection{Proof of Theorem \ref{thm:keyrep}}\label{sec:liftn}
%
\  

\underline{The case $G=\mathrm{SL}_3(\bR)$.}
By  \eqref{eq:MKMsl3} and \eqref{eq:nsl3},
the representation of $K_M\simeq \SO(2)$ on $\fn$ is just $\sigma_2$. Note that $K=\SO(3)$. We have the identity in $RO(K_M)$:
\begin{align}\label{eq:3to2}
 \iota^*\( \sigma_3-\mathbf{1}\)=\sigma_2,
\end{align}
which says $\fn$ lifts to $K$. 

\underline{The case  $G=\SO^0(p,q)$ with $pq>1$ odd.}
By \eqref{eq:MKM} and \eqref{eq:nsopq},  the representation of 
$K_M\simeq \SO(p-1)\times\SO(q-1)$ on $\fn$ is just  
$\sigma_{p-1}\boxtimes \mathbf{1}\oplus\mathbf{1}\boxtimes 
\sigma_{q-1}$.  Note that $K=\SO(p)\times \SO(q)$.  We have the identity in $RO(K_M)$:
\begin{align}\label{eq:ptoq}
\iota^* \big(( \sigma_{p}-\mathbf{1})\boxtimes \mathbf{1}\oplus 
\mathbf{1}\boxtimes   ( \sigma_{q}-\mathbf{1}) \big) =\sigma_{p-1}\boxtimes \mathbf{1}\oplus\mathbf{1}\boxtimes \sigma_{q-1},
\end{align}
which says $\fn$ lifts to $K$. 

\underline{The case for $G_*$. } This is a consequence of Proposition 
\ref{cor:Gspli}, \eqref{eq:bning1}-\eqref{eq:ptoq}.

\underline{The general case.}
Recall that $i_G:G\to G_*$ is a surjective morphism of Lie groups. 
Therefore, the restriction $i_K:K\to K_*$ of $i_G$ to $K$ is 
surjective. By \eqref{eq:pfp}, we have the identity in $RO(K)$:
\begin{align}\label{eq:ppull}
  \fp=i_K^*(\fp_*).
\end{align}

Set\index{T@$\ft_*$}
\begin{align}
  \ft_*=i_\fg(\ft)\subset \fk_*.
\end{align}
Since $i_K$  is surjective, by \cite[Theorem IV.2.9]{BrockerDieck}, $\ft_*$ is a Cartan subalgebra of $\fk_*$.
%
%
%

Let $\fb_*\subset \fp_*$ be the analogue of $\fb$ defined by $\ft_*$. \index{B@$\fb_*$}Thus,
\begin{align}
&\dim \fb_*=1,&  \fb_*=i_\fg(\fb).
\end{align}
We denote by $K_{*,M}, \fp_{*}^{\bot}(\fb_*), \fn_*$ the analogues of $K_M$, $\fp^\bot(\fb)$, $\fn$.
By \eqref{eq:pfp},  $i_\fg$ induces an isomorphism of vector spaces
\begin{align}\label{eq:ifgpp}
\fp^{\bot}({\fb}) \simeq \fp_{*}^{\bot}({\fb_*}).
\end{align}
Let $i_{K_M}:K_M\to K_{*,M}$ be the restriction of $i_G$ to $K_M$. We have the identity in $RO(K_M)$:
\begin{align}\label{eq:pbpull}
 \fp^{\bot}(\fb)=i^*_{K_M}\(\fp_{*}^{\bot}(\fb_*)\).
\end{align}

Let $\iota': K_{*,M}\to K_*$ be the embedding. Then the following diagram
\begin{align}\label{eq:diag}
\begin{aligned}
\xymatrix{
K_M \ar[d]^{i_{K_M}} \ar[r]^\iota &K\ar[d]^{i_K}\\
K_{*,M} \ar[r]^{\iota'} &K_*}
\end{aligned}
\end{align}
commutes. It was proved in the previous step that there is $E\in 
RO(K_*)$ such that the following identity in $RO(K_{*,M})$ holds:
\begin{align}\label{eq:lift}
  \iota^{\prime*}(E)=\fn_*.
\end{align}

By \eqref{eq:n=pbK}, \eqref{eq:pbpull}-\eqref{eq:lift}, we have the 
identity in $RO(K_M)$, 
\begin{align}
 \fn= \fp^\bot(\fb)=i_{K_M}^*\(\fp_{*}^{\bot}(\fb_*)\)= i_{K_M}^*(\fn_*)=i_{K_M}^*\iota^{\prime*}(E)=\iota^*i_K^*(E),
\end{align}
which completes  the proof of our theorem.\qed

\subsection{Proof of Proposition \ref{eq:propYb}}\label{sec:propYb}

If $n\in \mathbf{N}$, consider  the following closed subgroups:
\begin{align}\label{eq:exUU}
\begin{aligned}
& A\in  \mathrm{U}(2)\to \left(
                          \begin{array}{cc}
                            A & 0 \\
                            0 & \det^{-1}(A) \\
                          \end{array}
                        \right)\in \mathrm{SU}(3), \\
                        &(A,B)\in \SO(n)\times \SO(2)\to \left(
                          \begin{array}{cc}
                            A & 0 \\
                            0 & B \\
                          \end{array}
                        \right)\in \SO(n+2).
\end{aligned}
\end{align}

We state Proposition \ref{eq:propYb} in a more exact way.
\begin{prop}\label{eq:propYb1} We have the isomorphism of symmetric spaces
  \begin{align}\label{eq:Yb}
    Y_\fb\simeq \mathrm{SU}(3)/\mathrm{U}(2) \hbox{ or } \SO(p+q)/\SO(p+q-2)\times \SO(2),
  \end{align}
  with $pq>1$ odd.
\end{prop}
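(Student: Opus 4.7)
The plan is to reduce to the two cases $\fg_1=\mathfrak{sl}_3(\bR)$ and $\fg_1=\mathfrak{so}(p,q)$ via Theorem \ref{thm:cla}, then identify $U(\fb)$ by hand in each case.

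First, using the decomposition $\fg=\fg_1\oplus\fg_2$ from Theorem \ref{thm:cla} with $\delta(\fg_2)=0$, I would note that $\fb\subset\fg_1$. Indeed, $\delta(\fg_2)=0$ forces the noncompact part of the minimal $\theta$-stable Cartan of $\fg_2$ to vanish, so the one-dimensional space $\fb\subset\fp$ sits inside $\fp_1:=\fg_1\cap\fp$. Passing to compact forms, $\fu=\fu_1\oplus\fu_2$ with $\fu_i$ the compact form of $\fg_i$, and $[\fu_2,\fg_1]=0$. Consequently $\fu_2\subset\fu(\fb)$, so $\fu(\fb)=\fu_1(\fb)\oplus\fu_2$, and on the group level $U(\fb)=U_1(\fb)\cdot U_2$ where $U_i$ is the connected subgroup with Lie algebra $\fu_i$. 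This yields the identification of symmetric spaces
\begin{equation*}
Y_\fb=U/U(\fb)\simeq U_1/U_1(\fb),
\end{equation*}
reducing the problem to $\fg=\fg_1$.

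Next I would treat the two cases explicitly. For $\fg_1=\mathfrak{sl}_3(\bR)$, we have $U_1=\mathrm{SU}(3)$, and by \eqref{eq:fbsl3}, $\sqrt{-1}\fb$ is spanned by $\sqrt{-1}\,\mathrm{diag}(1,1,-2)$. A matrix in $\mathrm{SU}(3)$ centralizes this element iff it has block form $\mathrm{diag}(A,\det(A)^{-1})$ with $A\in\mathrm{U}(2)$, which is precisely the embedding described in the first line of \eqref{eq:exUU}. Hence $U_1(\fb)=\mathrm{U}(2)$ and $Y_\fb\simeq\mathrm{SU}(3)/\mathrm{U}(2)$. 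For $\fg_1=\mathfrak{so}(p,q)$ with $pq>1$ odd, we have $U_1=\mathrm{SO}(p+q)$, and by \eqref{eq:bpmpkmsopq} the space $\sqrt{-1}\fb$ is spanned by the generator of a rotation in the $(p,p{+}1)$-plane (i.e.\ the plane spanned by the $p$-th and $(p{+}1)$-st basis vectors). Its centralizer in $\mathrm{SO}(p+q)$ is manifestly $\mathrm{SO}(p+q-2)\times\mathrm{SO}(2)$ as in the second line of \eqref{eq:exUU}, so $Y_\fb\simeq\mathrm{SO}(p+q)/\mathrm{SO}(p+q-2)\times\mathrm{SO}(2)$.

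The main subtlety will be justifying that the centralizer in $U_1$ of $\sqrt{-1}\fb$ is really a connected subgroup coinciding with $U_1(\fb)$ (and not some larger disconnected group): this is handled by invoking \cite[Corollaire 4.51]{KnappLie} exactly as done right before Proposition \ref{prop:A0S1} to deduce that the stabilizer of $\fb$ in $U$ is connected, then matching Lie algebras by direct computation in the two matrix realizations above. Once connectedness is in hand, comparison of Lie algebras and dimensions closes the identification, and both quotients on the right-hand side of \eqref{eq:Yb} are classical irreducible compact Hermitian symmetric spaces of types AIII and BDI respectively, giving the statement of Proposition \ref{eq:propYb} as a corollary.
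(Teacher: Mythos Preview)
Your reduction $Y_\fb\simeq U_1/U_1(\fb)$ via the Lie algebra splitting $\fg=\fg_1\oplus\fg_2$ is correct and more direct than the paper's argument: the surjectivity of $U_1\to U/U(\fb)$ follows from $U_2\subset U(\fb)$, and the kernel $U_1\cap U(\fb)$ is the centralizer of $\sqrt{-1}\fb$ in the compact connected group $U_1$, hence connected by the same \cite[Corollaire 4.51]{KnappLie} you cite. So far so good.

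The gap is in the sentence ``we have $U_1=\mathrm{SU}(3)$'' (and likewise $U_1=\mathrm{SO}(p+q)$). You only know $U_1$ is \emph{some} connected compact group with Lie algebra $\mathfrak{su}(3)$ or $\mathfrak{so}(p+q)$ --- it could a priori be $\mathrm{PSU}(3)$, or $\mathrm{Spin}(p+q)$, or $\mathrm{PSO}(p+q)$, depending on how $G$ sits as a matrix group. Your explicit centralizer calculations are done in the specific groups $\mathrm{SU}(3)$ and $\mathrm{SO}(p+q)$, not in $U_1$. The fix is easy: observe that the center of any connected cover of $U_1$ lies in $U_1(\fb)$ (it lies in every maximal torus, and $U_1(\fb)$ contains one), so $U_1/U_1(\fb)$ is the same symmetric space regardless of which form $U_1$ takes; since $\mathrm{SU}(3)/\mathrm{U}(2)\simeq\mathbb{CP}^2$ and $\mathrm{SO}(p+q)/\mathrm{SO}(p+q-2)\times\mathrm{SO}(2)$ are simply connected, they are that common symmetric space.

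The paper takes a different route precisely to sidestep this issue: instead of working inside $G$, it passes to the isometry group $G_*=\mathrm{Isom}^0(X)$, which by Proposition~\ref{cor:Gspli} (via the de Rham decomposition) is \emph{literally} a direct product $G_1\times G_2$ with $G_1=\mathrm{SL}_3(\bR)$ or $\mathrm{SO}^0(p,q)$. It then shows $Y_\fb\simeq U_*/U_*(\fb_*)$ by threading through semisimple parts and universal covers (equations \eqref{eq:Yb1}--\eqref{eq:Yb3}). Your approach is cleaner once the center-in-centralizer observation is added; the paper's detour through $G_*$ buys a concrete group at the cost of more bookkeeping.
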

\begin{proof} Let $U_*$ and $U_{*}(\fb_*)$ be the analogues of  $U$ 
	and $U(\fb)$ when $G$ and $\fb$ are replaced by $G_*$ and 
	$\fb_*$. It is enough to show that
\begin{align}\label{eq:ybUUb}
  Y_\fb\simeq U_{*}/U_*(\fb_*).
\end{align}
Indeed, by the explicit constructions given  in Subsections \ref{subsec:SL3} and \ref{subsec:SOPQ}, by Proposition \ref{cor:Gspli}, and by \eqref{eq:exUU}, \eqref{eq:ybUUb}, we get \eqref{eq:Yb}.

Let  $Z_U\subset U$ be the center of $U$, and let $Z^0_U$ be the connected component of the identity in $Z_U$.
Let $U_{ss}\subset U$ be the connected subgroup of $U$ associated to the Lie algebra  $[\fu,\fu]\subset \fu$.
By \cite[Proposition 4.32]{Knappsemi}, $U_{ss}$ is compact, and  $U=U_{ss}Z^0_U$.

Let $U_{ss}(\fb)$ be the analogue of $U(\fb)$ when $U$ is replaced by $U_{ss}$. Then $U(\fb)=U_{ss}(\fb)Z^0_U$, and the imbedding $U_{ss}\to U$ induces an isomorphism of homogenous spaces
\begin{align}\label{eq:Yb1}
 U_{ss}/U_{ss}(\fb)\simeq U/U(\fb).
\end{align}

Let $\widetilde{U}_{ss}$ be the universal cover of $U_{ss}$. Since $U_{ss}$ is semisimple, $\widetilde{U}_{ss}$ is compact.
We define $\widetilde{U}_{ss}(\fb)$ similarly.
The canonical projection $\widetilde{U}_{ss}\to U_{ss}$ induces an isomorphism of homogenous spaces
\begin{align}\label{eq:Yb2}
\widetilde{U}_{ss}/  \widetilde{U}_{ss}(\fb)\simeq  U_{ss}/U_{ss}(\fb).
\end{align}
Similarly, since $U_*$ is semisimple,
if $\widetilde{U}_*$ is a universal cover of $U_*$, and if we define $\widetilde{U}_{*}(\fb)$ in the same way, we have
\begin{align}\label{eq:Yb200}
  \widetilde{U}_{*}/  \widetilde{U}_{*}(\fb)\simeq  U_{*}/U_{*}(\fb).
\end{align}

 The surjective morphism of Lie algebras $i_\fg:\fg\to \fg_*$ induces a surjective morphism
 of the compact forms $i_\fu:\fu\to \fu_*$. Since $\fu_*$ is semisimple, the restriction of $i_{\fu}$ to $[\fu,\fu]$ is still surjective. It
 lifts to a surjective morphism of simply connected Lie groups
\begin{align}\label{eq:tusstous}
  \widetilde{U}_{ss}\to \widetilde{U}_*.
\end{align}
Since any connected, simply connected, semisimple compact Lie group can be 
written as a product of connected, simply connected, simple compact Lie groups,  we can assume that there is a connected and simply connected semisimple compact Lie group $U'$ such that
 $\widetilde{U}_{ss}=\widetilde{U}_*\times U'$, and that the morphism \eqref{eq:tusstous} is  the   canonical projection.
Therefore,
\begin{align}\label{eq:Yb3}
  \widetilde{U}_{ss}/ \widetilde{U}_{ss}(\fb)\simeq \widetilde{U}_*/\widetilde{U}_*(\fb_*).
\end{align}

From \eqref{eq:Yb0}, \eqref{eq:Yb1}-\eqref{eq:Yb200} and 
\eqref{eq:Yb3}, we get \eqref{eq:ybUUb}.  The proof of our proposition  is completed.
\end{proof}
\begin{re}
  The Hermitian symmetric spaces on the right-hand side of 
  \eqref{eq:Yb} are irreducible and respectively of the type AIII and 
  the type BDI in the classification of Cartan \cite[p. 518 Table V]{HelgasonSymmetric}.
\end{re}

\subsection{Proof of Proposition \ref{prop:A0S1}}
We use the notation in Subsection \ref{sec:propYb}.
By definition, $A_0\subset U_{ss}$.   Let $\widetilde{A}_0\subset \widetilde{U}_{ss}$ and $A_{*0}\subset U_*$ be the analogues of $A_0$ when $U$ is replaced by $\widetilde{U}_{ss}$ and $U_*$.
As in the proof of Proposition \ref{eq:propYb}, we can show that $\widetilde{A}_0$ is a finite cover of $A_0$ and $A_{*0}$.

On the other hand, by the explicit constructions given  in Subsections \ref{subsec:SL3}, \ref{subsec:SOPQ}, and by Proposition \ref{cor:Gspli},  $A_{*0}$ is a circle $\bbS^1$. Therefore, both $\widetilde{A}_0, A_0$ are circles.

\subsection{Proof of Proposition \ref{prop:G1=0}}\label{sec:G1=0}
%
We use the notation in Subsection \ref{sec:t=0}. Let $\gamma\in G$ be 
such that $\dim \fb(\gamma)\g2$. As in \eqref{eq:trkpo}, we assume 
that $\gamma=e^ak^{-1}$ is such that
\begin{align}\label{eq:trkpo1}
&\ft(\gamma)\subset \ft, &k\in T.
\end{align}
It is enough to show \eqref{eq:G1=0} for $Y\in\ft(\gamma)$.

For $Y\in \ft(\gamma)$, since $k^{-1}\exp(-iY)\in T$ and $T\subset K^M$,  we have
\begin{align}\label{eq:TREhat}
  \Trs^{\Lambda^{\cdot}(\fp_{\fm})}\[k^{-1}\exp(-iY)\]
=\mathrm{det}\big(1-\Ad(k)\exp(i\ad(Y))\big)|_{\fp_\fm}.
\end{align}
It is enough to show
\begin{align}\label{eq:dbrp1}
  \dim \fb(\gamma)\cap \fp_\fm \g 1.
\end{align}

Note that $a\neq0$, otherwise $\dim \fb(\gamma)=1$. Let
\begin{align}
a=a^1+a^2+a^3\in \fb\oplus\fp_\fm\oplus\fp^\bot(\fb).
\end{align}
Since the decomposition $\fb\oplus\fp_\fm\oplus\fp^\bot(\fb)$ is 
preserved by $\ad(\ft)$ and $\Ad(T)$, it is also preserved by 
$\ad(\ft(\gamma))$ and $\Ad(k)$. Since $a \in \fb(\gamma)$, the $a_i$, $1 \l i \l 3$, all lie in $\fb(\gamma)$.  If $a^2\neq0$, we get \eqref{eq:dbrp1}.
If $a^2=0$ and $a^3=0$, we have $a\in \fb$. Since $a\neq 0$, then 
$\fb(\gamma) = \fb$, which is impossible since $\dim b (\gamma) \g 2$.

It remains to consider the case
\begin{align}\label{eq:asa2a3}
&a^2=0,&a^3\neq0.
\end{align}
We will follow the steps in the proof of Theorem \ref{thm:keyrep}.


\ul{The case $G=\mathrm{SL}_3(\bR)$.}  By \eqref{eq:TSL3} and \eqref{eq:pmkmsl3}, the representation of $T\simeq \SO(2)$ on $\fp^\bot(\fb)$ is equivalent to $\sigma_2$.
A nontrivial element of $T$ never fixes $a^3$. Therefore,
\begin{align}\label{eq:k=1sl3}
k=1.
\end{align}

Since $a\notin \fb$, $a$ does not commute  with all the elements of $\ft$. From \eqref{eq:trkpo1}, we get
\begin{align}
  \dim \ft(\gamma)< \dim \ft=1.
\end{align}
Therefore,
\begin{align}\label{eq:tr=0sl3}
   \ft(\gamma)=0.
\end{align}
By \eqref{eq:defbr}, \eqref{eq:k=1sl3} and \eqref{eq:tr=0sl3}, we see that $\fb(\gamma)=\fp$. Therefore,
\begin{align}\label{eq:babababab}
  \dim \fb(\gamma)\cap \fp_\fm =\dim \fp_\fm.
\end{align}
By \eqref{eq:pmkmsl3} and \eqref{eq:babababab}, we get \eqref{eq:dbrp1}.

\ul{The case $G=\SO^0(p,q)$ with $pq>1$ odd.} By \eqref{eq:bpmpkmsopq} and \eqref{eq:MKM},  the representations of $K_M\simeq\SO(p- 1)\times \SO(q-1)$ on $\fp_{\fm}$ and $\fp^\bot(\fb)$ are equivalent to
$\sigma_{p-1}\boxtimes \sigma_{q-1}$ and $\sigma_{p-1}\boxtimes \mathbf{1}\oplus  \mathbf{1}\boxtimes \sigma_{q-1}$. We identify $a^3\in \fp^\bot(\fb)$ with
\begin{align}\label{eq:a3a1a2}
v^1+v^2\in \bR^{p-1}\oplus \bR^{q-1}.
\end{align}
Then $v^1$ and $v^2$ are fixed by $\Ad(k)$ and commute with  $\ft(\gamma)$.

If $v^1\neq0$ and $v^2\neq0$, by \eqref{eq:defbr}, the nonzero element $v^1\boxtimes v^2\in \bR^{p-1}\boxtimes \bR^{q-1}\simeq\fp_\fm$ is in $\fb(\gamma)$. It implies \eqref{eq:dbrp1}.

If $v^2=0$, we will show that $\gamma$ can be conjugated into  $H$ by an element of $K$, which implies $\dim \fb(\gamma)=1$ and contradicts  $\dim \fb(\gamma)\g2$. (The proof for the case  $v^1=0$ is similar.)
Without loss of generality,  assume that there exist $s\in \mathbf{N}$ with $1\l s\l (p-1)/2$ and $\lambda_s, \cdots, \lambda_{(p-1)/2}\in \bC$ nonzero complex numbers such that,
\begin{align}\label{eq:v1}
v^1=\(0,\cdots,0,\lambda_{s},\cdots,\lambda_{(p-1)/2}\)\in \bC^{(p-1)/2}\simeq \bR^{p-1}.
\end{align}
Then there exists $x\in \bR$ such that
\begin{align}\label{eq:apva}
  a=\left(
               \begin{array}{ccc}
                 0 & \begin{array}{cc}
                       0 & v^1
                     \end{array}
   & 0
 \\
                 \begin{array}{c}
       0 \\
       v^{1t}
     \end{array} & \left(
                       \begin{array}{cc}
                         0 & x \\
                         x & 0 \\
                       \end{array}
                     \right)
 & \begin{array}{c}
        0 \\
      0
     \end{array} \\
                 0 & \begin{array}{cc}
                       0 & 0
                     \end{array} & 0 \\
               \end{array}
             \right)\in \fp.
\end{align}

By \eqref{eq:tsopq} and \eqref{eq:trkpo1}, there exist $A\in T_{p-1}$ and $D\in T_{q-1}$ such that
\begin{align}\label{eq:kAD}
  k=\left(
      \begin{array}{ccc}
        A & 0 & 0 \\
        0 & \left(
              \begin{array}{cc}
                1 & 0 \\
                0 & 1 \\
              \end{array}
            \right)
 & 0 \\
        0 & 0 & D \\
      \end{array}
    \right)\in T.
\end{align}
If we identify $T_{p-1}\simeq U(1)^{(p-1)/2}$, there are $\theta_1, \cdots, \theta_{(p-1)/2}\in \bR$ such that
\begin{align}\label{eq:Ator}
  A=\(e^{2i\pi \theta_1},\cdots, e^{2i\pi \theta_{(p-1)/2}}\).
\end{align}
Since $k$ fixes $a$, by \eqref{eq:v1}-\eqref{eq:Ator},
for $i=s,\cdots, (p-1)/2$, we have
\begin{align}\label{eq:e=0}
e^{2i\pi\theta_i}=1.
\end{align}

If $W\in \mathfrak{so}(p-2s+2)$, set
\begin{align}\label{eq:lW}
  l(W)=\left(
         \begin{array}{ccc}
           \overmat{p \hbox{col.}}{0 & 0} & 0 \\
           0 & W & 0 \\
           0&0 & 0 \\
         \end{array}
       \right)\in \fk.
\end{align}
By \eqref{eq:kAD}-\eqref{eq:lW}, we have
\begin{align}\label{eq:adklw}
  kl(W)=l(W)k.
\end{align}

Put $w=(\lambda_s,\cdots, \lambda_{(p-1)/2},x)\in \bC^{(p-2s+1)/2}\oplus \bR\simeq \bR^{p-2s+2}$.
There exists $W\in \mathfrak{so}(p-2s+2)$ such that
\begin{align}\label{eq:expWw}
  \exp(W)w= (0,\cdots, 0,|w|),
\end{align}
where $|w|$ is the Euclidean norm of $w$.

Put
\begin{align}
  k'=\exp(l(W))\in K.
\end{align}
By \eqref{eq:bpmpkmsopq}, \eqref{eq:adklw} and \eqref{eq:expWw}, we have
\begin{align}\label{eq:rtoH}
 & \Ad(k')a\in \fb,&k'kk^{\prime-1}=k.
\end{align}
Thus, $\gamma$ is conjugated by $k'$ into $H$.

\ul{The general case.} By \eqref{eq:g=g1g2}, $\fg=\fg_1\oplus \fg_2$ with $\fg_1=\mathfrak{sl}_3(\bR)$ or $\fg_1=\mathfrak{so}(p,q)$ with $pq>1$ odd. By \eqref{eq:bning1} and \eqref{eq:asa2a3}, we have $a\in \fg_1$. The arguments in \eqref{eq:k=1sl3}-\eqref{eq:a3a1a2} extend directly. We only need to take care
 of the case $\fg_1=\mathfrak{so}(p,q)$ and $a^2=0, v^1\neq0$ and $v^2=0$. In this case, the arguments in  \eqref{eq:apva}-\eqref{eq:expWw} extend  to the group of isometries $G_*$. In particular,  there is $W_*\in \fk_*$ such that
\begin{align}\label{eq:Ws}
 &\Ad\(\exp(W_*)\)i_\fg(a)\in \fb_*,  &\Ad(i_G(k))W_*=W_*.
\end{align}

By \eqref{eq:pfp}, $\ker(i_\fg)\subset \fk$. Let $\ker(i_\fg)^\bot$ be the orthogonal space of  $\ker(i_\fg)$ in $\fk$. Then,
\begin{align}
 & \fk=\ker(i_\fg)\oplus \ker(i_\fg)^{\bot},  &\ker(i_\fg)^{\bot}\simeq \fk_*.
\end{align}
Take $W=(0,W_*)\in \fk$. Put
\begin{align}\label{eq:kk2}
  k'=\exp(W)\in K.
\end{align}
By \eqref{eq:pfp}, \eqref{eq:Ws} and \eqref{eq:kk2}, we get 
\eqref{eq:rtoH}. Thus, $\gamma$ is conjugate  by $k'$ into $H$.
 The proof of  \eqref{eq:G1=0} is completed. \qed

%
%


\section{Selberg and Ruelle zeta functions}\label{Sec:qusi}
In this section, we assume that $\delta(G)=1$ and that $G$ has 
compact  center. 
The purpose of this section is to  establish  the first part of our 
main Theorem \ref{thm:01z}. 

%


In Subsection \ref{sec:evapri}, we introduce a class of 
representations $\eta$ of $M$, so that $\eta|_{K_{M}}$ lifts as an 
element of $RO(K)$. In particular, $\eta_{j}$ is in this class. Take 
$\widehat{\eta}=\Lambda^{\cdot}(\fp_{\fm}^{*})\otimes \eta\in 
RO(K)$. Using the explicit formulas for orbital integrals of Theorem \ref{thm:Bis}, we give 
an explicit geometric formula for  $\Trs^{[\gamma]}\[\exp(-tC^{\fg,X,\widehat{\eta}}/2)\]$, whose 
proof is given in Subsection \ref{sec:d11}. 

In Subsection \ref{sec:selzeta}, we introduce a Selberg zeta 
function $Z_{\eta,\rho}$  associated with $\eta$ and $\rho$. 
Using the result in Subsection 
\ref{sec:evapri}, we express $Z_{\eta,\rho}$ in terms of the regularized 
determinant of the resolvent of  $C^{\fg,Z, \widehat{\eta},\rho}$, 
and we prove that  $Z_{\eta,\rho}$ is meromorphic and  satisfies a functional 
equation.

Finally, in Subsection \ref{sec:6}, we show that the dynamical zeta 
function $R_\rho(\sigma)$ 
is equal to an alternating product of  $Z_{\eta_{j},\rho}$, from 
which we deduce the first part of Theorem \ref{thm:01z}.


\subsection{An explicit formula for $\Trs^{[\gamma]}\[\exp\(-tC^{\fg,X,\widehat{\eta}}/2\)\]$}\label{sec:evapri}
Now we introduce a class of representations of $M$.
\begin{as}\label{as:1}
Let $\eta$ \index{E@$\eta$}be a real finite dimensional representation of $M$ such that
\begin{enumerate}
   \item The restriction $\eta|_{K_M}$ on $K_M$ can be lifted into $RO(K)$;
   \item The action of  the Lie algebra  $\fu_\fm\subset \fm\otimes_\bR\bC$ on
$E_\eta\otimes_\bR\bC$, induced by complexification, can be lifted to an action of Lie group $U_M$;
   \item The Casimir element $C^{\fu_\fm}$ of $\fu_\fm$ acts on 
   $E_\eta\otimes_\bR\bC$ as the scalar $C^{\fu_\fm,\eta}\in \bR$.
 \end{enumerate}
\end{as}
By Corollary \ref{cor:key},  let
$\widehat{\eta}=\widehat{\eta}^+-\widehat{\eta}^-\in RO(K)$\index{E@$\widehat{\eta},\widehat{\eta}^+,\widehat{\eta}^-$} be the virtual real finite dimensional representation of $K$ on
$E_{\widehat{\eta}}=E^+_{\widehat{\eta}}-E^-_{\widehat{\eta}}$ \index{E@$E_{\widehat{\eta}},E^+_{\widehat{\eta}},E^-_{\widehat{\eta}}$}
such that the following identity in $RO(K_M)$ holds:
\begin{align}\label{eq:hatvar}
E_{\widehat{\eta}}|_{K_M}=  \sum_{i=0}^{\dim \fp_\fm}(-1)^i\Lambda^i(\fp^*_\fm)\otimes E_{\eta}|_{K_M}.
\end{align}

By Corollary  \ref{cor:key} and by Proposition \ref{prop:bggbuu}, 
 $\eta_j$ satisfies Assumption \ref{as:1}, so that the following identity in $RO(K)$ holds
 \begin{align}\label{eq:haba}
 \sum_{i=1}^{\dim \fp}(-1)^{i-1}i\Lambda^i(\fp^*)=\sum^{ 2l}_{j=0} (-1)^jE_{\widehat{\eta}_i}.
 \end{align}

As in Subsection \ref{sec:sym},
let $\cE_{\widehat{\eta}}=G\times_K E_{\widehat{\eta}}$  \index{E@$\cE_{\widehat{\eta}}$}be the induced
virtual  vector bundle on $X$.
Let $C^{\fg,X,\widehat{\eta}}$ be the corresponding Casimir element of $G$ acting on $C^\infty(X,\cE_{\widehat{\eta}})$. 
We will state an explicit  formula for  
$\Trs^{[\gamma]}\[\exp(-tC^{\fg,X,\widehat{\eta}}/2)\]$.

By \eqref{eq:Ub=S1UM}, the complex representation of $U_M$ on $E_\eta\otimes_\bR \bC$ extends to a complex  representation of
$U(\fb)$ such that $A_0$ acts trivially. Set\index{F@$F_{\fb,\eta}$}
\begin{align}
  F_{\fb,\eta}=U\times_{U(\fb)} \(E_\eta\otimes_\bR \bC\).
\end{align}
Then $F_{\fb,\eta}$ is a complex vector bundle on $Y_\fb$. It is equipped with a connection $\nabla^{F_{\fb,\eta}}$, induced by $\omega^{\fu(\fb)}$, with curvature $R^{F_{\fb,\eta}}$.\index{R@$R^{F_{\fb,\eta}}$}

\begin{re}
  When $\eta=\eta_j$, the above action of $U(\fb)$ on $\Lambda^j(\fn^*_\bC)$ is different from the adjoint  action
of $U(\fb)$ on  $\Lambda^j(\fn^*_\bC)$ induced by \eqref{eq:ubotnn}.
\end{re}

Recall that $T$ is the maximal torus of both $K$ and $U_M$. Put\index{C@$c_G$}
\begin{align}\label{eq:cG}
  c_G=(-1)^{\frac{m-1}{2}}\frac{|W(T,U_M)|}{|W(T,K)|}\frac{\vol(K/K_M)}{\vol(U_M/K_M)}.
\end{align}
Recall that $X_M=M/K_M$. By Bott's formula \cite[p. 175]{Bott1965},
\begin{align}\label{eq:Bott}
  \chi(K/K_M)=\frac{|W(T,K)|}{|W(T,K_M)|},
\end{align}
and by  \eqref{eq:eulWW}, \eqref{eq:cG}, we have a more geometric expression\begin{align}
  c_G=(-1)^l\frac{\[e\(TX_M,\nabla^{TX_M}\)\]^{\max}}{\[e\(T(K/K_M),\nabla^{T(K/K_M)}\)\]^{\max}}.
\end{align}
Note that $\dim \fu^{\bot}(\fb)=2\dim \fn=4l$.
If $\beta\in \Lambda^{\cdot}(\fu^{\bot,*}(\fb))$, let $[\beta]^{\max}\in \bR$ \index{B@$[\beta]^{\max}$}be such that
\begin{align}\label{eq:bmax}
  \beta-[\beta]^{\max}\frac{\omega^{Y_\fb,2l}}{(2l)!}
\end{align}
is of degree smaller than $4l$.

\begin{thm}\label{thm:d11}
 For $t>0$, we have
\begin{multline}\label{eq:d11}
  \Trs^{[1]}\[\exp\(-tC^{\fg,X,\widehat{\eta}}/2\)\]=\frac{c_G}{\sqrt{2\pi t}}\exp\(\frac{t}{16}\Tr^{{\fu^{\bot}(\fb)}}\[C^{\fu(\fb),\fu^\bot(\fb)}\]-\frac{t}{2}C^{\fu_\fm,\eta}\)\\
 \[\exp\(-\frac{\omega^{{Y_\fb},2}}{8\pi^2|a_0|^2 t}\)\widehat{A}\(TY_{\fb},\nabla^{TY_{\fb}}\)\mathrm{ch}\(F_{\fb,\eta},\nabla^{F_{\fb,\eta}}\)\]^{\max}.
\end{multline}
If $\gamma=e^ak^{-1}\in H$ with $a\in \fb, a\neq0, k\in T$, for  $t>0$, we have
\begin{multline}\label{eq:Trg}
  \Trs^{[\gamma]}\[\exp\(-t C^{\fg,X,\widehat{\eta}}/2\)\]
=\frac{1}{{\sqrt{2 \pi t}}}\[e\(TX_{M}(k),\nabla^{TX_M(k)}\)\]^{\max}\\
\exp\(-\frac{|a|^2}{2t}+\frac{t}{16}\Tr^{\fu^{\bot}(\fb)}\[C^{\fu(\fb),\fu^\bot(\fb)}\]-\frac{t}{2}C^{\fu_\fm,\eta}\)
 \frac{\Tr^{E_\eta}\[\eta(k^{-1})\]}{\left|\det\big(1-\Ad(\gamma)\big)|_{\fz_0^\bot}\right|^{1/2}}.
\end{multline}
If $\dim \fb(\gamma)\g2$, for $t>0$, we have
\begin{align}\label{eq:G1=00}
  \Trs^{[\gamma]}\[\exp\(-t C^{\fg,X,{\widehat{\eta}}}/2\)\]=0.
\end{align}
\end{thm}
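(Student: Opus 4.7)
\medskip

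\noindent\textbf{Proof plan for Theorem \ref{thm:d11}.} The three assertions are all obtained by specializing Bismut's explicit formula \eqref{eq:trrJr} to the virtual representation $\tau=\widehat{\eta}$. The crucial input is the factorization, valid because \eqref{eq:hatvar} is an identity in $RO(K_M)$ and because $k\in T\subset K_M$, $Y\in \fk_{\fm}$:
\begin{equation*}
\Trs^{E_{\widehat{\eta}}}\!\bigl[\widehat{\eta}(k^{-1})\exp(-i\widehat{\eta}(Y))\bigr]
=\Trs^{\Lambda^{\cdot}(\fp_{\fm}^{*})}\!\bigl[k^{-1}\exp(-iY)\bigr]\;\Tr^{E_{\eta}}\!\bigl[\eta(k^{-1})\exp(-i\eta(Y))\bigr].
\end{equation*}
The vanishing \eqref{eq:G1=00} is then immediate: for $\gamma$ conjugated to $e^{a}k^{-1}$ with \eqref{eq:trkpo1}, Bismut's formula reduces the supertrace to an integral over $\fk(\gamma)\subset\fk_{\fm}$, and under the hypothesis $\dim \fb(\gamma)\ge 2$ Proposition \ref{prop:G1=0} kills the first factor above pointwise.

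\smallskip

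For \eqref{eq:Trg}, specialize to $\gamma=e^{a}k^{-1}\in H$, $a\neq 0$. Using $\delta(G)=1$ and $a\in\fb$, one has $Z^{0}(a)=\bR\times M$, so $\fz_{0}=\bR\oplus\fm$ and $\fz_{0}^{\bot}=\fn\oplus\ol{\fn}$; moreover $\fz_{0}^{\bot}(\gamma)=\fm^{\bot}(k)$, $\fp_{0}^{\bot}(\gamma)=\fp_{\fm}^{\bot}(k)$, $\fk_{0}^{\bot}(\gamma)=\fk_{\fm}^{\bot}(k)$, and $\fp(\gamma)=\fb\oplus\fp_{\fm}(k)$, $\fk(\gamma)=\fk_{\fm}(k)$. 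Since $[\fk_{\fm},\fb]=0$, the factor $\widehat A(i\ad Y|_{\fp(\gamma)})$ equals $\widehat A(i\ad Y|_{\fp_{\fm}(k)})$, and inspection of \eqref{eq:J} yields the clean decomposition
\begin{equation*}
J_{\gamma}(Y)=\bigl|\det(1-\Ad(\gamma))|_{\fn\oplus\ol{\fn}}\bigr|^{-1/2}\,J^{M}_{k^{-1}}(Y),
\end{equation*}
where $J^{M}_{k^{-1}}$ is the $M$-analogue of \eqref{eq:J} for the semisimple element $k^{-1}\in M$. Plugging everything into \eqref{eq:trrJr} factors the orbital integral as the Gaussian $(2\pi t)^{-1/2}e^{-|a|^{2}/2t}$, the inverse determinant, the exponential prefactor, the scalar $\Tr^{E_{\eta}}[\eta(k^{-1})]$, and Bismut's orbital integral on $X_{M}$ for the Casimir $C^{\fm,X_{M},\eta}$. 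Since $\delta(M)=0$, \cite[Thm.~7.8.13]{B09} (used already in \eqref{eq:dgnc12}) evaluates the latter as $[e(TX_{M}(k),\nabla^{TX_{M}(k)})]^{\max}$, while the exponential in the trace terms is regrouped via Proposition \ref{prop:bggbuu} and the orthogonal splittings $\fg=\fb\oplus\fm\oplus\fn\oplus\ol\fn$, $\fk=\fk_{\fm}\oplus\fk^{\bot}(\fb)$ into the advertised combination $\exp\!\bigl(\tfrac{t}{16}\Tr^{\fu^\bot(\fb)}[C^{\fu(\fb),\fu^\bot(\fb)}]-\tfrac{t}{2}C^{\fu_{\fm},\eta}\bigr)$.

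\smallskip

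The main difficulty lies in \eqref{eq:d11}. Here $\gamma=1$, $a=0$, $\fz(\gamma)=\fg$, $\fk(\gamma)=\fk$, and \eqref{eq:trrJr} becomes a Gaussian integral over $\fk$ of $J_{1}(Y)\,\Trs^{E_{\widehat{\eta}}}[\widehat{\eta}(\exp(-iY))]$. The strategy is to reinterpret this integral as a Chern--Weil/Duistermaat--Heckman-type integral over the compact Hermitian symmetric space $Y_{\fb}=U/U(\fb)$ of Subsection~\ref{sec:Yb}. Concretely, one performs a Weyl-integration reduction of the $\fk$-integral to the common Cartan subalgebra $\ft$, splits the $\ft$-integral according to the $U(\fb)$-block decomposition using $\fk=\fk_{\fm}\oplus\fk^{\bot}(\fb)$ (the second summand being sent by \eqref{eq:n=pbK} into $\fu^{\bot}(\fb)\otimes\bC$ and hence contributing the Gaussian factor involving $\omega^{Y_{\fb}}$ via \eqref{eq:Ja0} and Proposition \ref{prop:BOO=0}), and then identifies the residual integrand as the $2l$-degree component of $\widehat A(TY_{\fb})\,\mathrm{ch}(F_{\fb,\eta})$ on $Y_{\fb}$. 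The transfer from $K$-data to $U_M$-data converts the multiplicative anomaly in $J_{1}$ via \eqref{eq:AA1} ($\widehat A(TY_{\fb})\widehat A(N_{\fb})=1$), and the combinatorial normalization produces the Weyl-group ratio inside $c_{G}$ through \eqref{eq:cG} and Bott's formula \eqref{eq:Bott}. The Gaussian $\exp(-\omega^{Y_{\fb},2}/(8\pi^{2}|a_{0}|^{2}t))$ is the exponential of the square of the K\"ahler form, computed via Proposition \ref{prop:BOO=0}, and accounts for the contribution of the $\fb$ direction. This case is the technical heart of the proof, and the detailed calculation will be performed in Subsection~\ref{sec:d11}.
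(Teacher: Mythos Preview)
Your treatment of \eqref{eq:G1=00} matches the paper exactly, and your outline for \eqref{eq:d11} follows the same route as the paper (Weyl integration to $\ft$, transfer to an integral over $\fu_{\fm}$, recognition of the curvature form on $Y_{\fb}$, and an eigenfunction-of-Laplacian argument to extract the exponential prefactor), though your phrasing ``splits the $\ft$-integral according to $\fk=\fk_{\fm}\oplus\fk^{\bot}(\fb)$'' is misleading: $\ft\subset\fk_{\fm}$, so nothing in the integration domain lives in $\fk^{\bot}(\fb)$; what actually happens is that the $\fk^{\bot}(\fb)$ directions enter through the determinant factor $\det(\ad(Y))|_{\fn_{\bC}}$ in the integrand, which is then rewritten as $[\exp(B(Y,\Omega^{\fu_{\fm}}/2\pi)/t)]^{\max}$.

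Your argument for \eqref{eq:Trg} is genuinely different from the paper's and in some ways cleaner. The paper does not factor $J_{\gamma}=|\det(1-\Ad\gamma)|_{\fz_{0}^{\bot}}|^{-1/2}J^{M}_{k^{-1}}$; instead it repeats the full Weyl-integration-plus-eigenfunction machinery from the proof of \eqref{eq:d11}, now over $\fu_{\fm}(k)$. Your factorization is correct and reduces \eqref{eq:Trg} directly to the $M$-orbital integral $\Trs^{[k^{-1}]_{M}}[\exp(-tC^{\fm,X_{M},\Lambda^{\cdot}(\fp_{\fm}^{*})\otimes E_{\eta}}/2)]$, which is evaluated by the $\delta(M)=0$ result of \cite{B09}. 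Two small repairs: the $K_{M}$-representation on the $M$-side is $\Lambda^{\cdot}(\fp_{\fm}^{*})\otimes E_{\eta}$, not just $E_{\eta}$, so you need the twisted version of \cite[Thm.~7.8.13]{B09} (this is exactly \eqref{eq:trd0Fi33} in the paper, not \eqref{eq:dgnc12}); and the regrouping of the exponential prefactors does not use Proposition~\ref{prop:bggbuu} (which concerns $C^{\fu_{\fm},\eta_{j}}$) but rather the trace identity \eqref{eq:masi0}, which follows from \cite[Prop.~2.6.1]{B09} applied to the symmetric pairs $(\fg,\fk)$, $(\fm,\fk_{\fm})$, and $(\fu,\fu(\fb))$. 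The payoff of your route is that it makes the reduction to $M$ transparent and avoids repeating the eigenfunction computation; the paper's route has the virtue of treating \eqref{eq:d11} and \eqref{eq:Trg} by a uniform method.
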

\begin{proof}
 The proof of \eqref{eq:d11} and \eqref{eq:Trg} will be given in 
 Subsection \ref{sec:d11}.
 Equation  \eqref{eq:G1=00} is a consequence of \eqref{eq:trrJr}, 
 (\ref{eq:G1=0}),  and \eqref{eq:hatvar}.
\end{proof}

\subsection{The proof of Equations (\ref{eq:d11}) and (\ref{eq:Trg})}\label{sec:d11}
Let us recall some facts on Lie algebra. Let $\Delta(\ft,\fk)\subset\ft^*$ \index{D@$\Delta(\ft,\fk),\Delta^+(\ft,\fk)$} be the real root system \cite[Definition V.1.3]{BrockerDieck}. We fix a set of positive roots $\Delta^+(\ft,\fk)\subset \Delta(\ft,\fk)$. 
Set\index{R@$\rho^{\fk}$}
\begin{align}\label{eq:rhok88}
  \rho^\fk=\frac{1}{2}\sum_{\alpha\in \Delta^+(\ft,\fk)}\alpha.
\end{align}
By Kostant's strange formula \cite{Kostant76} or \cite[Proposition 7.5.1]{B09}, we have
\begin{align}\label{eq:kostant}
  4\pi^2|\rho^\fk|^2=-\frac{1}{24}\Tr^{\fk}\[C^{\fk,\fk}\].
\end{align}

Let $\pi_\fk:\ft\to \bC$ be the polynomial function such that for $Y\in \ft$,\index{P@$\pi_{\fk}(Y)$}
\begin{align}\label{eq:pisigma1}
  \pi_{\fk}(Y)=\prod_{\alpha\in  \Delta^+(\ft,\fk)}2i\pi \<\alpha,Y\>.
\end{align}
Let $\sigma_\fk:\ft\to \bC$ be the denominator in the Weyl character formula. \index{S@$\sigma_{\fk}(Y)$} For $Y\in \ft$, we have
\begin{align}\label{eq:pisigma2}
\sigma_{\fk}(Y)=\prod_{\alpha\in  \Delta^+(\ft,\fk)}\(e^{i\pi\<\alpha,Y\>}-e^{-i\pi\<\alpha,Y\>}\).
\end{align}
The Weyl group $W(T,K)$ acts isometrically on $\ft$. For $w\in W(T,K)$, set $\e_w=\mathrm{det}(w)|_{\ft}$.
The Weyl denominator formula asserts for $Y\in \ft$, we have
\begin{align}\label{eq:Weyldemno}
  \sigma_{\fk}(Y)=\sum_{w\in W(T,K)}\e_w \exp\(2i \pi \<\rho^\fk,wY\>\).
\end{align}

Let $\widehat{K}$ be the set of equivalence classes of complex irreducible representations of $K$.
There is a bijection between $\widehat{K}$ and the set of dominant and analytic integral elements in $\ft^*$ \cite[Section VI (1.7)]{BrockerDieck}. If $\lambda\in \ft^*$ is dominant and analytic integral, the  character $\chi_\lambda$\index{C@$\chi_\lambda$}
of the corresponding  complex irreducible representation
is given by the Weyl character  formula: for $Y\in \ft$,
\begin{align}\label{eq:Weylch}
\sigma_\fk\big(Y\big)  \chi_\lambda\big(\exp(Y)\big)=\sum_{w\in W(T,K)}\e_w\exp\(2i\pi\<\rho^\fk+\lambda,wY\>\).
\end{align}

Let us recall the Weyl integral formula for Lie algebras. Let $dv_{K/T}$ be the
Riemannian volume on  $K/T$ induced by $-B$,  and let  $dY$  be the Lebesgue measure on $\fk$ or $\ft$ induced by $-B$. By \cite[Lemma 11.4]{Knappsemi}, if $f\in C_c(\fk)$, we have
\begin{align}\label{eq:weylintlie}
  \int_{Y \in \fk} f(Y)dY=\frac{1}{|W(T,K)|}\int_{Y\in \ft}|\pi_\fk(Y)|^2\(\int_{k\in K/T} f\big(\Ad(k)Y\big)dv_{K/T}\)dY.
\end{align}
Clearly, the formula \eqref{eq:weylintlie} extends  to $L^1(\fk)$.

%

\begin{proof}[Proof of \eqref{eq:d11}] By \eqref{eq:mpnk}, \eqref{eq:trrJr} and \eqref{eq:weylintlie}, we have
\begin{multline}\label{eq:tr1si}
  \Trs^{[1]}\[\exp\(-t C^{\fg,X,\widehat{\eta}}/2\)\]=\frac{1}{(2\pi t)^{(m+n)/2}}\exp\(\frac{t}{16}\Tr^{\fp}[C^{\fk,\fp}]+\frac{t}{48}\Tr^{\fk}\[C^{\fk,\fk}\]\)\\
\frac{\vol(K/T)}{|W(T,K)|}\int_{Y\in\ft}|\pi_\fk(Y)|^2J_1(Y)\Trs^{E_{\widehat{\eta}}}\[\exp(-i\widehat{\eta}(Y))\]\exp(-|Y|^2/2t)dY.
\end{multline}
As $\delta(M)=0$, $\ft$ is also a Cartan subalgebra of $\fu_\fm$. 
We will use \eqref{eq:weylintlie} again to write the integral on the second line of \eqref{eq:tr1si} as an integral over $\fu_\fm$.


By \eqref{eq:n=pbK}, we have the isomorphism of representations of $K_M$,
\begin{align}\label{eq:pkpk}
\fp^\bot(\fb)\simeq \fk^\bot(\fb).
\end{align}
By \eqref{eq:J1} and \eqref{eq:pkpk}, for $Y\in \ft$, we have
\begin{align}\label{eq:J1t}
  J_1(Y)=\frac{\widehat{A}(i\ad(Y)|_{\fp_\fm})}{\widehat{A}(i\ad(Y)|_{\fk_\fm})}.
\end{align}
By \eqref{eq:hatvar}, for $Y\in \ft$, we have
\begin{align}\label{eq:TREhat99}
  \Trs^{E_{\widehat{\eta}}}\[\exp(-i\widehat{\eta}(Y))\]
=\mathrm{det}\big(1-\exp(i\ad(Y))\big)|_{\fp_\fm}\Tr^{E_\eta}\[\exp(-i\eta(Y))\].
\end{align}
By  \eqref{eq:pisigma1}, \eqref{eq:J1t} and \eqref{eq:TREhat99}, for $Y\in \ft$, we have
\begin{multline}\label{eq:ttoum}
  \frac{|\pi_\fk(Y)|^2}{|\pi_{\fu_\fm}(Y)|^2}J_1(Y)
  \Trs^{E_{\widehat{\eta}}}\[\exp\big(-i\widehat{\eta}(Y)\big)\]\\
  =(-1)^{\frac{\dim \fp_\fm}{2}}\mathrm{det}\big(\ad(Y)\big)|_{{\fk^\bot(\fb)}}\widehat{A}^{-1}\big(i\ad(Y)|_{\fu_\fm}\big)\Tr^{E_\eta}\[\exp\big(-i\eta(Y)\big)\].
\end{multline}
Using \eqref{eq:n=pbK}, for $Y\in \ft$, we have
\begin{align}\label{eq:detn}
  \mathrm{det}\big(\ad(Y)\big)|_{{\fk^\bot(\fb)}}=\mathrm{det}\big(\ad(Y)\big)|_{\fn_\bC}.
\end{align}
By the second condition of  Assumption \ref{as:1}
and by \eqref{eq:detn}, the function on the right-hand side of \eqref{eq:ttoum} extends naturally to an $\Ad(U_M)$-invariant function defined on $\fu_\fm$.
By \eqref{eq:cG}, \eqref{eq:weylintlie}, \eqref{eq:tr1si},
\eqref{eq:ttoum} and \eqref{eq:detn}, we have
\begin{multline}\label{eq:id3}
  \Trs^{[1]}\[\exp\(-t C^{\fg,X,\widehat{\eta}}/2\)\]
=\frac{(-1)^lc_G}{(2\pi t)^{(m+n)/2}}\exp\(\frac{t}{16}\Tr^{\fp}[C^{\fk,\fp}]+\frac{t}{48}\Tr^{\fk}\[C^{\fk,\fk}\]\)\\
 \int_{Y\in \fu_\fm}\mathrm{\det}\big(\ad(Y)\big)|_{\fn_\bC}\widehat{A}^{-1}\big(i\ad(Y)|_{\fu_\fm}\big)\Tr^{E_\eta}\[\exp(-i\eta(Y))\]\exp\(-|Y|^2/2t\)dY.
\end{multline}

It remains to evaluate the integral on the second line of \eqref{eq:id3}. We use the method in \cite[Section 7.5]{B09}.  For $Y\in \fu_\fm$, we have
\begin{align}
  |Y|^2=-B(Y,Y).
\end{align}
By \eqref{eq:Hern}, \eqref{eq:Rub} and \eqref{eq:Rububb}, for $Y\in \fu_\fm$, we have
\begin{align}\label{eq:w}
  B\(Y,\Omega^{\fu_\fm}\)=-\sum_{1\l i,j\l 2l}B\(\ad(Y)f_i,\ol{f}_j\)f^i\wedge \ol{f}^j=\sum_{1\l i,j\l 2l}\<\ad(Y)f_i,f_j\>_{\fn_\bC}f^i\wedge \ol{f}^j.
\end{align}
By \eqref{eq:kahher}, \eqref{eq:bmax} and \eqref{eq:w}, for $Y\in \fu_\fm$, we have
\begin{align}\label{eq:id4}
\frac{\mathrm{\det}(\ad(Y))|_{\fn_\bC}}{(2\pi t)^{2l}}=(-1)^l\[\exp\(\frac{1}{ t}B\(Y,\frac{\Omega^{\fu_\fm}}{2\pi}\)\)\]^{\max}.
\end{align}

As $\dim \fu_\fm=\dim \fm=m+n-2l-1$, from \eqref{eq:id3} and \eqref{eq:id4}, we get
\begin{multline}\label{eq:d113}
  \Trs^{[1]}\[\exp\(-t C^{\fg,X,\widehat{\eta}}/2\)\]=\frac{c_G}{\sqrt{2\pi t}}\exp\(\frac{t}{16}\Tr^{\fp}[C^{\fk,\fp}]+\frac{t}{48}\Tr^{\fk}\[C^{\fk,\fk}\]\)\\
\exp\(\frac{t}{2}\Delta^{\fu_\fm}\) \left\{{\widehat{A}^{-1}\big(i\ad(Y)|_{\fu_\fm}\big)}\Tr^{E_\eta}\big[\exp(-i\eta(Y))\big]\exp\(\frac{1}{ t}B\(Y,\frac{\Omega^{\fu_\fm}}{2\pi}\)\)\right\}^{\max}\bigg|_{Y=0}.
\end{multline}
Using
\begin{align}
  B\(Y,\frac{\Omega^{\fu_\fm}}{2\pi}\)+\frac{1}{2}B(Y,Y)=\frac{1}{2}B\(Y+\frac{\Omega^{\fu_\fm}}{2\pi},Y+\frac{\Omega^{\fu_\fm}}{2\pi}\)- \frac{1}{2} B\(\frac{\Omega^{\fu_\fm}}{2\pi},\frac{\Omega^{\fu_\fm}}{2\pi}\),
\end{align}
by \eqref{eq:BOO=0} and \eqref{eq:d113}, we have
\begin{multline}\label{eq:g730}
  \Trs^{[1]}\[\exp\(-t C^{\fg,X,\widehat{\eta}}/2\)\]=\frac{c_G}{\sqrt{2\pi t}}\exp\(\frac{t}{16}\Tr^{\fp}[C^{\fk,\fp}]+\frac{t}{48}\Tr^{\fk}\[C^{\fk,\fk}\]\)\\
  \left\{\exp\(-\frac{\omega^{Y_\fb,2}}{8\pi^2|a_0|^2t}\)\exp\(\frac{t}{2}\Delta^{\fu_\fm}\)\(\widehat{A}^{-1}\big(i\ad(Y)|_{\fu_\fm}\big)\Tr^{E_\eta}\[\exp(-i\eta(Y))\]\)\right\}^{\max}\bigg|_{Y=-\frac{\Omega^{\fu_\fm}}{2\pi}}.
\end{multline}

We claim that the $\Ad\big(U_M\big)$-invariant function
\begin{align}\label{eq:evar1}
  Y\in \fu_\fm\to  {\widehat{A}^{-1}\big(i\ad(Y)|_{\fu_\fm}\big)}{\Tr^{E_\eta}\[\exp(-i\eta(Y))\]}
\end{align}
 is an eigenfunction of $\Delta^{\fu_\fm}$ with eigenvalue
\begin{align}\label{eq:eig1}
-C^{\fu_\fm,\eta}-\frac{1}{24}\Tr^{\fu_{\fm}}\[C^{\fu_{\fm},\fu_{\fm}}\].
\end{align}
 Indeed,  if $f$ is an $\Ad(U_M)$-invariant function on $\fu_\fm$, 
 when restricted to $\ft$, it is well known, for example \cite[eq. (7.5.22)]{B09}, that
 \begin{align}
 \Delta^{\fu_\fm} f=\frac{1}{\pi_{\fu_\fm}}\Delta^{\ft}\pi_{\fu_\fm}f.
 \end{align}
 Therefore, it is enough to show that the function
\begin{align}\label{eq:funt1}
Y\in \ft\to  \pi_{\fu_\fm}(Y)  \widehat{A}^{-1}\big(i\ad(Y)\big)|_{\fu_\fm}{\Tr^{E_\eta}\[\exp(-i\eta(Y))\]}
\end{align}
is an eigenfunction of $\Delta^{\ft}$ with eigenvalue \eqref{eq:eig1}. For $Y\in \ft$, we have
\begin{align}\label{eq:EignA}
\widehat{A}^{-1}\big(i\ad(Y)\big)|_{\fu_\fm}=\frac{\sigma_{\fu_\fm}(iY)}{\pi_{\fu_\fm}(iY)}
\end{align}
By \eqref{eq:EignA}, for $Y\in \ft$, we have
\begin{align}\label{eq:Ain1}
  \pi_{\fu_\fm}(Y)\widehat{A}^{-1}\(i\ad(Y)\)|_{\fu_\fm}=i^{|\Delta^+(\ft,\fu_\fm)|}\sigma_{\fu_\fm}(-iY).
\end{align}
If $E_\eta\otimes_\bR\bC$ is an irreducible representation of $U_M$ with the highest weight $\lambda\in \ft^*$, by the Weyl character formula \eqref{eq:Weylch}, we have
\begin{align}\label{eq:Ain2}
 \sigma_{\fu_\fm}(-iY) \Trs^{E_\eta}\[\exp(-i\eta(Y))\]=\sum_{w\in W(T,U_M)}\e_w\exp(2\pi\<\rho^{\fu_\fm}+\lambda,wY\>).
\end{align}
By \eqref{eq:Ain1} and \eqref{eq:Ain2}, the function \eqref{eq:funt1} is an eigenfunction of $\Delta^\ft$ with eigenvalue \begin{align}\label{eq:eigen}
  4\pi^2|\rho^{\fu_\fm}+\lambda|^2.
\end{align}
  By  Assumption \ref{as:1}, the Casimir of $\fu_\fm$ acts   as the  scalar $C^{\fu_\fm,\eta}$. Therefore,
\begin{align}\label{eq:wgaga}
-C^{\fu_\fm,\eta}=  4\pi^2\(|\rho^{\fu_\fm}+\lambda|^2-|\rho^{\fu_\fm}|^2\).
\end{align}
By \eqref{eq:kostant} and \eqref{eq:wgaga}, the eigenvalue \eqref{eq:eigen} is equal to \eqref{eq:eig1}.
If $E_\eta\otimes_\bR\bC$ is not  irreducible, it is enough to decompose $E_\eta\otimes_\bR\bC$ as a sum of  irreducible representations of $U_M$.

Since the function \eqref{eq:funt1} and its derivations of any orders 
satisfy estimations similar to \eqref{eq:Jrexp},  by \eqref{eq:RN} 
and \eqref{eq:g730}, we get
\begin{multline}\label{eq:d116}
  \Trs^{[1]}\[\exp\(-t 
  C^{\fg,X,\widehat{\eta}}/2\)\]\\
  =\frac{c_G}{\sqrt{2\pi 
  t}}\exp\(\frac{t}{16}\Tr^{\fp}[C^{\fk,\fp}]+\frac{t}{48}\Tr^{\fk}\[C^{\fk,\fk}\]-\frac{t}{48}\Tr^{\fu_{\fm}}\[C^{\fu_{\fm},\fu_{\fm}}\]-\frac{t}{2}C^{\fu_\fm,\eta}\)\\
  \left\{\exp\(-\frac{\omega^{Y_\fb,2}}{8\pi^2|a_0|^2t}\){\widehat{A}^{-1}\(\frac{R^{N_\fb}}{2i\pi}\)}\Tr^{E_\eta}\[\exp\(-\frac{R^{F_{\fb,\eta}}}{2i\pi }\)\]\right\}^{\max}.
\end{multline}

Since $\widehat{A}$ is an even function, by \eqref{eq:Ahatg}, we have
\begin{align}\label{eq:d117}
  \widehat{A}\(\frac{R^{N_\fb}}{2i\pi}\)=\widehat{A}\(N_\fb,\nabla^{N_\fb}\).
\end{align}

We claim that
\begin{align}\label{eq:masi0}
\Tr^{\fp}\[C^{\fk,\fp}\]+\frac{1}{3}\Tr^{\fk}\[C^{\fk,\fk}\]-\frac{1}{3}\Tr^{\fu_{\fm}}\[C^{\fu_{\fm},\fu_{\fm}}\]=\Tr^{\fu^{\bot}(\fb)}\[C^{\fu(\fb),\fu^{\bot}(\fb)}\].
\end{align}
Indeed, by \cite[Proposition 2.6.1]{B09}, we have
\begin{align}
\begin{aligned}\label{eq:masi1}
\Tr^{\fp}\[C^{\fk,\fp}\]+\frac{1}{3}\Tr^{\fk}\[C^{\fk,\fk}\]=\frac{1}{3}\Tr^{\fu}\[C^{\fu,\fu}\],\\
\Tr^{\fu^{\bot}(\fb)}\[C^{\fu(\fb),\fu^{\bot}(\fb)}\]+\frac{1}{3}\Tr^{\fu(\fb)}\[C^{\fu(\fb),\fu(\fb)}\]=\frac{1}{3}\Tr^{\fu}\[C^{\fu,\fu}\].
\end{aligned}
\end{align}
By \eqref{eq:99}, it is trivial that
\begin{align}\label{eq:masi2}
\Tr^{\fu(\fb)}\[C^{\fu(\fb),\fu(\fb)}\]=\Tr^{\fu_\fm}\[C^{\fu_\fm,\fu_\fm}\].
\end{align}
From  \eqref{eq:masi1} and \eqref{eq:masi2}, we get \eqref{eq:masi0}

By \eqref{eq:chg}, \eqref{eq:AA1}, \eqref{eq:d116}-\eqref{eq:masi0}, we get \eqref{eq:d11}.
\end{proof}

Let $U_M(k)$ \index{U@$U_M(k),U_M^0(k)$}be the  centralizer of $k$ in $U_M$, and let $\fu_{\fm}(k)$ \index{U@$\fu_{\fm}(k)$}be its Lie algebra. Then
\begin{align}
  \fu_{\fm}(k)=\sqrt{-1}\fp_\fm(k)\oplus \fk_\fm(k).
\end{align}
Let $U^0_{M}(k)$ be the connected component of the identity in $U_M(k)$. Clearly, $U^0_{M}(k)$ is the compact form of $M^0(k)$.

%
\begin{proof}[Proof of \eqref{eq:Trg}]
Since $\gamma\in H$, $\ft\subset \fk(\gamma)$ is a Cartan subalgebra of $\fk(\gamma)$.
By \eqref{eq:trrJr}, \eqref{eq:Kr=kMk} and \eqref{eq:weylintlie}, we have
\begin{multline}\label{eq:trrsi}
  \Trs^{[\gamma]}\[\exp\(-t C^{\fg,X,\widehat{\eta}}/2\)\]=\frac{1 }{(2\pi t)^{\dim \fz(\gamma)/2}}\exp\(-\frac{|a|^2}{2t}+\frac{t}{16}\Tr^{\fp}[C^{\fk,\fp}]+\frac{t}{48}\Tr^{\fk}\[C^{\fk,\fk}\]\)\\
\frac{\vol(K^0_M(k)/T)}{|W(T,K^0_M(k))|}
\int_{Y\in \ft}|\pi_{\fk_{\fm}(k)}(Y)|^2J_\gamma(Y)
  \Trs^{E_{\widehat{\eta}}}
\[\widehat{\eta}\(k^{-1}\)\exp(-i\widehat{\eta}(Y))\]\exp\(-|{Y}|^2/2t\)dY.
\end{multline}
Since $\ft$ is also a Cartan subalgebra of $\fu_{\fm}(k)$, as in the proof of  (\ref{eq:d11}),
we will write the integral on the second line of \eqref{eq:trrsi} as an integral over $\fu_{\fm}(k)$.

As $k\in T$ and $T\subset K_M$, by \eqref{eq:hatvar}, for $Y\in \ft$, we have
\begin{multline}\label{eq:TREhat100}
  \Trs^{E_{\widehat{\eta}}}\[\widehat{\eta}\(k^{-1}\)\exp(-i\widehat{\eta}(Y))\]\\
=\mathrm{det}\big(1-\Ad(k)\exp(i\ad(Y))\big)|_{\fp_\fm}\Tr^{E_\eta}\[\eta\(k^{-1}\)\exp(-i\eta(Y))\].
\end{multline}
%
%
By \eqref{eq:J}, \eqref{eq:pisigma1} and \eqref{eq:TREhat100}, 
for $Y\in \ft$, we have
\begin{multline}\label{eq:trrre}
  \frac{|\pi_{\fk_{\fm}(k)}(Y)|^2}{|\pi_{\fu_\fm(k)}(Y)|^2}J_\gamma(Y)
  \Trs^{E_{\widehat{\eta}}}
\[\widehat{\eta}\(k^{-1}\)\exp(-i\widehat{\eta}(Y))\]
=\frac{(-1)^{\frac{\dim \fp_\fm(k)}{2}}}{\left|\det\big(1-\Ad(\gamma)\big)|_{\fz_0^\bot}\right|^{1/2}}\\
\widehat{A}^{-1}\(i\ad(Y)|_{\fu_\fm(k)}\)
\[\frac{\det\big(1-\exp(-i\ad(Y))\Ad(k^{-1})\big)|_{\fz_0^\bot(\gamma)}}{\det\big(1-\Ad(k^{-1})\big)|_{\fz_0^\bot(\gamma)}}\]^{1/2}\!\!\!\!\!
\Tr^{E_\eta}\[\eta\(k^{-1}\)\exp(-i\eta(Y))\]\!.
\end{multline}
Let $\fu^\bot_\fm(k)$ be the orthogonal space to $\fu_\fm(k)$ in $ \fu_\fm$. Then
\begin{align}\label{eq:fubotk}
  \fu^\bot_\fm(k)=\sqrt{-1}\fp^\bot_0(\gamma)\oplus \fk_0^\bot(\gamma).
\end{align}
By \eqref{eq:fubotk}, for $Y\in \ft$, we have
\begin{align}\label{eq:1/2}
\frac{\det\big(1-\exp(-i\ad(Y))\Ad(k^{-1})\big)|_{\fz_0^\bot(\gamma)}}{\det\big(1-\Ad(k^{-1})\big)|_{\fz_0^\bot(\gamma)}}=\frac{\det\big(1-\exp(-i\ad(Y))\Ad(k^{-1})\big)|_{\fu_\fm^\bot(k)}}{\det\big(1-\Ad(k^{-1})\big)|_{\fu_\fm^\bot(k)}}.
\end{align}
By   Assumption \ref{as:1} and \eqref{eq:1/2}, the right-hand side of \eqref{eq:trrre} extends naturally to an $\Ad(U^0_M(k))$-invariant function defined on $\fu_\fm(k)$.
By \eqref{eq:eulWW}, \eqref{eq:weylintlie}, \eqref{eq:trrsi} and \eqref{eq:trrre}, we have
\begin{multline}\label{eq:trrkh}
 \Trs^{[\gamma]}\[\exp\(-t C^{\fg,X,\widehat{\eta}}/2\)\]\\
 =
\frac{1}{\sqrt{2 \pi t}}
\frac{\[e\(TX_{M}(k),\nabla^{TX_M(k)}\)\]^{\max}}{\left|\det\big(1-\Ad(\gamma)\big)|_{\fz_0^\bot}\right|^{1/2}}
\exp\(-\frac{|a|^2}{2t}+\frac{t}{16}\Tr^{\fp}[C^{\fk,\fp}]+\frac{t}{48}\Tr^{\fk}\[C^{\fk,\fk}\]\)\\
\exp\(\frac{t}{2}\Delta^{\fu_{\fm}(k)}\)\Bigg\{\widehat{A}^{-1}\big(i\ad(Y)|_{\fu_{\fm}(k)}\big)
  \[\frac{\det\big(1-\exp(-i\ad(Y))\Ad(k^{-1})\big)|_{\fu_\fm^\bot(k)}}{\det\big(1-\Ad(k^{-1})\big)|_{\fu_\fm^\bot(k)}}\]^{1/2}\\ \Tr^{E_\eta}\[\eta\(k^{-1}\)\exp(-i\eta(Y))\]\Bigg\}\Bigg|_{Y=0}.
\end{multline}

As before, we claim that the function
\begin{multline}\label{eq:kuohao}
Y\in \fu_\fm(k)\to   \widehat{A}^{-1}\big(i\ad(Y)|_{\fu_{\fm(k)}}\big)
  \[\frac{\det\big(1-\exp(-i\ad(Y))\Ad(k^{-1})\big)|_{\fu_\fm^\bot(k)}}{\det\big(1-\Ad(k^{-1})\big)|_{\fu_\fm^\bot(k)}}\]^{1/2} \\ \Tr^{E_\eta}\[\eta\(k^{-1}\)\exp(-i\eta(Y))\]
\end{multline}
 is an eigenfunction of $\Delta^{\fu_{\fm}(k)}$ with  eigenvalue \eqref{eq:eig1}. Indeed, it is enough to remark that,
as in \eqref{eq:Ain2},
up to a sign, if $k=\exp(\theta_1)$ for some $\theta_1\in \ft$, we have
\begin{multline}
  \pi_{\fu_{\fm(k)}}(Y) \widehat{A}^{-1}\big(i\ad(Y)|_{\fu_{\fm(k)}}\big)
  \[\det\big(1-\exp(-i\ad(Y))\Ad(k^{-1})\big)|_{\fu_\fm^\bot(k)}\]^{1/2}\\
=\pm i^{|\Delta^+(\ft,\fu_\fm)|} \sigma_\fu(-iY-\theta_1).
\end{multline}
Also, if $E_\eta\otimes_\bR\bC$ is an irreducible representation of $U_M$ with the highest weight $\lambda\in \ft^*$,
\begin{align}\label{eq:Ain3}
 \sigma_{\fu_\fm}(-iY-\theta_1) \Trs^{E_\eta}\[\eta\(k^{-1}\)\exp(-i\eta(Y))\]=\sum_{w\in W(T,U_M)}\e_w\exp(2\pi\<\rho_{\fu_\fm}+\lambda,w(Y-i\theta_1)\>).
\end{align}
Proceeding as in the proof of  \eqref{eq:d11}, we get \eqref{eq:Trg}. \end{proof}

\subsection{Selberg   zeta functions}\label{sec:selzeta}
Recall that $\rho:\Gamma\to \mathrm{U}(r)$ is a unitary 
representation of $\Gamma$ and that $(F,\nabla^F,g^F)$ is the 
unitarily  flat vector bundle on $Z$ associated with $\rho$.

\begin{defin}
For $\sigma\in \bC$,	we define a formal sum 
\begin{align}\label{eq:Xisel}
\Xi_{\eta,\rho}(\sigma)=-\sum_{[\gamma]\in [\Gamma]- 
\{1\}}\Tr[\rho(\gamma)]\frac{\chi_{\rm 
orb}\big(\mathbb{S}^{1}\backslash B_{[\gamma]}\big)}{m_{[\gamma]}}\frac{\Tr^{E_\eta}\[\eta(k^{-1})\]}{\left|\det\big(1-\Ad(\gamma)\big)|_{\fz_0^\bot}\right|^{1/2}}e^{-\sigma|a|}
\end{align}	
and a formal Selberg zeta function 
\begin{align}
  Z_{\eta,\rho}(\sigma)=\exp\big(\Xi_{\eta,\rho}(\sigma)\big).
\end{align}
The formal Selberg zeta function is said to be well defined if 
the same conditions as in Definition \ref{def:Rr} hold. 
\end{defin}

\begin{re}
  When $G=\SO^0(p,1)$ with $p\g3$ odd, up to a shift on $\sigma$,
  $Z_{\eta,\rho}$ coincides with Selberg zeta function  in \cite[Section 3]{FriedRealtorsion}.
\end{re}

Recall that the Casimir operator $ C^{\fg,Z,\widehat{\eta},\rho}$ acting on
 $C^\infty(Z,\cF_{\widehat{\eta}}\otimes_\bC F)$ is a  formally  self-adjoint second order elliptic operator, which is  bounded from below. For $\lambda\in \bC$, set \index{M@$m_{\eta,\rho}(\lambda)$}
\begin{align}
  m_{\eta,\rho}(\lambda)=\dim_\bC \ker \(C^{\fg,Z,\widehat{\eta}^+\!\!\!,\rho}-\lambda\)-\dim_\bC \ker \(C^{\fg,Z,\widehat{\eta}^-\!\!\!,\rho}-\lambda\).
\end{align}
Write\index{R@$r_{\eta,\rho}$}
\begin{align}\label{eq:rrs}
  r_{\eta,\rho}=m_{\eta,\rho}(0).
\end{align}
As in Subsection \ref{sec:regdet}, for $\sigma\in \bR$ and $\sigma\gg1$, set\index{D@$ \det_{\rm gr}$}
\begin{align}\label{eq:detgr}
\mathrm{det}_{\rm gr}\(C^{\fg,Z,\widehat{\eta},\rho}+\sigma\)=\frac{\mathrm{det}(C^{\fg,Z,\widehat{\eta}^+\!\!\!,\rho}+\sigma)}{\mathrm{det}(C^{\fg,Z,\widehat{\eta}^-\!\!\!,\rho}+\sigma)}.
\end{align}
Then, $\mathrm{det}_{\rm gr}\(C^{\fg,Z,\widehat{\eta},\rho}+\sigma\)$ extends  meromorphically to $\sigma\in \bC$. Its zeros and poles  belong to the set $\{-\lambda : \lambda\in \Sp(C^{\fg,Z,\widehat{\eta},\rho})\}$. If   $\lambda\in \Sp(C^{\fg,Z,\widehat{\eta},\rho})$, the order of the zero at $\sigma=-\lambda$ is $m_{\eta,\rho}(\lambda)$.

Set\index{S@$\sigma_\eta$}
\begin{align}\label{eq:AshiftC}
  \sigma_{\eta}=\frac{1}{8}\Tr^{\fu^{\bot}(\fb)}\[C^{\fu(\fb),\fu^\bot(\fb)}\]-C^{\fu_\fm,\eta}.
\end{align}
Set\index{P@$P_{\eta}(\sigma)$}
\begin{align}\label{eq:polyP}
  P_{\eta}(\sigma)= c_G
\sum^l_{j=0}(-1)^j \frac{\Gamma(-j-\frac{1}{2})}{j!(4\pi)^{2j+\frac{1}{2}}|a_0|^{2j}} \[\omega^{{Y_\fb},2j}\widehat{A}\(TY_{\fb},\nabla^{TY_{\fb}}\)\mathrm{ch}\(\cF_{\fb,\eta},\nabla^{\cF_{\fb,\eta}}\)\]^{\max}\sigma^{2j+1}.
\end{align}
Then $P_{\eta}(\sigma)$ is an odd polynomial function of $\sigma$. As the notation indicates, $\sigma_\eta$ and  $P_\eta(\sigma)$ do not depend on $\Gamma$ or $\rho$.

\begin{thm}\label{thm:detfor}
There is $\sigma_0>0$ \index{S@$\sigma_0$}such that
\begin{align}\label{eq:see}
   \sum_{[\gamma]\in  [\Gamma]- \{1\}}\frac{\left| \chi_{\rm 
   orb}\big(\bbS^1\backslash B_{[\gamma]}\big)\right|}{m_{[\gamma]}}\frac{1}{\left|\det\big(1-\Ad(\gamma)\big)|_{\fz_0^\bot}\right|^{1/2}}e^{-\sigma_0|a|}<\infty.
\end{align}
	The Selberg zeta function $Z_{\eta,\rho}(\sigma)$ has a meromorphic extension to $\sigma\in \bC$ such that  the following identity of meromorphic functions on $\bC$ holds:
\begin{align}\label{eq:detfor}
  Z_{\eta,\rho}(\sigma)=\mathrm{det}_{\rm gr}\(C^{\fg,Z,\widehat{\eta},\rho}+\sigma_{\eta}+\sigma^2\)\exp\big(r\vol(Z) P_\eta(\sigma)\big).
\end{align}
The zeros and poles of $Z_{\eta,\rho}(\sigma)$
belong to the set $\{\pm i\sqrt{\lambda+\sigma_{\eta}}:\lambda\in \Sp(C^{\fg,Z,\widehat{\eta},\rho})\}$. If $\lambda\in \Sp(C^{\fg,Z,\widehat{\eta},\rho})$ and $\lambda\neq -\sigma_{\eta}$, the order of zero  at $\sigma=\pm i\sqrt{\lambda+\sigma_{\eta}}$ is $m_{\eta,\rho}(\lambda)$. The order of zero at $\sigma=0$ is $2m_{\eta,\rho}(-\sigma_{\eta})$.
Also,
\begin{align}\label{eq:funceq}
  Z_{\eta,\rho}(\sigma)=Z_{\eta,\rho}(-\sigma)\exp\big(2r\vol(Z) P_\eta(\sigma)\big).
\end{align}
\end{thm}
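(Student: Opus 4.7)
The plan is to mirror closely the strategy used to prove Theorem \ref{thm:dg=1Rd1} in Subsection \ref{Sec:dg=1}, but now replacing the trivial representation with $\widehat{\eta}$ and using the refined orbital integral formulas from Theorem \ref{thm:d11}. Concretely, for $(s,\sigma)\in\bC\times\bR$ with $\Re(s)$ and $\sigma$ large, I would set
\begin{equation*}
\theta_{\eta,\rho}(s,\sigma)=-\frac{1}{\Gamma(s)}\int_0^\infty\Trs\[\exp\(-t\big(C^{\fg,Z,\widehat{\eta},\rho}+\sigma\big)\)\]t^{s-1}dt,
\end{equation*}
so that $\frac{\partial}{\partial s}\theta_{\eta,\rho}(0,\sigma)=\log\det_{\rm gr}(C^{\fg,Z,\widehat{\eta},\rho}+\sigma)$. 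Applying the Selberg trace formula \eqref{eq:sel} together with the vanishing \eqref{eq:G1=00}, the sum over $[\Gamma]$ collapses to the identity class plus those $[\gamma]$ with $\dim\fb(\gamma)=1$, which by Proposition \ref{prop:dbdbr} are exactly the classes conjugated into $H$ with $a\neq0$.

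For the identity contribution, I would insert \eqref{eq:d11}, expand the Gaussian factor
\begin{equation*}
\exp\(-\frac{\omega^{Y_{\fb},2}}{8\pi^2|a_0|^2 t}\)=\sum_{j=0}^{l}\frac{(-1)^j}{j!(8\pi^2|a_0|^2)^j}\,\frac{\omega^{Y_{\fb},2j}}{t^j}
\end{equation*}
(finite since $\omega^{Y_{\fb},2l+2}=0$ on $Y_{\fb}$), and compute Mellin transforms of $t^{s-3/2-j}e^{-(\sigma+\sigma_\eta/2)t}$ termwise. After the substitution $\sigma\to\sigma^2$ and differentiation at $s=0$, the sum collapses—via $\Gamma(-j-1/2)=(-1)^{j+1}\sqrt{\pi}2^{2j+1}j!/(2j+1)!$—to $r\vol(Z)P_\eta(\sigma)$, with the shift $\sigma_\eta$ in \eqref{eq:AshiftC} appearing exactly as the coefficient of $t$ coming from $\frac{1}{16}\Tr^{\fu^\bot(\fb)}[C^{\fu(\fb),\fu^\bot(\fb)}]-\frac{1}{2}C^{\fu_\fm,\eta}$ once the factor of $t/2$ in the heat operator is accounted for.

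For the non-identity contribution, I would substitute \eqref{eq:Trg} and use \eqref{eq:V=e} from Proposition \ref{prop:v=e} to rewrite $\vol(\Gamma(\gamma)\backslash X(\gamma))\,[e(TX_M(k),\nabla^{TX_M(k)})]^{\max}$ as $|a|\,\chi_{\rm orb}(\bbS^1\backslash B_{[\gamma]})/m_{[\gamma]}$. The heat kernel bound \eqref{eq:h11exp} yields both the convergence \eqref{eq:see} (with $\sigma_0$ chosen larger than the square root of the constant $C$ appearing there) and allows application of Fubini's theorem. The resulting Mellin transform has the shape $\int_0^\infty|a|\exp(-|a|^2/(4t)-\sigma^2 t)t^{s-3/2}dt$, evaluated at $s=0$ by the key formula \eqref{eq:elect} with $B_1=|a|^2/4$ and $B_2=\sigma^2$, producing exactly $2\sqrt{\pi}e^{-\sigma|a|}$. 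Assembling the two contributions gives $\frac{\partial}{\partial s}\theta_{\eta,\rho}(0,\sigma^2+\sigma_\eta)=\Xi_{\eta,\rho}(\sigma)-r\vol(Z)P_\eta(\sigma)$, which on exponentiation is \eqref{eq:detfor} for $\sigma\gg1$. Meromorphic extension is then inherited from the right-hand side, and the description of zeros and poles follows since $\lambda\mapsto\pm i\sqrt{\lambda+\sigma_\eta}$ is two-to-one away from $\lambda=-\sigma_\eta$, where the map has a ramification point at $\sigma=0$ doubling the multiplicity. The functional equation \eqref{eq:funceq} is immediate since $\sigma\mapsto\sigma^2+\sigma_\eta$ is even while $P_\eta$ is odd.

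The main obstacle is bookkeeping: tracking the $1/2$ factors in the heat operator, the Jacobian shifts from the substitution $\sigma\to\sigma^2$, and the correct identification of the polynomial coefficients in $P_\eta$ to match \eqref{eq:polyP}. A secondary delicate point is verifying that the identity-class Mellin integral extends holomorphically to $s=0$—this relies on the crucial vanishing $B(\Omega^{\fu(\fb)},\Omega^{\fu(\fb)})=0$ of Proposition \ref{prop:BOO=0}, which is what collapsed the full $\widehat{A}$-genus into just $\widehat{A}(TY_\fb,\nabla^{TY_\fb})\,{\rm ch}(F_{\fb,\eta},\nabla^{F_{\fb,\eta}})$ in \eqref{eq:d11} and guarantees the polynomial character (in $1/t$) of the short-time asymptotic.
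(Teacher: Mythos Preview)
Your proposal is correct and follows essentially the same approach as the paper. The paper's own proof is deliberately terse---it simply says ``proceeding as in the proof of Theorem~\ref{thm:dg=1Rd1}, by Proposition~\ref{prop:v=e}, Corollary~\ref{cor:V=0}, and Theorem~\ref{thm:d11}''---and you have accurately unpacked what that entails: the Mellin transform splitting into identity and non-identity contributions, the polynomial expansion of the Gaussian factor on $Y_{\fb}$ yielding $P_\eta$, and the use of \eqref{eq:elect} on the geodesic side; your treatment of the zeros/poles and the functional equation also matches the paper's reasoning.
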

\begin{proof}
Proceeding as in the proof of Theorem \ref{thm:dg=1Rd1}, by 
Proposition \ref{prop:v=e},  Corollary \ref{cor:V=0}, and Theorem \ref{thm:d11}, we get 
the first two statements of our theorem. 
%
By \eqref{eq:detfor}, the zeros and poles of
$Z_{\eta,\rho}(\sigma)$
 coincide with that of $\mathrm{det}_{\rm 
 gr}\(C^{\fg,Z,\widehat{\eta},\rho}+\sigma_{\eta}+\sigma^2\)$, from 
 which we deduce the third statement of our theorem. 
Equation \eqref{eq:funceq} is a consequence of \eqref{eq:detfor} and 
of the fact that $P_\eta(\sigma)$ is an odd polynomial. The proof of  
our theorem  is completed.
\end{proof}

\subsection{The Ruelle dynamical zeta function}\label{sec:6}
We turn our attention to the Ruelle dynamical zeta function $R_\rho(\sigma)$.

\begin{thm}\label{cor:R}
  The dynamical zeta function $R_\rho(\sigma)$ is  holomorphic for $\Re(\sigma)\gg1$, and extends meromorphically to $\sigma\in \bC$ such that
\begin{align}\label{eq:RbyZ}
  R_\rho(\sigma)=\prod_{j=0}^{2l}Z_{\eta_j,\rho}\big(\sigma+(j-l)|\alpha|\big)^{(-1)^{j-1}}.
\end{align}
\end{thm}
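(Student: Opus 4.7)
The plan is to derive \eqref{eq:RbyZ} directly from the algebraic identity of Proposition \ref{prop:det1/2fi} by a formal manipulation of the defining series, and then to justify the rearrangement by absolute convergence. Taking logarithms, it suffices to establish
\begin{align*}
\Xi_{\rho}(\sigma) = \sum_{j=0}^{2l}(-1)^{j-1}\Xi_{\eta_{j},\rho}\big(\sigma+(j-l)|\alpha|\big)
\end{align*}
for $\Re(\sigma) \gg 1$, from which \eqref{eq:RbyZ} follows by exponentiation.

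The core computation is as follows. Dividing both sides of \eqref{eq:det1/2fi} by $\big|\det(1-\Ad(\gamma))|_{\fz_{0}^{\bot}}\big|^{1/2}$ yields the pointwise identity
\begin{align*}
1 = \sum_{j=0}^{2l}(-1)^{j}\frac{\Tr^{\Lambda^{j}(\fn^{*})}[\Ad(k^{-1})]}{\big|\det(1-\Ad(\gamma))|_{\fz_{0}^{\bot}}\big|^{1/2}}\,e^{(l-j)|\alpha||a|}
\end{align*}
valid for every $\gamma = e^{a}k^{-1}\in H$ with $a\neq 0$. Multiplying by $\Tr[\rho(\gamma)]\frac{\chi_{\rm orb}(\bbS^{1}\backslash B_{[\gamma]})}{m_{[\gamma]}}e^{-\sigma|a|}$ and summing over $[\gamma]\in [\Gamma]-\{1\}$, Corollary \ref{cor:V=0} ensures that only classes conjugate into $H$ contribute, so the identity applies term by term; a comparison with \eqref{eq:Rrho} and \eqref{eq:Xisel} then gives exactly the displayed formula.

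First, I would check absolute convergence of $\Xi_{\rho}(\sigma)$ for $\Re(\sigma)$ large. Since by Proposition \ref{prop:nnam1} the eigenvalues of $\Ad(e^{a}k^{-1})$ on $\fn\oplus\ol{\fn}$ have modulus $e^{\pm|\alpha||a|}$, one gets $\big|\det(1-\Ad(\gamma))|_{\fz_{0}^{\bot}}\big|^{1/2}\leq C\, e^{l|\alpha||a|}$, so the estimate \eqref{eq:see} furnished by Theorem \ref{thm:detfor} yields absolute convergence of $\Xi_{\rho}(\sigma)$ as soon as $\Re(\sigma) > \sigma_{0} + l|\alpha|$. Next, to interchange the $j$-sum with the $[\gamma]$-sum in the computation above, one needs joint absolute convergence of the double series; because $\big|\Tr^{\Lambda^{j}(\fn^{*})}[\Ad(k^{-1})]\big|$ is uniformly bounded in $k\in K_{M}$, each of the $2l+1$ inner series converges absolutely (again by \eqref{eq:see}) provided $\Re(\sigma) + (j-l)|\alpha| > \sigma_{0}$, which holds simultaneously for all $j$ once $\Re(\sigma)$ is large enough.

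Once the identity is established on a right half-plane, \eqref{eq:RbyZ} holds there as an equality of holomorphic functions. Theorem \ref{thm:detfor} then supplies the meromorphic extension of each $Z_{\eta_{j},\rho}$ to $\bC$, so the right-hand side of \eqref{eq:RbyZ} is meromorphic on all of $\bC$ and provides the sought meromorphic extension of $R_{\rho}$. The only nontrivial algebraic input is Proposition \ref{prop:det1/2fi}; the rest is convergence bookkeeping, and there is essentially no obstacle now that the Selberg zeta functions have been constructed in Theorem \ref{thm:detfor}.
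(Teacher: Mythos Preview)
Your proposal is correct and follows essentially the same route as the paper: both use the determinant identity of Proposition \ref{prop:det1/2fi} to relate $\Xi_\rho$ to the shifted $\Xi_{\eta_j,\rho}$, invoke Corollary \ref{cor:V=0} to restrict to classes conjugate into $H$, bound $\big|\det(1-\Ad(\gamma))|_{\fz_0^\bot}\big|^{1/2}$ by $Ce^{C|a|}$ to deduce absolute convergence from \eqref{eq:see}, and then exponentiate and appeal to Theorem \ref{thm:detfor} for the meromorphic continuation. Your version is slightly more explicit about the constant (you identify $C=l|\alpha|$) and about the Fubini-type justification for interchanging sums, but the argument is otherwise identical.
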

\begin{proof}
Clearly, there is $C>0$ such that for all $\gamma\in \Gamma$,
\begin{align}\label{eq:det1/2E}
   \left|\mathrm{det}\big(1-\Ad(\gamma)\big)|_{\fz_0^\bot}\right|^{1/2}\l C\exp(C|a|).
\end{align}
By \eqref{eq:see} and \eqref{eq:det1/2E}, for $\sigma\in \bC$ and $\Re(\sigma)>\sigma_0+C$, the sum
in \eqref{eq:Rrho} converges absolutely to a holomorphic function.  By \eqref{eq:V=e}, \eqref{eq:V=0}, \eqref{eq:Rrho}, \eqref{eq:det1/2fi} and \eqref{eq:Xisel}, for $\sigma\in \bC$ and $\Re(\sigma)>\sigma_0+C$, we have
\begin{align}
  \Xi_{\rho}(\sigma)=\sum_{j=0}^{2l}(-1)^{j-1}\Xi_{\eta_j,\rho}(\sigma+(j-l)|\alpha|).
\end{align}
By taking exponentials, we get \eqref{eq:RbyZ} for 
$\Re(\sigma)>\sigma_0+C$. Since the right-hand side of 
\eqref{eq:RbyZ} is meromorphic, $R_\rho(\sigma)$ has a meromorphic 
extension to $\bC$, such that \eqref{eq:RbyZ} holds. The proof of our 
theorem  is completed.
\end{proof}

Remark that for $0\l j\l 2l$, we have the isomorphism of $K_M$-representations of $ \eta_j\simeq \eta_{2l-j}$. By \eqref{eq:hatvar}, we have the isomorphism of $K$-representations,
\begin{align}\label{eq:njl}
  \widehat{\eta}_j\simeq \widehat{\eta}_{2l-j}
\end{align}
Note that by \eqref{eq:BggBuu} and \eqref{eq:AshiftC}, we have
\begin{align}\label{eq:teshu}
\sigma_{\eta_j}=-(j-l)^2|\alpha|^2.
\end{align}

By \eqref{eq:detfor}, \eqref{eq:njl} and \eqref{eq:teshu}, we have
\begin{multline}\label{eq:Z1}
   Z_{\eta_j,\rho}\(-\sqrt{\sigma^2+(l-j)^2|\alpha|^2}\)Z_{\eta_{2l-j},\rho}\(\sqrt{\sigma^2+(l-j)^2|\alpha|^2}\)\\
   =Z_{\eta_j,\rho}\(-\sqrt{\sigma^2+(l-j)^2|\alpha|^2}\)Z_{\eta_{j},\rho}\(\sqrt{\sigma^2+(l-j)^2|\alpha|^2}\)\\
   =\mathrm{det}_{\rm gr}\(C^{\fg,Z,\widehat{\eta}_j,\rho}+\sigma^2\)^2=\mathrm{det}_{\rm gr}\(C^{\fg,Z,\widehat{\eta}_j,\rho}+\sigma^2\)\mathrm{det}_{\rm gr}\(C^{\fg,Z,\widehat{\eta}_{2l-j},\rho}+\sigma^2\).
\end{multline}
 Recall that $T(\sigma)$ is defined in \eqref{eq:Tsigma}. 
\begin{thm}
  \label{prop:RT}
  The following identity of meromorphic functions on $\bC$ holds:
  \begin{multline}\label{eq:RT}
    R_\rho(\sigma)=T(\sigma^2) \exp\((-1)^{l-1}r\vol(Z)P_{\eta_l}(\sigma)\)\\
\prod_{j=0}^{l-1}  \Bigg(\frac{Z_{\eta_j,\rho}\big(\sigma+(j-l)|\alpha|\big)Z_{\eta_{2l-j},\rho}\big(\sigma+(l-j)|\alpha|\big)}{Z_{\eta_j,\rho}\(-\sqrt{\sigma^2+(l-j)^2|\alpha|^2}\)Z_{\eta_{2l-j},\rho}\(\sqrt{\sigma^2+(l-j)^2|\alpha|^2}\)}\Bigg)^{(-1)^{j-1}}.
  \end{multline}
\end{thm}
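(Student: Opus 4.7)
The plan is to start from the factorization of $R_\rho(\sigma)$ given in Theorem \ref{cor:R}, reorganize the $2l+1$ factors by pairing the index $j$ with $2l-j$, and then use \eqref{eq:Z1} together with the identity \eqref{eq:haba} to reconstitute the torsion $T(\sigma^2)$.

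First, I would split the product in \eqref{eq:RbyZ} into three parts according to $j<l$, $j=l$, and $j>l$. For the last block, the substitution $j\mapsto 2l-j$ together with $(-1)^{2l-j-1}=(-1)^{j-1}$ yields
\begin{align*}
R_\rho(\sigma)=Z_{\eta_l,\rho}(\sigma)^{(-1)^{l-1}}\prod_{j=0}^{l-1}\Bigl[Z_{\eta_j,\rho}\bigl(\sigma+(j-l)|\alpha|\bigr)Z_{\eta_{2l-j},\rho}\bigl(\sigma+(l-j)|\alpha|\bigr)\Bigr]^{(-1)^{j-1}}.
\end{align*}
For the central factor $Z_{\eta_l,\rho}(\sigma)^{(-1)^{l-1}}$, I apply Theorem \ref{thm:detfor} directly: since $\sigma_{\eta_l}=0$ by \eqref{eq:teshu}, one has $Z_{\eta_l,\rho}(\sigma)=\det_{\rm gr}\!\bigl(C^{\fg,Z,\widehat{\eta}_l,\rho}+\sigma^2\bigr)\exp\!\bigl(r\vol(Z)P_{\eta_l}(\sigma)\bigr)$, which accounts for the exponential prefactor in \eqref{eq:RT}.

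Next, for each $0\le j\le l-1$, I would multiply and divide the $j$-th factor by $Z_{\eta_j,\rho}\bigl(-\sqrt{\sigma^2+(l-j)^2|\alpha|^2}\bigr)Z_{\eta_{2l-j},\rho}\bigl(\sqrt{\sigma^2+(l-j)^2|\alpha|^2}\bigr)$. Using \eqref{eq:Z1}, the denominators become $\det_{\rm gr}\!\bigl(C^{\fg,Z,\widehat{\eta}_j,\rho}+\sigma^2\bigr)\det_{\rm gr}\!\bigl(C^{\fg,Z,\widehat{\eta}_{2l-j},\rho}+\sigma^2\bigr)$. Combining all these determinantal factors with the middle contribution, and recollecting the $\eta_l$ term (which appears once), I obtain
\begin{align*}
R_\rho(\sigma)=\exp\!\bigl((-1)^{l-1}r\vol(Z)P_{\eta_l}(\sigma)\bigr)\,\prod_{j=0}^{2l}\det_{\rm gr}\!\bigl(C^{\fg,Z,\widehat{\eta}_j,\rho}+\sigma^2\bigr)^{(-1)^{j-1}}\cdot\mathcal{Q}(\sigma),
\end{align*}
where $\mathcal{Q}(\sigma)$ is precisely the ratio of Selberg zeta functions appearing in \eqref{eq:RT}.

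It then remains to identify the product of graded determinants with $T(\sigma^2)$. Here I would use the key $K$-theoretic identity \eqref{eq:haba}:
\begin{align*}
\sum_{i=1}^{\dim\fp}(-1)^{i-1}i\,\Lambda^i(\fp^*)=\sum_{j=0}^{2l}(-1)^jE_{\widehat{\eta}_j}\quad\text{in }RO(K).
\end{align*}
Passing from $K$-representations to the associated homogeneous bundles and descending to $Z$, and recalling from \eqref{eq:D=C} that $\Box^Z=C^{\fg,Z,\rho}$ acts on each $\Omega^i(Z,F)$ as the corresponding Casimir on $C^\infty(Z,\cF_{\Lambda^i(\fp^*)}\otimes F)$, I would take zeta-regularized determinants of $\sigma^2+(\cdot)$ on both sides. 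The weighted alternating product on the left reproduces $T(\sigma^2)$ by definition \eqref{eq:Tsigma}, while the right-hand side gives $\prod_{j=0}^{2l}\det_{\rm gr}\!\bigl(\sigma^2+C^{\fg,Z,\widehat{\eta}_j,\rho}\bigr)^{(-1)^{j-1}}$, yielding \eqref{eq:RT}.

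The step I expect to require most care is the passage from the $K$-theoretic identity \eqref{eq:haba} at the level of virtual representations to the corresponding identity for zeta-regularized graded determinants of the Casimirs; this is essentially a multiplicativity statement for regularized determinants combined with the additivity of the heat trace under direct sums, but one must verify that the construction of $\det_{\rm gr}$ is compatible with the $\bZ_2$-grading induced by the signed decomposition $E_{\widehat{\eta}_j}=E_{\widehat{\eta}_j}^+-E_{\widehat{\eta}_j}^-$. Otherwise the argument is a direct bookkeeping of the factorization \eqref{eq:RbyZ}, the pairing relation \eqref{eq:Z1}, and the vanishing $\sigma_{\eta_l}=0$.
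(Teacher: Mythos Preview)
Your proposal is correct and follows essentially the same route as the paper. The paper first records the determinant identity $T(\sigma)=\prod_{j=0}^{2l}\det_{\rm gr}\bigl(C^{\fg,Z,\widehat{\eta}_j,\rho}+\sigma\bigr)^{(-1)^{j-1}}$ as \eqref{eq:detfortor} (citing \eqref{eq:Tsigma}, \eqref{eq:D=C}, \eqref{eq:detgr}, with \eqref{eq:haba} implicit), then combines it with \eqref{eq:detfor} and \eqref{eq:Z1} to obtain \eqref{eq:thtoZ}, and finally plugs \eqref{eq:thtoZ} into \eqref{eq:RbyZ}; you do the same steps in the reverse order, starting from \eqref{eq:RbyZ}, pairing $j\leftrightarrow 2l-j$, inserting \eqref{eq:Z1}, and identifying the resulting product of graded determinants with $T(\sigma^2)$ via \eqref{eq:haba}. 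Your flagged concern about passing from the virtual identity \eqref{eq:haba} to equality of regularized determinants is exactly what the paper packages as \eqref{eq:detfortor}, and it is indeed routine: the Casimir on a homogeneous bundle depends only on the underlying $K$-representation, so equality in $RO(K)$ forces equality of the alternating heat traces and hence of the zeta-regularized determinants.
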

\begin{proof}
By  \eqref{eq:Tsigma}, \eqref{eq:D=C}, and \eqref{eq:detgr}, we have the identity of meromorphic functions,
\begin{align}\label{eq:detfortor}
  T(\sigma)=\prod_{j=0}^{2l}{\rm det}_{\rm gr}\(C^{\fg,Z,\widehat{\eta}_j,\rho}+\sigma\)^{(-1)^{j-1}}.
\end{align}

By \eqref{eq:detfor}, \eqref{eq:Z1}, and \eqref{eq:detfortor}, we have
\begin{multline}\label{eq:thtoZ}
  T(\sigma^2)= Z_{\eta_l,\rho}(\sigma)^{(-1)^{l-1}}\exp\((-1)^{l}r\vol(Z)P_{\eta_l}(\sigma)\)\\
\prod_{j=0}^{l-1}  \Bigg(Z_{\eta_j,\rho}\(-\sqrt{\sigma^2+(l-j)^2|\alpha|^2}\)Z_{\eta_{2l-j},\rho}\(\sqrt{\sigma^2+(l-j)^2|\alpha|^2}\)\Bigg)^{(-1)^{j-1}}.
\end{multline}
By  \eqref{eq:RbyZ} and \eqref{eq:thtoZ}, we get \eqref{eq:RT}. The 
proof of our theorem is completed.
\end{proof}

 For $0\l j\l 2l$, as in \eqref{eq:rrs}, we write $r_j=r_{\eta_j,\rho}$. \index{R@$r_j$} By  \eqref{eq:njl} and \eqref{eq:detfortor}, we have
\begin{align}\label{eq:chid}
  \chi'(Z,F)=2\sum_{j=0}^{l-1}(-1)^{j-1}r_j+(-1)^{l-1}r_l.
\end{align}
Set\index{C@$C_\rho$}\index{R@$r_\rho$}
\begin{align}\label{eq:Cr}
 & C_\rho=\prod_{j=0}^{l-1}\big(-4(l-j)^2|\alpha|^2\big)^{(-1)^{j-1}r_j},&r_\rho=2\sum_{j=0}^{l}(-1)^{j-1}r_j.
\end{align}



\begin{proof}[Proof of \eqref{eq:t01z}]
By Proposition \ref{prop:bggbuu} and Theorem \ref{thm:detfor}, for $0\l j\l l-1$,
the orders of the zero  at $\sigma=0$ of the functions $Z_{\eta_j,\rho}\big(\sigma+(j-l)|\alpha|\big)$ and $ Z_{\eta_{2l-j},\rho}\big(\sigma+(l-j)|\alpha|\big)$
are equal to $r_j$. Therefore,   for $0\l j\l l-1$, there are $A_j\neq0, B_j\neq0$ such that  as $\sigma\to 0$, 
\begin{align}\label{eq:P7}
\begin{aligned}
Z_{\eta_j,\rho}\big(\sigma+(j-l)|\alpha|\big)&=A_j\sigma^{r_j}+\cO\(\sigma^{r_j+1}\),\\
Z_{\eta_{2l-j},\rho}\big(\sigma+(l-j)|\alpha|\big)&=B_j\sigma^{r_j}+\cO\(\sigma^{r_j+1}\),
\end{aligned}
\end{align}
and
\begin{align}\label{eq:P6}
\begin{aligned}
Z_{\eta_j,\rho}\(-\sqrt{\sigma^2+(l-j)^2|\alpha|^2}\)&=A_j\(\frac{-\sigma^2}{2(l-j)|\alpha|}\)^{r_j}+\cO\(\sigma^{2r_j+2}\)\\
Z_{\eta_{2l-j},\rho}\(\sqrt{\sigma^2+(l-j)^2|\alpha|^2}\)&=B_j\(\frac{\sigma^2}{2(l-j)|\alpha|}\)^{r_j}+\cO\(\sigma^{2r_j+2}\).
\end{aligned}
\end{align}
By \eqref{eq:P7} and \eqref{eq:P6}, as $\sigma\to0$,
\begin{multline}\label{eq:725}
  \frac{Z_{\eta_j,\rho}\big(\sigma+(j-l)|\alpha|\big)Z_{\eta_{2l-j},\rho}\big(\sigma+(l-j)|\alpha|\big)}{Z_{\eta_j,\rho}\(-\sqrt{\sigma^2+(l-j)^2|\alpha|^2}\)Z_{\eta_{2l-j},\rho}\(\sqrt{\sigma^2+(l-j)^2|\alpha|^2}\)}\\
  \to \(-4(l-j)^2|\alpha|^2\)^{r_j}\sigma^{-2r_j}+\cO(\sigma^{-2r_j+1}).
\end{multline}
By  \eqref{eq:polyP}, \eqref{eq:RT}, \eqref{eq:chid}, \eqref{eq:Cr}, and \eqref{eq:725},
we get  \eqref{eq:t01z}.
\end{proof}
%
%

\begin{re}
  When $G=\SO^0(p,1)$ with $p\g3$ odd, we recover   \cite[Theorem 3]{FriedRealtorsion}.
\end{re}

\begin{re}
If we scale the form $B$ with the factor $a > 0$, $R_\rho (\sigma)$ is replaced by $R_\rho ( \sqrt{a}\sigma)$. By \eqref{eq:t01z}, as $\sigma\to 0$,
\begin{align}
R_\rho(\sqrt{a}\sigma)=a^{r_\rho/2}C_\rho T(F)^2\sigma^{r_\rho}+\cO(\sigma^{r_\rho+1}).
\end{align}
On the other hand, $C_\rho$ should become $a^{\sum_{j=0}^{l-1}(-1)^jr_j} C_\rho$ , and $T(F)$ should scale by $a^{\chi'(Z,F)/2}$. This is only possible if
\begin{align}\label{eq:3B}
r_\rho = 2\sum_{j=0}^{l-1} (-1)^j r_j + 2\chi'(Z, F ),
\end{align}
which just \eqref{eq:chid}.
\end{re}

\section{A cohomological formula for $r_j$}\label{sec:proofthm1}
The purpose of  this section is to establish  \eqref{eq:t02z} when $G$ is such that $\delta(G)=1$ and has compact center.  We rely on some deep results from the representation theory of reductive Lie groups.

This section is organized as follows. In Subsection \ref{sec:rep}, we recall the constructions of the infinitesimal and global characters of Harish-Chandra modules. We also recall some properties of $(\fg,K)$-cohomology and $\fn$-homology of Harish-Chandra modules.

In Subsection \ref{sec:7.2}, we give a formula relating 
$r_j$ with an 
alternating sum of the dimensions of Lie algebra cohomologies of 
certain Harish-Chandra modules, and we establish Equation \eqref{eq:t02z}.



%
%
%

\subsection{Some results from representation theory}\label{sec:rep}
In this Subsection,  we do not assume that $\delta(G)=1$. We use  the 
notation in Section \ref{Sec:Pred} and the convention of real root 
systems  introduced in Subsection \ref{sec:d11}.
\subsubsection{Infinitesimal characters}
Let $\cZ(\fg_\bC)$ \index{Z@$\cZ(\fg_\bC)$}be the center of the 
enveloping algebra $U(\fg_\bC)$\index{U@$U(\fg_\bC)$} of the 
complexification $\fg_\bC$ of $\fg$. A morphism of algebras 
$\chi:\cZ(\fg_\bC)\to \bC$ will be called a character of $\cZ(\fg_\bC)$.

Recall that $\fh_1,\cdots,\fh_{l_0}$ form all the nonconjugated 
$\theta$-stable Cartan subalgebras of $\fg$. Let  
$\fh_{i\bC}=\fh_{i}\otimes_\bR\bC$ and $\fh_{i\bR}=\sqrt{-1}\fh_{i\fp}\oplus \fh_{i\fk}$ \index{H@$\fh_{i\bC},\fh_{i\bR}$}are the complexfication and real form of $\fh_i$. For $\alpha\in \fh_{i\bR}^*$, we extend $\alpha$ to a $\bC$-linear form on $\fh_{i\bC}$ by $\bC$-linearity. In this way, we identify $\fh_{i\bR}^*$ to a subset of $\fh_{i\bC}^*$.

For $1\l i\l l_0$, let $S(\fh_{i\bC})$ be the symmetric algebra of $\fh_{i\bC}$.
 The algebraic Weyl group $W(\fh_{i\bR},\fu)$ acts isometrically on $\fh_{i\bR}$. By $\bC$-linearity, $W(\fh_{i\bR},\fu)$ acts on $\fh_{i\bC}$.  Therefore,  $W(\fh_{i\bR},\fu)$ acts on $S(\fh_{i\bC})$. Let $S(\fh_{i\bC})^{W(\fh_{i\bR},\fu)}\subset S(\fh_{i\bC})$ be the $W(\fh_{i\bR},\fu)$-invariant subalgebra of $S(\fh_{i\bC})$.
 Let
\begin{align}
  \gamma_i:\cZ(\fg_\bC)\simeq S(\fh_{i\bC})^{W(\fh_{i\bR},\fu)}
\end{align}
be the Harish-Chandra isomorphism \cite[Section V.5]{KnappLie}. For $\Lambda\in \fh^*_{i\bC}$, we can associate to it a character $\chi_\Lambda$ of $\cZ(\fg_\bC)$ as follows: for $z\in \cZ(\fg_\bC)$,\index{C@$\chi_{\Lambda}$}
\begin{align}
  \chi_\Lambda(z)=\<\gamma_i(z),2\sqrt{-1}\pi \Lambda\>.
\end{align}
By \cite[Theorem 5.62]{KnappLie}, every character of $\cZ(\fg_\bC)$ is of the form $\chi_\Lambda$, for some $\Lambda\in \fh^*_{i\bC}$. Also, $\Lambda$ is uniquely determined up to an action of $W(\fh_{i\bR}, \fu)$.
Such an element $\Lambda\in \fh^*_{i\bC}$ is called  the Harish-Chandra parameter of the character. In particular, $\chi_{\Lambda}=0$ if and only if there is $w\in W(\fh_{i\bR},\fu)$ such that
\begin{align}
w\Lambda=\rho_i^{\fu},
\end{align}
where $\rho_i^{\fu}$ is defined as in \eqref{eq:rhok88} with respect 
to  $(\fh_{i\bR},\mathfrak u)$.

\begin{defin}
A complex representation of $\fg_\bC$ is said to have  infinitesimal character $\chi$,
if $z\in \cZ(\fg_\bC)$ acts as a scalar $\chi(z)\in\bC$.

A complex representation of $\fg_\bC$ is said to have  generalized infinitesimal character $\chi$,
if $z-\chi(z)$ acts nilpotently for all
$z\in \cZ(\fg_\bC)$, i.e., $(z-\chi(z))^i$ acts like $0$ for $i\gg1$.
\end{defin}

If $\lambda\in \fh_{i\bR}^*$ is algebraically integral and dominant, let $V_\lambda$ be the complex finite dimensional irreducible representation of the Lie algebra $\fg_\bC$ with the highest weight $\lambda$. Then $V_\lambda$ possesses  an infinitesimal character with Harish-Chandra parameter $\lambda+\rho_i^\fu\in \fh_{i\bR}^*$.

\subsubsection{Harish-Chandra $(\fg_\bC,K)$-modules and admissible representations of $G$}
We follow  \cite[p. 54-55]{HechtSchmid} and \cite[p. 207]{Knappsemi}.
\begin{defin}
We will say that a complex $U(\fg_\bC)$-module $V$, equipped with an  action of $K$,  is a Harish-Chandra $(\fg_\bC,K)$-module, if
the following conditions hold:
\begin{enumerate}
  \item The space $V$ is finitely generated as a $U(\fg_\bC)$-module;
  \item Every $v\in V$ lies in a finite dimensional, $\fk_\bC$-invariant subspace;
  \item The action of $\fg_\bC$ and $K$ are compatible;
  \item Each irreducible $K$-module occurs only finitely many times in $V$.
\end{enumerate}
\end{defin}

Let $V$ be a Harish-Chandra $(\fg_\bC,K)$-module. For  a character $\chi$ of $\cZ(\fg_\bC)$, let
$V_\chi\subset V$
be the largest submodule of $V$ on which $z-\chi(z)$ acts nilpotently for all $z\in \cZ(\fg_\bC)$.
Then $V_\chi$ is a Harish-Chandra $(\fg_\bC,K)$-submodule of $V$ with generalized infinitesimal character $\chi$.
By \cite[eq. (2.4)]{HechtSchmid}, we can decompose $V$ as a finite  sum of Harish-Chandra $(\fg_\bC,K)$-submodules
\begin{align}\label{eq:V=Vchi}
  V=\bigoplus_{\chi} V_\chi.
\end{align}

Any Harish-Chandra $(\fg_\bC,K)$-module $V$ has a finite  composition series in the following sense: there exist finitely many Harish-Chandra  $(\fg_\bC,K)$-submodules
\begin{align}
  V=V_{n_1}\supset V_{n_1-1}\supset \cdots\supset V_{0}\supset V_{-1}=0
\end{align}
such that each quotient $V_i/V_{i-1}$, for $0\l i\l n_1$, is an irreducible Harish-Chandra $(\fg_\bC,K)$-module. Moreover, the set of all irreducible quotients and their multiplicities are the same for all the composition series.

\begin{defin}
We say that a representation $\pi$ of $G$ on a Hilbert space is admissible if the followings hold:
\begin{enumerate}
    \item when restricted to $K$, $\pi|_K$ is unitary;
    \item each $\tau\in \widehat{K}$ occurs with only finite multiplicity in $\pi|_K$.
  \end{enumerate}
\end{defin}

Let $\pi$ be a finitely generated  admissible representation of  $G$ on the Hilbert space $V_\pi$. If $\tau\in \widehat{K}$, let $V_\pi(\tau)\subset V_\pi$  be the $\tau$-isotopic subspace of $V_\pi$. Then $V_\pi(\tau)$ is the image of the evaluation map
\begin{align}
 (f,v)\in \Hom_K(V_\tau,V_\pi)\otimes V_\tau\to f(v)\in V_\pi.
\end{align}
Let
\begin{align}\label{eq:V_piK}
 V_{\pi,K}=\bigoplus_{\tau\in \widehat{K}}V_\pi(\tau)\subset V_\pi
\end{align}
 be the algebraic sum of representations of $K$. By \cite[Proposition 
 8.5]{Knappsemi}, 
 $V_{\pi,K}$ is a Harish-Chandra $(\fg_\bC,K)$-module.
It is explained in \cite[Section4]{VoganRepCompAna} that,
by  results of Casselman, Harish-Chandra, Lepowsky and Wallach, any Harish-Chandra $(\fg_\bC,K)$-module $V$ can be constructed in this way and  the corresponding $V_\pi$ is called a Hilbert globalization of $V$.  Moreover, %
 $V$ is an irreducible Harish-Chandra $(\fg_\bC,K)$-module if and only if $V_\pi$ is an irreducible admissible  representation of $G$. In this case, $V$ or $V_\pi$ has an infinitesimal character.

 We note that a Hilbert globalization of a Harish-Chandra $(\fg_\bC,K)$-module is not unique.

\subsubsection{Global characters}
We recall the definition of the space of rapidly decreasing functions $\cS(G)$ on $G$ \index{S@$\cS(G)$} \cite[Section 7.1.2]{WallachI}.

For $z\in U(\fg)$, we denote by $z_{L}$ and $z_R$ respectively the corresponding left and right invariant differential operators on $G$.
For  $r\g0$, $z_1\in U(\fg)$, $z_2\in U(\fg)$,
and $f\in C^\infty(G)$, put\index{1@$\lVert\cdot\rVert_{r,z_1,z_2}$}
\begin{align}\label{eq:defsg}
  \|f\|_{r,z_1,z_2}=\sup_{g\in G} e^{rd_{X}(p1,pg)}|z_{1L}z_{2R}f(g)|.
\end{align}
Let $\cS(G)$ be the space of all
$f\in C^\infty(G)$ such that, for all $r\g0$, $z_1\in U(\fg)$, $z_2\in U(\fg)$,  $\|f\|_{r,z_1,z_2}<\infty$. We endow $\cS(G)$ with the topology given by the above semi-norms. By \cite[Theorem 7.1.1]{WallachI}, $\cS(G)$ is a  Fr\'echet space which contains $C^\infty_c(G)$ as a dense subspace.


Let $\pi$ be a finitely generated admissible representation of  $G$ 
on the Hilbert space $V_\pi$. By \cite[Lemma 2.A.2.2]{WallachI}, there exists $C>0$ such that for $g\in G$, we have
\begin{align}\label{eq:pig}
  \|\pi(g)\|\l Ce^{Cd_X(p1,pg)},
\end{align}
where $\|\cdot\|$ is the operator norm.
By \eqref{eq:pig}, if $f\in \cS(G)$,
\begin{align}\label{eq:pig3}
  \pi(f)=\int_G f(g)\pi(g)dg
\end{align}
is a  bounded operator on $V_\pi$.
By \cite[Lemma 8.1.1]{WallachI}, $\pi(f)$ is trace class.  The global character $\Theta^G_\pi$ \index{T@$\Theta^G_\pi$} of $\pi$ is a continuous linear functional on  $\cS(G)$ such that for $f\in \cS(G)$,
\begin{align}\label{eq:trpi}
  \Tr[\pi(f)]=\< \Theta^G_\pi,f\>.
\end{align}
If $V$ is a Harish-Chandra $(\fg_\bC,K)$-module, we can define
the global character $\Theta^G_V$ of $V$ by
the global character of its Hilbert globalization. We note that the global character does not depend on the choice of
Hilbert globalization  \cite[p.56]{HechtSchmid}.

By Harish-Chandra's regularity theorem \cite[Theorems 10.25] {Knappsemi}, there is an $L^1_{loc}$ and $\Ad(G)$-invariant function $\Theta^G_\pi(g)$ on $G$, whose restriction to the regular set $G'$ is analytic, such that for $f\in C_c^\infty(G)$, we have
\begin{align}
  \left\<\Theta^G_{\pi},f\right\>=\int_{g\in G} \Theta^G_\pi(g)f(g)dv_G.
\end{align}


\begin{prop}\label{prop:HH}
  If $f\in \mathcal{S}(G)$, then $\Theta^G_{\pi}(g)f(g)\in L^1(G)$ such that
  \begin{align}\label{eq:HH1}
    \left\<\Theta^G_\pi,f\right\>=\int_{g\in G} \Theta^G_\pi(g)f(g)dv_G.
  \end{align}
\end{prop}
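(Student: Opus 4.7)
The strategy is to extend the identity $\langle \Theta^G_\pi, f\rangle = \int_G \Theta^G_\pi(g) f(g) dv_G$, which is the defining property of $\Theta^G_\pi$ as an $L^1_{\mathrm{loc}}$-function on $C_c^\infty(G)$, to the Fr\'echet space $\mathcal{S}(G)$, by density of $C_c^\infty(G)\subset \mathcal{S}(G)$ together with an exponential growth estimate for the character.

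The central ingredient that I would prove first is an a priori bound of the form
\begin{align}\label{eq:plan1}
|\Theta^G_\pi(g)|\l C\,e^{C d_X(p1,pg)},\quad g\in G',
\end{align}
for some $C>0$. This is well known from Harish-Chandra's work and can be deduced as follows. By \eqref{eq:pig}, the operator norm of $\pi(g)$ grows at most exponentially in $d_X(p1,pg)$. Fix a $K$-isotypic, finite-dimensional subspace $W\subset V_\pi$ containing a cyclic vector, and let $E_W$ denote the orthogonal projection onto $W$. Then for $g\in G'$, $\Theta^G_\pi$ is analytic near $g$ and can be computed locally as a limit (in the sense of distributions) of $\mathrm{Tr}[\pi(g\ast\varphi_n)E_W]$ for a suitable approximate identity $\varphi_n$; the a priori bound \eqref{eq:plan1} follows from \eqref{eq:pig} after controlling the $\mathcal{S}(G)$-seminorms of $g\ast\varphi_n$. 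Alternatively, one can invoke the classical fact that $|\Theta^G_\pi|$ is dominated by a polynomial in $d_X(p1,pg)$ times Harish-Chandra's $\Xi$-function, both of which satisfy \eqref{eq:plan1}.

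Given \eqref{eq:plan1}, the first assertion $\Theta^G_\pi\cdot f\in L^1(G)$ is immediate: from \eqref{eq:defsg}, for every $r\g 0$,
\begin{align}
|f(g)|\l \|f\|_{r,1,1}\,e^{-r d_X(p1,pg)},
\end{align}
while $G$ has at most exponential volume growth, so choosing $r$ large enough gives an integrable majorant.

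For the identity \eqref{eq:HH1}, I would choose a sequence $f_n\in C_c^\infty(G)$ converging to $f$ in the topology of $\mathcal{S}(G)$; such a sequence exists by the density statement following \eqref{eq:defsg}. The continuity of $\Theta^G_\pi$ as a linear functional on $\mathcal{S}(G)$ yields $\langle\Theta^G_\pi,f_n\rangle\to \langle\Theta^G_\pi,f\rangle$. On the integral side, $\|f_n-f\|_{r,1,1}\to 0$ for every $r$, hence $|f_n(g)-f(g)|\l \varepsilon_n\, e^{-r d_X(p1,pg)}$ with $\varepsilon_n\to 0$; picking $r$ so that $\int_G |\Theta^G_\pi(g)|\,e^{-r d_X(p1,pg)}\,dv_G<\infty$ (possible by \eqref{eq:plan1} and the volume estimate), we deduce $\int_G \Theta^G_\pi(g) f_n(g)\,dv_G \to \int_G \Theta^G_\pi(g) f(g)\,dv_G$. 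Since the two sides agree for each $f_n\in C_c^\infty(G)$ by Harish-Chandra's regularity theorem, the identity \eqref{eq:HH1} follows by passing to the limit.

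The main technical obstacle is establishing the exponential bound \eqref{eq:plan1}; once this is in hand, the rest is a routine density-plus-dominated-convergence argument. All the other ingredients (density of $C_c^\infty(G)$ in $\mathcal{S}(G)$, continuity of $\Theta^G_\pi$ on $\mathcal{S}(G)$, and the defining identity on test functions) are already implicit in the preceding discussion.
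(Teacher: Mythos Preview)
Your proposed bound \eqref{eq:plan1}, namely $|\Theta^G_\pi(g)|\l C e^{C d_X(p1,pg)}$ on $G'$, is false in general, and this is the essential gap. The Harish-Chandra character is only locally $L^1$, not locally bounded: near the singular set $G\setminus G'$ it may blow up like $|D(g)|^{-1/2}$, where $D(g)=\det(1-\Ad(g))|_{\fg/\fh_i}$ is the Weyl discriminant (already for discrete series of $\mathrm{SL}_2(\bR)$ one sees poles on the compact Cartan). The correct classical estimate, which the paper invokes from \cite[Theorem 10.35]{Knappsemi}, is $|\Theta^G_\pi(\gamma)|\,|D(\gamma)|^{1/2}\l C e^{r_0|a|}$; the discriminant factor cannot be dropped. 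Your ``alternative'' via the $\Xi$-function suffers from the same defect: the known bounds are of the form $|\Theta^G_\pi(\gamma)|\,|D(\gamma)|^{1/2}\l C\,\Xi(\gamma)(1+\sigma(\gamma))^N$, again with the discriminant.

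The paper's proof handles this by passing through the Weyl integration formula \eqref{eq:weylnoncom}: the Jacobian produces a full factor $|D(\gamma)|$, one power of $|D(\gamma)|^{1/2}$ is absorbed by the character estimate \eqref{eq:estgol}, and the remaining $|D(\gamma)|^{1/2}$ is absorbed by the orbital-integral bound \eqref{eq:es22} (a consequence of Harish-Chandra's weak inequality via the $\Xi$-function, \cite[Section II.12.2]{VaradarajanHAred}). Only after this splitting does the rapid decay of $f$ along the Cartan directions finish the argument. Your density-plus-dominated-convergence skeleton is fine, but it needs a genuine integrable majorant, and producing one requires exactly this Weyl-integral/discriminant bookkeeping rather than a pointwise exponential bound on the character.
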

\begin{proof}It is enough to show that there exist $C>0$ and a seminorm $\|\cdot\|$ on $\mathcal{S}(G)$ such that
\begin{align}\label{eq:HH2}
  \int_G |\Theta^G_\pi(g)f(g)|dg\l C\|f\|.
\end{align}
Recall that $H'$ is defined in \eqref{eq:Mul1}.
By \eqref{eq:weylnoncom}, we need to show that there exist $C>0$ and a semi-norm  $\|\cdot\|$ on $\mathcal{S}(G)$ such that
   for $1\l i\l l_0$, we have
  \begin{align}\label{eq:HH}
    \int_{\gamma\in H'_i}\left| \Theta^G_\pi(\gamma)\right| \(\int_{g\in H_i\backslash G} |f(g^{-1}\gamma g)|dv_{H_i\backslash G}\)\left|\det\big(1-\Ad(\gamma)\big)|_{\fg/\fh_i}\right|dv_{H_i}\l C\|f\|.
  \end{align}

  By \cite[Theorem 10.35]{Knappsemi},
  there exist $ C>0$ and $r_0>0$ such that, for $\gamma=e^ak^{-1}\in H_i'$ with $a\in \fh_{i\fp}$, $k\in H_i\cap K$, we have
\begin{align}\label{eq:estgol}
 | \Theta^G_{\pi}(\gamma)|\left|\mathrm{det}\big(1-\Ad(\gamma)\big)|_{\fg/\fh_i}\right|^{1/2}\l Ce^{r_0|a|}.
\end{align}

We claim that there exist  $r_1>0$ and $C>0$, such that for $\gamma\in H'_i$, we have
  \begin{align}\label{eq:es22}
   \left|\det\big(1-\Ad(\gamma)\big)|_{\fg/\fh_i}\right|^{1/2} \int_{g\in H_i\backslash G} \exp\(-r_1d_X(p1, g^{-1}\gamma g\cdot p1)\) dv_{H_i\backslash G}\l C.
  \end{align}
Indeed, let $\Xi(g)$ \index{X@$\Xi(g)$} be the Harish-Chandra's $\Xi$-function \cite[Section II.8.5]{VaradarajanHAred}. By \cite[Section II.12.2, Corollary 5]{VaradarajanHAred}, there exist  $r_2>0$ and $C>0$, such that for $\gamma\in H'_i$, we have
  \begin{align}\label{eq:es223}
   \left|\det\big(1-\Ad(\gamma)\big)|_{\fg/\fh_i}\right|^{1/2} \int_{g\in H_i\backslash G} \Xi(g^{-1}\gamma g )\(1+d_X(p1, g^{-1}\gamma g\cdot p1)\)^{-r_2} dv_{H_i\backslash G}\l C.
  \end{align}
  By \cite[Proposition 7.15 (c)]{Knappsemi} and by \eqref{eq:es223}, we get \eqref{eq:es22}.

   By \cite[(3.1.10)]{B09},  for $g\in G$ and $\gamma=e^ak^{-1}\in H_i$, we have
  \begin{align}\label{eq:dxaa}
    d_X\(p1,g^{-1}\gamma g\cdot p1\)\g |a|.
  \end{align}
  Take $r= 2r_0+r_1$, $z_1=z_2=1\in U(\fg)$.
  Since  $f\in \mathcal{S}(G)$, by \eqref{eq:defsg} and \eqref{eq:dxaa},  for $\gamma=e^ak^{-1}\in H_i'$, we have
  \begin{multline}\label{eq:es33}
    |f(g^{-1}\gamma g)|\l \|f\|_{r,z_1,z_2} \exp\(-rd_X(p1, g^{-1}\gamma g \cdot p1)\)\\
    \l \|f\|_{r,z_1,z_2} \exp(-2r_0|a|)\exp\(-r_1d_X(p1, g^{-1}\gamma g \cdot p1)\).
  \end{multline}
  By \eqref{eq:estgol}, \eqref{eq:es22}, and \eqref{eq:es33}, for $\gamma\in H_i'$, we have
  \begin{align}\label{eq:720}
    \left| \Theta^G_\pi(\gamma)\right| \(\int_{g\in H_i\backslash G} |f(g^{-1}\gamma g)|dv_{H_i\backslash G}\)\left|\det\big(1-\Ad(\gamma)\big)|_{\fg/\fh_i}\right|
    \l C\|f\|_{r,z_1,z_2} \exp(-r_0|a|).
  \end{align}

  By \eqref{eq:720}, we get \eqref{eq:HH}. The proof of our proposition is completed.
\end{proof}

Let $V$ be a Harish-Chandra $(\fg_\bC,K)$-module, and let
 $\tau$ be a real finite dimensional orthogonal representation of $K$ on the real Euclidean space $E_\tau$.
Then the invariant subspace $(V\otimes_\bR E_\tau)^K\subset V\otimes_\bR E_\tau$ has a finite dimension.
We will describe an integral formula for $\dim_\bC(V\otimes_\bR E_\tau)^K$, which extends \cite[Corollary 2.2]{MoscoviciL2}.

Recall that $p^{X,\tau}_t(g)$ is the smooth integral  kernel of $\exp\(-tC^{X,\tau}/2\)$. By the estimation
on the heat kernel or
by \cite[Proposition 2.4]{MoscoviciL2}, $p^{X,\tau}_{t}(g) \in \mathcal{S}(G)\otimes \End(E_\tau)$. Recall that $dv_{G}$ \index{d@$dv_G$} is the Riemannian volume on $G$ induced by $-B(\cdot, \theta\cdot)$. 

\begin{prop}\label{prop:int}
  Let $f\in C^\infty(G,E_\tau)^K$. Assume that there exist  $C>0$ and $r>0$ such that
  \begin{align}\label{eq:fas}
    |f(g)|\l C\exp\big(r d_X(p1,pg)\big).
  \end{align}
  The integral
  \begin{align}\label{eq:fas1}
   \int_{g\in G} p^{X,\tau}_t(g)f(g)dv_G\in E_\tau
  \end{align}
  is well defined such that
  \begin{align}\label{eq:fas2}
 \begin{split}
    \frac{\p}{\p t} \int_{g\in G} &p^{X,\tau}_t(g)f(g)dv_G=-\frac{1}{2}\int_{g\in G} C^{\fg}p^{X,\tau}_t(g)f(g)dv_G,\\
   &\frac{1}{\vol(K)}\lim_{t\to 0} \int_{g\in G} p^{X,\tau}_t(g)f(g)dv_G=f(1).
\end{split}
\end{align}
\end{prop}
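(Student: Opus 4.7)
The plan is to reduce the integral over $G$ to an integral over the symmetric space $X=G/K$ via the $K$-equivariance properties, then recognize it as the application of the heat semigroup $\exp(-tC^{\fg,X,\tau}/2)$ to the section of $\cE_\tau$ corresponding to $f$, and finally appeal to the basic properties of this heat semigroup.

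First, I would establish integrability. The hypothesis \eqref{eq:fas} gives exponential growth of $|f(g)|$ in $d_X(p1,pg)$, while the Gaussian bound \eqref{eq:estheat} yields
\begin{align*}
  |p_t^{X,\tau}(g)|\l \frac{C}{t^N}\exp\(-c\,d_X^2(p1,pg)/t+Ct\).
\end{align*}
Combining these, the integrand has Gaussian decay in $d_X(p1,pg)$. Since the volume of balls of radius $R$ in the nonpositively curved manifold $X$ grows at most exponentially in $R$, and since $dv_G$ disintegrates along the $K$-principal bundle $p:G\to X$ as $dv_X\otimes dv_K$, the integral \eqref{eq:fas1} is absolutely convergent.

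Next I would perform the reduction to $X$. The equivariance $f(gk)=\tau(k^{-1})f(g)$ (since $f\in C^\infty(G,E_\tau)^K$) together with \eqref{eq:qtautype} yields $p_t^{X,\tau}(g)f(g)=p_t^{X,\tau}(gk)f(gk)$, so the $E_\tau$-valued function $g\mapsto p_t^{X,\tau}(g)f(g)$ is right $K$-invariant and descends to $X$. Writing $\widetilde f\in C^\infty(X,\cE_\tau)$ for the section associated with $f$ and applying Fubini gives
\begin{align*}
  \int_{g\in G}p_t^{X,\tau}(g)f(g)dv_G=\vol(K)\int_{x\in X}p_t^{X,\tau}(p1,x)\widetilde f(x)dv_X,
\end{align*}
which is precisely $\vol(K)\bigl(\exp(-tC^{\fg,X,\tau}/2)\widetilde f\bigr)(p1)$, the integral making sense pointwise because of the Gaussian/exponential match discussed above.

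The two claims in \eqref{eq:fas2} now follow from the parabolic equation $\p_t p_t^{X,\tau}=-\tfrac12 C^{\fg,X,\tau}p_t^{X,\tau}$ and the fact that $\exp(-tC^{\fg,X,\tau}/2)\widetilde f\to \widetilde f$ pointwise as $t\to 0$. To differentiate under the integral for the first formula, I would use that $\p_t p_t^{X,\tau}$ (equivalently $C^\fg p_t^{X,\tau}$) enjoys Gaussian bounds of the same type as $p_t^{X,\tau}$ after slightly shrinking the constant $c$, obtained by combining \eqref{eq:estheat} applied to the derivative kernel (standard parabolic regularity on $X$) with the fact that $C^\fg$ commutes with the semigroup. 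For the limit, the key observation is that $\widetilde f$ is continuous at $p1$ and, by the Gaussian concentration of $p_t^{X,\tau}(p1,\cdot)$ together with the controlled growth of $\widetilde f$, one can split the integral into a small ball around $p1$ (where $\widetilde f\approx \widetilde f(p1)=f(1)$ and the kernel integrates essentially to $1$, by testing against the constant function) and its complement (where the Gaussian kernel kills the exponential growth as $t\to 0$).

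The main obstacle is the rigorous justification of these manipulations under exponential growth of $f$: both the interchange of $\p_t$ and $\int_G$, and the $t\to 0$ limit, require dominated convergence with explicit bounds. The path of least resistance is to invoke the heat kernel estimate \eqref{eq:estheat} on $X$ uniformly on compact $t$-intervals $[\e,T]$ (for the derivative formula) and to compare with the scalar heat kernel $p_t^{X,\mathbf 1}$ (for the approximate identity property, which is classical on $X$). No new phenomenon arises; all ingredients are already in place in Sections \ref{Sec:Pred} and \ref{sec:sel}.
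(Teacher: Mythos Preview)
Your argument is correct and follows essentially the same strategy as the paper: Gaussian decay of the heat kernel dominates the exponential growth of $f$ for integrability and differentiation under the integral, and a near/far splitting handles the limit $t\to 0$. The only differences are cosmetic. The paper does not explicitly descend to $X$ but works directly on $G$, invoking that $p_t^{X,\tau}\in\cS(G)$ (together with $\partial_t p_t^{X,\tau}=-\tfrac12 C^{\fg}p_t^{X,\tau}$) to get integrability and the first line of \eqref{eq:fas2} in one stroke, whereas you unpack this via the Gaussian estimate \eqref{eq:estheat} and exponential volume growth. For the limit, the paper implements your near/far splitting via a fixed $K$-invariant cutoff $\phi_1\in C_c^\infty(G)^K$: the compactly supported piece $\phi_1 f$ descends to an $L^2$-section on $X$, so the standard heat semigroup property gives $f(1)$, while on the support of $\phi_2=1-\phi_1$ the bound $|p_t^{X,\tau}(g)|\l Ce^{-c/2t}e^{-cd_X^2(p1,pg)/2t}$ for $d_X(p1,pg)\g 1$ kills the exponential growth of $f$ uniformly as $t\to 0$. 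Both routes amount to the same proof.
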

\begin{proof}
By \eqref{eq:fas}, by the property of $\mathcal{S}(G)$ and by $\frac{\p}{\p t}p_t^{X,\tau}(g)=-\frac{1}{2}C^\fg p_t^{X,\tau}(g)$, the left-hand side of \eqref{eq:fas1} and the right-hand side of the first equation of \eqref{eq:fas2} are well defined such that the first equation of \eqref{eq:fas2} holds true.

It remains to show the second equation of \eqref{eq:fas2}.
Let $\phi_1\in C_c^\infty(G)^K$
such that $0\l \phi_1(g)\l 1$ and that
\begin{align}
  \phi_1(g)=\left\{
            \begin{array}{ll}
              1, & d_X(p1,pg)\l 1, \\
              0, & d_X(p1,pg)\g 2.
            \end{array}
          \right.
\end{align}
Set $\phi_2=1-\phi_1$.

Since $\phi_1f$ has compact support, it descends to an $L^2$-section on $X$ with values in $G\times_K E_\tau$.
We have
\begin{align}\label{eq:ll1}
\frac{1}{\vol(K)}  \lim_{t\to 0}    \int_{g\in G} p^{X,\tau}_t(g)\phi_1(g)f(g)dv_G=f(1).
\end{align}

By \eqref{eq:estheat}, there exist $c>0$ and $C>0$ such that
for $g\in G$ with $d_X(p1,pg)\g 1$ and for $t\in (0,1]$,  we have
\begin{align}\label{eq:dff}
  \left|p^{X,\tau}_t(g)\right|\l C\exp\(-c\frac{d^2_{X}(p1,pg)}{t}\)\l  Ce^{-c/2t}\exp\(-c\frac{d^2_{X}(p1,pg)}{2t}\).
\end{align}
By \eqref{eq:fas} and \eqref{eq:dff}, there exist $c>0$ and $C>0$ such that for $t\in (0,1]$, we have
\begin{align}\label{eq:ll2}
   \int_{g\in G} \left|p^{X,\tau}_t(g)\phi_2(g)f(g)dv_G\right|\l Ce^{-c/2t}.
\end{align}

By \eqref{eq:ll1} and \eqref{eq:ll2}, we get
the second equation of \eqref{eq:fas2}. The proof of our proposition  is completed.
\end{proof}

\begin{prop}\label{prop:dimeq}
  Let $V$ be a Harish-Chandra $(\fg_\bC,K)$-module with generalized infinitesimal character  $\chi$. For $t>0$, we have
\begin{align}\label{eq:dimeq}
  \dim_\bC \(V\otimes_\bR E_\tau\)^K= \vol(K)^{-1}e^{t\chi(C^{\fg})/2} \int_{g\in G}\Theta^G_V(g)\Tr\[p^{X,\tau}_{t}(g)\]dv_G.
\end{align}
\end{prop}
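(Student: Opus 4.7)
The plan is to introduce the operator $B_t := \int_G \pi(g)\otimes p^{X,\tau}_t(g)\,dv_G(g)$ on $V_\pi\otimes_\bR E_\tau$, where $V_\pi$ is a Hilbert globalization of $V$, and to compute $\Tr(B_t)$ in two different ways. Convergence of the defining integral will follow from the estimate \eqref{eq:pig} combined with the Gaussian decay of $p^{X,\tau}_t$. Using the $K$-equivariance \eqref{eq:qtautype} of the heat kernel, I would first verify the identities $B_t = B_t\Pi = \Pi B_t$, where $\Pi := \vol(K)^{-1}\int_K \pi(k)\otimes\tau(k)\,dk$ is the projection onto $(V_\pi\otimes_\bR E_\tau)^K$. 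Since each $K$-type occurs with finite multiplicity in $V_{\pi,K}$ and only those $K$-types appearing in $E_\tau^\bC$ contribute, the space $(V_\pi\otimes_\bR E_\tau)^K = (V_{\pi,K}\otimes_\bR E_\tau)^K$ is finite-dimensional, so $B_t$ is of finite rank and in particular trace class, with $\Tr(B_t) = \Tr(\bar B_t)$, where $\bar B_t$ denotes the restriction of $B_t$ to $(V\otimes_\bR E_\tau)^K$.

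The first evaluation of $\Tr(B_t)$ uses the partial trace of $B_t$ over $E_\tau$, which equals $\pi(\Tr^{E_\tau}[p^{X,\tau}_t])\in\End(V_\pi)$. Since $\Tr^{E_\tau}[p^{X,\tau}_t]\in\cS(G)$, the defining identity \eqref{eq:trpi} of the global character combined with Proposition \ref{prop:HH} will yield
\begin{equation*}
\Tr(B_t) = \Tr_{V_\pi}\bigl(\pi(\Tr^{E_\tau}[p^{X,\tau}_t])\bigr) = \int_G \Theta^G_V(g)\,\Tr^{E_\tau}[p^{X,\tau}_t(g)]\,dv_G.
\end{equation*}

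For the second evaluation, to each $\xi\in(V_\pi\otimes_\bR E_\tau)^K$ and $v^*\in V_\pi^*$ I would associate the smooth function $F_{\xi,v^*}(g) := (v^*\otimes\mathrm{id}_{E_\tau})((\pi(g)\otimes\mathrm{id})\xi)$ on $G$; the $K$-invariance of $\xi$ together with \eqref{eq:qtautype} will force $F_{\xi,v^*}\in C^\infty(G,E_\tau^\bC)^K$, while \eqref{eq:pig} provides the growth bound \eqref{eq:fas}. Applying Proposition \ref{prop:int} to $F_{\xi,v^*}$, transferring $C^\fg$ by integration by parts from $p^{X,\tau}_t$ onto $F_{\xi,v^*}$, and exploiting the centrality of $C^\fg$ to identify its action with that of $\pi(C^\fg)\otimes 1$ on $\xi$, I would obtain the Cauchy problem $\frac{d}{dt}\bar B_t(\xi) = -\frac{1}{2}\bar B_t((C^\fg\otimes 1)\xi)$ with initial condition $\lim_{t\to 0^+}\vol(K)^{-1}\bar B_t(\xi) = \xi$, forcing $\bar B_t = \vol(K)\exp(-t(C^\fg\otimes 1)/2)$ on $(V\otimes_\bR E_\tau)^K$. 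Since $C^\fg$ acts on $V$ with generalized eigenvalue $\chi(C^\fg)$, and since on a finite-dimensional space the trace of the exponential of a nilpotent operator equals the dimension, this gives $\Tr(B_t) = \vol(K)\,e^{-t\chi(C^\fg)/2}\dim_\bC(V\otimes_\bR E_\tau)^K$. Equating the two computed expressions for $\Tr(B_t)$ and solving for the dimension will then yield \eqref{eq:dimeq}.

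The main technical difficulty will be the rigorous integration by parts transferring $C^\fg$ from $p^{X,\tau}_t$ onto $F_{\xi,v^*}$ on the noncompact group $G$: this requires playing the Gaussian decay of the heat kernel and its derivatives against the at-most-exponential growth of $F_{\xi,v^*}$ and its derivatives so that no boundary contribution appears. A secondary subtlety is the partial-trace identity $\Tr_{V_\pi\otimes E_\tau}(B_t) = \Tr_{V_\pi}(\mathrm{tr}_{E_\tau}B_t)$, which is standard for trace-class operators on tensor products with a finite-dimensional factor but must be invoked once the finite-rank structure of $B_t$ has been established.
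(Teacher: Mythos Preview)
Your proposal is correct and follows essentially the same route as the paper. The paper defines $\pi(p^{X,\tau}_t)=\vol(K)^{-1}\int_G\pi(g)\otimes p^{X,\tau}_t(g)\,dv_G$ (your $B_t$ up to the $\vol(K)^{-1}$ normalization), uses \eqref{eq:qtautype} to show it is sandwiched by the projection $Q_{\pi,\tau}$ onto $(V_\pi\otimes_\bR E_\tau)^K$ and hence has finite rank, invokes Proposition~\ref{prop:int} exactly as you do to obtain the block form $\pi(p^{X,\tau}_t)=\mathrm{diag}(e^{-t\pi(C^\fg)/2},0)$, takes the trace using the nilpotence of $C^\fg-\chi(C^\fg)$, and then computes the same trace via an orthonormal basis expansion (your partial-trace identity) to recover the character integral; the only cosmetic differences are the placement of the $\vol(K)$ factor and the use of $v\in V_\pi$ with the Hilbert pairing in place of your $v^*\in V_\pi^*$.
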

\begin{proof} Let $V_\pi$ be a Hilbert globalization of $V$. Then,
\begin{align}\label{eq:vvp}
  \(V\otimes_\bR E_\tau\)^K=\(V_{\pi}\otimes_\bR E_\tau\)^K.
\end{align}%
As in \eqref{eq:pig3}, set
\begin{align}\label{eq:pj}
  \pi\(p^{X,\tau}_t\)=\frac{1}{\vol(K)}\int_{g\in G}\pi(g)\otimes_{\bR} p^{X,\tau}_t(g)dv_G.
\end{align}
Then, $\pi\(p^{X,\tau}_t\)$ is a bounded operator acting on $V_\pi\otimes_\bR  E_\tau$.

We follow \cite[p. 160-161]{MoscoviciL2}.  Let $\(V_\pi\otimes_\bR  E_\tau\)^{K,\bot}$ be the orthogonal space to $\(V_\pi\otimes_\bR  E_\tau\)^K$ in $V_\pi\otimes_\bR  E_\tau$, such that
\begin{align}\label{eq:decom}
  V_\pi\otimes_\bR  E_\tau=\(V_\pi\otimes_\bR  E_\tau\)^K\oplus \(V_\pi\otimes_\bR  E_\tau\)^{K,\bot}.
\end{align}
Let $Q_{\pi,\tau}$ be the orthogonal projection from $V_\pi\otimes_\bR  E_\tau$ to $\(V_\pi\otimes_\bR  E_\tau\)^K$. Then,
\begin{align}\label{eq:QQ}
  Q_{\pi,\tau} =\frac{1}{\vol(K)}\int_{k\in K} \pi\otimes \tau (k) dv_K.
\end{align}
By \eqref{eq:qtautype}, \eqref{eq:pj} and \eqref{eq:QQ}, we get
\begin{align}\label{eq:QPQ}
  Q_{\pi,\tau}\pi\(p^{X,\tau}_t\)Q_{\pi,\tau}=\pi\(p^{X,\tau}_t\).
\end{align}
In particular, $\pi(p^{X,\tau}_t)$ is of finite rank.

Take $u\in \(V_\pi\otimes_{\bR} E_\tau\)^K$ and $v\in V_\pi$. Define $\<u,v\>\in E_\tau$ be such that for any $w\in E_\tau$,
\begin{align}
\big\<\<u,v\>,w\big\>=\< u,v\otimes_{\bR} w\>.
\end{align}
 By \eqref{eq:pig}, the function $g\in G\to \<\pi(g)\otimes_{\bR}{\rm id}\cdot u,v\>\in E_\tau$ is of class $C^\infty(G,E_\tau)^K$ such that \eqref{eq:fas} holds.
By \eqref{eq:pj}, we have
\begin{align}\label{eq:xb}
 \left\< \pi\(p^{X,\tau}_t\)u, v\right\>=\frac{1}{\vol(K)}\int_{g\in G}p^{X,\tau}_t(g)\<\pi(g)\otimes_{\bR}{\rm id}\cdot u,v\>dv_G.
\end{align}
By Proposition \ref{prop:int} and \eqref{eq:xb}, we have
\begin{align}\label{eq:P12}
&\frac{\p}{\p t}\left\<\pi\(p^{X,\tau}_t\)u,v\right\>=-\frac{1}{2}\left\<\pi\(C^\fg\)\pi\(p^{X,\tau}_t\)u,v\right\>,
  &\lim_{t\to 0}\left\< \pi\(p^{X,\tau}_t\)u, v\right\>=\<u,v\>
\end{align}

Since $C^\fg\in \mathcal{Z}(\fg)$ and since $\pi(C^{\fg})$ preserves the splitting \eqref{eq:decom},
by \eqref{eq:QPQ} and \eqref{eq:P12}, under the splitting \eqref{eq:decom},
we have
\begin{align}\label{eq:trcc}
   \pi\(p^{X,\tau}_t\)=\left(
                       \begin{array}{cc}
                         e^{-t\pi(C^{\fg})/2} & 0 \\
                         0 & 0 \\
                       \end{array}
                     \right).
\end{align}
Since $V$ has a generalized infinitesimal character  $\chi$, by \eqref{eq:trcc}, we have
\begin{align}\label{eq:tronpi}
  \Tr\[\pi\big(p^{X,\tau}_t\big)\]=e^{-t\chi(C^{\fg})/2} \dim_\bC \(V_\pi\otimes_\bR E_\tau\)^K.
\end{align}

Let $(\xi_i)_{i=1}^\infty$ and $(\eta_j)_{j=1}^{\dim E_\tau}$ be orthogonal basis of $V_\pi$ and $E_\tau$. Then
\begin{align}\label{eq:sd}
\begin{aligned}
  \Tr\[\pi\big(p^{X,\tau}_t\big)\]=&\frac{1}{\vol(K)}\sum_{i=1}^\infty\sum_{j=1}^{\dim E_\tau}\int_{g\in G} \<p_{t}^{X,\tau}(g)\eta_j,\eta_j\> \<\pi(g)\xi_i,\xi_i\>dv_G\\
  =&\frac{1}{\vol(K)}\sum_{i=1}^\infty\int_{g\in G} \Tr\[p_{t}^{X,\tau}(g)\] \<\pi(g)\xi_i,\xi_i\>dv_G.
\end{aligned}
\end{align}
Since $\Tr\[p_{t}^{X,\tau}(g)\]\in \cS(G)$, by \eqref{eq:HH1} and \eqref{eq:sd}, we have
\begin{align}\label{eq:pidot}
\Tr\[\pi\big(p^{X,\tau}_t\big)\]=\frac{1}{\vol(K)}\int_{g\in G} \Tr\big[p^{X,\tau}_t(g)\big] \Theta^G_\pi(g)dv_G.
\end{align}

From  \eqref{eq:vvp}, \eqref{eq:tronpi} and \eqref{eq:pidot}, we get 
\eqref{eq:dimeq}. The proof of  our proposition  is completed.
\end{proof}


\begin{prop}\label{prop:test}
For $1\l i\l l_0$, the function
\begin{align}\label{eq:test2}
\gamma\in H_i' \to  \Tr^{[\gamma]}\[\exp\(-tC^{\fg,X,\tau}/2\)\] \Theta^G_\pi(\gamma)\left|\mathrm{det}\big(1-\Ad(\gamma)\big)|_{\fg/\fh_i}\right|
\end{align}
is almost everywhere well defined and integrable on $H_i'$, so that
\begin{multline}\label{eq:test1}
  \int_{g\in G} \Tr\big[p^{X,\tau}_t(g)\big] \Theta^G_\pi(g)dv_G
=\sum_{i=1}^{l_0}\frac{\vol(K\cap H_i\backslash K)}{|W(H_i,G)|}\\
\int_{\gamma\in H'_i} \Tr^{[\gamma]}\[\exp\(-tC^{\fg,X,\tau}/2\)\]  \Theta^G_\pi(g)\left|\mathrm{det}\big(1-\Ad(\gamma)\big)|_{\fg/\fh_i}\right|dv_{H_i}.
\end{multline}
\end{prop}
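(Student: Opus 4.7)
The strategy is to apply the Weyl integration formula \eqref{eq:weylnoncom} to the function $h(g) := \Theta^G_\pi(g)\Tr[p_t^{X,\tau}(g)]$ and then identify the inner integral over $H_i\backslash G$ with the orbital integral. Since $\Tr[p_t^{X,\tau}(\cdot)]\in \cS(G)$, Proposition \ref{prop:HH} already tells us that $h\in L^1(G,dv_G)$; the substance of the argument is therefore the rigorous extension of \eqref{eq:weylnoncom} to such signed integrands, together with the identification of the inner integral.

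First I would verify that \eqref{eq:weylnoncom}, stated for nonnegative measurable functions, extends to $h$. Because of the pointwise bound $|\Tr[p_t^{X,\tau}(g)]|\le C(t)\,p_t^{X,\mathbf{1}}(g)$ obtained inside the proof of Proposition \ref{prop:heatconv}, together with the exponential estimate \eqref{eq:estgol} on $\Theta^G_\pi$ on $H_i'$ and the integrability estimate \eqref{eq:es22} of the Harish-Chandra $\Xi$-function on $H_i\backslash G$, the argument used to prove \eqref{eq:HH1} adapts verbatim to show that the nonnegative double integral obtained by applying \eqref{eq:weylnoncom} to $|h|$ is finite. Tonelli then gives Fubini for $h$, so \eqref{eq:weylnoncom} holds for $h$ and the iterated integral on the right-hand side of \eqref{eq:weylnoncom} converges absolutely. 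This already yields the a.e.\ well-definedness and the integrability of the function \eqref{eq:test2} on $H_i'$.

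Next I would identify the inner integral. For $\gamma\in H_i'$, regularity gives $Z(\gamma)=H_i$, so $Z^0(\gamma)$ is the identity component of $H_i$; the Cartan decomposition \eqref{eq:Hcartan} together with the analogous decomposition for $Z^0(\gamma)$ shows that
\begin{align*}
[H_i:Z^0(\gamma)]=[H_i\cap K:K^0(\gamma)].
\end{align*}
The natural covering $Z^0(\gamma)\backslash G\to H_i\backslash G$ of that degree, applied to the $Z(\gamma)$-invariant function $g\mapsto \Tr[p_t^{X,\tau}(g^{-1}\gamma g)]$, combined with the volume identity
\begin{align*}
\vol\bigl(K^0(\gamma)\backslash K\bigr)=[H_i\cap K:K^0(\gamma)]\,\vol\bigl((H_i\cap K)\backslash K\bigr),
\end{align*}
transforms Definition \ref{def:orbital} into
\begin{align*}
\Tr^{[\gamma]}\!\left[\exp\!\left(-tC^{\fg,X,\tau}/2\right)\right]
=\frac{1}{\vol((H_i\cap K)\backslash K)}\int_{g\in H_i\backslash G}\Tr\!\left[p_t^{X,\tau}(g^{-1}\gamma g)\right]dv_{H_i\backslash G}.
\end{align*}

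Finally, substituting this identity into \eqref{eq:weylnoncom} applied to $h$, and comparing to \eqref{eq:HH1} for $f=\Tr[p_t^{X,\tau}]\in\cS(G)$, yields \eqref{eq:test1}. The main obstacle is really the bookkeeping in the second step: one must verify that the component count of $H_i$ and the volume factor $\vol(K^0(\gamma)\backslash K)$ conspire, via the Cartan decomposition of $H_i$, to produce exactly the prefactor $\vol(K\cap H_i\backslash K)/|W(H_i,G)|$ appearing in \eqref{eq:test1}; the extension of Weyl's formula to $h$ is otherwise routine given the estimates already used in Proposition \ref{prop:HH}.
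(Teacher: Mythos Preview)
Your proposal is correct and follows essentially the same approach as the paper: apply the Weyl integration formula \eqref{eq:weylnoncom} to $h$, justify the passage via Fubini using the $L^1$-bound already established in Proposition~\ref{prop:HH}, and identify the inner $H_i\backslash G$-integral with the orbital integral via the finite cover $H_i^0\backslash G\to H_i\backslash G$ and the index relation $[H_i:H_i^0]=[H_i\cap K:K\cap H_i^0]$ coming from the Cartan decomposition \eqref{eq:Hcartan}. One small correction: for regular $\gamma\in H_i'$ the paper only uses $Z^0(\gamma)=H_i^0\subset H_i\subset Z(\gamma)$, not the stronger equality $Z(\gamma)=H_i$ that you assert; however your argument actually only needs $Z^0(\gamma)=H_i^0$, so this does not affect the validity.
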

\begin{proof}Since $\Tr\big[p^{X,\tau}_t(g)\big] \Theta^G_\pi(g)\in L^1(G)$, by \eqref{eq:weylnoncom} and by Fubini Theorem, the function
\begin{align}\label{eq:weylob2}
  \gamma\in H_i\to\(\int_{g\in H_i\backslash G}\Tr^{E_\tau}\[p_t^{X,\tau}\(g^{-1}\gamma g\)\]dv_{H_i\backslash G} \)\Theta^G_\pi(\gamma)\left|\mathrm{det}(1-\Ad(\gamma))|_{\fg/\fh_i}\right|
\end{align}
is  almost everywhere well defined and integrable on $H_i$.

Take $\gamma\in H_i'$. Since $H_i$ is abelian, we have
\begin{align}\label{eq:62}
Z^0(\gamma)=H_i^0\subset H_i\subset  Z(\gamma).
\end{align}
We have a finite covering space $H^0_i\backslash  G\to H_i\backslash  G$. Note that
\begin{align}\label{eq:coverHK}
\[H_i:H_i^0\]=\[K\cap H_i:K\cap H^0_i\].
\end{align}
 By \eqref{eq:TRrz}, \eqref{eq:62} and \eqref{eq:coverHK}, if $\gamma\in H'_i$, we have
\begin{multline}\label{eq:weylob}
  \int_{H_i\backslash G}\Tr^{E_\tau}\[p_t^{X,\tau}(g^{-1}\gamma g)\]dv_{H_i\backslash G} =\frac{\vol(K^0(\gamma)\backslash K)}{[H_i:H_i^0]}\Tr^{[\gamma]}\[\exp\(-tC^{\fg,X,\tau}/2\)\]\\
  =\vol(K\cap H_i\backslash K)\Tr^{[\gamma]}\[\exp\(-tC^{\fg,X,\tau}/2\)\].
\end{multline}

Since $H_i-H_i'$ has zero measure, and by \eqref{eq:weylob2} and 
\eqref{eq:weylob}, the function \eqref{eq:test2} defines an 
$L^1$-function on $H_i'$. By \eqref{eq:weylnoncom} and 
\eqref{eq:weylob}, we get \eqref{eq:test1}. The proof of our  proposition is completed.
\end{proof}


\subsubsection{The $(\fg,K)$-cohomology}
If $V$ is a Harish-Chandra $(\fg_\bC,K)$-module, let $H^\cdot(\fg,K;V)$ \index{H@$H^\cdot(\fg,K;V)$} be the $(\fg,K)$-cohomology of $V$ \cite[Section I.1.2]{BW}. The following two theorems are the essential algebraic ingredients in our proof of \eqref{eq:t02z}.

\begin{thm}\label{thm:vankey}
Let $V$ be a Harish-Chandra $(\fg_\bC,K)$-module with generalized infinitesimal  character $\chi$.
Let $W$ be a finite dimensional $\fg_\bC$-module with infinitesimal character.
 Let $\chi^{W^*}$ be the  infinitesimal character of $W^*$. If $\chi\neq \chi^{W^*}$, then
\begin{align}\label{eq:key1}
   H^\cdot(\fg,K;V\otimes W)=0.
\end{align}
\end{thm}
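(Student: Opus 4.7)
The plan is to recast the statement as a vanishing result for $\Ext$ in the category of $(\fg,K)$-modules, and then to invoke a version of Wigner's lemma on the two compatible actions of $\cZ(\fg_\bC)$.

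First, I would use the natural isomorphism
\begin{align*}
H^\cdot(\fg,K;V\otimes W)\simeq \Ext^\cdot_{(\fg,K)}(\mathbf{C},V\otimes W).
\end{align*}
Since $W$ is finite dimensional, the canonical map $v\otimes w\mapsto (\phi\mapsto \phi(w)v)$ yields an isomorphism of $(\fg_\bC,K)$-modules $V\otimes W\simeq \Hom_\bC(W^*,V)$. Combining this with the usual tensor-hom adjunction at the level of derived functors (cf.\ Borel--Wallach \cite{BW}, Chapter~I), one obtains
\begin{align*}
H^\cdot(\fg,K;V\otimes W)\simeq \Ext^\cdot_{(\fg,K)}(W^*,V).
\end{align*}

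The second and crucial step is to describe two commuting actions of $\cZ(\fg_\bC)$ on $\Ext^\cdot_{(\fg,K)}(W^*,V)$. On the one hand, $\cZ(\fg_\bC)$ acts naturally on the target $V$, hence on any injective resolution of $V$, and therefore on the $\Ext$-group. On the other hand, it acts on the source $W^*$, hence functorially on the $\Ext$-group by precomposition. These two actions coincide since $\cZ(\fg_\bC)$ lies in the center of $U(\fg_\bC)$, and the standard argument (commutation of source and target actions for any $\Ext^\cdot$ in a module category over a ring with center) applies.

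Now I would conclude as follows. Since $W^*$ is irreducible with infinitesimal character $\chi^{W^*}$, the action of any $z\in \cZ(\fg_\bC)$ on $W^*$ is the scalar $\chi^{W^*}(z)$, so $z-\chi^{W^*}(z)$ annihilates $\Ext^\cdot_{(\fg,K)}(W^*,V)$. Since $V$ has generalized infinitesimal character $\chi$, for every $z\in \cZ(\fg_\bC)$ there exists $N\in \mathbf{N}$ with $(z-\chi(z))^N=0$ on $V$; this nilpotence propagates to the $\Ext$-group, so $(z-\chi(z))^N=0$ there as well. Assuming $\chi\neq \chi^{W^*}$, pick $z_0\in \cZ(\fg_\bC)$ with $\chi(z_0)\neq \chi^{W^*}(z_0)$. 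Then on $\Ext^\cdot_{(\fg,K)}(W^*,V)$ the operator $z_0$ satisfies both $(z_0-\chi(z_0))^N=0$ and $z_0-\chi^{W^*}(z_0)=0$; the polynomials $(X-\chi(z_0))^N$ and $X-\chi^{W^*}(z_0)$ are coprime in $\bC[X]$, which forces $\Ext^\cdot_{(\fg,K)}(W^*,V)=0$.

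The main conceptual point, and the place where care is needed, is the verification that the two $\cZ(\fg_\bC)$-actions on $\Ext$ really do coincide, together with the fact that generalized infinitesimal characters pass through the cohomological machinery; this is classical but must be spelled out using a projective or injective resolution in the $(\fg,K)$-module category. Everything else is routine linear algebra once this framework is in place.
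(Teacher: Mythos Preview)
Your argument is correct and is the direct Wigner-lemma route: identify $H^\cdot(\fg,K;V\otimes W)$ with $\Ext^\cdot_{(\fg,K)}(W^*,V)$ and exploit the coincidence of the two central actions, so that on this $\Ext$-group the element $z_0$ is simultaneously equal to the scalar $\chi^{W^*}(z_0)$ and nilpotent around $\chi(z_0)$. One small slip: you write ``since $W^*$ is irreducible''; the hypothesis is only that $W$ (hence $W^*$) has an infinitesimal character, and that is exactly what you use, so drop the word ``irreducible''.

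The paper takes a different, more modular path. It first quotes the result for modules with an \emph{honest} infinitesimal character directly from Borel--Wallach \cite[Theorem I.5.3(ii)]{BW}, and then reduces the generalized case to this one by choosing a composition series $V=V_{n_1}\supset\cdots\supset V_0\supset 0$, observing that each irreducible quotient $V_i/V_{i-1}$ has infinitesimal character $\chi$, and running the long exact sequence in $(\fg,K)$-cohomology inductively. Your approach handles the nilpotent/generalized case in one stroke at the level of the $\Ext$-group and avoids the composition-series bookkeeping; the paper's approach is shorter on the page because the honest-character case is outsourced to a citation, and it makes the passage from ``infinitesimal character'' to ``generalized infinitesimal character'' completely transparent via filtrations. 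Both are standard and either would be acceptable here.
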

\begin{proof}
  If  $\chi$ is  the infinitesimal character of $V$,  then  \eqref{eq:key1} is  a consequence of  \cite[Theorem I.5.3 (ii)]{BW}.

In general, let
\begin{align}
V=V_{n_1}\supset V_{n_1-1}\supset \cdots \supset V_0\supset V_{-1}=0
\end{align}
be the composition series of $V$. Then for $0\l i \l n_1$, $V_{i}/V_{i-1}$ is an irreducible Harish-Chandra $(\fg_\bC,K)$-module with infinitesimal character $\chi$. Therefore, for all $0\l i \l n_1$, we have
\begin{align}\label{eq:gk0}
  H^\cdot(\fg,K;(V_{i}/V_{i-1})\otimes W)=0.
\end{align}

We will show by induction that, for all $0\l i\l n_1$,
\begin{align}\label{eq:gk1}
  H^\cdot(\fg,K;V_i\otimes W)=0.
\end{align}
 By \eqref{eq:gk0}, Equation \eqref{eq:gk1} holds for $i=0$. Assume that
\eqref{eq:gk1} holds for some $i$ with $0\l i\l n_1$. Using the short exact  sequence of Harish-Chandra $(\fg_\bC,K)$-modules
 \begin{align}
   0\to V_{i}\to V_{i+1}\to V_{i+1}/V_{i}\to 0,
 \end{align}
  we get the long exact sequence of cohomologies
  \begin{align}\label{eq:loex}
   \cdots \to H^j(\fg,K;V_{i}\otimes W)\to H^j(\fg,K;V_{i+1}\otimes W)\to H^j(\fg,K;(V_{i+1}/V_{i})\otimes W)\to \cdots.
 \end{align}
By \eqref{eq:gk0}, \eqref{eq:loex} and by the induction hypotheses, 
Equation \eqref{eq:gk1} holds for $i+1$, which completes the proof of 
\eqref{eq:gk1}. The proof of  our theorem is completed.
\end{proof}

We denote by $\widehat{G}_u$ the unitary dual of $G$, that is  the set of equivalence classes  of complex irreducible unitary representations $\pi$ of $G$ on Hilbert spaces $V_\pi$. If $(\pi,V_\pi)\in \widehat{G}_u$, by \cite[Theorem 8.1]{Knappsemi}, $\pi$ is irreducible admissible.  Let $\chi_\pi$  be the corresponding infinitesimal character. 

\begin{thm}\label{thm:vankey1}
If  $(\pi,V_\pi)\in \widehat{G}_u$, then
\begin{align}\label{eq:key2}
 \chi_{\pi}\neq0 \iff H^\cdot(\fg,K;V_{\pi,K})=0.
\end{align}
\end{thm}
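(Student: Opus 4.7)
The plan is to prove the two implications separately, with the forward direction being essentially a corollary of Theorem \ref{thm:vankey}, and the reverse direction relying on deep structural results from the representation theory of unitary $(\fg,K)$-modules.

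For the implication $\chi_\pi \neq 0 \implies H^\cdot(\fg,K;V_{\pi,K}) = 0$, I would apply Theorem \ref{thm:vankey} directly, taking $V = V_{\pi,K}$ and $W = \mathbb{C}$ the trivial one-dimensional $\fg_\bC$-module. The infinitesimal character $\chi^{W^*}$ of the dual trivial module is the trivial character, whose Harish-Chandra parameter is $\rho^\fu$ (by the paper's convention, $\chi_\Lambda = 0$ iff $\Lambda$ is $W(\fh_{i\bR},\fu)$-conjugate to $\rho_i^\fu$). Thus the condition $\chi_\pi \neq 0$ translates exactly into $\chi_\pi \neq \chi^{W^*}$, and \eqref{eq:key1} yields $H^\cdot(\fg,K;V_{\pi,K} \otimes \mathbb{C}) = H^\cdot(\fg,K;V_{\pi,K}) = 0$.

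The reverse direction $H^\cdot(\fg,K;V_{\pi,K}) = 0 \implies \chi_\pi \neq 0$ I would prove by contrapositive. Assume $\chi_\pi = 0$, which means $V_{\pi,K}$ has trivial infinitesimal character (i.e., the same infinitesimal character as the trivial $\fg$-module). The key algebraic ingredient is the theorem of Salamanca-Riba \cite{Salamanca}, which shows that any irreducible unitary $(\fg_\bC,K)$-module with the same infinitesimal character as a finite-dimensional irreducible representation is isomorphic to an $A_\fq(\lambda)$-module in the sense of Vogan-Zuckerman \cite{VoganZuckerman}. Applied to our situation (where the finite-dimensional module is trivial, so $\lambda = 0$), this forces $V_{\pi,K} \simeq A_\fq(0)$ for some $\theta$-stable parabolic subalgebra $\fq \subset \fg_\bC$. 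By the Vogan-Zuckerman computation of $(\fg,K)$-cohomology of the $A_\fq(0)$-modules \cite[Theorem 5.5]{VoganZuckerman}, such a module has nontrivial $(\fg,K)$-cohomology (concentrated in degrees determined by $\fq$, with the lowest nonzero degree giving nonzero cohomology). Hence $H^\cdot(\fg,K;V_{\pi,K}) \neq 0$, which is the contrapositive of what we wanted.

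The main obstacle — or at least the only nontrivial ingredient — is the reverse direction, which is not something one can prove by a short calculation: it rests entirely on the combination of (i) Salamanca-Riba's classification of unitary representations with trivial infinitesimal character as $A_\fq(0)$-modules, and (ii) the Vogan-Zuckerman explicit formula showing that $A_\fq(0)$-modules have nonvanishing $(\fg,K)$-cohomology. Beyond citing these theorems, the proof itself is short, so the role of this subsection is really to package these two deep results into the statement needed for evaluating $r_\rho$ in the next subsection.
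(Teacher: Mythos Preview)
Your proposal is correct and follows essentially the same approach as the paper: the forward implication is reduced to Theorem~\ref{thm:vankey} with $W=\bC$, and the reverse implication (argued by contrapositive) is obtained by combining Salamanca-Riba's result that an irreducible unitary representation with trivial infinitesimal character must be an $A_\fq(0)$-module with the Vogan--Zuckerman computation showing such modules have nonzero $(\fg,K)$-cohomology. The paper additionally cites Vogan~\cite{Vogan2} alongside \cite{VoganZuckerman} for the classification side, but otherwise your argument matches the paper's.
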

\begin{proof}
  The  direction $\implies$ of \eqref{eq:key2} is \eqref{eq:key1}.
The  direction  $\impliedby$  of \eqref{eq:key2} is a consequence of Vogan-Zuckerman \cite{VoganZuckerman}, Vogan \cite{Vogan2} and  Salamanca-Riba \cite{Salamanca}. Indeed, the irreducible unitary representations with nonvanishing $(\fg,K)$-cohomology are classified
and constructed in \cite{VoganZuckerman, Vogan2}.
By \cite{Salamanca}, the irreducible unitary representations with vanishing infinitesimal character is
in the class specified by  Vogan and Zuckerman, which implies that their $(\fg,K)$-cohomology are nonvanishing.
\end{proof}

\begin{re} The condition that $\pi$ is unitary is crucial in the \eqref{eq:key2}. See \cite[Section 9.8.3]{WallachI} for a counterexample.
\end{re}
%

\subsubsection{The Hecht-Schmid character formula}
Let us recall the main result of \cite{HechtSchmid}. Let $Q\subset G$ be a standard parabolic subgroup of $G$ with  Lie algebra $\fq\subset \fg$. Let
\begin{align}
&    Q=M_QA_QN_Q,&\fq=\fm_\fq\oplus \fa_\fq\oplus \fn_\fq
\end{align}
be the corresponding Langlands decompositions \cite[Section V.5]{Knappsemi}.

Put $ \Delta^+(\fa_\fq,\fn_\fq)$ to be the set of all linear forms $\alpha\in \fa^*_\fq$ such that there exists a nonzero element $Y\in \fn_\fq$ such that for all $a\in \fa_\fq$,
\begin{align}
  \ad(a)Y=\<\alpha,a\>Y.
\end{align}
Set
\begin{align}
  \fa_\fq^-=\{a\in \fa_\fq: \<\alpha,a\><0, \hbox{ for all } \alpha\in \Delta^+(\fa,\fn)\}.
\end{align}
Put  $\(M_QA_Q\)^{-}$ to be the interior in $M_QA_Q$  of the set
\begin{align}\label{eq:HSMA-}
  \{g\in M_QA_Q: \mathrm{det}\big(1-\Ad(ge^a)\big)|_{\fn_\fq}\g 0 \hbox{ for all } a\in \fa_\fq^-\}.
\end{align}

If $V$ is a Harish-Chandra $(\fg_\bC,K)$-module, let $H_\cdot(\fn_{\fq},V)$ be the $\fn_{\fq}$-homology of $V$.  By \cite[Proposition 2.24]{HechtSchmid}, $H_\cdot(\fn_\fq,V)$ \index{H@$H_\cdot(\fn_\fq,V)$} is a Harish-Chandra  $(\fm_{\fq\bC}\oplus\fa_{\fq\bC}, K\cap M_Q)$-module. We denote by $\Theta^{M_QA_Q}_{H_\cdot(\fn_{\fq},V)}$ the corresponding global character. Also, $M_QA_Q$ acts on $\fn_\fq$. We denote by $\Theta^{M_QA_Q}_{\Lambda^\cdot\(\fn_{\fq}\)}$ the character of $\Lambda^\cdot(\fn_\fq)$.
By \cite[Theorem 3.6]{HechtSchmid}, the following identity of analytic functions on $\(M_QA_Q\)^{-}\cap G'$ holds:
\begin{align}\label{eq:smid}
  \Theta^G_{V}|_{\(M_QA_Q\)^{-}\cap G'}=\frac{\sum^{\dim \fn_\fq}_{i=0}(-1)^i\Theta^{M_QA_Q}_{H_i(\fn_{\fq},V)}}{\sum^{\dim \fn_\fq}_{i=0}(-1)^i\Theta^{M_QA_Q}_{\Lambda^i\(\fn_{\fq}\)}}\bigg|_{\(M_QA_Q\)^{-}\cap G'}.
\end{align}

Take a $\theta$-stable
 Cartan subalgebra $\fh^{\fm_\fq}$ of $\fm_\fq$. Set  $\fh_\fq=\fh^{\fm_\fq}\oplus \fa_\fq$. Then  $\fh_\fq$ is a $\theta$-stable
 Cartan subalgebra of both $\fm_\fq\oplus\fa_\fq$ and $\fg$. Put $\fu_\fq$ to be the compact form of $\fm_{\fq}\oplus\fa_{\fq}$. Then
$\fh_{\fq\bR}$, the real form of $\fh_\fq$, is a Cartan subalgebra of both $\fu_\fq$ and $\fu$. The real root system of $\Delta(\fh_{\fq\bR},\fu_\fq)$ is a subset of $ \Delta(\fh_{\fq\bR},\fu)$ consisting of the elements whose restriction to $\fa_\fq$  vanish. The set of positive real roots $\Delta^+(\fh_{\fq\bR},\fu)\subset \Delta(\fh_{\fq\bR},\fu)$  determines a set of positive real roots $\Delta^+(\fh_{\fq\bR}, \fu_\fq)\subset \Delta(\fh_{\fq\bR}, \fu_\fq)$. Let $\rho^{\fu}_\fq$ and $\rho^{\fu_\fq}_\fq$ be the corresponding half sums of positive real roots.

 If $V$ possesses  an infinitesimal character with Harich-Chandra parameter $\Lambda\in \fh^*_{\fq\bC}$, by \cite[Corollary 3.32]{HechtSchmid},  $H_\cdot(\fn_\fq,V)$ can be decomposed in the sense of \eqref{eq:V=Vchi}, where the generalized infinitesimal characters are given by
\begin{align}
\chi_{w\Lambda+\rho_\fq^\fu-\rho_\fq^{\fu_\fq}},
\end{align}
for some $w\in W(\fh_{\fq\bR},\fu)$.

Also, $H_\cdot(\fn,V)$ is a Harish-Chandra $(\fm_{\fq\bC},K\cap M_Q)$-module. For $\nu\in \fa_{\fq\bC}^*$, let $H_\cdot(\fn,V)_{[\nu]}$ be the largest submodule of $H_\cdot(\fn,V)$ on which $z-\<2\sqrt{-1}\pi\nu,z\>$ acts nilpotently for all $z\in \fa_{\fq\bC}$. Then,
\begin{align}
  H_\cdot(\fn,V)=\bigoplus_{\nu}H_\cdot(\fn,V)_{[\nu]},
\end{align}
where $\nu=(w\Lambda+\rho_\fq^\fu-\rho_\fq^{\fu_\fq})|_{\fa_{\fq\bC}}$, for some $w\in W(\fh_{\fq\bR},\fu)$. Let $\Theta^{M_Q}_{H_\cdot(\fn,V)}$ and $\Theta^{M_Q}_{H_\cdot(\fn,V)_{[\nu]}}$ be the corresponding global characters. We have the identities of $L^1_{loc}$-functions: for $m\in M_Q$ and $a\in \fa_\fq$,
\begin{align}\label{eq:MAtoM}
&\Theta^{M_QA_Q}_{H_\cdot(\fn,V)}\(me^a\)=\sum_{\nu}e^{2\sqrt{-1}\pi\<\nu,a\>}  \Theta^{M_Q}_{H_\cdot(\fn,V)_{[\nu]}}(m),&\Theta^{M_Q}_{H_\cdot(\fn,V)}\(m\)=\sum_{\nu}  \Theta^{M_Q}_{H_\cdot(\fn,V)_{[\nu]}}(m),
\end{align}
where $\nu=(w\Lambda+\rho_\fq^\fu-\rho_\fq^{\fu_\fq})|_{\fa_{\fq\bC}}$, for some $w\in W(\fh_{\fq\bR},\fu)$.


Consider now $G$ is such that $\delta(G)=1$ and has compact center. 
Use the notation in Subsection \ref{sec:dG=1}. Take $\fq=\fm\oplus \fb\oplus \fn$, and let $Q=M_QA_QN_Q\subset G$ be the corresponding parabolic subgroup. Then $M$ is the connected component of the identity in $M_Q$. Since $K\cap M_Q$ has  a finite  number of connected components, $H_\cdot(\fn,V)$ is still a Harish-Chandra $(\fm_\bC\oplus \fb_\bC,K_M)$-module.  Also, it is a Harish-Chandra $(\fm_\bC,K_M)$-module. Let $\Theta^{MA_Q}_{H_\cdot(\fn,V)}$ and $\Theta^{M}_{H_\cdot(\fn,V)}$ be the respective global characters.

Recall that $H=\exp(\fb)T\subset MA_Q$ is the Cartan subgroup of $MA_Q$.

\begin{prop}\label{prop:G1mq}
  We have
\begin{align}\label{eq:G1mq}
 \bigcup_{g\in MA_Q} g H' g^{-1}\subset (M_QA_Q)^{-}\cap G'.
\end{align}
\end{prop}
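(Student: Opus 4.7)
The claim splits into two assertions about $g\gamma g^{-1}$: it lies in the regular set $G'$, and it lies in $(M_QA_Q)^-$. The first is immediate since $H'\subset G'$ by definition and $G'$ is stable under all $G$-conjugation. For the second, I would begin by verifying that the defining condition of $(M_QA_Q)^-$ is invariant under $MA_Q$-conjugation. Since the Langlands decomposition gives $[M_Q,A_Q]=1$, the element $e^a$ is central in $M_QA_Q$ for $a\in\fb$, so for $h\in MA_Q$ we have $h(ge^a)h^{-1}=(hgh^{-1})e^a$. Moreover, $\mathrm{Ad}(MA_Q)$ preserves $\fn$ (because $MA_Q\subset Q$ normalizes $N_Q$), hence
\begin{align*}
\det\big(1-\mathrm{Ad}(hgh^{-1}e^a)\big)\big|_{\fn}=\det\big(1-\mathrm{Ad}(ge^a)\big)\big|_{\fn}.
\end{align*}
Thus the condition is $MA_Q$-conjugation invariant, and it suffices to prove $H'\subset (M_QA_Q)^-$.

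Take $\gamma=e^{a_\gamma}k\in H'$ with $a_\gamma\in\fb$, $k\in T$, and $a\in\fb^-=\{a:\<\alpha,a\><0\}$. By Proposition \ref{prop:nnam1}, $\mathrm{Ad}(e^{a_\gamma+a})$ acts on $\fn$ as the scalar $e^{\<\alpha,a_\gamma+a\>}$, so the eigenvalues of $\mathrm{Ad}(\gamma e^a)$ on $\fn_\bC$ are $\mu_j e^{\<\alpha,a_\gamma+a\>}$ where the $\mu_j$ are the eigenvalues of $\mathrm{Ad}(k)|_{\fn_\bC}$. The crucial observation is that $\fn$ has no trivial $T$-subrepresentation: indeed, $\mathfrak b(1)=\fb$ by \eqref{eq:defbr}, so the $T$-fixed subspace of $\fp=\fb\oplus\fp_{\fm}\oplus\fp^\bot(\fb)$ equals $\fb$, forcing the $T$-fixed subspace of $\fn\simeq\fp^\bot(\fb)$ to be zero. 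Consequently the $T$-weights on $\fn$ come in nontrivial conjugate pairs $\chi,-\chi=\bar\chi$, and the $\mu_j$ pair up as $\mu,\bar\mu$. Therefore
\begin{align*}
\det\big(1-\mathrm{Ad}(\gamma e^a)\big)\big|_{\fn}=\prod_{\text{pairs }j}\bigl|1-\mu_j e^{\<\alpha,a_\gamma+a\>}\bigr|^2\geqslant 0,
\end{align*}
so $\gamma$ lies in the closed set $S=\{x\in M_QA_Q:\det(1-\mathrm{Ad}(xe^a))|_{\fn}\geqslant 0\ \forall a\in\fb^-\}$.

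To show $\gamma\in (M_QA_Q)^-=\mathrm{int}(S)$, I would exhibit a whole neighborhood of $\gamma$ in $M_QA_Q$ contained in $S$. By the analogue of Subsection \ref{sec:regular} applied to the reductive group $MA_Q$, the union $MA_Q\cdot H'$ is open in $MA_Q$ and contains an open dense neighborhood of $\gamma$. On this open dense set the condition defining $S$ holds by the previous paragraph combined with the conjugation invariance established in the first paragraph. Since $S$ is closed (being the intersection of the closed sets $\{f_a\geqslant 0\}$ over $a\in\fb^-$), the condition extends by continuity to the full neighborhood of $\gamma$, yielding $\gamma\in (M_QA_Q)^-$.

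\textbf{Main obstacle.} The delicate step is the interior claim rather than the inequality itself: one must promote the pointwise sign statement $f_a(\gamma)\geqslant 0$ (which can even be an equality when $\<\alpha,a_\gamma\>>0$ and $a$ is chosen so that $\<\alpha,a_\gamma+a\>=0$) to the topological interior statement. This forces the use of the conjugation-invariance together with density of $MA_Q\cdot H'$; an alternative would be to argue directly that eigenvalues of $\mathrm{Ad}(x e^a)|_\fn$ vary continuously with $x$ and retain their complex-conjugate pairing structure for $x$ near $\gamma$, so the product of squared moduli stays non-negative under perturbation.
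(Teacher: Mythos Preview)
Your proof is correct and follows the same strategy as the paper's: set $L'=\bigcup_{g\in MA_Q}gH'g^{-1}$, observe that $L'$ is open in $MA_Q$, check $L'\subset S$ (the closed set in \eqref{eq:HSMA-}) via conjugation invariance plus a computation on $H$, and conclude $L'\subset\mathrm{int}(S)=(M_QA_Q)^-$.

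Two remarks. First, the nonnegativity step: the paper simply invokes \eqref{eq:det1/2} and \eqref{eq:toot2}, which combine to give
\[
\det\big(1-\Ad(\gamma)\big)\big|_{\fn}=e^{l\<\alpha,a\>}\,\big|\det\big(1-\Ad(\gamma)\big)|_{\fz_0^\bot}\big|^{1/2}\geqslant 0.
\]
Your eigenvalue-pairing argument is a legitimate alternative, and it can be shortened: since $K_M$ is connected, $\Ad(k)|_{\fn}\in\mathrm{SO}(\fn)$, so in dimension $2l$ with determinant $1$ the real eigenvalues $\pm 1$ each occur with even multiplicity and the rest pair as complex conjugates; hence $\det(1-c\Ad(k))|_{\fn}=\prod_j|1-c\mu_j|^2\geqslant 0$ for every real $c$. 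The separate verification that $\fn$ has no trivial $T$-weight is not needed.

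Second, your interior step is over-engineered. As soon as $L'$ is open (which you correctly deduce from the regular-element structure in the reductive group $MA_Q$) and $L'\subset S$, one has $L'\subset\mathrm{int}(S)$ directly. There is no need for density, for the closedness of $S$, or for worrying about the boundary case $f_a(\gamma)=0$; these concerns in your ``Main obstacle'' paragraph disappear once the openness of $L'$ is in hand.
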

\begin{proof}Put $L'=\bigcup_{g\in MA_Q} g H' g^{-1}\subset MA_Q\cap G'$. Then $L'$ is an open subset of $MA_Q$. It is enough to show that $L'$ is a subset of \eqref{eq:HSMA-}.

By \eqref{eq:det1/2} and \eqref{eq:toot2}, for $\gamma=e^ak^{-1}\in H$ with $a\in \fb$ and $k\in T$, we have
$  \mathrm{det} \big(1-\Ad(\gamma)\big)|_\fn\g 0.$ Therefore, $L'$ is 
a subset of \eqref{eq:HSMA-}.  The proof of our proposition is completed.
\end{proof}

\subsection{Formulas for $r_{\eta,\rho}$ and $r_j$}\label{sec:7.2}
Recall that $\widehat{p}:\Gamma\backslash G\to Z$ is the natural projection. The group $G$ acts unitarily  on the right on $L^2(\Gamma\backslash G,\widehat{p}^*F)$. By \cite[p.23, Theorem]{GMP}, we can decompose $L^2(\Gamma\backslash G,\widehat{p}^*F)$ into a
direct sum of unitary representations of $G$,
\begin{align}\label{eq:Geldecomp}
 L^2\(\Gamma\backslash G,\widehat{p}^*F\)=\bigoplus_{\pi\in \widehat{G}_u}^{\mathrm{Hil}}n_{\rho}(\pi)V_\pi,
\end{align}
with $n_\rho(\pi)<\infty$.

Recall that $\tau$  is a real finite dimensional orthogonal representation of $K$ on the real Euclidean space $E_\tau$, and that $C^{\fg,Z,\tau,\rho}$ is the Casimir element of $G$ acting on $C^\infty(Z,\cF_\tau\otimes_\bC F)$.
By \eqref{eq:Geldecomp}, we have
\begin{align}\label{eq:kerC=sum}
  \ker C^{\fg,Z,\tau,\rho}=\bigoplus_{\pi\in \widehat{G}_u, \chi_\pi(C^{\fg})=0}n_\rho(\pi) \big(V_{\pi,K}\otimes_\bR E_\tau\big)^K.
\end{align}
By the properties of elliptic operators, the sum on right-hand side of \eqref{eq:kerC=sum} is finite.

We will give two applications of \eqref{eq:kerC=sum}.
In our first application, we take $E_\tau=\Lambda^\cdot(\fp^*)$.
\begin{prop}\label{prop:mushi}
We have
\begin{align}\label{eq:mushifi}
H^\cdot(Z,F)=\bigoplus_{\pi\in \widehat{G}_u,\chi_\pi=0}n_{\rho}(\pi)H^\cdot(\fg,K;V_{\pi,K}).
\end{align}
If $H^\cdot(Z,F)=0$, then for any $\pi\in \widehat{G}_u$ such that $\chi_\pi=0$, we have
\begin{align}\label{eq:mushiva}
  n_\rho(\pi)=0.
\end{align}
\end{prop}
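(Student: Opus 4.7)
The plan is to combine three ingredients: (i) Hodge theory, identifying $H^\cdot(Z,F)$ with $\ker \Box^Z$; (ii) the decomposition \eqref{eq:kerC=sum}, applied with $E_\tau=\Lambda^\cdot(\fp^*)$, to write this kernel as a direct sum over the unitary dual of $G$; and (iii) the Kuga--Matsushima identity, which turns each resulting invariant subspace into a $(\fg,K)$-cohomology. The isomorphism \eqref{eq:mushifi} will then follow from the implication $\chi_\pi\neq 0\Rightarrow H^\cdot(\fg,K;V_{\pi,K})=0$ supplied by Theorem \ref{thm:vankey1}, while \eqref{eq:mushiva} will follow from the reverse implication in the same theorem.

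Concretely, first I would take $\tau=\Lambda^\cdot(\fp^*)$ in \eqref{eq:kerC=sum} and use \eqref{eq:D=C} to get
\begin{align}\label{eq:propStep1}
\ker\Box^Z \;=\; \bigoplus_{\pi\in\widehat{G}_u,\,\chi_\pi(C^\fg)=0} n_\rho(\pi)\,\bigl(V_{\pi,K}\otimes_\bR\Lambda^\cdot(\fp^*)\bigr)^K.
\end{align}
The $(\fg,K)$-cohomology $H^\cdot(\fg,K;V_{\pi,K})$ is computed by the complex $\bigl(V_{\pi,K}\otimes_\bR\Lambda^\cdot(\fp^*)\bigr)^K$ with its natural Koszul-type differential, and the Kuga--Matsushima identity (see e.g.\ \cite[Ch.~II]{BW}) shows that the associated Laplacian is, up to normalisation, the action of the Casimir. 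For irreducible $\pi$ this Casimir acts by the scalar $\chi_\pi(C^\fg)$, so whenever $\chi_\pi(C^\fg)=0$ every cochain is harmonic and
\begin{align}\label{eq:propStep2}
\bigl(V_{\pi,K}\otimes_\bR\Lambda^\cdot(\fp^*)\bigr)^K \;=\; H^\cdot(\fg,K;V_{\pi,K}).
\end{align}

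The next step is to shrink the index set in \eqref{eq:propStep1} from $\{\chi_\pi(C^\fg)=0\}$ to $\{\chi_\pi=0\}$. The inclusion $\chi_\pi=0 \Rightarrow \chi_\pi(C^\fg)=0$ is automatic. Conversely, if $\pi\in\widehat{G}_u$ satisfies $\chi_\pi(C^\fg)=0$ but $\chi_\pi\neq 0$, then by Theorem \ref{thm:vankey1} we have $H^\cdot(\fg,K;V_{\pi,K})=0$, and \eqref{eq:propStep2} shows that such $\pi$ contribute nothing to \eqref{eq:propStep1}. Combining \eqref{eq:propStep1}, \eqref{eq:propStep2} and the Hodge isomorphism \eqref{eq:Hodge} then yields \eqref{eq:mushifi}. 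For the vanishing statement, if $H^\cdot(Z,F)=0$, each summand on the right-hand side of \eqref{eq:mushifi} must vanish; but the reverse implication in Theorem \ref{thm:vankey1} (the deep input of Vogan--Zuckerman and Salamanca-Riba) forces $H^\cdot(\fg,K;V_{\pi,K})\neq 0$ for every $\pi\in\widehat{G}_u$ with $\chi_\pi=0$, whence $n_\rho(\pi)=0$.

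The only real subtlety to nail down is the passage from the $L^2$-spectral decomposition \eqref{eq:Geldecomp} to the algebraic spaces $(V_{\pi,K}\otimes_\bR\Lambda^\cdot(\fp^*))^K$: elliptic regularity confines $\ker\Box^Z$ to smooth sections, and $K$-finiteness of harmonic forms of a fixed $K$-type lets one replace the unitary globalisation $V_\pi$ by its underlying Harish-Chandra module $V_{\pi,K}$. This is standard and is already implicit in \eqref{eq:kerC=sum}. Apart from that, all the substantive content is packaged into Theorem \ref{thm:vankey1}, so the hard part of the proof has already been done upstream.
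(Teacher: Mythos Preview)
Your proposal is correct and follows essentially the same route as the paper: apply \eqref{eq:kerC=sum} with $E_\tau=\Lambda^\cdot(\fp^*)$ together with Hodge theory \eqref{eq:Hodge} and \eqref{eq:D=C}, identify $(V_{\pi,K}\otimes_\bR\Lambda^\cdot(\fp^*))^K$ with $H^\cdot(\fg,K;V_{\pi,K})$ via the Kuga--Matsushima lemma \cite[Proposition II.3.1]{BW} when $\chi_\pi(C^\fg)=0$, shrink the index set to $\{\chi_\pi=0\}$ using the $\Rightarrow$ direction of Theorem~\ref{thm:vankey1} (equivalently \eqref{eq:key1}), and finally invoke the $\Leftarrow$ direction of Theorem~\ref{thm:vankey1} for \eqref{eq:mushiva}. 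The only cosmetic difference is that the paper cites \eqref{eq:key1} from Theorem~\ref{thm:vankey} rather than the forward implication of Theorem~\ref{thm:vankey1}, but these are the same statement.
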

\begin{proof}By Hodge theory, and by \eqref{eq:D=C}, \eqref{eq:kerC=sum}, we have
\begin{align}\label{eq:Hodv_Gecla}
  H^\cdot(Z,F)=\bigoplus_{\pi\in \widehat{G}_u,\chi_\pi(C^{\fg})=0}n_{\rho}(\pi)\big(V_{\pi,K}\otimes_\bR\Lambda^\cdot(\fp^*)\big)^K.
\end{align}
By Hodge theory for Lie algebras \cite[Proposition II.3.1]{BW}, if $\chi_\pi(C^{\fg})=0$, we have
\begin{align}\label{eq:HogLie}
  \big(V_{\pi,K}\otimes_\bR\Lambda^\cdot(\fp^*)\big)^K=H^\cdot(\fg,K;V_{\pi,K}).
\end{align}
From \eqref{eq:Hodv_Gecla} and \eqref{eq:HogLie}, we get
\begin{align}\label{eq:Mushi}
  H^\cdot(Z,F)=\bigoplus_{\pi\in \widehat{G}_u,\chi_\pi(C^{\fg})=0}n_{\rho}(\pi)H^\cdot(\fg,K;V_{\pi,K}).
\end{align}
By \eqref{eq:key1} and \eqref{eq:Mushi},
we get \eqref{eq:mushifi}.

By Theorem \ref{thm:vankey1}, and by \eqref{eq:mushifi}, we get 
\eqref{eq:mushiva}. The proof of our  proposition is completed.
\end{proof}
\begin{re}
	Equation \eqref{eq:mushifi} is 
	\cite[Proposition VII.3.2]{BW}.	When $\rho$ is a trivial 
	representation,  \eqref{eq:Mushi} is originally due to 
	Matsushima \cite{MatsushimaBetti}. 
\end{re}

In the rest of this section,  $G$ is supposed to be  $\delta(G)=1$ and has compact center. Recall that $\eta$ is a real finite dimensional representation of $M$ satisfying Assumption \ref{as:1}, and that $\widehat{\eta}$ is defined in \eqref{eq:hatvar}.
In our second application of \eqref{eq:kerC=sum}, we take $\tau=\widehat{\eta}$.

\begin{thm}\label{thm:711}
If $(\pi,V_\pi)\in \widehat{G}_u,$ then
\begin{multline}\label{eq:711}
 \dim_\bC\(V_{\pi,K} \otimes_\bR \widehat{\eta}^+\)^K-\dim_\bC\(V_{\pi,K} \otimes_\bR \widehat{\eta}^-\)^K\\
= \frac{1}{\chi(K/K_M)}\sum_{i=0}^{\dim \fp_\fm}\sum_{j=0}^{2l} (-1)^{i+j}\dim_\bC H^i\big(\fm, K_M; H_j(\fn,V_{\pi,K})\otimes_\bR E_\eta\big).
\end{multline}
\end{thm}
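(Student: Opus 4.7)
\medskip

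\noindent\textbf{Proof plan.} The strategy is to compute both sides via Proposition~\ref{prop:dimeq} and the Hecht--Schmid character formula, reducing everything to an orbital integral over the single Cartan subgroup $H = \exp(\fb)T$.

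First, I would apply Proposition~\ref{prop:dimeq} to $\tau = \widehat{\eta}^+$ and $\tau = \widehat{\eta}^-$ separately and subtract, yielding
\begin{align*}
\dim_{\bC}(V_{\pi,K}\otimes_{\bR}\widehat{\eta}^+)^K - \dim_{\bC}(V_{\pi,K}\otimes_{\bR}\widehat{\eta}^-)^K = \vol(K)^{-1}e^{t\chi_\pi(C^\fg)/2}\!\int_G \Theta^G_{V_{\pi,K}}(g)\,\Trs\bigl[p^{X,\widehat{\eta}}_t(g)\bigr]\,dv_G.
\end{align*}
By Proposition~\ref{prop:test} the integral decomposes as a sum over Cartan subgroups $H_i$. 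A regular element of $H_i$ has $\dim\fb(\gamma) = \dim\fh_{i\fp}$, so Equation~\eqref{eq:G1=00} together with Proposition~\ref{prop:dbdbr} forces all contributions to vanish except from the Cartan subgroup $H_1 = H$, on which $\dim\fh_{1\fp} = \delta(G) = 1$.

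Next, on $H'$ I would apply the Hecht--Schmid formula \eqref{eq:smid} with $\fq = \fm\oplus\fb\oplus\fn$; this is legitimate by Proposition~\ref{prop:G1mq}. The denominator $\sum_j (-1)^j \Theta^{MA_Q}_{\Lambda^j(\fn)}(\gamma) = \det(1-\Ad(\gamma))|_\fn$ combines with the Weyl factor $|\det(1-\Ad(\gamma))|_{\fg/\fh}|$ to leave $|\det(1-\Ad(\gamma))|_{\fm/\ft}|$ together with a sign. Decomposing $H_\cdot(\fn, V_{\pi,K}) = \bigoplus_\nu H_\cdot(\fn, V_{\pi,K})_{[\nu]}$ as in \eqref{eq:MAtoM}, and inserting the explicit formula \eqref{eq:Trg} for $\Trs^{[\gamma]}[\exp(-tC^{\fg,X,\widehat{\eta}}/2)]$ at $\gamma = e^ak^{-1}$, the integrand factorizes on $H \simeq \fb \times T$ into a product of an $\fb$-factor (Gaussian in $a$, weighted by $e^{2\sqrt{-1}\pi\langle\nu,a\rangle}$) and a $T$-factor. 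The $\fb$-integral $\int_\fb e^{-|a|^2/(2t)+2\sqrt{-1}\pi\langle\nu,a\rangle}da/\sqrt{2\pi t}$ produces $e^{-2\pi^2 t|\nu|^2}$; combined with the overall prefactor $e^{t\chi_\pi(C^\fg)/2}$ and the Casimir shift $-tC^{\fu_\fm,\eta}/2 + t\sigma_\eta/2$ in \eqref{eq:Trg} and \eqref{eq:AshiftC}, the exponential becomes $\exp(t\chi_{H_j(\fn,V_{\pi,K})_{[\nu]}}(C^\fm)/2)$ by the Harish-Chandra parametrization of generalized infinitesimal characters recalled before \eqref{eq:MAtoM}.

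The remaining $T$-integral, after using Weyl integration on $M$ (whose relevant Cartan is $T$ since $\delta(M)=0$) in reverse and Proposition~\ref{prop:dimeq} applied at the level of $M$ with representation $W = H_j(\fn,V_{\pi,K})_{[\nu]}$ and graded coefficient bundle corresponding to $\sum_i(-1)^i\Lambda^i(\fp_\fm^*)\otimes E_\eta$, is identified with the virtual dimension $\sum_i(-1)^i\dim_\bC(H_j(\fn,V_{\pi,K})\otimes\Lambda^i(\fp_\fm^*)\otimes E_\eta)^{K_M}$. Summing the generalized-weight decomposition in $\nu$ recovers the full module $H_j(\fn,V_{\pi,K})$. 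Finally, applying the Euler characteristic identity $\sum_i(-1)^i\dim(W\otimes\Lambda^i(\fp_\fm^*))^{K_M} = \sum_i(-1)^i\dim H^i(\fm,K_M;W)$ to the complex $C^\cdot(\fm,K_M;W)$ yields the right-hand side of \eqref{eq:711}. The factor $\chi(K/K_M)^{-1}$ emerges by collecting $\vol(K)^{-1}\vol(H\cap K\backslash K)/|W(H,G)|$ on the $G$-side against $\vol(K_M)^{-1}/|W(T,K_M)|$ on the $M$-side, via Bott's formula \eqref{eq:Bott} and the identification $W(H,G) \cong W(T,K)$ (the latter because $\fb \subset \fz_\fk$-centralizer is abelian).

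The main obstacle will be tracking the Gaussian concentration step and matching Casimir shifts with $\rho$-shifts in Hecht--Schmid: one must verify carefully that the Casimir eigenvalue of $H_j(\fn,V_{\pi,K})_{[\nu]}$ with respect to $\fm$ combines exactly with the $\fa_Q$-weight $|\nu|^2$ contribution to reproduce $\chi_\pi(C^\fg)$, so that the $t$-dependence on the right-hand side disappears as it must. This is essentially the content of Kostant's formula/Parthasarathy relation for the infinitesimal characters of $\fn$-homology modules stated after \eqref{eq:MAtoM}, and it is here that the algebraic identities compatible with \eqref{eq:BggBuu} and \eqref{eq:AshiftC} are used in an essential way.
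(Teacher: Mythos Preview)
Your overall strategy matches the paper's proof closely: apply Proposition~\ref{prop:dimeq}, reduce to the Cartan subgroup $H$ via Proposition~\ref{prop:test} and \eqref{eq:G1=00}, invoke Hecht--Schmid through Proposition~\ref{prop:G1mq}, and then pass to the $M$-level where the same machinery (Proposition~\ref{prop:dimeq} for $M$, together with the Euler formula) produces the $(\fm,K_M)$-cohomology. The bookkeeping with $W(H,G)=W(T,K)$, $W(T,M)=W(T,K_M)$ and Bott's formula \eqref{eq:Bott} is also exactly what the paper does.

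There are two points where your plan diverges from the paper, one a minor imprecision and one a missed simplification. First, when you combine the Hecht--Schmid denominator $\det(1-\Ad(\gamma))|_\fn$ with the Weyl density and the factor $|\det(1-\Ad(\gamma))|_{\fz_0^\bot}|^{-1/2}$ already present in \eqref{eq:Trg}, what remains is not just $|\det(1-\Ad(k^{-1}))|_{\fm/\ft}|$ ``together with a sign'' but rather $e^{-l\langle\alpha,a\rangle}\,|\det(1-\Ad(k^{-1}))|_{\fm/\ft}|$; see \eqref{eq:nzs}. This real exponential in $a$ must be tracked.

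Second, and more importantly, the paper does \emph{not} carry out the Gaussian $\fb$-integral and verify the Casimir compatibility you flag as ``the main obstacle.'' Since the left-hand side is manifestly $t$-independent, the paper simply lets $t\to 0$: the factor $(2\pi t)^{-1/2}\exp(-|a|^2/2t)$ concentrates at $a=0$, the exponential prefactors in $t$ tend to $1$, and by \eqref{eq:MAtoM} and the estimate \eqref{eqfg} one passes directly to an integral over $T'$ involving $\Theta^M_{H_j(\fn,V_{\pi,K})}$ (this is \eqref{eq:dimGtoT}). This bypasses entirely the delicate matching of $\chi_\pi(C^\fg)$ with the $\fm$-Casimir on $H_j(\fn,V_{\pi,K})_{[\nu]}$ plus the $|\nu|^2$ contribution, and it also absorbs the $e^{-l\langle\alpha,a\rangle}$ factor without further comment. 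Your route through explicit Gaussian evaluation and Casimir identities can be made to work, but it is unnecessarily laborious; the $t\to 0$ limit is the cleaner argument.
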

\begin{proof}Let $\Lambda(\pi)\in \fh^*_\bC$ be the Harish-Chandra parameter of the infinitesimal character of $\pi$.
By \eqref{eq:dimeq}, for $t>0$, we have
\begin{multline}\label{eq:68}
  \dim_\bC\(V_{\pi,K} \otimes_\bR \widehat{\eta}^+\)^K-\dim_\bC\(V_{\pi,K} \otimes_\bR \widehat{\eta}^-\)^K\\
=\vol(K)^{-1} e^{t\chi_\pi(C^{\fg})/2}\int_{g\in G} \Theta^G_\pi(g)\Trs\[p^{X,\widehat{\eta}}_{t}(g)\]dv_G.
\end{multline}
By \eqref{eq:G1=00}, by Proposition \ref{prop:test} and by $H\cap K=T$, we have
\begin{multline}\label{eq:wel1}
  \int_G \Theta^G_\pi(g)\Trs\[p^{X,\widehat{\eta}}_{t}(g)\]dv_G=\frac{\vol(T\backslash K)}{|W(H,G)|}\\
\int_{\gamma\in H'}\Theta^G_\pi(\gamma)\Trs^{[\gamma]}\[\exp(-tC^{\fg,X,\widehat{\eta}}/2)\]\left|\mathrm{det}\big(1-\Ad(\gamma)\big)|_{\fg/\fh}\right|dv_{H}.
\end{multline}
Since $\gamma=e^ak^{-1}\in H'$ implies $T=K_M(k)=M^0(k)$,
by \eqref{eq:Trg}, \eqref{eq:68} and \eqref{eq:wel1}, we have
\begin{multline}\label{eq:dim=int}
\dim_\bC\(V_{\pi,K} \otimes_\bR \widehat{\eta}^+\)^K-\dim_\bC\(V_{\pi,K} \otimes_\bR \widehat{\eta}^-\)^K=
\frac{1}{|W(H,G)|\vol(T)}\\
\frac{1}{\sqrt{2 \pi t}}\exp\(
\frac{t}{16}\Tr^{\fu^{\bot}(\fb)}\[C^{\fu(\fb),\fu^{\bot}(\fb)}\]-\frac{t}{2}C^{\fu_\fm,\eta}+\frac{t}{2}\chi_\pi(C^{\fg})\)\\
\int_{\gamma=e^ak^{-1}\in H'}\Theta^G_\pi(\gamma)\exp\(-|a|^2/2t\)\Tr^{E_\eta}\[\eta\(k^{-1}\)\]
\frac{\left|\mathrm{det}\big(1-\Ad(\gamma)\big)|_{\fg/\fh}\right|}{\left|\mathrm{det}\big(1-\Ad(\gamma)\big)|_{\fz_0^\bot}\right|^{1/2}}dv_{H}.
\end{multline}

%
%
%
%
%

By \eqref{eq:det1/2}, for $\gamma=e^ak^{-1}\in H'$, we have
\begin{align}\label{eq:nzs}
\frac{\left|\mathrm{det}\big(1-\Ad(\gamma)\big)|_{\fg/\fh}\right|}{\mathrm{det}\big(1-\Ad(\gamma)\big)|_\fn\left|\mathrm{det}\big(1-\Ad(\gamma)\big)|_{\fz_0^\bot}\right|^{1/2}}=e^{-l\<\alpha,a\>}\left|\mathrm{det}\big(1-\Ad(k^{-1})\big)|_{\fm/\ft}\right|.
\end{align}
By \eqref{eq:smid}, \eqref{eq:G1mq}, \eqref{eq:dim=int}, and \eqref{eq:nzs}, we have
\begin{multline}\label{eq:hs1}
  \dim_\bC\(V_{\pi,K} \otimes_\bR \widehat{\eta}^+\)^K-\dim_\bC\(V_{\pi,K} \otimes_\bR \widehat{\eta}^-\)^K=\frac{1}{|W(H,G)|\vol(T)}\\
\frac{1}{\sqrt{2 \pi t}}\exp\(\frac{t}{16}\Tr^{\fu^{\bot}(\fb)}\[C^{\fu(\fb),\fu^{\bot}(\fb)}\]-\frac{t}{2}C^{\fu_\fm,\eta}+\frac{t}{2}\chi_\pi(C^{\fg})\)\\
\sum_{j=0}^{2l}(-1)^j\int_{\gamma=e^ak^{-1}\in H'}\Theta^{MA_Q}_{H_j(\fn,V_{\pi,K})}(\gamma)
\exp\(-|a|^2/2t-l\<\alpha,a\>\)\\
\Tr^{E_\eta}\[\eta\(k^{-1}\)\]\left|\mathrm{det}\big(1-\Ad(k^{-1})\big)|_{\fm/\ft}\right|dv_H.
\end{multline}

By \eqref{eq:estgol}, there exist  $C>0$ and $c>0$ such that for $\gamma=e^ak^{-1}\in H'$, we have
\begin{align}\label{eqfg}
\left|  \Theta^{MA_Q}_{H_j(\fn,V_{\pi,K})}(\gamma)\right|\left|
\mathrm{det}\(1-\Ad(k^{-1})\)|_{\fm/\ft}\right|^{1/2}\l Ce^{c|a|}.
\end{align}
By  \eqref{eq:MAtoM}, \eqref{eq:hs1}, \eqref{eqfg}, and by  letting $t\to0$, we get
\begin{multline}\label{eq:dimGtoT}
 \dim_\bC\(V_{\pi,K} \otimes_\bR \widehat{\eta}^+\)^K-\dim_\bC\(V_{\pi,K} \otimes_\bR \widehat{\eta}^-\)^K=\frac{1}{|W(H,G)|\vol(T)}\\
\sum_{j=0}^{2l}(-1)^j\int_{\gamma\in T'}\Theta^{M}_{H_j(\fn,V_{\pi,K})}(\gamma)
\Tr^{E_\eta}\[\eta(\gamma)\]\left|\mathrm{det}\big(1-\Ad(\gamma)\big)|_{\fm/\ft}\right|dv_T,
\end{multline}
where $T'$ is the set of the regular elements of $M$ in $T$.

We claim that, for $0\l j\l 2l$, we have
\begin{multline}\label{eq:hs2}
\sum_{i=0}^{\dim \fp_\fm}(-1)^i\dim_\bC\(H_j(\fn,V_{\pi,K})\otimes_\bR\Lambda^i(\fp_\fm^*)\otimes_\bR E_\eta\)^{K_M}\\
=\frac{1}{|W(T,M)|\vol(T)}
 \int_{\gamma\in T'} \Theta^M_{H_j(\fn,V_{\pi,K})}(\gamma)\Tr^{E_\eta}\[\eta\(\gamma\)\]\left|\mathrm{det}\big(1-\Ad(\gamma)\big)|_{{\fm/\ft}}\right|dv_T.
\end{multline}
Indeed, consider $H_j(\fn,V_{\pi,K})$  as a  Harish-Chandra $(\fm_\bC,K_M)$-module. We can decompose $H_j(\fn,V_{\pi,K})$ in the sense of \eqref{eq:V=Vchi}, where the  generalized infinitesimal characters are given by \begin{align}\label{eq:sf1}
 \chi_{ (w\Lambda(\pi)+\rho^\fu-\rho^{\fu(\fb)})|_{\ft_{\bC}}},
\end{align}
for some $w\in W(\fh_\bR,\fu)$. Therefore, it is enough to show \eqref{eq:hs2} when $H_j(\fn,V_{\pi,K})$ is replaced by any Harish-Chandra
$(\fm_\bC,K_M)$-module with  generalized infinitesimal  character
$\chi_{(w\Lambda(\pi)+\rho^\fu-\rho^{\fu(\fb)})|\ft_{\bC}}$. Let $(\pi^M,V_{\pi^M})$ be a Hilbert globalization of such a Harish-Chandra $(\fm_\bC,K_M)$-module. As before,
let $C^{\fm,X_M,\Lambda^\cdot(\fp_\fm)\otimes E_\eta}$ be the Casimir element of $M$ acting on
$C^\infty(M,\Lambda^\cdot(\fp_\fm)\otimes E_\eta)^{K_M}$, and let 
$p_t^{X_M,\Lambda^\cdot(\fp_\fm)\otimes E_\eta}(g)$ be the  smooth 
integral kernel of the heat operator  $\exp(-tC^{\fm,X_M,\Lambda^\cdot(\fp_\fm)\otimes E_\eta}/2)$. Remark that by \cite[Proposition 8.4]{BMZ}, $C^{\fm,X_M,\Lambda^\cdot(\fp_\fm)\otimes E_\eta}-C^{\fm,E_{\eta}}$ is the Hodge Laplacian on $X_M$ acting  on the differential forms with values in the homogenous flat vector bundle $M\times_{K_M} E_\eta$. Proceeding as in \cite[Theorem 7.8.2]{B09}, if $\gamma\in M$ is semisimple and nonelliptic, we have
\begin{align}\label{eq:trd0Fe22}
\Tr^{[\gamma]}\[\exp\(-t(C^{\fm,X_M,\Lambda^\cdot(\fp_\fm)\otimes E_\eta}-C^{\fm,E_{\eta}})/2\)\]=0.
\end{align}
Also, if $\gamma=k^{-1}\in K_M$, then
\begin{align}\label{eq:trd0Fi33}
\Tr^{[\gamma]}\[\exp\(-t(C^{\fm,X_M,\Lambda^\cdot(\fp_\fm)\otimes E_\eta}-C^{\fm,E_{\eta}})/2\)\]=\Tr^{E_\eta}\[\eta\(k^{-1}\)\]e\(X_M(k),\nabla^{TX_M(k)}\).
\end{align}
Using \eqref{eq:trd0Fe22}, proceeding as in \eqref{eq:68} and \eqref{eq:wel1},  we have
\begin{align}\label{eq:jiji}
  &\hspace{10mm}\sum_{i=0}^{\dim \fp_\fm}(-1)^i\dim_\bC\(V_{\pi^M}\otimes_\bR\Lambda^i(\fp_\fm^*)\otimes_\bR E_\eta\)^{K_M}\notag\\
&=\vol(K_M)^{-1}\exp\big(t\chi_{\pi^M}(C^{\fm})/2\big)\int_{g\in M} \Theta^M_{\pi^M}(g) \Trs\[p_t^{X_M,\Lambda^\cdot(\fp^*_\fm)\otimes E_\eta}(g)\]dv_M\\
&\begin{aligned}
=\frac{\exp\big(t\chi_{\pi^M}(C^{\fm})/2\big)}{|W(T,M)|\vol(T)}
\int_{\gamma\in T'} \Theta^M_{\pi^M}(\gamma) \Trs^{[\gamma]}\[\exp(-tC^{\fm,X_M,\Lambda^\cdot(\fp_\fm)\otimes E_\eta}/2)\]\\
\left|\mathrm{det}\big(1-\Ad(\gamma)\big)|_{\fm/\ft}\right|dv_T.\notag
\end{aligned}
\end{align}
By \eqref{eq:trd0Fi33}, \eqref{eq:jiji}, proceeding as in \eqref{eq:dim=int}, and letting $t\to 0$, we get the desired equality \eqref{eq:hs2}.

The Euler formula asserts
\begin{multline}\label{eq:dimeu}
\sum_{i=0}^{\dim \fp_\fm} (-1)^i \dim_\bC\( H_j(\fn,V_{\pi,K})\otimes_\bR\Lambda^i(\fp_\fm^*)\otimes_\bR E_\eta\)^{K_M}\\
=\sum_{i=0}^{\dim \fp_\fm}(-1)^i\dim_\bC  H^i\big(\fm,K_M;H_j(\fn,V_{\pi,K})\otimes_\bR E_\eta\big).
\end{multline}

By \eqref{eq:Weylgroup}, we have
\begin{align}\label{eq:dimeu12}
&W(H,G)=W(T,K),&W(T,M)=W(T,K_M).
\end{align}

By \eqref{eq:Bott}, \eqref{eq:dimGtoT}, \eqref{eq:hs2}, 
\eqref{eq:dimeu}-\eqref{eq:dimeu12}, we get \eqref{eq:711}. The proof 
of our theorem  is completed.
\end{proof}

\begin{cor}\label{cor:forr}
The following identity holds:
\begin{align}\label{eq:jul}
  r_{\eta,\rho}=\frac{1}{\chi(K/K_M)}\!\sum_{\pi\in \widehat{G}_{u},\chi_\pi(C^{\fg})=0}\!\!\!\!n_\rho(\pi)\sum_{ i=0}^{ \dim \fp_\fm}\sum_{ j=0}^{ 2l}(-1)^{i+j}\dim_\bC H^i\big(\fm,K_M;H_j(\fn,V_{\pi,K})\otimes_\bR E_\eta\big).
\end{align}
\end{cor}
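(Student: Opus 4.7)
The plan is to assemble Corollary \ref{cor:forr} from two ingredients already available: the spectral decomposition \eqref{eq:kerC=sum} applied to the virtual bundle $\widehat{\eta}$, and the cohomological formula of Theorem \ref{thm:711}. First I would write
\begin{align*}
r_{\eta,\rho} = \dim_\bC \ker\!\big(C^{\fg,Z,\widehat{\eta}^+,\rho}\big) - \dim_\bC \ker\!\big(C^{\fg,Z,\widehat{\eta}^-,\rho}\big),
\end{align*}
which is just \eqref{eq:rrs} combined with the definition of $m_{\eta,\rho}(0)$.

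Next, applying \eqref{eq:kerC=sum} separately to $\widehat{\eta}^+$ and $\widehat{\eta}^-$ and subtracting gives
\begin{align*}
r_{\eta,\rho} = \sum_{\pi\in \widehat{G}_u,\ \chi_\pi(C^{\fg})=0} n_\rho(\pi)\Big[\dim_\bC\big(V_{\pi,K}\otimes_\bR \widehat{\eta}^+\big)^K - \dim_\bC\big(V_{\pi,K}\otimes_\bR \widehat{\eta}^-\big)^K\Big].
\end{align*}
The sum is finite since $\ker C^{\fg,Z,\widehat{\eta}^\pm,\rho}$ are finite dimensional (here I use that $C^{\fg,Z,\widehat{\eta}^\pm,\rho}$ are elliptic and $Z$ is compact). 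It remains to substitute Theorem \ref{thm:711} into each bracket, which directly yields \eqref{eq:jul}.

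The argument is essentially a bookkeeping step: once the spectral decomposition \eqref{eq:Geldecomp} of $L^2(\Gamma\backslash G, \widehat{p}^* F)$ has been used to identify the kernel of the Casimir on $Z$ with the $K$-invariants of the underlying Harish-Chandra modules tensored with $E_{\widehat{\eta}}$, and Theorem \ref{thm:711} has converted those $K$-invariants into an alternating sum of $(\fm,K_M)$-cohomologies of $\fn$-homologies, the corollary follows with no further calculation. There is no real obstacle here; all the analytic input (the heat kernel estimates, the validity of the Hecht--Schmid character formula on the relevant regular set, and the Salamanca-Riba/Vogan--Zuckerman vanishing controlling which $\pi$ contribute) has already been absorbed into Theorem \ref{thm:711}. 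The only minor point to verify is that we are entitled to exchange the finite sum over $\pi$ with the supertrace, which is immediate from finite dimensionality of $\ker C^{\fg,Z,\widehat{\eta}^\pm,\rho}$.
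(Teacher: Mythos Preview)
Your proof is correct and matches the paper's approach exactly: the paper's proof is the single line that the corollary follows from \eqref{eq:rrs}, \eqref{eq:kerC=sum}, and \eqref{eq:711}, which is precisely the chain you wrote out. One small correction to your commentary: the Salamanca-Riba/Vogan--Zuckerman input is \emph{not} absorbed into Theorem \ref{thm:711} or used here---it enters only later (Proposition \ref{prop:vani3} and Corollary \ref{cor:forr1}) to sharpen the condition $\chi_\pi(C^{\fg})=0$ to $\chi_\pi=0$.
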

\begin{proof}
  This is a consequence of \eqref{eq:rrs}, \eqref{eq:kerC=sum}, and \eqref{eq:711}.
\end{proof}

\begin{re}
  When $G=\SO^0(p,1)$ with $p\g 3$ odd, the formula \eqref{eq:jul} is compatible with \cite[Theorem 3.11]{Juhldymamicalzeta}.
\end{re}

We will apply \eqref{eq:jul} to $\eta_j$. The following proposition allows  us to reduce the first  sum in \eqref{eq:jul} to the one over $\pi\in \widehat{G}_u$ with $\chi_\pi=0$.

\begin{prop}\label{prop:vani3}
Let $(\pi,V_\pi)\in \widehat{G}_u$. Assume $\chi_\pi(C^{\fg})=0$ and
\begin{align}\label{eq:Hn0}
H^\cdot\big(\fm, K_M; H_\cdot(\fn,V_\pi)\otimes_\bR \Lambda^{j}(\fn^{*})\big)\neq0.
\end{align}
Then the infinitesimal character $\chi_\pi$ vanishes.
\end{prop}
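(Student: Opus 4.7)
The plan is to combine Theorem \ref{thm:vankey} applied to the pair $(\fm, K_M)$ with the Hecht-Schmid parametrization of the $\fm$-infinitesimal characters of the $\fn$-homology, and then exploit the Casimir hypothesis $\chi_\pi(C^\fg) = 0$ through a norm identity on the Harish-Chandra parameter. First, decompose the finite-dimensional $\fm_\bC$-module $\Lambda^j(\fn^*)$ into irreducible summands $\bigoplus_\alpha W_\alpha$ with highest weights $\mu_\alpha \in \ft^*$ and infinitesimal characters $\chi_{\mu_\alpha + \rho^{\fu_\fm}}$. The assumed nonvanishing $H^\cdot(\fm, K_M; H_\cdot(\fn, V_{\pi,K}) \otimes \Lambda^j(\fn^*)) \neq 0$ forces the same for some summand $W_\alpha$ and some degree of $\fn$-homology $H_{j_0}(\fn, V_{\pi,K})$; applying Theorem \ref{thm:vankey} to the reductive group $M$ and its maximal compact $K_M$ then yields that some composition factor of $H_{j_0}(\fn, V_{\pi,K})$ shares its generalized $\fm$-infinitesimal character with that of $W_\alpha^*$.

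Second, by the Hecht-Schmid description \cite[Corollary 3.32]{HechtSchmid} recalled in the excerpt (with $\fq = \fm \oplus \fb \oplus \fn$ and $\rho^{\fu_\fq} = \rho^{\fu_\fm}$, since $\sqrt{-1}\fb$ contributes no roots to $\fu_\fq$), the generalized $\fm$-infinitesimal characters of $H_\cdot(\fn, V_{\pi,K})$ take the form $\chi_{(w\Lambda(\pi) + \rho^\fu - \rho^{\fu_\fm})|_{\ft_\bC}}$ for $w \in W(\fh_\bR, \fu)$. Combining with the first step, there exist $w$ and $\alpha$ such that $(w\Lambda(\pi) + \rho^\fu - \rho^{\fu_\fm})|_{\ft_\bC}$ is $W(\ft, \fu_\fm)$-conjugate to the Harish-Chandra parameter $-w_0^{\fu_\fm}(\mu_\alpha) + \rho^{\fu_\fm}$ of the infinitesimal character of $W_\alpha^*$, where $w_0^{\fu_\fm}$ is the longest element of $W(\ft, \fu_\fm)$. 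Writing $\rho^\fu = \rho^{\fu_\fm} + \rho^\fn$ with $\rho^\fn$ the half-sum of the $\fh$-roots whose root spaces lie in $\fn$, this matching becomes a precise constraint on the $\ft$-component of $w\Lambda(\pi)$.

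Meanwhile, the Casimir hypothesis $\chi_\pi(C^\fg) = 0$ is equivalent, via the Harish-Chandra isomorphism, to the norm identity $|\Lambda(\pi)|^2 = |\rho^\fu|^2$ for the $W(\fh_\bR, \fu)$-invariant bilinear form on $\fh_\bR^*$ induced by $B$. Decomposing $w\Lambda(\pi) = (w\Lambda)_\ft + (w\Lambda)_\fb$ along $\fh^* = \ft^* \oplus \fb^*$ and noting that $\rho^{\fu_\fm}$ has vanishing $\fb$-component while $\rho^\fu|_\fb = \rho^\fn|_\fb$, the second-step matching fixes $(w\Lambda)_\ft$ modulo $W(\ft, \fu_\fm)$ in terms of $\mu_\alpha$, while the norm identity controls $|(w\Lambda)_\fb|^2 = |\rho^\fu|^2 - |(w\Lambda)_\ft|^2$.

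The main obstacle is the final algebraic step: deducing from these constraints that $w\Lambda(\pi) = \rho^\fu$, which is precisely the assertion $\chi_\pi = 0$. The key input is a Kostant-style analysis of the admissible highest weights $\mu_\alpha$, namely that each such $\mu_\alpha$ is a sum of $j$ distinct $\ft$-weights of $\fn^*$, combined with a norm inequality in the spirit of Parthasarathy's Dirac inequality: among all such $\mu_\alpha$, the quantity $|-w_0^{\fu_\fm}(\mu_\alpha) + \rho^{\fu_\fm}|^2$ is bounded below in a way that, when combined with the conjugacy and the norm identity $|\Lambda(\pi)|^2 = |\rho^\fu|^2$, forces saturation. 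This saturation pins down $(w\Lambda)_\ft = \rho^{\fu_\fm} + \rho^\fn|_\ft = \rho^\fu|_\ft$ and $(w\Lambda)_\fb = \rho^\fn|_\fb = \rho^\fu|_\fb$, yielding $w\Lambda(\pi) = \rho^\fu$, hence $\chi_\pi = 0$.
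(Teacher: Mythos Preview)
Your architecture is correct through the third paragraph: Theorem~\ref{thm:vankey} for $(\fm,K_M)$, the Hecht--Schmid description of the generalized $\fm$-infinitesimal characters of $H_\cdot(\fn,V_{\pi,K})$, and the norm identity $B^*(\Lambda(\pi),\Lambda(\pi))=B^*(\rho^\fu,\rho^\fu)$ from $\chi_\pi(C^\fg)=0$. The gap is in the final step. You invoke a Parthasarathy-type inequality that ``saturates,'' but no inequality is in play; the argument hinges on an \emph{exact} identity you do not mention, namely Proposition~\ref{prop:bggbuu}: the Casimir $C^{\fu_\fm}$ acts on all of $\Lambda^j(\fn^*_\bC)$ as the single scalar $\tfrac{1}{8}\Tr^{\fu^\bot(\fb)}[C^{\fu(\fb),\fu^\bot(\fb)}]+(j-l)^2|\alpha|^2$, regardless of which irreducible constituent one lands in. This fixes $|\mu_j+\rho^{\fu_\fm}|^2$ exactly, and combined with the norm identity forces the $\fb$-component of $w''\Lambda(\pi)$ to be $\pm(l-j)\alpha/2\sqrt{-1}\pi$, so that $w''\Lambda(\pi)=\big(\pm(l-j)\alpha/2\sqrt{-1}\pi,\,\mu_j\big)+\rho^{\fu(\fb)}$ for some highest weight $\mu_j$ of a constituent of $\Lambda^j(\fn^*_\bC)$.

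Your stated endgame $(w\Lambda)_\ft=\rho^\fu|_\ft$ and $(w\Lambda)_\fb=\rho^\fu|_\fb$ is also too strong: in general $\mu_j+\rho^{\fu_\fm}\neq\rho^\fu|_\ft$. What must be shown is only that the element above lies in $W(\fh_\bR,\fu)\cdot\rho^\fu$, and this requires a second missing ingredient: the spinor identification $\Lambda^\cdot(\ol\fn_\bC^*)\simeq S^{\fu^\bot(\fb)}\otimes\det(\fn_\bC)^{1/2}$ of \eqref{eq:Rnspinc} together with Kostant's parametrization \cite[Lemma~II.6.9]{BW} of the $\fu(\fb)$-highest weights of $S^{\fu^\bot(\fb)}$ as $\{w\rho^\fu-\rho^{\fu(\fb)}:w\in W(\fh_\bR,\fu)\}$. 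Since $\big((j-l)\alpha/2\sqrt{-1}\pi,\mu_j\big)$ and $\big((l-j)\alpha/2\sqrt{-1}\pi,\mu_j\big)$ are precisely such highest weights (in degrees $j$ and $2l-j$ respectively), the conclusion $\chi_\pi=0$ follows. Your ``Kostant-style analysis'' points at the right object, but the mechanism is this exact weight identification, not a saturated inequality.
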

\begin{proof}
Recall that $\Lambda(\pi)\in \fh^*_\bC$ is a Harish-Chandra parameter of $\pi$. We need to show that there is $w\in W(\fh_\bR,\fu)$ such that
\begin{align}\label{eq:hou}
  w\Lambda(\pi)=\rho^\fu.
\end{align}
Let $B^*$ be the bilinear form on $\fg^*$ induced by $B$. It extends to $\fg^*_\bC$ and $\fu^*$ in an obvious way. 
Since $\chi_\pi(C^{\fg,\pi})=0$, we have
\begin{align}\label{eq:C=0}
  B^*(\Lambda(\pi),\Lambda(\pi))=B^*(\rho^\fu,\rho^\fu).
\end{align}
%

We identify $\fh_\bR^*=\sqrt{-1}\fb^*\oplus \ft^*$.
By definition,
\begin{align}
  \rho^{\fu}=\(\frac{l\alpha}{2\sqrt{-1}\pi},\rho^{\fu_{\fm}}\)\in  \sqrt{-1}\fb^*\oplus \ft^* \hbox{\quad and \quad} \rho^{\fu(\fb)}=(0, \rho^{\fu_\fm})\in \sqrt{-1}\fb^*\oplus \ft^*.
\end{align}
%
By \eqref{eq:key1}, \eqref{eq:sf1} and \eqref{eq:Hn0}, there exist $w\in W(\fh_\bR,\fu)$, $w'\in W(\ft,\fu_\fm)\subset W(\fh_\bR,\fu)$ and the highest real weight $\mu_j\in \ft^*$ of an irreducible subrepresentation of $\fm_\bC$ on $\Lambda^j(\fn_\bC)\simeq \Lambda^j(\ol{\fn}^*_\bC)$ such that
\begin{align}\label{eq:sf3}
 w\Lambda(\pi)|_{\ft_\bC}=w'(\mu_j+\rho^{\fu_\fm}).
\end{align}

By \eqref{eq:BggBuu}, \eqref{eq:C=0} and \eqref{eq:sf3}, there exists $w''\in  W(\fh_\bR,\fu)$ such that
\begin{align}\label{eq:Cpi=0}
  w''\Lambda(\pi)=\(\pm\frac{(l-j)\alpha}{2\sqrt{-1}\pi}, \mu_j+\rho^{\fu_\fm}\)=\(\pm\frac{(l-j)\alpha}{2\sqrt{-1}\pi}, \mu_j\)+\rho^{\fu(\fb)}.
\end{align}
In particular, $w''\Lambda(\pi)\in \fh^*_\bR$.

Clearly, $\big((j-l)\alpha/2\sqrt{-1}\pi, \mu_j\big)\in \fh_\bR^*$ is the highest
real weight of an irreducible subrepresentation of
$\fm_\bC\oplus \fb_\bC$ on
$\Lambda^j(\ol{\fn}^*_\bC)\otimes_\bC(\det(\fn_\bC))^{-1/2}$.
By \eqref{eq:Rnspinc}, $\big((j-l)\alpha/2\sqrt{-1}\pi, \mu_j\big)\in \fh_\bR^*$ is the highest
real weight of an irreducible subrepresentation of  $\fm_\bC\oplus \fb_\bC$ on $S^{\fu^\bot(\fb)}$.
By \cite[Lemma II.6.9]{BW},  there exists $w_1\in W(\fh_\bR,\fu)$ such that
\begin{align}\label{eq:la2}
  \(\,\frac{(j-l)\alpha}{2\sqrt{-1}\pi}, \mu_j\)=w_1\rho^{\fu}-\rho^{\fu(\fb)}.
\end{align}
Similarly, $\big((l-j)\alpha/2\sqrt{-1}\pi, \mu_j\big)\in \fh_\bR^*$ is the highest
real weight of an irreducible subrepresentation of $\fm_\bC\oplus \fb_\bC$ on both $\Lambda^{2l-j}(\ol{\fn}^*_\bC)\otimes_\bC(\det(\fn_\bC))^{-1/2}$ and $S^{\fu^\bot(\fb)}$. Therefore,  there exists $w_2\in W(\fh_\bR,\fu)$ such that
\begin{align}\label{eq:la3}
  \(\,\frac{(l-j)\alpha}{2\sqrt{-1}\pi}, \mu_j\)=w_2\rho^{\fu}-\rho^{\fu(\fb)}.
\end{align}

%
%

%
%
%
By \eqref{eq:Cpi=0}-\eqref{eq:la3}, we get \eqref{eq:hou}.
The proof of our proposition  is completed.
\end{proof}

\begin{cor}\label{cor:forr1}
 For $0\l j\l 2l$, we have
\begin{align}
  r_j=\frac{1}{\chi(K/K_M)}\sum_{\pi\in 
  \widehat{G}_{u},\chi_\pi=0}n_\rho(\pi)\sum_{i=0}^{\dim 
  \fp_\fm}\sum_{k=0}^{2l}(-1)^{i+k}\dim_\bC 
  H^i\(\fm,K_M;H_k(\fn,V_{\pi,K})\otimes_\bR \Lambda^{j}(\fn^{*})\).
\end{align}
If $H^\cdot(Z,F)=0$, then for all $0\l j\l 2l$,
\begin{align}\label{eq:yep}
  r_j=0.
\end{align}
\end{cor}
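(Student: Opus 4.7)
The plan is to derive Corollary \ref{cor:forr1} as a specialization of Corollary \ref{cor:forr} combined with the representation-theoretic vanishing given by Proposition \ref{prop:vani3} and Proposition \ref{prop:mushi}. Since $\eta_{j}$ satisfies Assumption \ref{as:1} with $E_{\eta_{j}} = \Lambda^{j}(\fn^{*})$, and since $r_{j} = r_{\eta_{j},\rho}$ by the notational convention introduced after \eqref{eq:Cr}, one first applies Corollary \ref{cor:forr} to $\eta = \eta_{j}$, which yields the displayed identity but with the outer sum running over all $\pi \in \widehat{G}_{u}$ satisfying only the weaker condition $\chi_{\pi}(C^{\fg}) = 0$ rather than the full vanishing $\chi_{\pi} = 0$.

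Next, I would trim the outer sum by showing that the terms with $\chi_{\pi}(C^{\fg}) = 0$ but $\chi_{\pi} \neq 0$ contribute nothing. This is exactly Proposition \ref{prop:vani3}: if $\chi_{\pi}(C^{\fg}) = 0$ and
\[
H^{\cdot}\bigl(\fm, K_{M}; H_{\cdot}(\fn, V_{\pi,K}) \otimes_{\bR} \Lambda^{j}(\fn^{*})\bigr) \neq 0,
\]
then $\chi_{\pi} = 0$. So for every $\pi$ with $\chi_{\pi}(C^{\fg}) = 0$ but $\chi_{\pi} \neq 0$, all the inner cohomology groups vanish and the corresponding term disappears. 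Restricting the sum to $\chi_{\pi} = 0$ then gives the first asserted formula.

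For the acyclicity statement, I would invoke Proposition \ref{prop:mushi}: if $H^{\cdot}(Z, F) = 0$, then $n_{\rho}(\pi) = 0$ for every $\pi \in \widehat{G}_{u}$ with $\chi_{\pi} = 0$. Substituting this into the formula just obtained, every term in the outer sum vanishes, so $r_{j} = 0$ for all $0 \l j \l 2l$.

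The only nontrivial input here is the matching of Harish-Chandra parameters in Proposition \ref{prop:vani3} (which uses the classification of unitary representations with nonzero $(\fg,K)$-cohomology via Vogan--Zuckerman and Salamanca-Riba, together with the Hecht--Schmid formula to decompose $H_{\cdot}(\fn, V_{\pi,K})$); but that work has already been done. No additional obstacle is expected in the derivation of Corollary \ref{cor:forr1} itself — it is essentially a bookkeeping step that packages Corollary \ref{cor:forr}, Proposition \ref{prop:vani3}, and Proposition \ref{prop:mushi} into their final form, which in turn feeds into the proof of \eqref{eq:t02z} via the formula \eqref{eq:Cr} for $C_{\rho}$ and $r_{\rho}$.
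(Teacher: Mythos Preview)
Your proposal is correct and follows exactly the paper's approach: the paper's proof is a one-line citation of Proposition~\ref{prop:mushi}, Corollary~\ref{cor:forr}, and Proposition~\ref{prop:vani3}, and you have unpacked precisely how these three ingredients combine. One small inaccuracy in your commentary (not in the proof itself): Proposition~\ref{prop:vani3} does not invoke Vogan--Zuckerman or Salamanca-Riba; it is a direct computation with Harish-Chandra parameters and the Hecht--Schmid decomposition, while the Vogan--Zuckerman/Salamanca-Riba input enters only through Theorem~\ref{thm:vankey1}, which underlies Proposition~\ref{prop:mushi}.
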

\begin{proof}
  This is a consequence of  Proposition \ref{prop:mushi}, Corollary \ref{cor:forr} and Proposition \ref{prop:vani3}.
\end{proof}

\begin{re}By \eqref{eq:Cr} and \eqref{eq:yep},  we get  
	\eqref{eq:t02z} when $G$ has compact center and $\delta(G)=1$.
\end{re}

\def\cprime{$'$}
\providecommand{\bysame}{\leavevmode\hbox to3em{\hrulefill}\thinspace}
\providecommand{\MR}{\relax\ifhmode\unskip\space\fi MR }
\providecommand{\MRhref}[2]{%
  \href{http://www.ams.org/mathscinet-getitem?mr=#1}{#2}
}
\providecommand{\href}[2]{#2}

\begin{theindex}

  \item $\lVert\cdot\rVert_{r,z_1,z_2}$, \hyperpage{56}
  \item $\langle,\rangle_{\fn_\bC}$, \hyperpage{31}
  \item $\lvert\cdot\rvert$, \hyperpage{12}

  \indexspace

  \item $A_0$, \hyperpage{31}
  \item $[\alpha ]^{\qopname  \relax m{max}}$, \hyperpage{17}
  \item $\alpha$, \hyperpage{29}
  \item $\widehat{A}$, \hyperpage{10}
  \item $\widehat{A}(E,\nabla^E)$, \hyperpage{10}
  \item $a_0$, \hyperpage{29}

  \indexspace

  \item $ \mathfrak  {b}(\gamma )$, \hyperpage{22}
  \item $B$, \hyperpage{6}, \hyperpage{12}
  \item $B_{[\gamma]}$, \hyperpage{4}, \hyperpage{24}
  \item $[\beta]^{\max}$, \hyperpage{45}
  \item $\Box^Z$, \hyperpage{3}, \hyperpage{11}
  \item $\fb_*$, \hyperpage{40}
  \item $\mathfrak  {b}$, \hyperpage{14}

  \indexspace

  \item $C^{\fg,X,\tau}$, \hyperpage{17}
  \item $C^{\fg,X}$, \hyperpage{17}
  \item $C^{\fk}$, \hyperpage{13}
  \item $C^{\fu(\fb),\fu^\bot(\fb)}$, \hyperpage{34}
  \item $C^{\fu_\fm,\eta_j}$, \hyperpage{34}
  \item $C^{\mathfrak  {g}}$, \hyperpage{13}
  \item $C^{\mathfrak  {k},V},C^{\mathfrak  {k},\tau }$, \hyperpage{13}
  \item $C_\rho$, \hyperpage{53}
  \item $\chi (Z,F),\chi '(Z,F)$, \hyperpage{11}
  \item $\chi(Z)$, \hyperpage{11}
  \item $\chi_\lambda$, \hyperpage{46}
  \item $\chi_{\Lambda}$, \hyperpage{55}
  \item $\chi_{\rm orb}$, \hyperpage{4}, \hyperpage{24}
  \item $\mathrm  {ch}(E',\nabla ^{E'})$, \hyperpage{10}
  \item $c_G$, \hyperpage{45}

  \indexspace

  \item $ \det_{\rm gr}$, \hyperpage{51}
  \item $D^Z$, \hyperpage{11}
  \item $\Delta(\ft,\fk),\Delta^+(\ft,\fk)$, \hyperpage{46}
  \item $\delta(G)$, \hyperpage{6}, \hyperpage{14}
  \item $\delta(\fg)$, \hyperpage{15}
  \item $\det(P+\sigma)$, \hyperpage{10}
  \item $\qopname  \relax m{det}(P)$, \hyperpage{10}
  \item $d^Z$, \hyperpage{11}
  \item $d^{Z,*}$, \hyperpage{11}
  \item $d_X(x,y)$, \hyperpage{17}
  \item $dv_X$, \hyperpage{17}
  \item $dv_{H_i},dv_{H_i\backslash G}$, \hyperpage{15}
  \item $dv_{K^0(\gamma)\backslash K},dv_{Z^0(\gamma)\backslash G}$, 
		\hyperpage{18}
  \item $dv_G$, \hyperpage{58}

  \indexspace

  \item $E_\tau$, \hyperpage{17}
  \item $E_{\widehat{\eta}},E^+_{\widehat{\eta}},E^-_{\widehat{\eta}}$, 
		\hyperpage{45}
  \item $\cE_\tau$, \hyperpage{17}
  \item $\cE_{\widehat{\eta}}$, \hyperpage{45}
  \item $\eta$, \hyperpage{44}
  \item $\eta_j$, \hyperpage{34}
  \item $\widehat{\eta},\widehat{\eta}^+,\widehat{\eta}^-$, 
		\hyperpage{45}
  \item $e(E,\nabla ^E)$, \hyperpage{10}

  \indexspace

  \item $F$, \hyperpage{3}, \hyperpage{6}, \hyperpage{11}, 
		\hyperpage{20}
  \item $F_{\fb,\eta}$, \hyperpage{45}
  \item $\cF_\tau$, \hyperpage{20}

  \indexspace

  \item $G$, \hyperpage{6}, \hyperpage{12}
  \item $G'$, \hyperpage{16}
  \item $G_*$, \hyperpage{39}
  \item $G_1,G_2$, \hyperpage{39}
  \item $G_\bC$, \hyperpage{12}
  \item $G_i'$, \hyperpage{16}
  \item $[\Gamma]$, \hyperpage{4}, \hyperpage{21}
  \item $[\gamma]$, \hyperpage{4}, \hyperpage{21}
  \item $\Gamma (\gamma )$, \hyperpage{21}
  \item $\Gamma$, \hyperpage{4}, \hyperpage{6}, \hyperpage{11}, 
		\hyperpage{20}
  \item $\fg$, \hyperpage{6}, \hyperpage{12}
  \item $\fg_*$, \hyperpage{39}
  \item $\fg_1,\fg_2$, \hyperpage{35}
  \item $\fg_\bC$, \hyperpage{12}
  \item $g^{F}$, \hyperpage{3}, \hyperpage{11}
  \item $g^{TX}$, \hyperpage{17}
  \item $g^{TZ}$, \hyperpage{3}, \hyperpage{11}

  \indexspace

  \item $ \mathfrak  {h}_{i\mathfrak  {p}}, \mathfrak  {h}_{i\mathfrak  {k}}$, 
		\hyperpage{14}
  \item $H^\cdot(Z,F)$, \hyperpage{11}
  \item $H^\cdot(\fg,K;V)$, \hyperpage{61}
  \item $H_\cdot(\fn_\fq,V)$, \hyperpage{62}
  \item $H_i'$, \hyperpage{16}
  \item $\fh(\gamma)$, \hyperpage{15}
  \item $\fh_i,H_i$, \hyperpage{14}
  \item $\fh_{i\bC},\fh_{i\bR}$, \hyperpage{54}
  \item $\mathfrak  {h}$, \hyperpage{14}

  \indexspace

  \item $\iota$, \hyperpage{34}
  \item $i_G$, \hyperpage{36}

  \indexspace

  \item $J$, \hyperpage{31}
  \item $J_{\gamma }$, \hyperpage{19}

  \indexspace

  \item $K$, \hyperpage{6}, \hyperpage{12}
  \item $K_*$, \hyperpage{39}
  \item $K_{M}$, \hyperpage{23}
  \item $\fk$, \hyperpage{6}, \hyperpage{12}
  \item $\fk_*$, \hyperpage{39}
  \item $\fk_0^{\bot}$, \hyperpage{19}
  \item $\fk_0^{\bot}(\gamma)$, \hyperpage{19}
  \item $\fk_\fm(k)$, \hyperpage{30}
  \item $\fk_{\fm}$, \hyperpage{23}
  \item $\mathfrak  {k}(\gamma )$, \hyperpage{13}
  \item $\mathfrak  {k}_0$, \hyperpage{19}

  \indexspace

  \item $l$, \hyperpage{29}
  \item $l_0$, \hyperpage{14}
  \item $l_{[\gamma]}$, \hyperpage{4}

  \indexspace

  \item $M$, \hyperpage{23}
  \item $M(k),M^0(k)$, \hyperpage{30}
  \item $\fm$, \hyperpage{23}
  \item $\fm(k)$, \hyperpage{30}
  \item $m$, \hyperpage{11, 12}
  \item $m_P(\lambda )$, \hyperpage{10}
  \item $m_{[\gamma]}$, \hyperpage{4}, \hyperpage{24}
  \item $m_{\eta,\rho}(\lambda)$, \hyperpage{51}

  \indexspace

  \item $N^{\Lambda^\cdot(T^*Z)}$, \hyperpage{3}
  \item $N_\fb$, \hyperpage{32}
  \item $\mathfrak  {n},\overline  {\mathfrak  {n}}$, \hyperpage{28}
  \item $\nabla^{TX}$, \hyperpage{17}
  \item $n$, \hyperpage{12}

  \indexspace

  \item $ \omega ^\mathfrak  {g},\omega ^\mathfrak  {p},\omega ^\mathfrak  {k}$, 
		\hyperpage{16}
  \item $ \omega ^{\mathfrak  {u}},\omega ^{\mathfrak  {u}(\mathfrak  {b})},\omega ^{\mathfrak  {u}^{\bot }(\mathfrak  {b})}$, 
		\hyperpage{32}
  \item $ \omega^{Y_\fb}$, \hyperpage{32}
  \item $\Omega ^\mathfrak  {k}$, \hyperpage{17}
  \item $\Omega^\cdot(Z,F)$, \hyperpage{3}, \hyperpage{11}
  \item $\Omega^{\fu(\fb)}$, \hyperpage{32}
  \item $\Omega^{\fu_{\fm}}$, \hyperpage{33}
  \item $o(TX)$, \hyperpage{17}

  \indexspace

  \item $P_{\eta}(\sigma)$, \hyperpage{51}
  \item $\fp$, \hyperpage{12}
  \item $\fp^{a,\bot}(\gamma)$, \hyperpage{13}
  \item $\fp_*$, \hyperpage{39}
  \item $\fp_0^{\bot}$, \hyperpage{19}
  \item $\fp_0^{\bot}(\gamma)$, \hyperpage{19}
  \item $\fp_\fm(k)$, \hyperpage{30}
  \item $\fp_{\fm}$, \hyperpage{23}
  \item $\mathfrak  {p}(\gamma )$, \hyperpage{13}
  \item $\mathfrak  {p}_0$, \hyperpage{19}
  \item $\pi_{\fk}(Y)$, \hyperpage{46}
  \item $\widehat{p},\widehat{\pi}$, \hyperpage{20}
  \item $p_t^{X,\tau }(g)$, \hyperpage{18}
  \item $p_t^{X,\tau}(x,x')$, \hyperpage{18}
  \item $P$, \hyperpage{16}

  \indexspace

  \item $\fq$, \hyperpage{29}

  \indexspace

  \item $RO(K_M),RO(K)$, \hyperpage{7}, \hyperpage{34}
  \item $R^{F_{\fb,\eta}}$, \hyperpage{45}
  \item $R^{N_\fb}$, \hyperpage{33}
  \item $R_\rho(\sigma)$, \hyperpage{4}, \hyperpage{25}
  \item $\mathrm{rk}_\bC$, \hyperpage{14}
  \item $\rho$, \hyperpage{6}, \hyperpage{11}, \hyperpage{20}
  \item $\rho^{\fk}$, \hyperpage{46}
  \item $r$, \hyperpage{11}, \hyperpage{20}
  \item $r_\rho$, \hyperpage{53}
  \item $r_j$, \hyperpage{53}
  \item $r_{\eta,\rho}$, \hyperpage{51}

  \indexspace

  \item $S^{\fu^\bot(\fb)}$, \hyperpage{31}
  \item $\cS(G)$, \hyperpage{56}
  \item $\sigma_0$, \hyperpage{26}, \hyperpage{52}
  \item $\sigma_\eta$, \hyperpage{51}
  \item $\sigma_{\fk}(Y)$, \hyperpage{46}

  \indexspace

  \item $T$, \hyperpage{14}
  \item $T(F)$, \hyperpage{3}, \hyperpage{11}
  \item $T(\sigma )$, \hyperpage{8}, \hyperpage{11}
  \item $\Theta^G_\pi$, \hyperpage{56}
  \item $\Tr  ^{[\gamma ]}[\cdot ]$, \hyperpage{18}
  \item $\Trs^{[\gamma]}[\cdot]$, \hyperpage{18}
  \item $\ft(\gamma)$, \hyperpage{22}
  \item $\ft_*$, \hyperpage{40}
  \item $\tau$, \hyperpage{13}, \hyperpage{17}
  \item $\theta (s)$, \hyperpage{3}
  \item $\theta _P(s)$, \hyperpage{10}
  \item $\theta$, \hyperpage{6}, \hyperpage{12}

  \indexspace

  \item $U$, \hyperpage{12}
  \item $U(\fb),U_M$, \hyperpage{31}
  \item $U(\fg)$, \hyperpage{13}
  \item $U(\fg_\bC)$, \hyperpage{54}
  \item $U_M(k),U_M^0(k)$, \hyperpage{49}
  \item $\fu$, \hyperpage{12}
  \item $\fu(\fb),\fu(\fm)$, \hyperpage{31}
  \item $\fu^\bot(\fb)$, \hyperpage{31}
  \item $\fu_{\fm}(k)$, \hyperpage{49}

  \indexspace

  \item $\vol(\cdot)$, \hyperpage{17, 18}

  \indexspace

  \item $W(H_i,G)$, \hyperpage{14}
  \item $W(T,K)$, \hyperpage{17}

  \indexspace

  \item $X$, \hyperpage{6}, \hyperpage{16}
  \item $X(\gamma )$, \hyperpage{17}
  \item $X^{a,\bot }(\gamma )$, \hyperpage{18}
  \item $X_M(k)$, \hyperpage{30}
  \item $X_{M}$, \hyperpage{23}
  \item $\Xi(g)$, \hyperpage{57}
  \item $\Xi_{\rho}$, \hyperpage{25}

  \indexspace

  \item $Y_\mathfrak  {b}$, \hyperpage{31}

  \indexspace

  \item $Z$, \hyperpage{3}, \hyperpage{11}, \hyperpage{20}
  \item $Z(\gamma)$, \hyperpage{13}
  \item $Z(a)$, \hyperpage{13}
  \item $Z^0(\fb)$, \hyperpage{28}
  \item $Z^0(\gamma)$, \hyperpage{13}
  \item $Z_G$, \hyperpage{12}
  \item $\cZ(\fg_\bC)$, \hyperpage{54}
  \item $\fz(\gamma)$, \hyperpage{13}
  \item $\fz(a)$, \hyperpage{13}
  \item $\fz^{a,\bot}(\gamma)$, \hyperpage{13}
  \item $\fz_0^{\bot}$, \hyperpage{19}
  \item $\fz_0^{\bot}(\gamma)$, \hyperpage{19}
  \item $\fz_\fg$, \hyperpage{12}
  \item $\mathcal{Z}(\fg)$, \hyperpage{13}
  \item $\mathfrak  {z}_0$, \hyperpage{19}
  \item $\mathfrak  {z}_{\mathfrak  {p}},\mathfrak  {z}_{\mathfrak  {k}}$, 
		\hyperpage{12}

\end{theindex}


\begin{thebibliography}{GGPS69}

\bibitem[BaMo83]{MoscoviciL2}
D.~Barbasch and H.~Moscovici, \emph{{$L^{2}$}-index and the {S}elberg trace
  formula}, J. Funct. Anal. \textbf{53} (1983), no.~2, 151--201. \MR{722507
  (85j:58137)}

\bibitem[BeGeVe04]{BGV}
N.~Berline, E.~Getzler, and M.~Vergne, \emph{Heat kernels and {D}irac
  operators}, Grundlehren Text Editions, Springer-Verlag, Berlin, 2004,
  Corrected reprint of the 1992 original. \MR{2273508 (2007m:58033)}

\bibitem[B05]{B05}
J.-M. Bismut, \emph{The hypoelliptic {L}aplacian on the cotangent bundle}, J.
  Amer. Math. Soc. \textbf{18} (2005), no.~2, 379--476 (electronic).
  \MR{2137981 (2006f:35036)}

\bibitem[B11]{B09}
\bysame, \emph{Hypoelliptic {L}aplacian and orbital integrals}, Annals of
  Mathematics Studies, vol. 177, Princeton University Press, Princeton, NJ,
  2011. \MR{2828080}
  
\bibitem[BG04]{BGdeRham}
J.-M. Bismut and S.~Goette, \emph{Equivariant de {R}ham torsions}, Ann. of
  Math. (2) \textbf{159} (2004), no.~1, 53--216. \MR{2051391 (2005f:58059)}


\bibitem[BMaZ17]{BMZ}
J.-M. Bismut, X.~Ma, and W.~Zhang, \emph{Asymptotic torsion and {T}oeplitz
  operators}, J. Inst. Math. Jussieu \textbf{16} (2017), no.~2, 223--349.
  \MR{3615411}
  
\bibitem[BZ92]{BZ92}
J.-M. Bismut and W.~Zhang, \emph{An extension of a theorem by {C}heeger and
  {M}\"uller}, Ast\'erisque (1992), no.~205, 235, With an appendix by
  Fran{\c{c}}ois Laudenbach. \MR{1185803 (93j:58138)}


\bibitem[Bot65]{Bott1965}
R.~Bott, \emph{The index theorem for homogeneous differential operators},
  Differential and {C}ombinatorial {T}opology ({A} {S}ymposium in {H}onor of
  {M}arston {M}orse), Princeton Univ. Press, Princeton, N.J., 1965,
  pp.~167--186. \MR{0182022 (31 \#6246)}

\bibitem[BrDi85]{BrockerDieck}
T.~Br{\"o}cker and T.~tom Dieck, \emph{Representations of compact {L}ie
  groups}, Graduate Texts in Mathematics, vol.~98, Springer-Verlag, New York,
  1985. \MR{781344 (86i:22023)}

\bibitem[BoW00]{BW}
A.~Borel and N.~Wallach, \emph{Continuous cohomology, discrete subgroups, and
  representations of reductive groups}, second ed., Mathematical Surveys and
  Monographs, vol.~67, American Mathematical Society, Providence, RI, 2000.
  \MR{1721403 (2000j:22015)}

\bibitem[C79]{Ch79}
J.~Cheeger, \emph{Analytic torsion and the heat equation}, Ann. of Math. (2)
  \textbf{109} (1979), no.~2, 259--322. \MR{528965 (80j:58065a)}

\bibitem[DKoVa79]{DuistermaatKolkVaradarajan}
J.~J. Duistermaat, J.~A.~C. Kolk, and V.~S. Varadarajan, \emph{Spectra of
  compact locally symmetric manifolds of negative curvature}, Invent. Math.
  \textbf{52} (1979), no.~1, 27--93. \MR{532745 (82a:58050a)}

\bibitem[DyZw16]{DyatlovZworski}
S.~Dyatlov and M.~Zworski, \emph{Dynamical zeta functions for {A}nosov flows
  via microlocal analysis}, Ann. Sci. \'Ec. Norm. Sup\'er. (4) \textbf{49}
  (2016), no.~3, 543--577. \MR{3503826}

\bibitem[DyZw17]{zworski_zero}
\bysame, \emph{Ruelle zeta function at zero for surfaces}, Inventiones
  mathematicae (2017), 1--19.

\bibitem[F86]{FriedRealtorsion}
D.~Fried, \emph{Analytic torsion and closed geodesics on hyperbolic
  manifolds}, Invent. Math. \textbf{84} (1986), no.~3, 523--540. \MR{837526
  (87g:58118)}

\bibitem[F87]{Friedconj}
\bysame, \emph{Lefschetz formulas for flows}, The {L}efschetz centennial
  conference, {P}art {III} ({M}exico {C}ity, 1984), Contemp. Math., vol.~58,
  Amer. Math. Soc., Providence, RI, 1987, pp.~19--69. \MR{893856 (88k:58138)}

\bibitem[GeGrPS69]{GMP}
I.~M. Gel{\cprime}fand, M.~I. Graev, and I.~I. Pyatetskii-Shapiro,
  \emph{Representation theory and automorphic functions}, Translated from the
  Russian by K. A. Hirsch, W. B. Saunders Co., Philadelphia,
  Pa.-London-Toronto, Ont., 1969. \MR{0233772 (38 \#2093)}

\bibitem[GiLP13]{GLP2013}
P.~Giulietti, C.~Liverani, and M.~Pollicott, \emph{Anosov flows and dynamical
  zeta functions}, Ann. of Math. (2) \textbf{178} (2013), no.~2, 687--773.
  \MR{3071508}

\bibitem[He78]{HelgasonSymmetric}
S.~Helgason, \emph{Differential geometry, {L}ie groups, and symmetric spaces},
  Pure and Applied Mathematics, vol.~80, Academic Press, Inc. [Harcourt Brace
  Jovanovich, Publishers], New York-London, 1978. \MR{514561 (80k:53081)}
  
\bibitem[HeSc83]{HechtSchmid}
H.~Hecht and W.~Schmid, \emph{Characters, asymptotics and {$\mathfrak{n}$}-homology of
  {H}arish-{C}handra modules}, Acta Math. \textbf{151} (1983), no.~1-2,
  49--151. \MR{716371 (84k:22026)}

  
 \bibitem[Hi66]{Hirzebruch_top_methods_AG}
F.~Hirzebruch, \emph{Topological methods in algebraic geometry}, Third enlarged
  edition. New appendix and translation from the second German edition by R. L.
  E. Schwarzenberger, with an additional section by A. Borel. Die Grundlehren
  der Mathematischen Wissenschaften, Band 131, Springer-Verlag New York, Inc.,
  New York, 1966. \MR{0202713}
  
\bibitem[H74]{Hitchin74}
N.~Hitchin, \emph{Harmonic spinors}, Advances in Math. \textbf{14} (1974),
  1--55. \MR{0358873 (50 \#11332)}


\bibitem[J01]{Juhldymamicalzeta}
A.~Juhl, \emph{Cohomological theory of dynamical zeta functions}, Progress in
  Mathematics, vol. 194, Birkh\"auser Verlag, Basel, 2001. \MR{1808296
  (2001j:37046)}

\bibitem[KobN63]{Kobayashi_Nomizu_I}
S.~Kobayashi and K.~Nomizu, \emph{Foundations of differential geometry. {V}ol
  {I}}, Interscience Publishers, a division of John Wiley \& Sons, New
  York-London, 1963. \MR{0152974}

\bibitem[K86]{Knappsemi}
A.~W. Knapp, \emph{Representation theory of semisimple groups}, Princeton
  Mathematical Series, vol.~36, Princeton University Press, Princeton, NJ,
  1986, An overview based on examples. \MR{855239 (87j:22022)}

\bibitem[K02]{KnappLie}
\bysame, \emph{Lie groups beyond an introduction}, second ed., Progress in
  Mathematics, vol. 140, Birkh\"auser Boston, Inc., Boston, MA, 2002.
  \MR{1920389 (2003c:22001)}

\bibitem[Ko76]{Kostant76}
B.~Kostant, \emph{On {M}acdonald's {$\eta $}-function formula, the {L}aplacian
  and generalized exponents}, Advances in Math. \textbf{20} (1976), no.~2,
  179--212. \MR{0485661 (58 \#5484)}

\bibitem[LiY86]{Li_Yau}
P.~Li and S.-T. Yau, \emph{On the parabolic kernel of the {S}chr\"odinger
  operator}, Acta Math. \textbf{156} (1986), no.~3-4, 153--201. \MR{834612}
  
\bibitem[Ma17]{Ma_bourbaki}
X.~Ma, \emph{Geometric hypoelliptic Laplacian and orbital integrals, after
  Bismut, Lebeau and Shen}, S{\'e}minaire Bourbaki, March 2017.
  
\bibitem[MaMar15]{MaMar_cover}
X.~Ma and G.~Marinescu, \emph{Exponential estimate for the asymptotics of
  {B}ergman kernels}, Math. Ann. \textbf{362} (2015), no.~3-4, 1327--1347.
  \MR{3368102}


\bibitem[Mat67]{MatsushimaBetti}
Y. Matsushima, \emph{A formula for the {B}etti numbers of compact locally
  symmetric {R}iemannian manifolds}, J. Differential Geometry \textbf{1}
  (1967), 99--109. \MR{0222908 (36 \#5958)}

\bibitem[Mi68a]{MilnorZcover}
J.~Milnor, \emph{Infinite cyclic coverings}, Conference on the {T}opology of
  {M}anifolds ({M}ichigan {S}tate {U}niv., {E}. {L}ansing, {M}ich., 1967),
  Prindle, Weber \& Schmidt, Boston, Mass., 1968, pp.~115--133. \MR{0242163 (39
  \#3497)}

 \bibitem[Mi68b]{Milnor_fundgroup}
 \bysame, \emph{A note on curvature and fundamental group}, J. Differential
   Geometry \textbf{2} (1968), 1--7. \MR{0232311}

\bibitem[MoSt91]{MStorsion}
H.~Moscovici and R.~J. Stanton, \emph{{$R$}-torsion and zeta functions for
  locally symmetric manifolds}, Invent. Math. \textbf{105} (1991), no.~1,
  185--216. \MR{1109626 (92i:58199)}

\bibitem[M78]{Muller78}
W.~M{\"u}ller, \emph{Analytic torsion and {$R$}-torsion of {R}iemannian
  manifolds}, Adv. in Math. \textbf{28} (1978), no.~3, 233--305. \MR{498252
  (80j:58065b)}

\bibitem[Q85]{Quillensuper}
D.~Quillen, \emph{Superconnections and the {C}hern character}, Topology
  \textbf{24} (1985), no.~1, 89--95. \MR{790678 (86m:58010)}

\bibitem[Re35]{ReidemeisterTorsion}
K.~Reidemeister, \emph{Homotopieringe und {L}insenr\"aume}, Abh. Math. Sem.
  Univ. Hamburg \textbf{11} (1935), no.~1, 102--109. \MR{3069647}

\bibitem[RS71]{RSTorsion}
D.~B. Ray and I.~M. Singer, \emph{{$R$}-torsion and the {L}aplacian on
  {R}iemannian manifolds}, Advances in Math. \textbf{7} (1971), 145--210.
  \MR{0295381 (45 \#4447)}

\bibitem[Sa57]{SatakeGaussB}
I.~Satake, \emph{The {G}auss-{B}onnet theorem for {$V$}-manifolds}, J. Math.
  Soc. Japan \textbf{9} (1957), 464--492. \MR{0095520 (20 \#2022)}

\bibitem[Se67]{Seeley66}
R.~T. Seeley, \emph{Complex powers of an elliptic operator}, Singular
  {I}ntegrals ({P}roc. {S}ympos. {P}ure {M}ath., {C}hicago, {I}ll., 1966),
  Amer. Math. Soc., Providence, R.I., 1967, pp.~288--307. \MR{0237943 (38
  \#6220)}

\bibitem[Sel60]{Selberg60}
A.~Selberg, \emph{On discontinuous groups in higher-dimensional symmetric
  spaces}, Contributions to function theory (internat. {C}olloq. {F}unction
  {T}heory, {B}ombay, 1960), Tata Institute of Fundamental Research, Bombay,
  1960, pp.~147--164. \MR{0130324}

\bibitem[Sh16]{Shen_Fried_Conj_CRAS}
S.~Shen, \emph{Analytic torsion, dynamical zeta functions and orbital
  integrals}, C. R. Math. Acad. Sci. Paris \textbf{354} (2016), no.~4,
  433--436. \MR{3473562}

\bibitem[SR99]{Salamanca}
S.~A. Salamanca-Riba, \emph{On the unitary dual of real reductive {L}ie groups
  and the {$A_g(\lambda)$} modules: the strongly regular case}, Duke Math. J.
  \textbf{96} (1999), no.~3, 521--546. \MR{1671213 (2000a:22023)}

\bibitem[Va77]{VaradarajanHAred}
V.~S. Varadarajan, \emph{Harmonic analysis on real reductive groups}, Lecture
  Notes in Mathematics, Vol. 576, Springer-Verlag, Berlin-New York, 1977.
  \MR{0473111 (57 \#12789)}

\bibitem[V84]{Vogan2}
D.~A. Vogan, Jr., \emph{Unitarizability of certain series of representations},
  Ann. of Math. (2) \textbf{120} (1984), no.~1, 141--187. \MR{750719
  (86h:22028)}

\bibitem[V08]{VoganRepCompAna}
\bysame, \emph{Unitary representations and complex analysis}, Representation
  theory and complex analysis, Lecture Notes in Math., vol. 1931, Springer,
  Berlin, 2008, pp.~259--344. \MR{2409701 (2009f:22013)}

\bibitem[Vo87]{Voros}
A.~Voros, \emph{Spectral functions, special functions and the {S}elberg zeta
  function}, Comm. Math. Phys. \textbf{110} (1987), no.~3, 439--465.
  \MR{891947}

\bibitem[VZu84]{VoganZuckerman}
D.~A. Vogan, Jr. and G.~J. Zuckerman, \emph{Unitary representations with
  nonzero cohomology}, Compositio Math. \textbf{53} (1984), no.~1, 51--90.
  \MR{762307 (86k:22040)}

\bibitem[W88]{WallachI}
N.~R. Wallach, \emph{Real reductive groups. {I}}, Pure and Applied
  Mathematics, vol. 132, Academic Press, Inc., Boston, MA, 1988. \MR{929683
  (89i:22029)}

\end{thebibliography}

\end{document}